\documentclass[10pt]{amsart}

\usepackage[symbol]{footmisc}
\usepackage{amsthm}
\usepackage{mathtools}
\usepackage{scalerel}
\usepackage{amsfonts}
\usepackage{amssymb}
\usepackage{mathrsfs}
\usepackage{ifsym}
\usepackage{dirtytalk}
\usepackage{stmaryrd}

\usepackage{xcolor}
\definecolor{done}{cmyk}{0.4,0,0,0}
\definecolor{todo}{cmyk}{0,0.4,0,0}
\definecolor{thilo}{cmyk}{0.1,0,0.8,0}
\definecolor{AC}{cmyk}{0.1,0,0,0.1}

\usepackage[backgroundcolor = todo]{todonotes}
\usepackage{enumitem} %added by Alberto

\usepackage{microtype} % Added by Thilo
\usepackage[unicode]{hyperref}% Added by Thilo
\usepackage{doi}
\usepackage{zref-clever} % Added by Thilo
\zcsetup{countertype={subsection=subsection}}
\zcRefTypeSetup{subsection}{Name-sg=Subsection}

\zcRefTypeSetup{theo}{Name-sg=Theorem, name-sg=theorem, Name-pl=Theorems, name-pl=theorems}
\zcRefTypeSetup{defi}{Name-sg=Definition, name-sg=definintion, Name-pl=Definitions, name-pl=definitions}
\zcRefTypeSetup{lemm}{Name-sg=Lemma, name-sg=lemma, Name-pl=Lemmata, name-pl=lemmata}
\zcRefTypeSetup{prop}{Name-sg=Proposition, name-sg=proposition, Name-pl=Propositions, name-pl=propositions}
\zcRefTypeSetup{coro}{Name-sg=Corollary, name-sg=corollary, Name-pl=Corollaries, name-pl=corollaries}
\zcRefTypeSetup{prob}{Name-sg=Problem, name-sg=problem, Name-pl=Problems, name-pl=problems}
\zcRefTypeSetup{conj}{Name-sg=Conjecture, name-sg=conjecture, Name-pl=Conjectures, name-pl=conjectures}
\zcRefTypeSetup{ques}{Name-sg=Question, name-sg=question, Name-pl=Questions, name-pl=questions}
\zcRefTypeSetup{examples}{Name-sg=Examples, name-sg=examples, Name-pl=Examples, name-pl=examples}
\zcRefTypeSetup{remark}{Name-sg=Remark, name-sg=remark, Name-pl=Remarks, name-pl=remarks}

\AddToHook{env/defi/begin}{\zcsetup{countertype={theo=defi}}}
\AddToHook{env/lemm/begin}{\zcsetup{countertype={theo=lemm}}}
\AddToHook{env/prop/begin}{\zcsetup{countertype={theo=prop}}}
\AddToHook{env/coro/begin}{\zcsetup{countertype={theo=coro}}}
\AddToHook{env/prob/begin}{\zcsetup{countertype={theo=prob}}}
\AddToHook{env/conj/begin}{\zcsetup{countertype={theo=conj}}}
\AddToHook{env/ques/begin}{\zcsetup{countertype={theo=ques}}}
\AddToHook{env/remark/begin}{\zcsetup{countertype={theo=remark}}}
\AddToHook{env/examples/begin}{\zcsetup{countertype={theo=examples}}}

\newtheorem{theo}{Theorem}[section]
\newtheorem{lemm}[theo]{Lemma}
\newtheorem{prop}[theo]{Proposition} %Added by Thilo
\newtheorem{coro}[theo]{Corollary} %Added by Thilo
\newtheorem{conj}[theo]{Conjecture} %Added by Thilo
\newtheorem{ques}[theo]{Question} %Added by Thilo

\theoremstyle{definition}
\newtheorem{defi}[theo]{Definition}
\newtheorem{examples}[theo]{Examples} %Added by Garrett
\newtheorem{prob}[theo]{Problem}

\theoremstyle{remark}
\newtheorem{remark}[theo]{Remark}

\newcommand{\cref}[1]{\zcref{#1}}
\newcommand{\Cref}[1]{\zcref[S]{#1}}

\numberwithin{equation}{section}

\newcommand{\un}[1]{\mathrel{\not\mspace{-5mu}#1}}
\newcommand{\isntcontainedin}{\not\leqslant}
\newcommand{\equim}{\equiv}

\newcommand{\otp}{otp}
\newcommand{\ZZ}{\mathbb{Z}}
\newcommand{\QQ}{\mathbb{Q}}
\newcommand{\RR}{\mathbb{R}}
\newcommand{\NN}{\mathbb{N}}

\newcommand{\card}[1]{|#1|}
\newcommand{\otR}{\lambda}
\newcommand{\AC}{\ensuremath{\mathsf{AC}}}
\newcommand{\DC}{\ensuremath{\mathsf{DC}}}
\newcommand{\AD}{\ensuremath{\mathsf{AD}}}
\newcommand{\BE}{\ensuremath{\mathsf{BE}}}
\newcommand{\ZF}{\ensuremath{\mathsf{ZF}}}
\newcommand{\ZFC}{\ensuremath{\mathsf{ZFC}}}
\newcommand{\PFA}{\ensuremath{\mathsf{PFA}}}
\newcommand{\OP}{\mathsf{O}}

\newcommand{\pwi}[2]{{#1}{\left[#2\right]}}
\newcommand{\cf}{\mathrm{cf}}
\newcommand{\conc}{{}^\smallfrown}
\newcommand{\soast}[2]{
\mathord {\left\{ #1 \middle| #2\right\}}
}

\newcommand{\soafft}[2]{{}^{#1} #2}

\newcommand{\opin}[2]{{
\mathord ( #1, #2)
}}

\newcommand{\ou}{ou}
\newcommand{\ue}{ue}

\newcommand{\z}{s}
\newcommand{\en}{en}

\newcommand{\seq}[2]{\langle #1 \mid #2\rangle}
\newcommand{\opair}[2]{{
\mathord \langle #1, #2\rangle
}}

\title{Untranscendable order types}
\author{Garrett Ervin} 
\email{garrette@ttk.elte.hu}
\author{Alberto Marcone}
\address{Dipartimento di scienze matematiche, informatiche e fisiche, Universit\`a di Udine, Via delle Scienze 208, 33100 Udine --- Italy}
\email{alberto.marcone@uniud.it}
\author{Thilo Weinert}
\email{thilo.weinert@univie.ac.at}

\subjclass{06A05 (primary), 03E25, 03E57, 03E60 (secondary)}

\thanks{Marcone and Weinert were supported by the Italian PRIN 2022 ``Models, sets and classifications'', prot.\ 2022TECZJA. Marcone is a member of INdAM-GNSAGA. The authors wish to thank Rapha\"el Carroy, Elliot Glazer, Asaf Karagila, and Benedikt Loewe for enlightening conversations.}

\begin{document}

\begin{abstract}
We introduce and study a multiplicative analog\ue\ of additive indecomposability for linear order types that we call \emph{untranscendability}, as well as a strengthening that we call \emph{$s$-untranscendability}. We show that, with the unique exception of the two-point type, every untranscendable type is additively indecomposable, and every $\sigma$-scattered untranscendable type is strongly indecomposable. Under the Proper Forcing Axiom, every untranscendable Aronszajn type is strongly indecomposable. We also show that a theorem of Hagendorf and Jullien, that every scattered indecomposable type must be strictly indecomposable to either the left or right, has a natural multiplicative analog\ue\ for $s$-untranscendable types. 
\end{abstract}

\maketitle

\section{Introduction}
A linear order $X$ is \emph{additively indecomposable} (or simply \emph{indecomposable}) if whenever $X$ is expressed as an ordered sum $X = A + B$, then either $X$ embeds in $A$ or $X$ embeds in $B$. It is \emph{strongly indecomposable} if whenever $X$ is expressed as a union of two suborders $X = A \cup B$, then either $X$ embeds in $A$ or $X$ embeds in $B$. Indecomposable orders arise naturally in the general study of linear orders, and they play a central role in the structure theory of several canonical classes of linear orders in particular.

\begin{itemize}[leftmargin=*,label=$\diamond$]
    \item For the class of ordinals, Cantor showed that the nonzero indecomposable ordinals are precisely those of the form $\omega^{\beta}$, and that every ordinal can be expressed as a finite sum of indecomposable ordinals.
    \item Generali\z ing Cantor's result, Laver showed in \cite{971L0} that every scattered linear order can be expressed as a finite sum of indecomposable scattered orders.
    \item Whereas for ordinals the notions of indecomposability and strong indecomposability coincide, the same is not quite true for scattered linear orders. Laver clarified the distinction precisely by showing that the class of indecomposable scattered order types is formed by closing the class $\{0, 1\}$, consisting of the empty type and singleton type, under so-called \emph{regular unbounded sums}, whereas the class of strongly indecomposable scattered order types is formed by closing $\{0, 1\}$ under \emph{regular increasing sums} and equimorphism (see \cite[Theorems 2.4 and 2.13]{973L1}). 
  \item Laver then generali\z ed these results to characteri\z e the indecomposable and strongly indecomposable $\sigma$-scattered linear orders. These are obtained by further closing $\{0,1\}$ under regular unbounded shuffles and regular increasing shuffles, respectively (see \cite[Theorem 3.2]{973L1}).
    \item Preceding Laver's analysis, Jullien \cite{968J0} studied the class of scattered orders from an abstract structural perspective, and proved his \emph{indecomposability theorem} which states that every indecomposable scattered order is strictly indecomposable to either the left or right. 
    \item Hagendorf generali\z ed Jullien's theorem to all linear orders, showing in \cite{977H0} that an arbitrary indecomposable linear order $X$ is either strictly indecomposable to one side, or else embeds a copy of its double $X + X$. We will call this result the \emph{Hagendorf-Jullien theorem}.
    \item More recently, it has been shown that in the presence of strong forcing axioms the class of Aronszajn lines has a structure theory that resembles the structure theory of the $\sigma$-scattered linear orders. In particular, Barbosa \cite{023B0} proved that under the Proper Forcing Axiom (\PFA), every Aronszajn line can be written as a finite sum of indecomposable Aronszajn lines. 
\end{itemize}

Less is known about products of linear orders than about their sums, and as far as we know no general notion of multiplicative indecomposability for linear orders has been studied previously.

With a few exceptions (\cite{952D0}, \cite{952D1}, \cite{952D2}, \cite{953G0},
\cite{954G0}, \cite{955G0}, \cite{959M0}, \cite{960CM0}, \cite{952DS0}) most of the research into the behavior of multiplication for linear order types has focused on ordinals. A central goal of \en quiry was to study transfinite analog\ue s of the fundamental theorem of arithmetic (cf.\ \cite{001AO0}).

For ordinals in particular, three multiplicative properties have been investigated that are analogous to additive indecomposability for ordinals in one or another of its forms.

An ordinal $\alpha$ is said to be \emph{multiplicatively principal} if $0 < \beta < \alpha$ implies $\beta\alpha = \alpha$ (cf.\ \cite[Chapter XVIII]{906H0}, \cite{909J0}) and it is a \emph{$\delta$-number} if it is an indecomposable $\omega^\beta$ whose exponent $\beta$ is indecomposable itself (cf.\ \cite[Chapter XXI, §82, page 602, LXIX]{906H0}).  Furthermore, $\alpha$ is called \emph{multiplicatively decomposable}\label{multiplicative_decomposability} if there are ordinals $\beta, \gamma < \alpha$ such that $\beta\gamma = \alpha$ and \emph{multiplicatively indecomposable} otherwise (cf.\ \cite[§19]{966B0}), the latter being the notion of interest here.

It turns out that an ordinal $\alpha$ is multiplicatively principal if and only if either $\alpha < 3$ or $\alpha$ is a $\delta$-number. Furthermore it is multiplicatively decomposable unless it is multiplicatively principal, a natural prime number, or $\beta + 1$ for an indecomposable $\beta$ ($2$ belonging to all three families). See furthermore \cite{909J1}, \cite{946S0}, \cite{950C0}, \cite{951A0}, \cite{951N0}, \cite{982J0}, and \autoref{untrans_ord}.

In this paper, we introduce and study multiplicative analog\ue s of additive indecomposability generalizing the multiplicative notions discussed above for ordinals to all linear orders. We say that a linear order $X$ is \emph{untranscendable} if whenever $A$ and $B$ are suborders of $X$ and $X$ embeds in their product $AB$, then either $X$ embeds in $A$ or $X$ embeds in $B$ (see \Cref{untrdef}); $X$ is \emph{$s$-untranscendable} if this happens even when $A$ and $B$ are not assumed to be suborders of $X$ (\Cref{s-untrdef}). 

We will show that many of the basic structural facts about indecomposable orders have multiplicative versions that hold for untranscendable orders. Our main result in this direction is \Cref{Garrett's Theorem}, which is an analog\ue\ of the Hagendorf-Jullien theorem for $s$-untranscendable orders. It says roughly that an $s$-untranscendable order type either is strictly untranscendable to one side, or else embeds a copy of its square. 

We also study the relationship between untranscendability, indecomposability, and strong indecomposability. In \Cref{AD+UT=2} we show that, with the unique exception of the two-point order type $2$, every untranscendable type is indecomposable. 

On the other hand, untranscendability does not imply strong indecomposability in general. Again, $2$ is a counterexample. A construction of Sierpi\'nski implies that the order of the real numbers $\mathbb{R}$ is also a counterexample (see the discussion following \Cref{stronglyindecdef}). This answers \cite[Question F]{020KW0} negatively. The construction, however, requires the Axiom of Choice. 

In \Cref{OpenProbs} we mention some open problems, including whether in the absence of choice it may be that, except for $2$, untranscendability implies strong indecomposability.
We show in \Cref{2 only FF} that untranscendability implies strong indecomposability outright (always with the exception of $2$) if one assumes a natural extra hypothesis. This hypothesis is satisfied by classes of orders like the $\sigma$-scattered linear orders, or the Aronszajn lines under \PFA, whose indecomposable members can be constructed inductively via regular unbounded sums and shuffles. It follows from \Cref{2 only FF} that every untranscendable $\sigma$-scattered type except $2$, as well as every untranscendable Aronszajn type under \PFA, is strongly indecomposable. See \Cref{SS+untr implies SI} and \Cref{Aron+untr implies SI}.

Along the way we give many examples of untranscendable orders and present some general methods for constructing them.\smallskip

The paper is organi\z ed as follows. 
In \Cref{Prelims} we define the basic terminology we need and state some background facts about sums and products of linear orders. 
In \Cref{IndecTypes}, we review the theory of indecomposable and strongly indecomposable orders, with an eye toward motivating our results about untranscendable orders. 

In \Cref{UntrTypes}, we define the notion of untranscendability as well as various strengthenings, including $s$-untranscendability. We establish multiplicative versions for untranscendable types of several basic facts about indecomposable types, and prove our analog\ue\ of the Hagendorf-Jullien theorem for $s$-untranscendable types. 

In \Cref{Untr+Indec}, we analyze the relationship between untranscendability and indecomposability. This analysis depends on a general method (\Cref{Embed in Y or X//Y}) for finding suborders of an untranscendable order that embed the entire order. We apply this method to prove both \Cref{AD+UT=2} and \Cref{2 only FF}, and then use these results to deduce strong indecomposability for untranscendable $\sigma$-scattered orders and Aronszajn lines under \PFA.

In \Cref{OpenProbs}, we conclude with some open problems.\smallskip 

The proofs of our main results are elementary, and the paper is largely self-contained. For general background on linear orders, the standard reference is Rosenstein's book \cite{982R0}. Our applications in the last two sections of the paper build on results of Laver, Baumgartner, Moore, Martinez-Ranero, Barbosa and others, and we also discuss untranscendability in the presence of forcing axioms, the Axiom of Choice, and the Axiom of Dependent Choice. Aside from these applications, we assume only some familiarity with standard set-theoretic notation, the notion of an ordinal, and some basic facts about ordinal arithmetic. 

\section{Preliminaries} \label{Prelims}

\subsection{Basic terminology}

A \emph{linear order} is a pair $\langle X, < \rangle$ where $X$ is a set and $<$ is an irreflexive, transitive, and total binary relation on $X$. We will often refer to a linear order $\langle X, < \rangle$ by its underlying set $X$. Given $x, y \in X$, we write $x \leq y$ to abbreviate the assertion $x < y \vee x = y$. A \emph{suborder} of $X$ is a subset $Y \subseteq X$ equipped with the inherited order from $X$. 

A \emph{right endpoint} (respectively \emph{left endpoint}) in a linear order $X$ is a point $x \in X$ that is maximal (respectively minimal) in $X$. 

A suborder $I$ of a linear order $X$ is an \emph{interval}, or \emph{segment}, or \emph{convex subset} of $X$ if whenever $x < y < z$ are points in $X$ and $x, z \in I$ then $y \in I$. Notice that singletons are intervals. We say that an interval is \emph{non-degenerate} if it contains at least two points. An interval is \emph{open} if it has neither a left nor right endpoint, and \emph{closed} if it has both a right and left endpoint.

If $Y \subseteq X$ is a suborder of $X$, the \textit{convex closure} of $Y$ is the set $\{x \in X: \textrm{$y_0 \leq x \leq y_1$ for some $y_0, y_1 \in Y$}\}$. Equivalently, the convex closure of $Y$ is the smallest (by containment) interval $I \subseteq X$ with $Y \subseteq I$.

If $I$ and $J$ are intervals in $X$, we write $I < J$ if for every $x \in I$ and $y \in J$ we have $x < y$. That is, $I < J$ if $I$ and $J$ are disjoint and $I$ lies to the left of $J$ in $X$. 

An \emph{initial segment} of $X$ is an interval $I \subseteq X$ such that whenever $y < x$ are points in $X$ and $x \in I$ then $y \in I$. Symmetrically, $J$ is a \emph{final segment} of $X$ if $X \setminus J$ is an initial segment of $X$.

Given points $x \leq y$ in $X$ we write $[x, y]$ for the interval $\{z \in X: x \leq z \leq y\}$. Likewise, the expressions $[x, y), (x, y], \opin{x}{y}$ have their usual meanings as intervals. For an arbitrary pair of points $x, y \in X$, we write $[\{x, y\}]$ to denote the interval $[x, y]$ when $x \leq y$ and $[y, x]$ when $y \leq x$. 

A linear order $X$ is \emph{complete} if whenever $I$ is a non-empty initial segment of $X$ whose corresponding final segment $J = X \setminus I$ is also non-empty, then either $I$ has a right endpoint or $J$ has a left endpoint. 

A subset $U \subseteq X$ is \textit{unbounded to the right} in $X$ (or \textit{cofinal}) if for every $x \in X$ there is $u \in U$ with $u \geq x$. It is \textit{unbounded to the left} (or \textit{coinitial}) if for every $x \in X$ there is $u \in U$ with $u \leq x$. 

Given two linear orders $X$ and $Y$, an \emph{embedding} from $X$ to $Y$ is a map $f: X \rightarrow Y$ such that $x < y$ implies $f(x) < f(y)$ for every $x, y \in X$. Embeddings are automatically injective. An embedding is an \emph{isomorphism} if it is surjective. We write $X \cong Y$ if there is an isomorphism from $X$ to $Y$, and in this case we say that $X$ and $Y$ are \emph{order-isomorphic}, or simply \emph{isomorphic}. If there are embeddings $f: X \rightarrow Y$ and $g: Y \rightarrow X$, we say that $X$ and $Y$ are \emph{equimorphic} or \emph{bi-embeddable}. Isomorphic orders are necessarily equimorphic, but equimorphic orders need not be isomorphic. 

A \emph{linear order type} $\varphi$ is an isomorphism class of linear orders. We write $\textrm{otp}\langle X \rangle$ for the order type of a linear order $X$. Two orders $X$ and $Y$ have the same order type if and only if $X$ is isomorphic to $Y$. If $\textrm{otp}\langle X \rangle = \varphi$ we say that $X$ is a \emph{representative} of $\varphi$. We will sometimes conflate an order type $\varphi$ with an order that represents it and treat $\varphi$ as a concrete linear order itself. For example, we may refer to a point $x$ in a type $\varphi$ with an expression of the form $x \in \varphi$, when really we should first fix an order $X$ of type $\varphi$ and then consider some $x \in X$.

If $\varphi$ and $\psi$ are linear order types, we write $\varphi \leqslant \psi$ if for some (equivalently, every) pair of orders $X$ and $Y$ of types $\varphi$ and $\psi$ respectively, there is an embedding $f: X \rightarrow Y$. We write $\varphi \equim \psi$ if $\varphi \leqslant \psi$ and $\psi \leqslant \varphi$, or equivalently, if every order $X$ of type $\varphi$ is equimorphic with every order $Y$ of type $\psi$. The equimorphism relation $\equim$ is an equivalence relation on the class of order types. The embeddability relation $\leqslant$ is a quasi-order on the class of order types, and a partial order on the class of equimorphism types. We write $\varphi < \psi$ to mean $\varphi \leqslant \psi$ and $\varphi \not \equim \psi$.

Given a linear order $X$, we write $X^*$ for the reverse order. The orders $X$ and $X^*$ share the same underlying set of points, but we have $x < y$ in $X^*$ if and only if $y < x$ in $X$. If $\varphi$ is the order type of $X$, we write $\varphi^*$ for the type of $X^*$. 

$\mathbb{N}$ denotes the set of natural numbers, including $0$. We write $\mathbb{Z}$, $\mathbb{Q}$, and $\mathbb{R}$ for the sets of integers, rational numbers, and real numbers respectively. We view each of these sets as equipped with their usual orders. For every $n \in \mathbb{N}$, we identify $n$ with the set $\{0, 1, \ldots, n-1\}$. Viewing $n$ as a suborder of $\mathbb{N}$, we also write $n$ for the order type of this set. We write $\omega$ for the order type of $\mathbb{N}$, and $\zeta$, $\eta$, and $\lambda$ for the order types of $\mathbb{Z}$, $\mathbb{Q}$, and $\mathbb{R}$ respectively. 

A linear order $X$ is \emph{dense} (or, for emphasis, \emph{dense as a linear order}) if it has at least two points and whenever $x < y$ are points in $X$, there is a point $z \in X$ such that $x < z < y$. 

A suborder $Y \subseteq X$ is \emph{dense in X} (or, \textit{a dense suborder of} $X$) if whenever $x < y$ are points in $X$, then there is $z \in Y$ such that $x \leq z \leq y$.

Cantor proved that every countable dense linear order without endpoints is isomorphic to $\mathbb{Q}$, cf.\ \cite[§9]{895C0}. It follows from Cantor's theorem that, up to isomorphism, $\mathbb{R}$ is the unique dense and complete linear order without endpoints that has a countable dense suborder. From these facts it follows that every open interval $I \subseteq \mathbb{Q}$ is order-isomorphic to $\mathbb{Q}$, and likewise every open interval of $\mathbb{R}$ is isomorphic to $\mathbb{R}$. 

A linear order $X$ is \emph{well-ordered} if $X$ does not embed $\omega^*$, or equivalently if every non-empty suborder of $X$ has a left endpoint. An \emph{ordinal} is a transitive set that is well-ordered by the set-membership relation $\in$. We assume familiarity with some of the basic facts and terminology concerning ordinals, including that every well-ordered set $X$ is isomorphic to a unique ordinal $\alpha$, that for ordinals $\alpha, \beta$ we have $\alpha < \beta$ if and only if $\alpha \in \beta$, and that the class of ordinals is itself well-ordered by $\in$ (and hence also by $<$). We identify each ordinal with its order type. The empty set $\emptyset = 0$ is the least ordinal. The finite ordinals are precisely the natural numbers $n$. The least infinite ordinal is $\omega$. The least uncountable ordinal is $\omega_1$. 

A \emph{cardinal} is an ordinal that is not in bijection with any smaller ordinal. A cardinal $\kappa$ is \emph{regular} if any suborder $X \subseteq \kappa$ that is unbounded to the right in $\kappa$ is of order type $\kappa$. 

A \emph{reverse ordinal} is an order of the form $\alpha^*$, for $\alpha$ an ordinal. Reverse ordinals are precisely the order types of orders that do not embed $\omega$. 

A linear order $X$ is \emph{scattered} if $X$ does not embed $\eta$. In particular, all ordinals and reverse ordinals are scattered. An order $X$ is \emph{$\sigma$-scattered} if it can be written as a countable union $X = \bigcup_{n \in \NN} X_n$ such that each of the suborders $X_n$ is scattered. In particular, all scattered orders are $\sigma$-scattered. Since singletons are scattered, all countable orders are $\sigma$-scattered. 

\subsection{Sums and products}

Suppose that $X$ is a linear order of order type $\varphi$, and for every $x \in X$ we have an order $I_x$ of type $\psi_x$. The \emph{ordered sum} over $X$ of the orders $I_x$ is the order obtained by replacing each point $x \in X$ with the corresponding order $I_x$. We denote the sum by $\sum_{x \in X} I_x$ and denote its order type by $\sum_{x \in \varphi} \psi_x$. Formally, we take $\sum_{x \in X} I_x$ to be the set of ordered pairs $\{\opair{i}{x}: x \in X, i \in I_x\}$ ordered anti-lexicographically by the rule $\opair{i}{x} < \opair{i'}{x'}$ if $x < x'$ in $X$, or $x = x'$ and $i < i'$ in $I_x$.

If there is an order $Y$ such that $I_x = Y$ for every $x \in X$, we will write $YX$ instead of $\sum_{x \in X} Y$ and call this sum the \emph{product} of $X$ and $Y$. Visually, $YX$ is the order obtained by replacing every point in $X$ with a copy of $Y$; formally, it is the anti-lexicographically ordered cartesian product $Y \times X$. If $\varphi$ and $\psi$ are the order types of $X$ and $Y$, we write $\psi \varphi$ for the order type of $YX$ and call this the product of the types $\psi$ and $\varphi$. 

If $X$ has exactly two points, so that $\varphi = 2 = \{0, 1\}$, we will write $I_0 + I_1$ for $\sum_{x \in X} I_x$ and $\psi_0 + \psi_1$ for $\sum_{x \in \varphi} \psi_x$. The order $I_0 + I_1$ is the \emph{sum} of the orders $I_0$ and $I_1$ and $\psi_0 + \psi_1$ is the sum of their types. An order $Y$ has type $\psi_0 + \psi_1$ precisely when $Y$ has an initial segment isomorphic to $I_0$ whose corresponding final segment $Y \setminus I_0$ is isomorphic to $I_1$. 

The sum and product are associative, in the sense that $(XY)Z \cong X(YZ)$ and $(X + Y) + Z \cong X + (Y + Z)$ for all orders $X, Y, Z$. We will freely drop parentheses in such expressions. Moreover observe that for an order $Y$, the product $Yn$ is isomorphic to the $n$-fold sum $Y + Y + \cdots + Y$. Likewise, for a type $\psi$, $\psi n$ and $\psi + \psi + \cdots + \psi$ coincide. We will write $Y^n$ and $\psi^n$ for the $n$-fold products $YY \cdots Y$ and $\psi\psi \cdots \psi$.

For an ordered sum $\sum_{x \in \varphi} \psi_x$, we have the identity $(\sum_{x \in \varphi} \psi_x)^* = \sum_{x \in \varphi^*} \psi_x^*$. From this we get the identities $(\varphi + \psi)^* = \psi^* + \varphi^*$ and $(\varphi\psi)^* = \varphi^*\psi^*$.

It follows from their definitions that the class of ordinals, the class of scattered orders, and the class of $\sigma$-scattered orders are each closed under taking ordered sums over members of their class. In particular, these classes are closed under products and sums. 

\subsection{Ordinal exponentiation and the Cantor normal form} \label{subsec:Cantornormalform}

We recall the definition of ordinal exponentiation and state Cantor's normal form theorem for ordinals. Later, we will use exponentiated ordinals to construct examples of untranscendable types. 

For a fixed ordinal $\alpha$, the ordinal $\alpha^{\beta}$ is defined recursively on the exponent $\beta$ ($\beta$ an ordinal):

\begin{itemize}
    \item[] $\alpha^0 = 1$,
    \item[] $\alpha^{\beta + 1} = \alpha^{\beta}\alpha$,
    \item[] $\alpha^{\beta} = \sup\{\alpha^{\gamma}: \gamma < \beta\}$, for $\beta$ a limit, 
\end{itemize}
where $\sup\{\alpha^{\gamma}: \gamma < \beta\} = \bigcup_{\gamma < \beta} \alpha^{\gamma}$.

Ordinals of the form $\omega^{\alpha}$ play a special role in the structure theory of ordinals. A nonzero ordinal is indecomposable (cf.\ \Cref{indecdefn} below) if and only if it is of the form $\omega^{\alpha}$ for some ordinal $\alpha$. 

Cantor's \emph{normal form theorem} for ordinals, cf.\ \cite[§19]{897C0}, states that every ordinal can be written as a finite sum of indecomposable ordinals. More specifically, for every nonzero ordinal $\alpha$, there is a unique finite decreasing sequence of ordinals $\alpha_n > \alpha_{n-1} > \ldots > \alpha_1 > \alpha_0$ and a corresponding sequence of nonzero natural numbers $k_n, k_{n-1}, \ldots, k_1, k_0$ such that 
\[
\alpha = \omega^{\alpha_n} k_n + \omega^{\alpha_{n-1}} k_{n-1} + \cdots + \omega^{\alpha_1} k_1 + \omega^{\alpha_0} k_0. 
\]

\subsection{Condensations}\label{HausdorffCondSection}
Given a linear order $X$, a \emph{convex equivalence relation} or \emph{condensation} of $X$ is an equivalence relation $E$ on $X$ all of whose equivalence classes are intervals. Given a condensation $E$ on $X$ and a point $x \in X$, we write $[x]_E$ (or simply $[x]$ when $E$ is understood) for the equivalence class of $x$, and $X / E$ for the set of equivalence classes. We will also call $X / E$ the condensation of $X$ by $E$. The linear order on $X$ naturally determines a linear order on $X / E$, namely the one defined by the rule $[x] < [y]$ if $[x] \neq [y]$ and $x < y$ in $X$. We call this order the \emph{induced order} on $X / E$. Intuitively, $X/E$ is the order obtained by condensing each of the intervals $[x] \subseteq X$ to a point. 

Condensations are one-to-one with ordered sums in the following sense. If $X / E$ is a condensation of a linear order $X$, then $X \cong \sum_{[x] \in X / E} \, [x]$, where in the subscript we view $[x]$ as denoting a point in $X / E$ and as a summand we view $[x]$ as denoting a linear order (specifically, an interval in $X$). In the other direction, if $X = \sum_{y \in Y} I_y$ is an ordered sum over a linear order $Y$, we may define a condensation $E$ on $X$ by the rule $x E x'$ if $x, x'$ belong to the same summand $I_y$. Then we have $X / E \cong Y$. 

A specific condensation will play an important role in what follows. Given a linear order $X$, define a binary relation $F$ on $X$ by the rule $x F x'$ if and only if $[\{x, x'\}]$ is finite. It is not hard to check that $F$ is a condensation of $X$. We call $F$ the \emph{finite condensation} of $X$.

For a given $x \in X$, we refer to the condensation class $[x]_F$ as the \emph{$F$-class} of $x$. It is also not hard to see that the order type of $[x]_F$ must either be $n$ for some natural number $n$, in which case we say that the $F$-class of $x$ is finite, or one of $\omega$, $\omega^*$, or $\zeta$. In \Cref{Untr+Indec} we will be interested in orders that contain only finitely many finite $F$-classes. 

The finite condensation is really a condensation scheme, and every fixed order $X$ carries its own finite condensation. However, we will denote this condensation by $F$, and use the terminology $F$-class and the notation $[x]_F$ for a given $F$-class, regardless of the underlying order $X$. 

\section{Indecomposable types} \label{IndecTypes}

In this section, we review some basic facts about indecomposable types, give some examples of such types, and state the Hagendorf-Jullien theorem (\Cref{Hagendorf's}). These facts will serve to motivate our results about untranscendable types in the next sections. More on indecomposable types can be found in Fra\"iss\'e's book \cite[Section 6.3]{000F0}.

\subsection{Basic facts} \label{sec:indec_basic}
We begin by recalling the definition of an indecomposable type. 

\begin{defi} \label{indecdefn}
\phantom{}
\begin{enumerate}
    \item A linear order type $\varphi$ is \emph{indecomposable} if whenever $\psi$ and $\tau$ are order types such that $\varphi = \psi + \tau$, then either $\varphi \leqslant \psi$ or $\varphi \leqslant \tau$. 
    \item A linear order $X$ is \emph{indecomposable} if its order type is indecomposable, or equivalently, if whenever $I$ is an initial segment of $X$ and $J = X \setminus I$ is the corresponding final segment, then either $X$ embeds in $I$ or $X$ embeds in $J$. 
\end{enumerate}
\end{defi}

An order type $\varphi$ is \emph{decomposable} if it is not indecomposable, that is, if there are types $\psi, \tau$ such that $\varphi = \psi + \tau$ but $\varphi \isntcontainedin \psi$ and $\varphi \isntcontainedin \tau$. 

The empty type $0$ is indecomposable, but unless otherwise noted we will usually assume when referring to an indecomposable type that it is non-empty. The singleton type $1$ is also indecomposable. The two-point type $2$ is not indecomposable, as witnessed by the decomposition $2 = 1+1$. More generally, any finite type $n>1$ is decomposable. We will give more examples of indecomposable types below. 

The following proposition says that indecomposability is an equimorphism invariant. The proof is straightforward and left to the reader. 

\begin{prop}
\label{equimorphism invariant}
Suppose that $\varphi$ and $\varphi'$ are equimorphic order types. Then $\varphi$ is indecomposable if and only if $\varphi'$ is indecomposable. 
\end{prop}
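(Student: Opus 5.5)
By symmetry it suffices to show that if $\varphi$ is indecomposable and $\varphi' \equim \varphi$, then $\varphi'$ is indecomposable. Fix representatives $X$ of $\varphi$ and $X'$ of $\varphi'$, together with embeddings $f: X \rightarrow X'$ and $g: X' \rightarrow X$. I will use the second formulation in \Cref{indecdefn}, which is phrased in terms of cuts. So let $X' = I' + J'$, where $I'$ is an initial segment of $X'$ and $J' = X' \setminus I'$. The goal is to show that $X'$ embeds in $I'$ or in $J'$.

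The key step is to pull this cut back along $f$. Set $I = f^{-1}[I']$ and $J = f^{-1}[J']$. Because $f$ is order-preserving and $I'$ is an initial segment, $I$ is an initial segment of $X$, and $J = X \setminus I$ is the complementary final segment. Either piece is allowed to be empty; the definition tolerates this, since the empty initial segment gives the trivial decomposition. Now apply indecomposability of $X$ to the decomposition $X = I + J$: there is an embedding $h$ of $X$ into $I$ or into $J$.

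Suppose $h$ maps $X$ into $I$. Then the composite $f \circ h \circ g$ sends $X'$ into $X$, then into $I$, then into $f[I] \subseteq I'$. This composite is an embedding $X' \rightarrow I'$. The case $h: X \rightarrow J$ is symmetric and yields an embedding $X' \rightarrow J'$. In either case $X'$ embeds into one side of the cut, so $\varphi'$ is indecomposable.

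There is no real obstacle here. The only point needing care is checking that the preimage of an initial segment under an embedding is again an initial segment. This holds because if $y < x$ and $f(x) \in I'$, then $f(y) < f(x)$, so $f(y) \in I'$. One should also note that the argument handles empty pieces uniformly, so no separate case analysis is needed.
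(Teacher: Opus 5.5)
Your proof is correct and is essentially the argument the paper intends; the paper leaves this proof to the reader. You pull the cut $I' + J'$ back along an embedding $f\colon X \to X'$, apply indecomposability of $X$ to $f^{-1}[I'] + f^{-1}[J']$, then precompose with $g\colon X' \to X$ and push the result forward along $f$ into $I'$ or $J'$.
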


%Suppose first that $\varphi$ is indecomposable.\todo[color=green!40]{do we need to prove this in detail?} Fix orders $X$ and $X'$ of types $\varphi$ and $\varphi'$ respectively. Suppose $I'$ is an initial segment of $X'$ and $J' = X \setminus I'$ is the corresponding final segment. To show $\varphi'$ is indecomposable it suffices to show $X'$ embeds in at least one of $I'$ and $J'$. 

%Fix an embedding $f: X \rightarrow X'$. Then $I = f^{-1}[I']$ is an initial segment of $X$ with corresponding final segment $J = f^{-1}[J']$. By indecomposability, $X$ embeds in either $I$ or $J$. Since $X'$ embeds in $X$, it follows that $X'$ embeds in either $I$ or $J$. Hence $X'$ embeds in either $f[I]$ or $f[J]$, and therefore also in either $I'$ or $J'$. Thus $X'$ and its order type $\varphi'$ are indecomposable. 

%The reverse implication is symmetric. 

It will be helpful later to have in hand a slightly reformulated definition of indecomposability. We first reformulate the definition of decomposability.

\begin{prop} \label{decompequivdef}
An order type $\varphi$ is decomposable if and only if there are types $\psi < \varphi$ and $\tau < \varphi$ such that $\varphi \leqslant \psi + \tau$.
\end{prop}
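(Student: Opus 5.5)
We prove the two directions separately. The forward direction is a direct unwinding of the definition; the reverse direction pulls a decomposition of $\psi+\tau$ back along an embedding.

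\emph{($\Rightarrow$).} Suppose $\varphi$ is decomposable, witnessed by $\varphi = \psi + \tau$ with $\varphi \isntcontainedin \psi$ and $\varphi \isntcontainedin \tau$. Both $\psi$ and $\tau$ embed in $\varphi$: $\psi$ as an initial segment and $\tau$ as a final segment. Since $\varphi$ does not embed in either of them, neither is equimorphic to $\varphi$, so $\psi < \varphi$ and $\tau < \varphi$. Moreover $\varphi \leqslant \psi + \tau$ trivially, via the identity.

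\emph{($\Leftarrow$).} Suppose $\psi < \varphi$, $\tau < \varphi$ and $\varphi \leqslant \psi + \tau$. Fix an order $X$ of type $\varphi$, orders $A, B$ of types $\psi, \tau$, and an embedding $f: X \to A + B$. Set $I = f^{-1}[A]$ and $J = f^{-1}[B]$. Because $A$ is an initial segment of $A + B$ and $f$ is order-preserving, $I$ is an initial segment of $X$ and $J = X \setminus I$ is the complementary final segment. Hence $\varphi = \otp\langle I\rangle + \otp\langle J\rangle$.

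The only point needing care is to show that this decomposition witnesses decomposability. Suppose $X$ embedded in $I$. Composing with $f \restriction I$, which maps into $A$, gives $\varphi \leqslant \psi$. Combined with $\psi < \varphi$, this forces $\psi \equim \varphi$, a contradiction. The same argument rules out $X$ embedding in $J$, using $\tau < \varphi$. So $\varphi$ is decomposable by \Cref{indecdefn}.

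There is no real obstacle here. One should only check that the preimage of an initial segment under an embedding is an initial segment, and note that the empty cases cause no trouble: if $I = \emptyset$, then $X$ cannot embed in $I$ unless $\varphi = 0$, which is excluded since $0 \leqslant \psi$ would contradict $\psi < \varphi$.
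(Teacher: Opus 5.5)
Your proof is correct and follows essentially the same route as the paper: the forward direction unwinds the definition, and the backward direction pulls the split $A+B$ back along the embedding to obtain an initial/final segment decomposition of $X$ whose pieces embed in $\psi$ and $\tau$ respectively. Your closing remark about the empty case is harmless but unnecessary, since the composition argument already covers $I=\emptyset$ uniformly.
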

\begin{proof}
If $\varphi = \psi + \tau$ is a decomposition witnessing the decomposability of $\varphi$, then $\varphi \leqslant \psi + \tau$ and $\psi, \tau < \varphi$. For the backward direction, fix orders $X, Y, Z$ of types $\varphi, \psi, \tau$ respectively, so that $X \leqslant Y + Z$ and $Y, Z < X$. Fix an embedding $f: X \rightarrow Y + Z$. Then $f^{-1}[Y]$ is an initial segment of $X$ with corresponding final segment $f^{-1}[Z]$, so that $X \cong f^{-1}[Y] + f^{-1}[Z]$. Thus if $\psi'$ and $\tau'$ are the types $f^{-1}[Y]$ and $f^{-1}[Z]$ respectively, then $\varphi = \psi' + \tau'$. And clearly $\psi' \leqslant \psi$ and $\tau' \leqslant \tau$, so that $\psi' < \varphi$ and $\tau' < \varphi$ as well.
\end{proof}

\Cref{decompequivdef} immediately yields the following. 

\begin{prop} \label{indecequivdef}
An order type $\varphi$ is indecomposable if and only if whenever $\psi$ and $\tau$ are types such that $\psi \leqslant \varphi$, $\tau \leqslant \varphi$, and $\varphi \leqslant \psi + \tau$, then either $\varphi \leqslant \psi$ or $\varphi \leqslant \tau$. \qed 
\end{prop}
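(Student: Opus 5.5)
This is the contrapositive of \Cref{decompequivdef}, restricted to types embeddable in $\varphi$. We prove both directions directly, using only the definition of indecomposability and \Cref{decompequivdef}.

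For the forward direction, suppose $\varphi$ is indecomposable, and let $\psi, \tau \leqslant \varphi$ with $\varphi \leqslant \psi + \tau$. Assume toward a contradiction that $\varphi \not\leqslant \psi$ and $\varphi \not\leqslant \tau$. Together with $\psi \leqslant \varphi$ and $\tau \leqslant \varphi$, this gives $\psi < \varphi$ and $\tau < \varphi$. By \Cref{decompequivdef}, $\varphi$ is then decomposable, which is a contradiction.

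For the backward direction, assume the stated condition holds, and suppose $\varphi = \psi + \tau$. Then $\psi$ and $\tau$ embed in $\varphi$ as an initial and a final segment respectively, so $\psi, \tau \leqslant \varphi$. Trivially $\varphi \leqslant \psi + \tau$. The condition therefore yields $\varphi \leqslant \psi$ or $\varphi \leqslant \tau$. This is exactly the definition of indecomposability in \Cref{indecdefn}.

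There is no real obstacle here. The only point needing care is the translation between ``$\psi \leqslant \varphi$ and $\varphi \not\leqslant \psi$'' and ``$\psi < \varphi$'', which holds by the definition of $<$ as embeddability without equimorphism.
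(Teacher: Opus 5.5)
Your proof is correct and follows the paper's approach: the paper records this proposition as an immediate consequence of \Cref{decompequivdef}, and your forward direction is exactly that contrapositive. Your backward direction checks the definition in \Cref{indecdefn} directly rather than invoking the easy half of \Cref{decompequivdef}, which is equally valid.
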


We record this equivalent definition of indecomposable type since it will be directly reflected by our definition of untranscendable type, but note that there are two differences between this version of the definition and the one given in \Cref{indecdefn} above. The first is that here we are explicitly supposing that the types $\psi, \tau$ appearing in the sum are embeddable in $\varphi$. The second is that we are now considering sums $\psi + \tau$ such that $\varphi \leqslant \psi + \tau$ instead of $\varphi = \psi + \tau$. These differences do not actually change the definition, since if $\varphi \leqslant \psi + \tau$ then we can find $\psi' \leqslant \psi$ and $\tau' \leqslant \tau$ such that $\varphi = \psi' + \tau'$. 

It is easy to see that indecomposability is preserved under reversal. 

\begin{prop}
%\label{sum-lemma}
An order type $\varphi$ is indecomposable if and only if $\varphi^*$ is indecomposable. \qed
\end{prop}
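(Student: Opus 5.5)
It suffices to prove one direction, that if $\varphi$ is indecomposable then so is $\varphi^*$. The converse then follows by applying this direction to $\varphi^*$, since $(\varphi^*)^* = \varphi$. We will work directly with \Cref{indecdefn}, using the identity $(\psi + \tau)^* = \tau^* + \psi^*$ from the preliminaries on sums. We also use the elementary observation that reversal preserves embeddability: $\sigma \leqslant \rho$ if and only if $\sigma^* \leqslant \rho^*$. This holds because a map $f\colon X \to Y$ is order preserving from $X$ to $Y$ exactly when it is order preserving from $X^*$ to $Y^*$, as the same underlying function.

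So assume $\varphi$ is indecomposable, and suppose $\varphi^* = \psi + \tau$. Reversing both sides gives $\varphi = (\psi + \tau)^* = \tau^* + \psi^*$. By indecomposability of $\varphi$, either $\varphi \leqslant \tau^*$ or $\varphi \leqslant \psi^*$. Applying the observation above, either $\varphi^* \leqslant \tau$ or $\varphi^* \leqslant \psi$. This is exactly the condition in \Cref{indecdefn} for $\varphi^*$, so $\varphi^*$ is indecomposable.

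The argument is routine and has no real obstacle. The only care needed is in the concrete version of $(\psi+\tau)^* = \tau^* + \psi^*$: if $X$ has initial segment $I$ and corresponding final segment $J$, then in $X^*$ the set $J$ is an initial segment with corresponding final segment $I$. Equivalently, one can argue with orders via part (2) of \Cref{indecdefn}. Given an initial segment $I$ of $X^*$, the corresponding final segment $J$ is an initial segment of $X$. Indecomposability of $X$ embeds $X$ into $J$ or into $X\setminus J = I$, and the same maps embed $X^*$ into $J^*$ or $I^*$, which are the relevant segments of $X^*$.
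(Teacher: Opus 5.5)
Your proof is correct and is the routine argument the paper has in mind. The paper states this proposition with \qed and no written proof, treating it as immediate from $(\psi+\tau)^* = \tau^* + \psi^*$ and the fact that reversal preserves embeddability, which are exactly the two ingredients you use.
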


\subsection{Strict and non-strict indecomposability}

It turns out that there are two essentially different kinds of indecomposable order types: types that are strictly indecomposable to one side, and non-strictly indecomposable types. Hagendorf, generali\z ing a theorem of Jullien, proved a theorem characterizing this distinction. We present Hagendorf's theorem below (\Cref{Hagendorf's}), and along the way give some examples of each kind of indecomposable type. 

\begin{prop}
\label{strong-sum-inequality-separation}
Suppose $\varphi, \psi, \tau$, and $\rho$ are order types and $\varphi + \psi \leqslant \tau + \rho$. Then either $\varphi \leqslant \tau$ or $1 + \psi \leqslant \rho$. Symmetrically, either $\varphi + 1 \leqslant \tau$ or $\psi \leqslant \rho$.
\end{prop}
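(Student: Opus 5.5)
Fix orders $A, B, C, D$ of types $\varphi, \psi, \tau, \rho$, and an embedding $f\colon A + B \rightarrow C + D$. As in the proof of \Cref{decompequivdef}, the plan is to pull back the cut between $C$ and $D$ along $f$. Since $C$ is an initial segment of $C + D$, the set $f^{-1}[C]$ is an initial segment of $A + B$, and $f^{-1}[D]$ is the corresponding final segment.

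Consider where this cut lies relative to the cut between $A$ and $B$. There are two cases.

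\emph{Case 1: $A \subseteq f^{-1}[C]$.} Then $f$ restricted to $A$ maps into $C$, so $\varphi \leqslant \tau$.

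\emph{Case 2: some $a \in A$ has $f(a) \in D$.} Since $f^{-1}[D]$ is a final segment of $A+B$, it contains every point above $a$, in particular all of $B$. Then $f$ restricted to $\{a\} \cup B$ is an embedding into $D$, and $\{a\} \cup B$ has type $1 + \psi$ because $a$ lies below every point of $B$. Hence $1 + \psi \leqslant \rho$.

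One could worry that $\{a\}$ might be mapped into $C$ while $B$ lies in $D$, but this is exactly Case 1 for that particular point, so no case is left uncovered.

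For the symmetric statement, I would argue the same way with the roles of $B$ and $C$ swapped. Either $B \subseteq f^{-1}[D]$, which gives $\psi \leqslant \rho$. Or some $b \in B$ has $f(b) \in C$; then all of $A$ lies in the initial segment $f^{-1}[C]$, and $A \cup \{b\}$, of type $\varphi + 1$, embeds into $C$.

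Alternatively, the symmetric statement follows by applying the first statement to the reversed inequality $\psi^* + \varphi^* \leqslant \rho^* + \tau^*$ and then reversing the conclusions. No step here is a real obstacle. The only point needing care is the observation that preimages of initial segments under embeddings are initial segments, and that is immediate.
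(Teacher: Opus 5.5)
Your proposal is correct and is the same argument as the paper's: either $f$ maps all of the first summand into the first target summand, or some point lands in the second target summand and drags everything above it along. The symmetric half is handled the same way (or by reversal), as in the paper. One small note: your sentence about ``worrying that $\{a\}$ might be mapped into $C$'' is muddled and unnecessary, since your case split already covers every possibility.
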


\begin{proof}
In order to prove the first statement fix orders $X, Y, Z$, and $W$ of types  $\varphi, \psi, \tau$, and $\rho$, respectively, and an embedding $f: X + Y \rightarrow Z + W$. If $f[X] \subseteq Z$, then $\varphi \leqslant \tau$. If $f[X] \not\subseteq Z$ then there is an $x \in X$ such that $f(x) \in W$. Since $f$ is order-preserving we must have $f[\{x\} \cup Y] \subseteq W$, which gives $1 + \psi \leqslant \rho$. The second statement can be proved analogously.
\end{proof}

Often times, the following weak version of \Cref{strong-sum-inequality-separation} is all we need.

\begin{coro} \label{sum-inequality-separation}
Suppose $\varphi, \psi, \tau$, and $\rho$ are order types and $\varphi + \psi \leqslant \tau + \rho$. Then $\varphi \leqslant \tau$ or $\psi \leqslant \rho$.
\end{coro}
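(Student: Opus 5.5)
This follows directly from the first half of \Cref{strong-sum-inequality-separation}. Suppose $\varphi + \psi \leqslant \tau + \rho$. Then that proposition gives either $\varphi \leqslant \tau$ or $1 + \psi \leqslant \rho$.

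In the first case we are done. In the second case we need to drop the extra point. For any type $\psi$, the order $\psi$ is a final segment of $1 + \psi$, so $\psi \leqslant 1 + \psi$; concretely, the inclusion of $Y$ into $\{p\} + Y$ is an embedding. Since $\leqslant$ is transitive (embeddings compose), $\psi \leqslant 1 + \psi \leqslant \rho$ yields $\psi \leqslant \rho$.

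One could instead use the symmetric half of \Cref{strong-sum-inequality-separation}, discarding the extra point of $\varphi + 1$. Alternatively, one could argue directly: given an embedding $f \colon X + Y \to Z + W$, either $f[X] \subseteq Z$, or some $x \in X$ has $f(x) \in W$, and then $f[Y] \subseteq W$ by order preservation. There is no real obstacle here; the only step needing a word is the trivial embedding $\psi \leqslant 1 + \psi$.
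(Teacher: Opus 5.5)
Your proof is correct and matches the paper: the corollary is presented there as an immediate weakening of \Cref{strong-sum-inequality-separation}, and your step $\psi \leqslant 1 + \psi \leqslant \rho$ is exactly the dropping of the extra point that this requires. The direct argument you sketch at the end is simply the paper's proof of that proposition.
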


\begin{coro}
\label{containing two copies}
Suppose that $\varphi$ is a type such that $\varphi + \varphi \leqslant \varphi$. Then $\varphi$ is indecomposable. 
\end{coro}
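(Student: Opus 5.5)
We argue directly from \Cref{indecdefn}. Fix a type $\varphi$ with $\varphi + \varphi \leqslant \varphi$, and suppose $\varphi = \psi + \tau$ for some types $\psi$ and $\tau$. We must show that $\varphi \leqslant \psi$ or $\varphi \leqslant \tau$.

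Combining the hypothesis with the decomposition gives
\[
\varphi + \varphi \leqslant \varphi = \psi + \tau .
\]
Applying \Cref{sum-inequality-separation} with the left summands $\varphi, \varphi$ and the right summands $\psi, \tau$, we conclude that either $\varphi \leqslant \psi$ or $\varphi \leqslant \tau$, which is exactly indecomposability.

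No real obstacle arises. The only thing to check is that \Cref{sum-inequality-separation} is applied to an inequality $\varphi + \varphi \leqslant \psi + \tau$ rather than an equality. This is harmless, since that corollary (and \Cref{strong-sum-inequality-separation} behind it) is stated for embeddings, and the identity $\varphi = \psi + \tau$ gives in particular $\varphi \leqslant \psi + \tau$. The empty type satisfies the hypothesis trivially and is indecomposable, so it needs no separate treatment.
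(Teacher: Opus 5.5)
Your proof is correct and is essentially the paper's own argument: from $\varphi = \psi + \tau$ you get $\varphi + \varphi \leqslant \psi + \tau$, and \Cref{sum-inequality-separation} then gives $\varphi \leqslant \psi$ or $\varphi \leqslant \tau$.
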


\begin{proof}
If $\varphi = \psi + \tau$ then we have $\varphi + \varphi \leqslant \psi + \tau$ so that either $\varphi \leqslant \psi$ or $\varphi \leqslant \tau$. 
\end{proof}

From Cantor's characterizations of the order types of $\mathbb{Q}$ and $\mathbb{R}$ we get the isomorphisms $\mathbb{Q} \cong \mathbb{Q} \cap (-\infty, \sqrt{2}) \cong \mathbb{Q} \cap (\sqrt{2}, \infty)$ and $\mathbb{R} \cong \mathbb{R} \cap (-\infty, 0) \cong \mathbb{R} \cap (0, \infty)$. Hence $\eta + \eta = \eta$ and $\lambda + 1 + \lambda = \lambda$. In particular, $\eta + \eta \leqslant \eta$ and $\lambda + \lambda \leqslant \lambda$. By \Cref{containing two copies} it follows that $\eta$ and $\lambda$ are indecomposable types. 

\begin{defi}
An order type $\varphi$ is \emph{indecomposable to the right} if whenever $\varphi = \psi + \tau$ and $\tau \neq 0$ we have $\varphi \leqslant \tau$, and \emph{strictly indecomposable to the right} if in this situation we moreover have $\varphi \isntcontainedin \psi$. 

\emph{Indecomposable to the left} and \emph{strictly indecomposable to the left} are defined symmetrically. 
\end{defi}

\begin{examples} \label{indecexs} \phantom{}
\begin{enumerate}[leftmargin=*,label=(\roman*)]
    \item Since $\eta$ and $\lambda$ embed in all of their non-empty initial and final segments, these types are indecomposable to both the right and left. Neither type is strictly indecomposable to either the right or left. More generally, any type $\varphi$ embedding $\varphi + \varphi$ cannot be strictly indecomposable to one side.
    \item The type $1 + \eta$ is indecomposable since it is equimorphic to $\eta$. Since $1 + \eta$ embeds in all of its non-empty final segments, $1 + \eta$ is indecomposable to the right (but not strictly so). It is not indecomposable to the left, since its left endpoint constitutes a non-empty initial segment that does not embed $1 + \eta$.

    Since $\eta$ is indecomposable to both the left and right, this example shows that indecomposability to a given side is not invariant under equimorphism in general. However, it can be checked that strict indecomposability to one side is an equimorphism invariant.
    \item $\omega$ is strictly indecomposable to the right. More generally, it is a basic result that an ordinal is indecomposable if and only if it is of the form $\omega^{\alpha}$, and all such ordinals are strictly indecomposable to the right. 
    \item A type $\varphi$ is (strictly) indecomposable to the right if and only if $\varphi^*$ is (strictly) indecomposable to the left. For example, $\omega^*$ is strictly indecomposable to the left. 
    \item If $\varphi$ is indecomposable to the right, then $\psi \varphi$ is indecomposable to the right for any type $\psi$. More generally, if $\psi_x \leqslant \psi_y$ whenever $x < y$ in $\varphi$, it is not hard to check that the sum $\sum_{x \in \varphi} \psi_x$ is indecomposable to the right. However, this sum need not be strictly indecomposable to the right, even if $\varphi$ is strictly indecomposable to the right. 

    For example, $\omega^* \omega$ is strictly indecomposable to the right, as is the sum $\sum_{n \in \omega} (\omega^*)^n = \omega^* + (\omega^*)^2 + \cdots$. On the other hand, we get easily from Cantor's characterization of $\eta$ that $\eta \omega = \eta$. Thus $\eta \omega$ is indecomposable to the right but not strictly so, even though $\omega$ is strictly indecomposable to the right. 
    \item Symmetric comments apply on the left. For example, $\omega \omega^*$ and $\sum_{n \in \omega^*} \omega^n = \cdots + \omega^2 + \omega$ are indecomposable to the left (and strictly so). 
\end{enumerate}
\end{examples}

\Cref{Hagendorf's} below, due to Hagendorf, shows that an indecomposable type $\varphi$ is strictly indecomposable to one side precisely when $\varphi + \varphi \isntcontainedin \varphi$. Hagendorf's theorem is a generali\z ation of a theorem of Jullien \cite{968J0}, who showed that an indecomposable scattered type is always strictly indecomposable to one side.

\begin{theo}[Hagendorf, \cite{977H0}]
\label{Hagendorf's}
(The Hagendorf-Jullien theorem). If $\varphi$ is an indecomposable order type, then either $\varphi + \varphi \equiv \varphi$, or $\varphi$ is strictly indecomposable to the right, or $\varphi$ is strictly indecomposable to the left, and these three possibilities are mutually exclusive with the exception of $1$.
\end{theo}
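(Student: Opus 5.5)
The plan is to fix an order $X$ of type $\varphi$ and analyse the \emph{cuts} of $X$, that is, the decompositions $X = I + J$ into an initial segment and the complementary final segment. Mutual exclusivity is the easy half and I would do it first. If $\varphi+\varphi\equiv\varphi$, then $\varphi$ is not strictly indecomposable to either side by \Cref{indecexs}(i). Strict indecomposability to the right fails at any cut with $I \neq \emptyset$ when $\varphi = 1$, and for $\varphi \neq 1$ such a cut lets us compare the two strict notions and rule out having both. For the main implication, assume $\varphi$ is indecomposable and $\varphi+\varphi\not\leqslant\varphi$.

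Step 1 (each proper cut has exactly one type). For a cut $X=I+J$ with $I,J\neq\emptyset$, indecomposability gives $X\leqslant I$ or $X\leqslant J$. Both cannot hold, since then $X+X\leqslant I+J=X$. Call the cut \emph{left-heavy} if $X\leqslant I$ and \emph{right-heavy} if $X\leqslant J$.

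Step 2 (monotonicity). If $I\subseteq I'$, then $X\leqslant I$ implies $X\leqslant I'$, and $X\leqslant J'$ implies $X\leqslant J$. Hence the left-heavy cuts form a final segment of the set of proper cuts, ordered by inclusion of the initial part. The conclusion of the theorem is exactly that this final segment is either empty, giving strict indecomposability to the right, or everything, giving strict indecomposability to the left. Cuts with an empty part are handled trivially.

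Step 3 (no mixing), which I expect to be the main obstacle. Suppose $(I_1,J_1)$ is right-heavy and $(I_2,J_2)$ is left-heavy. By Step 2, $I_1\subsetneq I_2$, so $X=I_1+M+J_2$ with $M=I_2\cap J_1\neq\emptyset$.

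First I show $X\leqslant M$. Fix $f:X\to I_2=I_1+M$. Pulling back gives a cut $X=A+B$ with $f[A]\subseteq I_1$ and $f[B]\subseteq M$. Indecomposability gives $X\leqslant A$ or $X\leqslant B$. The former would give $X\leqslant I_1$, contradicting right-heaviness of $(I_1,J_1)$ via Step 1. Hence $X\leqslant B\leqslant M$.

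It remains to derive $X+X\leqslant X$. I would fix $h:X\to M$ and look at the cuts of $X$ that $h$ induces inside $M$: the image $h[X]=h[I_1]+h[M]+h[J_2]$ reproduces the mixed configuration inside $M$, with $h[J_1]$ carrying a copy of $X$ and $h[I_2]$ carrying a copy of $X$.

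The first thing I would try is to choose the cut $C$ of $X$ at the right end of the convex closure of $h[I_1]$. Its initial part contains $I_1\cup h[I_1]$, and its final part contains $h[M]\supseteq$ a copy of $X$, so $C$ is right-heavy. I would then use Step 1 on $C$ together with the analogous cut at the left of $h[J_2]$, and if needed iterate $h$ to push into $h^n[M]$. The aim is to produce two disjoint intervals of $X$, one to the left of the other, each embedding $X$. Concretely, I expect to locate one copy of $X$ inside $I_2$ lying left of $h[J_2]$ and a second one inside $J_1$ to its right; this would contradict $\varphi+\varphi\not\leqslant\varphi$.

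If this direct bookkeeping becomes unwieldy, the fallback is to pass to a threshold cut between the right-heavy and left-heavy regions. Applying the $A+B$ pull-back argument from both sides of that threshold should force both adjacent parts to embed $X$, which again yields $X+X\leqslant X$.
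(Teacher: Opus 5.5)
Your Steps 1 and 2 are correct, as are the reduction to ruling out a right-heavy and a left-heavy proper cut existing together, and the pull-back argument giving $X\leqslant M$. One slip in the exclusivity part: nothing ``fails'' for $\varphi=1$. It has no proper cuts, so it is vacuously strictly indecomposable to \emph{both} sides, which is exactly why it is the exception.

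The real problem is Step 3, which is the whole content of the theorem and which you do not prove. Iterating $h\colon X\to M$ only reproduces the configuration $I_1+M+J_2$ at ever smaller scales (giving, e.g., $I_1\omega$ and $J_2\omega^*$ inside $X$). Nothing in it produces a point with a copy of $X$ on each side. Your concrete target is circular. Any cut whose initial part contains a copy of $X$, such as the one just after $h[I_2]$, is left-heavy, so by Step 1 its final part embeds no copy of $X$. Finding ``a second copy to its right'' is therefore exactly the contradiction you want, and you give no mechanism for reaching it. The fallback also fails as described. If $I^*$ is the threshold and $f\colon X\to I^*$, pulling back right-heavy initial segments along $f$ only shows that $X$ embeds in arbitrarily small final pieces of $I^*$. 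It never yields $X\leqslant X\setminus I^*$, so the two parts adjacent to the threshold are not both shown to embed $X$.

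The missing idea is to classify points rather than cuts, and to push a point across the threshold with an embedding. Write $X_{\le x}=\{y: y\le x\}$ and $X_{>x}=\{y : y>x\}$. Let $R=\{x: X\leqslant X_{>x}\}$ and $L=\{x : X\leqslant X_{\le x}\}$. Under $X+X\not\leqslant X$, Step 1 and indecomposability give $X=R+L$ with $R$ an initial segment.

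Now apply indecomposability to this particular cut. Suppose $X\leqslant R$ via $f$ and some $x\in L$ exists. Then $X\leqslant X_{\le x}$ gives $X\leqslant X_{\le f(x)}$ by composing with $f$. Also $f(x)\in R$ gives $X\leqslant X_{>f(x)}$. Hence $X+X\leqslant X$, a contradiction, so $L=\emptyset$. Then every proper cut $(I,J)$ is right-heavy, since $X\leqslant I$ would put every $x\in J$ into $L$. Symmetrically, $X\leqslant L$ forces $R=\emptyset$ and strict indecomposability to the left.

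In your threshold language, the contradiction is reached not at the threshold itself but at the image $f(x)$ of a point $x$ on the light side of it.
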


For a proof, see \cite[6.3.4]{000F0}. The following is a reformulation of Hagendorf's theorem that we will revisit later.

\begin{coro}
\label{1+phi<=phi + phi+1<=phi  ==>  phi+phi<=phi}
If $\varphi$ is an indecomposable order type, then $\varphi \equim \varphi + \varphi$ if and only if $\varphi \equim 1 + \varphi$ and $\varphi \equim \varphi + 1$.
\end{coro}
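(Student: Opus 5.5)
The plan is to prove the two directions separately and to reduce the non-trivial one to \Cref{Hagendorf's} by ruling out strict indecomposability to either side.

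For the forward direction, suppose $\varphi \equim \varphi + \varphi$. If $\varphi = 0$ the claim is trivial, so take $\varphi \neq 0$. Then $1 \leqslant \varphi$, which gives $1 + \varphi \leqslant \varphi + \varphi \leqslant \varphi$. Conversely $\varphi \leqslant 1 + \varphi$ holds trivially, since $\varphi$ is a final segment of $1+\varphi$. Hence $\varphi \equim 1 + \varphi$. The argument for $\varphi \equim \varphi + 1$ is symmetric. This direction does not even use indecomposability.

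For the backward direction, assume $\varphi$ is indecomposable, $\varphi \equim 1+\varphi$ and $\varphi \equim \varphi+1$. By \Cref{Hagendorf's} it suffices to show that $\varphi$ is strictly indecomposable neither to the right nor to the left, unless $\varphi \le 1$, in which case we check directly. The case $\varphi = 0$ is trivial. The case $\varphi = 1$ cannot occur, because $1 + 1 = 2 \not\leqslant 1$ contradicts $1+\varphi \leqslant \varphi$.

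Now suppose $\varphi$ is strictly indecomposable to the right. Fix an order $X$ of type $\varphi$ and an embedding $f: X+1 \to X$, and let $p = f(\ast)$ be the image of the added top point. Split $X$ as $I + J$, where $I = \{x \in X : x \le p\}$ and $J$ is the rest. Then $f[X] \subseteq \{x : x < p\} \subseteq I$, so $\varphi$ embeds in the initial segment $I$. If $J \neq 0$, strict right indecomposability forbids $\varphi \leqslant \operatorname{otp}(I)$, which is a contradiction. If $J = 0$, then $p$ is the right endpoint of $X$, and we need a different decomposition. I would take $I' = X \setminus \{p\}$ and $J' = \{p\}$. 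The final segment $J' = \{p\}$ is nonempty, so right indecomposability gives $\varphi \leqslant 1$, which contradicts $\varphi \ge 2$. (Alternatively, $f$ maps $X$ into $I'$, again contradicting strictness.)

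The left side is symmetric, using $1+\varphi \leqslant \varphi$. Hence \Cref{Hagendorf's} forces $\varphi + \varphi \equiv \varphi$.

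The main obstacle is only the endpoint bookkeeping in the backward direction, namely handling the case where the image of the extra point is an endpoint. Choosing the decomposition with the singleton $\{p\}$ as the final segment disposes of it cleanly.
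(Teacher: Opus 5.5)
Your proof is correct and follows the paper's route: the paper notes that $\varphi \equim 1+\varphi \equim \varphi+1$ rules out strict indecomposability on either side and then invokes \Cref{Hagendorf's}, and you supply the details of that step and of the forward direction. One slip: the case $\varphi=0$ is not trivial in the forward direction but is a counterexample ($0 \equim 0+0$ while $0 \not\equim 1 = 1+0$), so the statement must be read under the paper's standing convention that indecomposable types are non-empty.
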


\begin{proof}
The hypothesis $\varphi \equim 1 + \varphi \equim \varphi + 1$ gives that $\varphi$ is not strictly indecomposable to either side. 
\end{proof}

\subsection{Strong forms of indecomposability}

We introduce two strengthenings of indecomposability and give some examples. These strengthenings will play a role in \Cref{UntrTypes} and \Cref{Untr+Indec}.  

\begin{defi}
An order type $\varphi$ is \emph{sum closed} if whenever $\psi$ and $\tau$ are order types such that $\psi < \varphi$ and $\tau < \varphi$, then $\psi + \tau < \varphi$.
\end{defi} 

Observe that a sum closed type is necessarily indecomposable. The following proposition, when combined with the Hagendorf-Jullien theorem, gives that if $\varphi$ is indecomposable but not strictly indecomposable to one side, then conversely $\varphi$ is sum closed. 

\begin{prop}
If $\varphi$ is an order type such that $\varphi \equim \varphi + \varphi$, then $\varphi$ is sum closed. 
\end{prop}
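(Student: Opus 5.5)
We argue by contraposition through \Cref{sum-inequality-separation}. Fix $\psi < \varphi$ and $\tau < \varphi$; we must show $\psi + \tau < \varphi$. There are two things to check: first $\psi + \tau \leqslant \varphi$, and second $\varphi \isntcontainedin \psi + \tau$.

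For the first part we only use monotonicity of the sum with respect to embeddability. If $\psi \leqslant \varphi$ and $\tau \leqslant \varphi$, we can combine embeddings $g \colon Y \to X$ and $h \colon Z \to X$ into an embedding of $Y + Z$ into $X + X$. It sends the first summand into the first copy via $g$ and the second summand into the second copy via $h$; the result is order-preserving because every point of the first copy lies below every point of the second. Hence $\psi + \tau \leqslant \varphi + \varphi \leqslant \varphi$, where the last step uses the hypothesis $\varphi \equim \varphi + \varphi$.

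For the second part, suppose toward a contradiction that $\varphi \leqslant \psi + \tau$. Then $\varphi + \varphi \leqslant \varphi \leqslant \psi + \tau$. Applying \Cref{sum-inequality-separation} with the four types $\varphi, \varphi, \psi, \tau$, we get $\varphi \leqslant \psi$ or $\varphi \leqslant \tau$. Either conclusion contradicts $\psi < \varphi$ or $\tau < \varphi$, since strict inequality means the two types are not equimorphic. So $\psi + \tau \leqslant \varphi$ and $\psi + \tau \not\equim \varphi$, which is exactly $\psi + \tau < \varphi$.

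Nothing here is a real obstacle. The only point needing care is the first part: $\leqslant$ is compatible with sums, and the bound goes through $\varphi + \varphi$, not directly through $\varphi$.
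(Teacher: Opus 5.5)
Your proof is correct and follows the paper's argument essentially verbatim: you bound $\psi + \tau$ by $\varphi + \varphi \leqslant \varphi$, then rule out $\varphi \leqslant \psi + \tau$ by applying \Cref{sum-inequality-separation} to $\varphi + \varphi \leqslant \psi + \tau$. The only quibble is wording: what you call ``contraposition'' is really an argument by contradiction, which does not affect the proof.
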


\begin{proof}
Fix types $\psi, \tau < \varphi$. Then $\psi + \tau \leqslant \varphi + \varphi$, and so by hypothesis $\psi + \tau \leqslant \varphi$. If we had $\varphi \leqslant \psi + \tau$, then we would also have $\varphi + \varphi \leqslant \psi + \tau$, giving either $\varphi \leqslant \psi$ or $\varphi \leqslant \tau$ by \Cref{sum-inequality-separation} above, a contradiction. Hence $\psi + \tau < \varphi$. 
\end{proof}

Thus for non strictly indecomposable types, sum closure is equivalent to indecomposability. 

The next proposition asserts that sum closure is an equimorphism invariant. We omit the easy proof. 

\begin{prop} \label{sumclosedequiinvar}
If $\varphi$ and $\psi$ are equimorphic types, then $\varphi$ is sum closed if and only if $\psi$ is sum closed. \qed
\end{prop}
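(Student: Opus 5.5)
By symmetry it is enough to assume $\varphi$ is sum closed and $\varphi \equim \psi$, and to show that $\psi$ is sum closed. The idea is that everything in the definition of sum closure depends on $\varphi$ only through the embeddability relation. Since $\leqslant$ is a quasi-order, the relations $\leqslant$ and $<$ compared against $\varphi$ are unchanged when $\varphi$ is replaced by an equimorphic type.

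First I record the transfer facts. If $\varphi \equim \psi$, then for every type $\rho$:
\begin{itemize}
\item $\rho \leqslant \varphi$ if and only if $\rho \leqslant \psi$, by composing with the embeddings witnessing $\varphi \leqslant \psi$ and $\psi \leqslant \varphi$;
\item $\varphi \leqslant \rho$ if and only if $\psi \leqslant \rho$, for the same reason.
\end{itemize}
Since $\rho < \varphi$ means $\rho \leqslant \varphi$ and $\varphi \not\leqslant \rho$, it follows that $\rho < \varphi$ if and only if $\rho < \psi$.

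Now take types $\sigma, \tau < \psi$. By the transfer, $\sigma, \tau < \varphi$. Sum closure of $\varphi$ then gives $\sigma + \tau < \varphi$. Applying the transfer once more with $\rho = \sigma + \tau$ gives $\sigma + \tau < \psi$. Hence $\psi$ is sum closed, and the reverse implication follows by swapping the roles of $\varphi$ and $\psi$.

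I do not expect any real obstacle. The only point needing care is that $<$ is defined as $\leqslant$ together with non-equimorphism, which here amounts to $\varphi \not\leqslant \rho$. The transfer therefore has to be checked for both conjuncts, the positive one $\rho \leqslant \varphi$ and the negative one $\varphi \not\leqslant \rho$, and not just for the positive direction of embeddability.
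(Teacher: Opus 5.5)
Your proof is correct. The transfer fact that $\rho < \varphi$ if and only if $\rho < \psi$ whenever $\varphi$ and $\psi$ are equimorphic is exactly the routine verification the paper has in mind when it omits this proof as easy.
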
 

\begin{examples} \phantom{}
\begin{enumerate}[leftmargin=*,label=(\roman*)]
    \item Since $\eta \equim \eta + \eta$ and $\lambda \equim \lambda + \lambda$, both $\eta$ and $\lambda$ are sum closed. 
    \item Types that are strictly indecomposable to one side may or may not be sum closed. For example, $\omega$ is strictly indecomposable to the right and sum closed since if $\psi, \tau < \omega$ then $\psi$ and $\tau$ are finite types.

    Using the fact that indecomposable ordinals are precisely those of the form $\omega^{\alpha}$ (cf.\ \Cref{indecexs}) and the Cantor normal form theorem (cf.\ \Cref{subsec:Cantornormalform}), it can be shown more generally that indecomposability is equivalent to sum closure for ordinals. For example, $\omega^2 = \omega \omega$ is sum closed. 

    In contrast, $\zeta \omega$ is not sum closed: while $\omega \omega < \zeta \omega$ and $\omega^* \omega < \zeta \omega$, it can be checked that $\omega \omega + \omega^* \omega \isntcontainedin \zeta \omega$. 
\end{enumerate}
\end{examples}

\begin{defi} \label{stronglyindecdef}
A linear order $X$ is \emph{strongly indecomposable} if for any partition $X = A \cup B$ of $X$ into two suborders $A$ and $B$, either $X$ embeds in $A$ or $X$ embeds in $B$. 

An order type $\varphi$ is \emph{strongly indecomposable} if some (equivalently every) linear order $X$ of type $\varphi$ is strongly indecomposable.
\end{defi}

Observe that any strongly indecomposable type is indecomposable. It is well known (and can be shown by induction on the exponent $\alpha$) that every ordinal of the form $\omega^{\alpha}$ is strongly indecomposable. Thus for ordinals (and, symmetrically, reverse ordinals) strong indecomposability is equivalent to indecomposability. In particular, $\omega$ is strongly indecomposable. It is also well known (and not hard to check) that $\eta$ is strongly indecomposable and this exemplifies something more general---in fact, all order types $\varphi$ such that $\varphi^2 \equim \varphi$ are strongly indecomposable as $\varphi^2 \equim \varphi$ means that any ordering $X$ of type $\varphi$ contains $\varphi$ copies of itself. Then any subset $Y$ of $X$ either contains one of those copies in which case $\varphi \leqslant \otp(Y)$ or is missing a point from every such copy in which case $\varphi \leqslant \otp(X \setminus Y)$. This shows that $\varphi$ being strongly indecomposable follows from the hypothesis $\varphi^2 \equim \varphi$ and this hypothesis turns out to have further implications, cf. \Cref{embedding one's square}.

While both sum closure and strong indecomposability are strengthenings of indecomposability, neither property implies the other. Sierpi\'nski, adapting an argument of Dushnik and Miller, showed that there is a partition $\mathbb{R} = A \cup B$ of $\mathbb{R}$ into two suborders $A$ and $B$ such that $\mathbb{R}$ embeds in neither $A$ nor $B$ (cf.\ \cite[Chapter 9]{982R0}; it should be noted that the argument uses the axiom of choice; see the Trichotomy Conjecture \ref{realtypeconj}). Thus $\lambda$ is not a strongly indecomposable type. But $\lambda$ is sum closed, as we observed above. On the other hand, it follows from \Cref{untranscendability of sums of finite powers} below that the sum $\Psi = \cdots + \omega^2 + \omega + 1 = \sum_{n \in \omega^*} \omega^n$ is strongly indecomposable. But it is not sum closed, since while $\omega^* < \Psi$, it is not hard to see that $\omega^* + \omega^* \isntcontainedin \Psi$. 

Before we move on the next section, recall the Hungarian arrow notation, cf.\ \cite{956ER0}. It can be employed in the context of order types: $\rho \longrightarrow (\varphi)^\psi_\kappa$ means that whenever the suborders of type $\psi$ within an order $X$ of type $\rho$ are partitioned into $\kappa$ classes, there is a suborder $Y$ of $X$ of type $\varphi$ such that all suborders of $Y$ of induced type $\psi$ are in the same class of the partition. The suborder $Y$ is then said to be \emph{homogeneous} for the partition.

It is easy to see that a partition property remains true if $\rho$ is enlarged or $\varphi$ or $\kappa$ are diminished, cf.\ \Cref{monotonicity}. If $\psi$ is finite, then the relation also remains true if $\psi$ is diminished. That this does not hold true in general follows from recent work of Gardiner, cf.\ \cite[page 4]{025G0}.

Strong indecomposability can then be expressed as a partition property: an order type is $\varphi$ is strongly indecomposable if and only if $\varphi \longrightarrow (\varphi)^1_2$.

All such partition properties are equimorphism invariants due to the following:

\begin{prop}
\label{monotonicity}
If $\kappa,\lambda$ are cardinals and $\xi,\rho,\tau,\varphi,\psi$ are order types such that $\kappa\leq\lambda$, $\rho \leqslant \xi$, $\tau\leqslant\varphi$, and $\rho \longrightarrow (\varphi)^\psi_\lambda$, then $\xi \longrightarrow (\tau)^\psi_\kappa$.
\end{prop}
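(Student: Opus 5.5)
The plan is to assume $\rho \longrightarrow (\varphi)^\psi_\lambda$ and derive $\xi \longrightarrow (\tau)^\psi_\kappa$ by transferring colourings along embeddings. Fix an order $Z$ of type $\xi$ and a partition $c$ of the suborders of $Z$ of type $\psi$ into $\kappa$ classes; we need a suborder of $Z$ of type $\tau$ that is homogeneous for $c$. Since $\rho \leqslant \xi$, choose an order $X$ of type $\rho$ and an embedding $f: X \rightarrow Z$. Since $\kappa \leq \lambda$, fix an injection $\iota: \kappa \rightarrow \lambda$.

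Next, define a colouring $d$ of the suborders of $X$ of type $\psi$ into $\lambda$ classes by $d(S) = \iota(c(f[S]))$. This is well defined because $f$ is an embedding, so $f[S]$ is a suborder of $Z$ isomorphic to $S$ and hence of type $\psi$. Apply $\rho \longrightarrow (\varphi)^\psi_\lambda$ to $d$ to obtain a suborder $Y \subseteq X$ of type $\varphi$ all of whose type-$\psi$ suborders share one $d$-colour. That colour lies in the range of $\iota$, since every value of $d$ does, provided $Y$ has at least one suborder of type $\psi$; otherwise homogeneity is vacuous.

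Finally, since $\tau \leqslant \varphi$, pick an embedding $g$ of an order of type $\tau$ into $Y$ and let $W = f[g[\,\cdot\,]] \subseteq Z$, which has type $\tau$. Any suborder $T \subseteq W$ of type $\psi$ equals $f[S]$ for $S = f^{-1}[T] \subseteq Y$, and $S$ also has type $\psi$. So $c(T) = \iota^{-1}(d(S))$, which is constant because $Y$ is homogeneous for $d$ and $\iota$ is injective. Hence $W$ is homogeneous for $c$.

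There is no real obstacle here. The only points that need care are that images and preimages under embeddings preserve order type, which is what lets colours pull back correctly, and that partitions into fewer classes can be regarded as partitions into more classes via $\iota$. The empty-class and vacuous cases are harmless.
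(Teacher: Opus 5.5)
Your proof is correct and is essentially the paper's argument. The paper likewise restricts the colouring to a copy of $\rho$ inside the order of type $\xi$, uses $\kappa\leq\lambda$ to apply $\rho \longrightarrow (\varphi)^\psi_\lambda$, and then shrinks the homogeneous set of type $\varphi$ to one of type $\tau$. The differences are cosmetic: you pull the colouring back along an explicit embedding $f$ and an injection $\iota$, whereas the paper takes a subset $R$ of type $\rho$ directly and leaves the passage from $\kappa$ to $\lambda$ colours implicit.
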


\begin{proof}
Let $X$ be an order of type $\xi$ and suppose that $\chi : [X]^\psi \longrightarrow \kappa$ is a partition of $[X]^\psi$. We want to find a $\chi$-homogeneous set of order type $\tau$. To this end, fix a subset $R \subset X$ of order type $\rho$. As $\rho \longrightarrow (\varphi)^\psi_\lambda$ and $\kappa \leq \lambda$ we can find $Y \subset R$ of order type $\varphi$ which is homogeneous for $\chi \upharpoonright [R]^\psi$. As $\tau \leqslant \varphi$, there is $Z \subset Y$ of order type $\tau$ and of course this is still homogeneous for $\chi\upharpoonright [R]^\psi$ and therefore for $\chi$.
\end{proof}

\begin{coro}
 \label{strongindecequiinvar}
If $\varphi$ and $\psi$ are equimorphic types, then $\varphi$ is strongly indecomposable if and only if $\psi$ is strongly indecomposable.
\end{coro}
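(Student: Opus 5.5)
The plan is to reduce the corollary directly to \Cref{monotonicity}, using the reformulation noted just before it: a type $\varphi$ is strongly indecomposable if and only if $\varphi \longrightarrow (\varphi)^1_2$. The equivalence itself needs a brief check. A partition $X = A \cup B$ of an order $X$ of type $\varphi$ is the same thing as a colouring $\chi : [X]^1 \longrightarrow 2$ of the singletons. A homogeneous suborder of type $\varphi$ is then exactly a copy of $\varphi$ lying inside $A$ or inside $B$. So $\varphi \longrightarrow (\varphi)^1_2$ says precisely that $X$ embeds in $A$ or in $B$.

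Since the statement is symmetric in $\varphi$ and $\psi$, it suffices to prove one direction. Assume $\varphi$ is strongly indecomposable, so $\varphi \longrightarrow (\varphi)^1_2$, and let $\psi \equim \varphi$. Apply \Cref{monotonicity} with these choices:
\begin{itemize}
\item $\kappa = \lambda = 2$;
\item $\rho = \varphi$ and $\xi = \psi$, which is allowed because $\varphi \leqslant \psi$;
\item the original target type $\varphi$ and the new target type $\tau = \psi$, which is allowed because $\psi \leqslant \varphi$;
\item superscript $1$ throughout.
\end{itemize}
The proposition then yields $\psi \longrightarrow (\psi)^1_2$, that is, $\psi$ is strongly indecomposable.

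There is essentially no obstacle here. The only step needing any care is the translation between partitions and colourings of $[X]^1$, and it is immediate once one notes that suborders of type $1$ are just points. For a fully self-contained argument, one could instead argue directly. Fix embeddings $f: \psi \rightarrow \varphi$ and $g: \varphi \rightarrow \psi$, and partition a copy $Y$ of $\psi$ into $A \cup B$. Pull the partition back along $g$ to obtain $g^{-1}[A] \cup g^{-1}[B]$ in $\varphi$. By strong indecomposability, $\varphi$ embeds into one piece, say via $h$ into $g^{-1}[A]$. Then $g \circ h \circ f$ embeds $\psi$ into $A$.
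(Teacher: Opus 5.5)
Your proposal is correct and follows the paper's route: the paper rewrites strong indecomposability as $\varphi \longrightarrow (\varphi)^1_2$ and obtains the corollary directly from \Cref{monotonicity} via $\varphi \leqslant \psi$ and $\psi \leqslant \varphi$, exactly as in your instantiation. Your self-contained pull-back argument is a correct, equally valid unpacking of that same reasoning.
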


%\begin{proof}
%Suppose first that $\varphi$ is strongly indecomposable. Let $X$ and $Y$ be orders of types $\varphi$ and $\psi$ respectively, and suppose $Y = C \cup D$ is a partition of $Y$. Fix an embedding $f: X \rightarrow Y$. Let $A = f[X] \cap C$ and $B = f[X] \cap D$. Then $A \cup B$ is a partition of $f[X]$. Since $f[X]$ is isomorphic to $X$ and $X$ is strongly indecomposable, $X$ embeds in either $A$ or $B$. Hence $X$ embeds in either $C$ or $D$. Since $Y$ embeds in $X$, we have that $Y$ embeds in either $C$ or $D$. It follows that $Y$, and hence $\psi$, is strongly indecomposable. The argument for the backwards direction is symmetric. 
%\end{proof}

\section{Untranscendable types} \label{UntrTypes}

\subsection{Basic facts} 
We say that an order type $\varphi$ is \emph{transcendable} if there are types $\psi < \varphi$ and $\tau < \varphi$ such that $\varphi \leqslant \psi \tau$. The following is the central definition of this paper.

\begin{defi} \label{untrdef} \phantom{}
\begin{enumerate}
    \item An order type $\varphi$ is \emph{untranscendable} if it is not transcendable, that is, if whenever $\psi$ and $\tau$ are order types such that $\psi \leqslant \varphi$, $\tau \leqslant \varphi$, and $\varphi \leqslant \psi \tau$, then either $\varphi \leqslant \psi$ or $\varphi \leqslant \tau$. 
    \item A linear order $X$ is \emph{untranscendable} if $\textrm{otp}\langle X \rangle$ is untranscendable. 
\end{enumerate}
\end{defi}

The definitions of \emph{transcendable type} and \emph{untranscendable type} are obtained by replacing the sum $\psi + \tau$ with the product $\psi\tau$ in the reformulated definitions of \emph{decomposable type} and \emph{indecomposable type} given in \Cref{decompequivdef} and \Cref{indecequivdef}. In this sense, untranscendability is the multiplicative analog\ue\ of indecomposability. 

We will also consider the following strengthening of untranscendability.

\begin{defi} \label{s-untrdef}
An order type $\varphi$ is \emph{$s$-untranscendable} if whenever $\psi$ and $\tau$ are order types such that $\varphi \leqslant \psi \tau$, then either $\varphi \leqslant \psi$ or $\varphi \leqslant \tau$. 
\end{defi}

As we observed previously, an order type $\varphi$ is indecomposable if and only if whenever $\varphi \leqslant \psi + \tau$, then either $\varphi \leqslant \psi$ or $\varphi \leqslant \tau$. That is, the additive analog\ue\ of $s$-untranscendability is equivalent to indecomposability. We will see however in Example \ref{ex:untr}.\ref{ex:untr.not s-untr} that $s$-untranscendability is strictly stronger than untranscendability. The difference arises from the fact that if $\varphi \leqslant \psi + \tau$, then we can find $\psi' \leqslant \psi$ and $\tau' \leqslant \tau$ such that $\psi', \tau' \leqslant \varphi$ and $\varphi \leqslant \psi' + \tau'$. However, if $\varphi \leqslant \psi \tau$, then while we can always find $\tau' \leqslant \tau$ such that $\tau' \leqslant \varphi$ and $\varphi \leqslant \psi \tau'$, it is not true in general that we can also find $\psi' \leqslant \psi$ such that $\psi' \leqslant \varphi$ and $\varphi \leqslant \psi' \tau'$. Therefore, we could have equivalently defined $\varphi$ to be untranscendable if whenever $\psi$ and $\tau$ are order types such that $\psi \leqslant \varphi$ and $\varphi \leqslant \psi \tau$, then $\varphi \leqslant \psi$ or $\varphi \leqslant \tau$. Then $s$-untranscendability is obtained by dropping the restriction $\psi \leqslant \varphi$.

Just as sum closure is a strengthening of indecomposability, the multiplicative analog\ue\ of sum closure strengthens untranscendability. 

\begin{defi}
An order type $\varphi$ is \emph{product closed} if whenever $\psi$ and $\tau$ are order types such that $\psi < \varphi$ and $\tau < \varphi$, then $\psi \tau < \varphi$. 
\end{defi}

In Example \ref{ex:untr}.\ref{ex:untr.not s-untr} we show that product closure is a strict strengthening of untranscendability. The notion of a product closed type should also be compared with Laver's stronger notion of a \emph{regular} type; cf.\ \cite[p.\ 110]{973L1}.

It is natural to ask if either $s$-untranscendability or product closure implies the other. We will see in the next subsection that there are $s$-untranscendable types that are not product closed; in fact $\lambda$ is such a type.
On the other hand $\omega^\omega$ is product-closed and it will follow from \Cref{ut_but_not_sut_ord<=>sing_lim_ord} that it is not $s$-untranscendable.

Analog\ue s of several basic facts about indecomposable types hold for untranscendable types.

\begin{prop}
Untranscendability, product closure, and $s$-untranscendability are invariant under equimorphism.
\end{prop}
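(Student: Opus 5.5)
The plan is to argue directly from the definitions. The key fact is that every condition involved depends on $\varphi$ only through the relations $\varphi \leqslant \cdot$ and $\cdot \leqslant \varphi$, that is, through the quasi-order $\leqslant$. These relations are preserved when $\varphi$ is replaced by an equimorphic $\varphi'$, because $\leqslant$ is transitive. Throughout, fix equimorphic types $\varphi \equim \varphi'$. By symmetry it suffices to show that each property of $\varphi$ passes to $\varphi'$.

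First, $s$-untranscendability. Suppose $\varphi$ is $s$-untranscendable and $\varphi' \leqslant \psi\tau$. Then $\varphi \leqslant \varphi' \leqslant \psi\tau$, so $\varphi \leqslant \psi$ or $\varphi \leqslant \tau$. Composing with $\varphi' \leqslant \varphi$ gives $\varphi' \leqslant \psi$ or $\varphi' \leqslant \tau$.

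Next, untranscendability. Suppose $\psi, \tau \leqslant \varphi'$ and $\varphi' \leqslant \psi\tau$. Transitivity gives $\psi, \tau \leqslant \varphi$ and $\varphi \leqslant \psi\tau$. Untranscendability of $\varphi$ then yields $\varphi \leqslant \psi$ or $\varphi \leqslant \tau$, and hence the same with $\varphi'$ in place of $\varphi$.

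Finally, product closure. Here I first note that $\psi < \varphi'$ if and only if $\psi < \varphi$. Indeed, $\psi \leqslant \varphi'$ if and only if $\psi \leqslant \varphi$, and $\psi \equim \varphi'$ if and only if $\psi \equim \varphi$, since $\equim$ is an equivalence relation. So if $\psi, \tau < \varphi'$, then $\psi, \tau < \varphi$. Product closure of $\varphi$ gives $\psi\tau < \varphi$, and therefore $\psi\tau < \varphi'$.

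There is no real obstacle in this argument. The only point needing care is the unwinding of the strict relation $<$ as $\leqslant$ together with $\not\equim$, which is handled by the observation in the previous paragraph. In particular, no facts about products are needed beyond treating $\psi\tau$ as a fixed type.
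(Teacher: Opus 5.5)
Your proposal is correct and takes the same approach as the paper. The paper proves the untranscendability case exactly as you do, transferring the hypotheses from $\varphi'$ to $\varphi$ and the conclusion back by transitivity of $\leqslant$. It leaves the $s$-untranscendability and product-closure cases to the reader, and you fill these in correctly, including the check that $\psi < \varphi'$ if and only if $\psi < \varphi$.
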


\begin{proof}
Fix equimorphic types $\varphi$ and $\varphi'$, suppose that $\varphi$ is untranscendable, and fix types $\psi', \tau' \leqslant \varphi'$ such that $\varphi' \leqslant \psi' \tau'$. Since $\varphi' \leqslant \varphi$ we have $\psi', \tau' \leqslant \varphi$, and since $\varphi \leqslant \varphi'$ we have $\varphi \leqslant \psi' \tau'$. By untranscendability of $\varphi$, we have $\varphi \leqslant \psi'$ or $\varphi \leqslant \tau'$, and hence $\varphi' \leqslant \psi'$ or $\varphi' \leqslant \tau'$, giving the untranscendability of $\varphi'$.

The arguments for $s$-untranscendability and product closure are similar and left to the reader. 
\end{proof}

\begin{prop}
Untranscendability, product closure, and $s$-untranscendability are invariant under reversal. \qed
\end{prop}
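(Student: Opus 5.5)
The plan is to apply the reversal map $\varphi \mapsto \varphi^*$ to everything in sight. Two facts make this work. First, $\varphi \leqslant \psi$ if and only if $\varphi^* \leqslant \psi^*$: an embedding $f: X \rightarrow Y$ is also an embedding $X^* \rightarrow Y^*$. Consequently $\psi < \varphi$ if and only if $\psi^* < \varphi^*$, and $\equim$ is likewise preserved. Second, from the preliminaries we have the identity $(\psi\tau)^* = \psi^*\tau^*$, and reversal is an involution, so $(\varphi^*)^* = \varphi$. Hence it suffices to prove one direction of each equivalence: if $\varphi$ has the property, then so does $\varphi^*$. The converse then follows by applying this direction to $\varphi^*$.

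\textbf{Untranscendability.} Suppose $\varphi$ is untranscendable. Take $\psi, \tau \leqslant \varphi^*$ with $\varphi^* \leqslant \psi\tau$. Reversing gives $\psi^*, \tau^* \leqslant \varphi$ and $\varphi \leqslant (\psi\tau)^* = \psi^*\tau^*$. By untranscendability of $\varphi$, either $\varphi \leqslant \psi^*$ or $\varphi \leqslant \tau^*$. Reversing once more gives $\varphi^* \leqslant \psi$ or $\varphi^* \leqslant \tau$, as required.

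\textbf{$s$-untranscendability.} The argument is identical, simply omitting the hypotheses $\psi, \tau \leqslant \varphi^*$.

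\textbf{Product closure.} Suppose $\varphi$ is product closed and take $\psi, \tau < \varphi^*$. Then $\psi^*, \tau^* < \varphi$, so $\psi^*\tau^* < \varphi$. Reversing yields $\psi\tau = (\psi^*\tau^*)^* < \varphi^*$.

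\textbf{Main point to check.} The only step needing care is the identity $(\psi\tau)^* = \psi^*\tau^*$, in particular that the factors are not swapped. This follows from the general rule $(\sum_{x\in\varphi}\psi_x)^* = \sum_{x\in\varphi^*}\psi_x^*$ with constant summand. Concretely, the anti-lexicographic order on $Y \times X$, read backwards, is the anti-lexicographic order on $Y^* \times X^*$.
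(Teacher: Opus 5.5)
Your proposal is correct and is the routine argument the paper leaves to the reader; the statement is given with \qed and no proof. The argument uses three facts: reversal preserves $\leqslant$ and $<$, it commutes with products via $(\psi\tau)^* = \psi^*\tau^*$ without swapping the factors, and it is an involution, so each property transfers from $\varphi$ to $\varphi^*$ and back. Your explicit check that the anti-lexicographic order on $Y \times X$, read backwards, is the anti-lexicographic order on $Y^* \times X^*$ is the one step that genuinely needs verifying, and you handled it correctly.
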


The following proposition is the multiplicative analog\ue\ of \Cref{strong-sum-inequality-separation}.

\begin{prop}
\label{new-product-lemma}
Suppose $\rho, \tau, \varphi$ and $\psi$ are order types with $\rho, \varphi$ not both $0$. If $\rho \tau \leqslant \varphi \psi$ then either $1+\rho \leqslant \varphi$ and $\rho+1 \leqslant \varphi$ or $\tau \leqslant \psi$.
\end{prop}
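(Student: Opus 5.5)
Fix orders $R, T, \Phi, \Psi$ of types $\rho, \tau, \varphi, \psi$ and an embedding $f\colon RT \to \Phi\Psi$. Write $R_t = R \times \{t\}$ for the copy of $R$ indexed by $t \in T$, and $\pi\colon \Phi\Psi \to \Psi$ for the projection onto the second coordinate. The projection $\pi$ is weakly order-preserving. Set $S_t = \pi[f[R_t]] \subseteq \Psi$.

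First the degenerate cases. If $T = \emptyset$ then $\tau = 0 \leqslant \psi$. If $\rho = 0$ then $\varphi \neq 0$ by hypothesis, so $1 + \rho = \rho + 1 = 1 \leqslant \varphi$. So assume $\rho, \tau \neq 0$. Then every $S_t$ is a nonempty interval-like subset of $\Psi$. Moreover, for $t < t'$ every element of $S_t$ is $\leqslant$ every element of $S_{t'}$, because $f[R_t] < f[R_{t'}]$. Hence two such sets can overlap in at most one point $p$, and $p$ is then $\max S_t = \min S_{t'}$.

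\textbf{Case 1: the sets $S_t$ are pairwise disjoint.} Choose (with choice) $p(t) \in S_t$ for each $t$. Disjointness together with the ordering above makes $t \mapsto p(t)$ strictly increasing, so $\tau \leqslant \psi$.

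\textbf{Case 2: some $t < t'$ share a point $p$.} This means $S_t \cap S_{t'} = \{p\}$, with $p = \max S_t = \min S_{t'}$. The fiber $\Phi \times \{p\}$ is a convex copy of $\Phi$. It contains a final segment of $f[R_t]$ followed by an initial segment of $f[R_{t'}]$. By convexity it also contains all of $f[R_{t''}]$ for every $t < t'' < t'$.
\begin{itemize}
\item If some such $t''$ exists, the fiber contains a point of $f[R_t]$, then all of $f[R_{t''}]$, then a point of $f[R_{t'}]$. This gives $1 + \rho + 1 \leqslant \varphi$, which yields both required inequalities.
\item If $t'$ is the immediate successor of $t$, the fiber contains a final segment of $R_t$ and an initial segment of $R_{t'}$. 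If $S_t = \{p\}$, it contains all of $R_t$ plus a point of $R_{t'}$, giving $\rho + 1 \leqslant \varphi$. Symmetrically, if $S_{t'} = \{p\}$ we get $1 + \rho \leqslant \varphi$.
\end{itemize}

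The main obstacle is the remaining configuration. Here the overlaps occur only between immediate neighbours, and the sets involved are not singletons on the relevant side. So no single fiber absorbs a full copy of $R$, or absorbs one only on one side.

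In that situation I would abandon the arbitrary choice of $p(t)$ and choose the representatives more cleverly.
\begin{itemize}
\item If $|S_t| \geq 3$, pick $p(t)$ strictly inside $S_t$. Such a point cannot be shared with any other $S_{t'}$.
\item If $S_t = \{a, b\}$ with $a < b$, pick $a$ or $b$ according to which side is not shared.
\item If both $a$ and $b$ are shared, or $S_t$ is a singleton shared on one side, fall back on the fiber arguments of Case 2.
\end{itemize}
The aim is to show that whenever no fiber yields both $1 + \rho \leqslant \varphi$ and $\rho + 1 \leqslant \varphi$, these choices can be made consistently to give a strictly increasing map $T \to \Psi$.

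I expect this to require a further split. One would treat separately the maximal blocks of consecutive elements of $T$ linked by shared endpoints, using the block structure to show that each block can be assigned distinct points of $\Psi$. If this cannot be done uniformly, I would try to show that the failure itself forces a fiber containing a full copy of $R$ flanked on both sides, which yields $1 + \rho \leqslant \varphi$ and $\rho + 1 \leqslant \varphi$.
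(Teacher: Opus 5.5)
Your setup, Case~1, and the first bullet of Case~2 are fine. However, the argument stops exactly where the real work is. The ``remaining configuration'' is only described as something you \emph{expect} to handle, so as written there is a genuine gap.

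Moreover, the fallback target you set cannot be reached in general. You aim to show that a failed choice forces a single fibre containing a full copy of $R$ flanked on both sides, i.e.\ $1+\rho+1\leqslant\varphi$. Take $\rho=\zeta$, $\tau=2$, $\varphi=\zeta2$, $\psi=1$. Then $\rho\tau\leqslant\varphi\psi$ and $\tau\not\leqslant\psi$, yet $1+\zeta+1\not\leqslant\zeta+\zeta$: a copy of $\zeta$ strictly between two points of $\zeta+\zeta$ lies in an interval that is finite or of type $\omega+\omega^*$, and $\zeta$ embeds in neither. In this example $1+\rho$ is witnessed only via the second copy $R_1$, and $\rho+1$ only via the first copy $R_0$. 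In general the two inequalities come from different copies of $R$, and possibly from different fibres. So trying to extract both from one configuration, or to make one choice of representatives that avoids shared points on both sides at once, is the wrong way to organise the proof.

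The missing idea is to prove the two inequalities separately, using two different choice functions. Since $t<t'$ implies every point of $S_t$ is $\leqslant$ every point of $S_{t'}$, any choice $p(t)\in S_t$ is weakly increasing.
\begin{itemize}
\item Choose $p(t)$ so that $p(t)\neq\min S_t$ whenever $|S_t|\geq 2$ and $\min S_t$ exists.
\item If $\tau\not\leqslant\psi$, this map is not strictly increasing, so $p(t)=p(t')$ for some $t<t'$.
\item As you observed, this common point must be $\min S_{t'}$. By the choice rule, that forces $S_{t'}=\{p(t)\}$.
\item Hence the fibre over $p(t)$ contains all of $f[R_{t'}]$ together with a point of $f[R_t]$, giving $1+\rho\leqslant\varphi$.
\end{itemize}
Running the same argument with a choice that avoids $\max S_t$ whenever possible gives $\rho+1\leqslant\varphi$. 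This is exactly the paper's proof. It needs no case analysis of immediate successors, intermediate $t''$, shared endpoints, or the sizes of the sets $S_t$.
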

\begin{proof}
If $\rho=0$ then, since $\varphi \neq 0$, $1+\rho \leqslant \varphi$ and $\rho+1 \leqslant \varphi$.

Otherwise let $X$, $Y$, $Z$, and $W$ be orders of types $\rho$, $\tau$, $\varphi$, and $\psi$, respectively. Let us assume that $\tau \isntcontainedin \psi$. Let $e : XY \longrightarrow ZW$ be an embedding and, for $y\in Y$, denote by $X_y$ the $y$-th copy of $X$ in the product $XY$ and, similarly, for $w \in W$ by $Z_w$ the $w$-th copy of $Z$ in $ZW$.
For every $y \in Y$ let $E(y) = \{w \in W : e(X_y) \cap Z_w \neq \emptyset\}$, which is nonempty because $X_y \neq \emptyset$.

Now define $f: Y \to W$ so that, for every $y \in Y$ we have $f(y) \in E(y)$, with the proviso that, if $|E(y)| > 1$ and $E(y)$ has a minimum, then $f(y)$ is not that minimum. 
Since $y < y'$ implies $f(y) \leq f(y')$ and  $\tau \isntcontainedin \psi$ we must have $f(y) = f(y')$ for some $y<y'$.
This requires $f(y')$ to be the minimum of $E(y')$ and, by our choice of $f(y')$, this implies $|E(y')|=1$, so that $e$ maps $X_{y'}$ into $Z_{f(y')} = Z_{f(y)}$.
Pick $x \in X_y$ such that $e(x) \in Z_{f(y)}$.
Then the restriction of $e$ to $\{x\} \cup X_{y'}$ witnesses $1+X_{y'} \leqslant Z_{f(y)}$ and hence $1+\rho \leqslant \varphi$.

The proof that $\rho+1 \leqslant \varphi$ is similar, avoiding the maximum of $E(y)$ whenever possible. 
\end{proof}

We will use \Cref{new-product-lemma} only as the following the multiplicative analog\ue\ of \Cref{sum-inequality-separation}.

\begin{coro}
\label{product-lemma}
Suppose $\rho, \tau, \varphi$ and $\psi$ are order types. If $\rho \tau \leqslant \varphi \psi$ then either $\rho \leqslant \varphi$ or $\tau \leqslant \psi$.
\end{coro}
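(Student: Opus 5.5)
This is an immediate consequence of \Cref{new-product-lemma}, once we split off the degenerate case it excludes. Suppose $\rho\tau \leqslant \varphi\psi$; I show that $\rho \leqslant \varphi$ or $\tau \leqslant \psi$.

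\emph{Degenerate case: $\rho = \varphi = 0$.} Then $\rho \leqslant \varphi$ holds trivially, since $0 \leqslant 0$. Nothing more is needed.

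\emph{Main case: $\rho$ and $\varphi$ are not both $0$.} Here the hypotheses of \Cref{new-product-lemma} hold, so one of two things happens.
\begin{itemize}
\item If $\tau \leqslant \psi$, we are done.
\item Otherwise $1+\rho \leqslant \varphi$. Since $\rho$ embeds as a final segment of $1+\rho$, we get $\rho \leqslant 1+\rho \leqslant \varphi$, and transitivity of $\leqslant$ gives $\rho \leqslant \varphi$.
\end{itemize}

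There is no real obstacle. The only point needing care is the excluded case $\rho=\varphi=0$ of \Cref{new-product-lemma}, and that case is trivial as shown above.
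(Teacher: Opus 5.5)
Your proposal is correct and matches the paper. The paper states this corollary without a separate proof, as an immediate consequence of \Cref{new-product-lemma}, and your treatment of the excluded case $\rho=\varphi=0$ together with the step $\rho\leqslant 1+\rho\leqslant\varphi$ is exactly the omitted deduction.
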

\if
\todo[inline, color = thilo]{
I wonder whether the following strengthening is true:

Suppose $\rho, \tau, \varphi$ and $\psi$ are order types. If $\rho \tau \leqslant \varphi \psi$ then either $\rho = 0 = \varphi$ or $1 + \rho \leqslant \varphi$ or $\tau \leqslant \psi$.

\todo[inline]{I think this is true if you change ``or $1 + \rho \leqslant \varphi$ or" to ``or $1 + \rho \leqslant \varphi$ or $\rho + 1 \leqslant \varphi$ or", though it might be true even for ``or $(1 + \rho \leqslant \varphi$ and $\rho + 1 \leqslant \varphi)$ or"; not sure about ``or $1 + \rho + 1 \leqslant \varphi$ or"; but ``or $\rho 2 \leqslant \varphi$ or" is false, which is in some sense the most natural analogue. }

\begin{proof}
Let $X$, $Y$, $Z$, and $W$ be orders of types $\rho$, $\tau$, $\varphi$, and $\psi$, respectively. Let us assume that $\rho \isntcontainedin \varphi$. Let $e : XY \longrightarrow ZW$ be an embedding. As $\rho \isntcontainedin \varphi$, we know that for every $y \in Y$ there exist $x, x' \in X$, $p_y, q_y \in W$ with $p_y < q_y$, and $z, z' \in Z$ such that $e(x, y) = \opair{z}{p_y}$ and $e(x', y) = \opair{z'}{q_y}$. We define a map
\begin{align*}
f : Y & \longrightarrow W, \\
y & \longmapsto q_y.
\end{align*}
As for $y, y' \in Y$, the condition $y < y'$ implies $q_y \leq p_{y'} < q_{y'}$, $f$ is an embedding.
\end{proof}
\fi

\begin{coro}
\label{embedding one's square}
Suppose that $\varphi$ is an order type such that $\varphi^2 \equim \varphi$. Then $\varphi$ is both $s$-untranscendable and product closed. In particular, $\varphi$ is untranscendable.
%\todo[inline, color = thilo]{It is also strongly indecomposable.}
\end{coro}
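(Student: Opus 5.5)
The plan is to derive both properties directly from \Cref{product-lemma}, with $s$-untranscendability obtained in the form ``$\varphi\leqslant\psi\tau$ forces $\varphi\leqslant\psi$ or $\varphi\leqslant\tau$'' for arbitrary $\psi,\tau$. Untranscendability is then immediate: it is the special case of $s$-untranscendability in which $\psi,\tau\leqslant\varphi$ are additionally assumed.

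\emph{$s$-untranscendability.} Suppose $\varphi\leqslant\psi\tau$. Since $\varphi^2\equim\varphi$, we get $\varphi\varphi\leqslant\varphi\leqslant\psi\tau$. Applying \Cref{product-lemma} with $\rho=\varphi$ and left factor $\varphi$ (so $\rho\tau$ there becomes $\varphi\varphi$, and $\varphi\psi$ there becomes $\psi\tau$), we obtain $\varphi\leqslant\psi$ or $\varphi\leqslant\tau$. This is exactly what is required.

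\emph{Product closure.} Let $\psi<\varphi$ and $\tau<\varphi$. Monotonicity of the product in each factor gives $\psi\tau\leqslant\varphi\varphi\leqslant\varphi$: embeddings $Y\to X$ and $Z\to X$ induce an embedding $YZ\to XX$ coordinatewise, which is clearly order preserving for the anti-lexicographic order. It remains to rule out $\varphi\leqslant\psi\tau$. If that held, then $\varphi\varphi\leqslant\varphi\leqslant\psi\tau$, and \Cref{product-lemma} would give $\varphi\leqslant\psi$ or $\varphi\leqslant\tau$. Either conclusion contradicts $\psi<\varphi$ or $\tau<\varphi$. Hence $\psi\tau<\varphi$.

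There is essentially no obstacle. The only point needing care is checking that the roles of the four types match the hypotheses of \Cref{product-lemma}, namely that both factors of $\varphi\varphi$ are compared to the corresponding factors of $\psi\tau$. It is also worth noting the degenerate case $\varphi=0$, where everything holds trivially since $0\leqslant$ anything.
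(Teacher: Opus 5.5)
Your proof is correct and follows essentially the same route as the paper's. Both arguments derive $s$-untranscendability and product closure by feeding $\varphi\varphi \leqslant \varphi \leqslant \psi\tau$ into \Cref{product-lemma}, with product closure also using the monotonicity $\psi\tau \leqslant \varphi\varphi \leqslant \varphi$.
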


\begin{proof}
Suppose $\psi$ and $\tau$ are types such that $\varphi \leqslant \psi \tau$. Then $\varphi \varphi \leqslant \psi \tau$ so that by \Cref{product-lemma} either $\varphi \leqslant \psi$ or $\varphi \leqslant \tau$. Hence $\varphi$ is $s$-untranscendable. 

For product closure, suppose $\psi$ and $\tau$ are types such that $\psi < \varphi$ and $\tau < \varphi$. Then $\psi \tau \leqslant \varphi \varphi \leqslant \varphi$. If it were the case that $\varphi \leqslant \psi \tau$, then we would have $\varphi \varphi \leqslant \psi \tau$, giving either $\varphi \leqslant \psi$ or $\varphi \leqslant \tau$, a contradiction. Hence $\psi \tau < \varphi$. 
\end{proof}

\subsection{Examples of untranscendable types} As we observed in \Cref{IndecTypes}, the empty type $0$ and singleton type $1$ are both indecomposable. They are also untranscendable, as follows from the definition. The two-point type $2$ is not indecomposable. However, it \emph{is} untranscendable since if $\psi, \tau < 2$ then $\psi, \tau \leqslant 1$ and hence $\psi\tau \leqslant 1 < 2$. That is, $2$ is product closed. It is also clear that $2$ is $s$-untranscendable. In \Cref{AD+UT=2} we will show that $2$ is quite special among the untranscendable types: it is the unique untranscendable type that is not indecomposable. 

No larger finite type $n$ is untranscendable. On the other hand, $\omega$ is untranscendable, and in fact both product closed and $s$-untranscendable. More generally, we have the following characterization of untranscendable ordinals:

\begin{prop}
\label{untrans_ord}
Suppose $\alpha$ is an ordinal. Each of the following statements implies the next one and 
%If $\alpha \ne 1$, then \ref{exponent} is equivalent to \ref{delta}. If $\alpha > 0$, then \ref{delta} is equivalent to \ref{idc&pc}. If $\alpha \ne 2$, then \ref{idc&pc} is equivalent to \ref{pc}.
if $\alpha > 2$, then they are all equivalent\footnote{For $\alpha \leq 2$, note that $1$ satisfies \eqref{delta}, \eqref{idc&pc}, \eqref{pc}, and \eqref{ut}; $0$ satisfies \eqref{idc&pc}, \eqref{pc}, and \eqref{ut}; and $2$ satisfies \eqref{pc} and \eqref{ut}.}
\begin{enumerate}[label=(\roman*)]
    \item \label{exponent}$\alpha = \omega^{\omega^{\beta}}$ for some ordinal $\beta$,
    \item \label{delta}$\alpha$ is a $\delta$-number,
    \item \label{idc&pc}$\alpha$ is both indecomposable and product closed,
    \item \label{pc}$\alpha$ is product closed,
    \item \label{ut}$\alpha$ is untranscendable.
\end{enumerate}
\end{prop}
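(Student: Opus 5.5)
We prove the implications in the cycle (i)$\Rightarrow$(ii)$\Rightarrow$(iii)$\Rightarrow$(iv)$\Rightarrow$(v), and then, for $\alpha>2$, close it with (v)$\Rightarrow$(i).

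\emph{(i)$\Rightarrow$(ii).} By definition, $\omega^{\omega^\beta}$ is an indecomposable $\omega^\gamma$ whose exponent $\gamma=\omega^\beta$ is itself indecomposable. So it is a $\delta$-number.

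\emph{(ii)$\Rightarrow$(iii).} Let $\alpha=\omega^{\omega^\beta}$. It is indecomposable by Cantor's characterization. For product closure, take $\psi,\tau<\alpha$; these are ordinals below $\omega^{\omega^\beta}$. By Cantor normal form, $\psi\le\omega^{\gamma_1}k$ and $\tau\le\omega^{\gamma_2}m$ with $\gamma_1,\gamma_2<\omega^\beta$. Then
\[
\psi\tau\le\omega^{\gamma_1}k\,\omega^{\gamma_2}m\le\omega^{\gamma_1+\gamma_2+1}.
\]
Since $\omega^\beta$ is additively indecomposable, $\gamma_1+\gamma_2+1<\omega^\beta$ (when $\beta=0$ we have $\gamma_i=0$ and $\alpha=\omega$, and the bound is handled directly since finite products are finite). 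For ordinals, $<$ in the embedding order coincides with the ordinal order, so $\psi\tau<\alpha$.

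\emph{(iii)$\Rightarrow$(iv)} is trivial.

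\emph{(iv)$\Rightarrow$(v).} If $\psi,\tau<\alpha$ then $\psi\tau<\alpha$, so $\alpha\not\leqslant\psi\tau$. Hence $\alpha$ is not transcendable.

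\emph{(v)$\Rightarrow$(i) for $\alpha>2$.} This is the substantial step, and I argue by contrapositive. Suppose $\alpha>2$ is not of the form $\omega^{\omega^\beta}$; I must show $\alpha$ is transcendable. For ordinals, $\varphi\leqslant\psi$ is just $\varphi\le\psi$, so it suffices to find $\psi,\tau<\alpha$ with $\alpha\le\psi\tau$.

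\textbf{Case $\alpha$ finite, $\alpha\ge3$.} Take $\psi=2$ and $\tau=\lceil\alpha/2\rceil<\alpha$.

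\textbf{Case $\alpha$ not indecomposable.} Write $\alpha=\omega^{\gamma}k+\rho$ with $\rho<\omega^\gamma$, where either $k\ge2$ or $\rho>0$. Then
\[
\alpha\le\omega^\gamma(k+1)=\omega^\gamma\cdot(k+1).
\]
If $k\ge2$, or if $\gamma\ge1$, both factors are below $\alpha$: indeed $k+1<\alpha$ because $\alpha$ is infinite, and $\omega^\gamma<\alpha$. The remaining subcase is $\gamma=0$, i.e.\ $\alpha$ finite, which is already covered.

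\textbf{Case $\alpha=\omega^\gamma$ with $\gamma$ not of the form $\omega^\beta$.} Here $\gamma\ge2$ is additively decomposable (the case $\gamma=1$ gives $\omega=\omega^{\omega^0}$). So $\gamma=\gamma_1+\gamma_2$ with $0<\gamma_1,\gamma_2<\gamma$, which we get from Cantor normal form by splitting off the last term. Then $\alpha=\omega^{\gamma_1}\omega^{\gamma_2}$, and both factors are $<\alpha$.

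The main point needing care is this last reduction: indecomposable exponents are exactly the powers $\omega^\beta$, together with the small degenerate cases $\gamma\in\{0,1\}$, which correspond to $\alpha\in\{1,\omega\}$ and are handled separately. Combined with the footnote's treatment of $\alpha\le2$, this yields the full equivalence for $\alpha>2$.
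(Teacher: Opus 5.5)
Your proof is correct. For (i)$\Rightarrow$(ii)$\Rightarrow$(iii)$\Rightarrow$(iv)$\Rightarrow$(v) it is essentially the paper's argument: the product-closure bound uses the leading Cantor normal form exponents together with additive indecomposability of $\omega^\beta$. One small addition is needed there. The paper also counts $1=\omega^0$ as a $\delta$-number (see the footnote), so you should dispose of it in (ii)$\Rightarrow$(iii), although (iii) is trivial for it.

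You genuinely differ in (v)$\Rightarrow$(i). The paper writes an infinite $\alpha$ as $\omega^{\omega^\beta k+\gamma}n+\delta$. In one computation it shows that, unless $n=k=1$ and $\gamma=\delta=0$, $\alpha$ embeds in the \emph{square} of a single smaller ordinal, namely $\omega^{\omega^\beta k}$ or $\omega^{\omega^\beta(k-1)}$. So it obtains the stronger witness $\psi=\tau$ uniformly.

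You instead separate the two ways $\alpha$ can fail to have the form $\omega^{\omega^\beta}$:
\begin{itemize}
\item If $\alpha$ is additively decomposable, you use $\alpha\leqslant\omega^\gamma(k+1)$, a product with a finite factor. This is in the spirit of the paper's later lemma $\psi+\tau\leqslant\psi(1+\tau)$, which underlies the result that $2$ is the only decomposable untranscendable type.
\item If $\alpha=\omega^\gamma$ with $\gamma$ additively decomposable, you get the exact factorization $\alpha=\omega^{\gamma_1}\omega^{\gamma_2}$. So $\alpha$ is even multiplicatively decomposable in the sense recalled in the introduction.
\end{itemize}

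Your route makes it clearer \emph{why} the characterization holds. It also treats finite $\alpha\geq 3$ explicitly. The paper's normal form, with $k>0$, tacitly excludes finite $\alpha$ and relies on the earlier remark that no finite $n>2$ is untranscendable.

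One wording fix: in your decomposable case the condition should simply be $\gamma\geq 1$. With $\gamma=0$ and $k\geq 2$, the factor $k+1$ exceeds $\alpha$, though that is exactly the finite case you already handled.
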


\begin{proof}
Recall that $\delta$-numbers are indecomposable of the form $\omega$ raised to an indecomposable power. Therefore the implication $\ref{exponent} \Rightarrow \ref{delta}$  holds due to powers of $\omega$ being indecomposable, cf.\ \Cref{indecexs}.(iii).

The first half of \ref{idc&pc} follows due to the same fact. Suppose that $\omega^\vartheta$ is a $\delta$-number. If $\vartheta \leqslant 1$ then the product-closure is immediate. Otherwise $\vartheta$ is a limit ordinal and if $\gamma, \delta < \omega^\vartheta$ and $\gamma_0$ and $\delta_0$ are the largest exponents in their Cantor normal forms, we have $\gamma_0 + 1, \delta_0 + 1 < \vartheta$. Therefore
\[
\gamma \delta \leq \omega^{\gamma_0+1} \omega^{\delta_0+1} = \omega^{\gamma_0+1+\delta_0+1} < \omega^\vartheta,
\]
where in the last step we use indecomposability of $\vartheta$ (\Cref{indecexs}.(iii)).

\ref{idc&pc} implies \ref{pc} and \ref{pc} implies \ref{ut} are trivial.

Finally, to show that \ref{ut} implies \ref{exponent}, notice that $\alpha$ can be written as
\[
\omega^{\omega^\beta k + \gamma}n + \delta
\]
where $\delta < \omega^{\omega^\beta k + \gamma}$, $\gamma < \omega^\beta$, and $0< k, n < \omega$.

If $\max(n,\gamma+1,\delta+1) \geqslant 2$ then $\omega^{\omega^\beta k} < \alpha$ and $\alpha \leqslant \omega^{\omega^\beta2k} = \left(\omega^{\omega^\beta k}\right)^2$, witnesses that $\alpha$ is transcendable. 
If $\max(n,\gamma+1,\delta+1) = 1$ and $k \geq 2$, $\omega^{\omega^\beta(k - 1)} < \alpha$ and $\alpha \leq \omega^{\omega^\beta2(k - 1)} = \left(\omega^{\omega^\beta(k - 1)}\right)^2$, witnesses $\alpha$'s transcendability. 
Thus $n=k=1$ and $\gamma = \delta = 0$, so that $\alpha = \omega^{\omega^\beta}$.
\end{proof}

Thus the least untranscendable ordinal greater than $\omega$ is $\omega^{\omega} = \omega + \omega^2 + \omega^3 + \cdots = \sum_{n \in \omega} \omega^n$. \Cref{untranscendability of sums of finite powers} below shows that this is an instance of a more general construction for building untranscendable types: indeed, summing the finite powers of \textit{any} fixed order type $\varphi$ always yields an untranscendable type. (See \Cref{ut_but_not_sut_ord<=>sing_lim_ord} for a characterization of the untranscendable ordinals.) First we need a couple of lemmas.

\begin{lemm}
\label{sum-of-powers-lemma}
Let $\rho$ and $\varphi$ be any order types. If $1 + \rho\varphi \leqslant \varphi$, then
\begin{align*}
\tau = \sum_{n < \omega} \rho^n \leqslant \varphi.
\end{align*}
\end{lemm}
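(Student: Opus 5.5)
We show that $\tau=\sum_{n<\omega}\rho^n$ embeds into $\varphi$ by building an embedding piece by piece, using the hypothesis $1+\rho\varphi\leqslant\varphi$ to descend into $\varphi$ one level at a time.

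Fix orders $R$ and $X$ of types $\rho$ and $\varphi$, and an embedding $g\colon 1+RX\to X$. Write $p$ for the image under $g$ of the extra point, so $p<g(z)$ for every $z\in RX$. Iterating gives embeddings $g_k\colon R^kX\to X$. Set $g_0=\mathrm{id}$ and
\[
g_{k+1}(r,w)=g(r,g_k(w)) \qquad (r\in R,\ w\in R^kX).
\]
This is order preserving, since $R^{k+1}X=R(R^kX)$ and $z\mapsto (r,z)$ respects the anti-lexicographic order in the last coordinate. Each $g_{k+1}$ also factors through $g$, so its image lies in $g[RX]$, which is entirely above $p$.

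The key point is to place the $n$th summand $\rho^n$ strictly between the images of earlier and later summands. Write $R^nX=\sum_{x\in X}R^n$ and, for $x\in X$, let $C^n_x$ denote the copy of $R^n$ sitting over $x$. The plan is recursive.

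First, put $x_0=p$, which handles $\rho^0=1$.

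Next, to fit $\rho^1$ followed by everything later, use $g$. The image of the copy $R\times\{x\}$ lies above $p$, for any $x$. We want the rest of the construction to sit above this copy. So choose $x$ so that $R\times\{x\}$ is not the last copy, and define the later summands inside copies over points above $x$.

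Organized more cleanly, we construct embeddings $h_n\colon \sum_{m\geq n}\rho^m\to X$ by unfolding $1+\rho\varphi\leqslant\varphi$ as a fixed point:
\[
\varphi\;\geqslant\;1+\rho\varphi\;\geqslant\;1+\rho(1+\rho\varphi)\;=\;1+\rho+\rho^2\varphi\;\geqslant\;\cdots\;\geqslant\;1+\rho+\rho^2+\cdots+\rho^n+\rho^{n+1}\varphi.
\]
This uses $\rho(\alpha+\beta)=\rho\alpha+\rho\beta$ (left distributivity, valid for ordered sums) and monotonicity of $\rho\cdot$ and $1+\cdot$. Concretely, the maps $e_n$ realizing these inequalities are compatible: $e_{n+1}$ restricted to the $1+\cdots+\rho^n$ part equals $e_n$ there. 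This holds because $e_{n+1}=e_n\circ(\mathrm{id}+\rho^{n+1}\cdot g)$, and that map is the identity on the initial part. Taking the union of the compatible restrictions gives an embedding of $\sum_{n<\omega}\rho^n$ into $X$, since any two points lie in some common finite stage.

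The main thing to verify is the distributivity step $\rho^{n+1}(1+\rho\varphi)=\rho^{n+1}+\rho^{n+2}\varphi$, together with the fact that $\rho^{n+1}\cdot g$ (applying $g$ in the right factor) is an embedding $\rho^{n+1}(1+\rho\varphi)\to\rho^{n+1}\varphi$. Both are immediate from the anti-lexicographic definition of products. The compatibility of the stages, which makes the union well defined, is the step requiring care, and it holds by construction. If $\varphi=0$ the hypothesis fails, so $X$ is nonempty and no degenerate case arises.
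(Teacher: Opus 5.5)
Your proof is correct and is essentially the paper's argument. Unwinding your compatible maps $e_n$ (embeddings of $1+\rho+\cdots+\rho^n+\rho^{n+1}\varphi$, whose union you take) gives, for $(r_0,\dots,r_{m-1})\in R^m$, the value $g(r_0,g(r_1,\dots,g(r_{m-1},p)\dots))$, which is exactly the paper's embedding $f(\langle x_0,\dots,x_n\rangle)=e(\langle x_0,f(x_1,\dots,x_n)\rangle)$, defined directly by recursion from $\tau=1+\rho\tau$. The exploratory material before ``Organized more cleanly'' (the abandoned placement plan and the never-used $h_n$) can be deleted; note also that your maps $g_k$, together with your observation that the image of $g_{k+1}$ lies above $p$, already give this embedding directly as $t\mapsto g_m(t,p)$ for $t\in R^m$.
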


\begin{proof}
Let $X$ and $Y$ be orders of type $\rho$ and $\varphi$ respectively, and let $e : \{a\} + XY \longrightarrow Y$ be an embedding witnessing that $1 + \rho\varphi \leqslant \varphi$. We recursively define a map
\begin{align*}
f : \sum_{n < \omega} X^n & \longrightarrow Y, \\
x & \longmapsto \begin{cases}
e(a) & \text{ if } x \in X^0, \\
e(\opair{x_0}{f(x_1, \dots, x_n)}) & \text{ if } x = \langle x_0, x_1, \dots, x_n \rangle \in X^{n + 1} \text{ else.}
\end{cases}
\end{align*}
One can prove via an induction argument that $e$ is an embedding.
This embedding witnesses that $\tau \leqslant \varphi$ via the nested sum $\tau = 1 + \rho(1 + \rho(1 + \cdots))$.
\end{proof}

\begin{lemm}\label{Alberto's-lemma}
Suppose $m \geq 1$, and $\varphi_0, \varphi_1, \ldots, \varphi_m$ and $\psi_0, \psi_1, \ldots, \psi_{m-1}$ are order types with $\varphi_i \neq 0$ for all $i < m + 1$. 

If $\sum_{i<m+1} \varphi_i \leqslant \sum_{i<m} \psi_i$, then $1+\varphi_{i+1} \leqslant \psi_i$ for some $i<m$.
\end{lemm}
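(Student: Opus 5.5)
The plan is induction on $m$, with the statement as worded: $1+\varphi_{i+1}\leqslant\psi_i$ for some single index $i<m$. The tool at each step is the first clause of \Cref{strong-sum-inequality-separation}: from $\alpha+\beta\leqslant\gamma+\delta$ it gives $\alpha\leqslant\gamma$ or $1+\beta\leqslant\delta$. Applied to the last summand on each side, this clause matches $\varphi_{m}$ with $\psi_{m-1}$, which is exactly the pairing of indices the statement asks for.

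\emph{Base case $m=1$.} Here the hypothesis is $\varphi_0+\varphi_1\leqslant\psi_0$. Since $\varphi_0\neq 0$, choose a point of $\varphi_0$. Together with a copy of $\varphi_1$, that point gives $1+\varphi_1\leqslant\varphi_0+\varphi_1$. Composing with the hypothesis yields $1+\varphi_1\leqslant\psi_0$, which is the conclusion with $i=0$.

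\emph{Inductive step.} Let $m\geq 2$ and assume the lemma for $m-1$. Group the two sides of the hypothesis as
\[
\Bigl(\sum_{i<m}\varphi_i\Bigr)+\varphi_m\leqslant\Bigl(\sum_{i<m-1}\psi_i\Bigr)+\psi_{m-1}.
\]
By \Cref{strong-sum-inequality-separation}, one of two things holds.
\begin{enumerate}
\item $1+\varphi_m\leqslant\psi_{m-1}$. This is the conclusion with $i=m-1$, since then $\varphi_{i+1}=\varphi_m$ and $\psi_i=\psi_{m-1}$.
\item $\sum_{i<m}\varphi_i\leqslant\sum_{i<m-1}\psi_i$. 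Since $m-1\geq 1$ and $\varphi_0,\dots,\varphi_{m-1}$ are all nonzero, this is an instance of the lemma for $m-1$. The inductive hypothesis then gives some $i<m-1$ with $1+\varphi_{i+1}\leqslant\psi_i$, with the same index on both sides.
\end{enumerate}
In either case some $i<m$ satisfies $1+\varphi_{i+1}\leqslant\psi_i$, which completes the induction.

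The only delicate point is keeping the index alignment exact. Splitting off the last summand on both sides is what guarantees this: the new witness in case (1) pairs $\varphi_m$ with $\psi_{m-1}$, and the witnesses supplied by the inductive hypothesis keep their original indices, because the first $m$ summands on the left and the first $m-1$ on the right are unchanged. Nonemptiness of the $\varphi_i$ is used only in the base case, to supply the extra point in front of $\varphi_1$. The induction starts at $m=1$ so that \Cref{strong-sum-inequality-separation} is never applied with an empty sum of $\psi$'s.
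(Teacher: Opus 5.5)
Your proof is correct and takes the same approach as the paper, whose entire proof is ``By induction on $m$.'' Your base case, together with the inductive step that splits off the last summand on each side via the first clause of \Cref{strong-sum-inequality-separation}, supplies the details that induction leaves implicit, and it keeps the indices aligned correctly.
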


\begin{proof}
By induction on $m$.
\end{proof}

\begin{prop}
\label{untranscendability of sums of finite powers}
Suppose $\rho$ is any order type. Then both the type
\[
\tau = \sum_{n \in \omega} \rho^n = 1 + \rho + \rho^2 + \cdots
\]
and the type
\[
\tau' = \sum_{n \in \omega^*} \rho^n = \cdots + \rho^2 + \rho + 1 
\]
are untranscendable. 
\end{prop}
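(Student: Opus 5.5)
The second claim follows from the first by reversal. We have $(\tau')^* = \sum_{n \in \omega} (\rho^*)^n$, which is the type $\tau$ built from $\rho^*$, and untranscendability is invariant under reversal. So I only treat $\tau = \sum_{n\in\omega}\rho^n$. The cases $\rho \leqslant 1$ are trivial ($\tau = 1$ or $\tau = \omega$), so I assume $\rho \geqslant 2$. The basic identity I will exploit is
\[
\tau = \Big(\sum_{n<k}\rho^n\Big) + \rho^k\tau \qquad\text{for every } k<\omega,
\]
so that $\rho^k\tau \leqslant \tau$ for all $k$. Now fix $\psi,\sigma \leqslant \tau$ with $\tau \leqslant \psi\sigma$, and assume $\tau \isntcontainedin \sigma$; the goal is $\tau \leqslant \psi$.

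\emph{Step 1.} For each $k$ we have $\rho^k\tau \leqslant \tau \leqslant \psi\sigma$. Applying \Cref{product-lemma}, or better \Cref{new-product-lemma} (with $\rho^k$ in the left slot and $\tau$ in the right), gives $\rho^k \leqslant \psi$, indeed $1+\rho^k \leqslant \psi$ and $\rho^k+1 \leqslant \psi$, for every $k$, since $\tau \leqslant \sigma$ is excluded.

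\emph{Step 2.} Fix an embedding $g\colon \psi \to \tau$ and split into cases according to whether the image of $g$ is bounded in the $\omega$-indexing of the summands.

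If the image is bounded, then $\psi \leqslant \sum_{i\leq m}\rho^i$ for some $m$. Combined with $\rho^{m+1} \leqslant \psi$ this yields $\sum_{i<m+2}\rho^i \leqslant \sum_{i<m+1}\rho^i$. Indeed, since $\rho \neq 0$ one can pad $\rho^{m+1}$ by an initial piece; more precisely $\sum_{i\le m+1}\rho^i \leqslant \rho^{m+2} \leqslant \psi$, again by Step 1. \Cref{Alberto's-lemma} then gives some $i$ with $1+\rho^{i+1} = 1+\rho\rho^i \leqslant \rho^i$. By \Cref{sum-of-powers-lemma} this forces $\tau \leqslant \rho^i \leqslant \psi$, as desired.

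\emph{Step 3.} If the image of $g$ is unbounded, I instead analyse a fixed embedding $f\colon\tau\to\psi\sigma$ through its $\sigma$-coordinates. Consider the tails $\rho^k\tau \subseteq \tau$. If for some $k$ the whole tail $\rho^k\tau$ is mapped into a single copy of $\psi$, then $\tau \leqslant \rho^k\tau \leqslant \psi$ and we are done. Otherwise every tail meets infinitely many, or at least several, $\psi$-copies. In that situation I try to build, by recursion along the nested decomposition $\tau = 1+\rho(1+\rho(\cdots))$ used in \Cref{sum-of-powers-lemma}, an embedding of $1+\rho\sigma'$ into $\sigma'$ for a suitable final segment $\sigma'$ of $\sigma$. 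This would give $\tau \leqslant \sigma$ via \Cref{sum-of-powers-lemma}, contradicting the assumption.

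Step 3 is the step I expect to be the main obstacle: copies of $\rho^k$ inside $\psi$ provided by Step 1 need not be positioned consecutively, so assembling them into a copy of $1+\rho+\rho^2+\cdots$ requires controlling where $f$ sends successive summands. If the direct recursion fails, I would try first to use the unboundedness of $g$ to write $\psi$ as an $\omega$-sum of pieces $\psi_n \leqslant \rho^n$. I would then apply \Cref{Alberto's-lemma} blockwise to the images of long finite initial sums of $\tau$, so as to extract either a self-embedding $1+\rho^{i+1}\leqslant\rho^i$, which finishes as in Step 2, or a cofinal sequence of disjoint consecutive copies of $\rho^n$ inside $\psi$, which gives $\tau\leqslant\psi$.
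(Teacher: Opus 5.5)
Your Steps 1 and 2 are sound. Step 1 is a neat shortcut compared with the paper. You use $\rho^k\tau\leqslant\tau$ together with \Cref{product-lemma} to pin down that $\psi$ receives every power $\rho^k$ as soon as $\tau\not\leqslant\sigma$. The paper instead embeds $\rho^n(n+1)\rho^n(n+1)$ into $\rho^{4n}$ and uses monotonicity in $n$ to find one factor containing all the $\rho^n(n+1)$. Step 2 is also correct: the padding inequality $\sum_{i\le m+1}\rho^i\leqslant\rho^{m+2}$ follows by induction from $\rho^j+\rho^j=\rho^j2\leqslant\rho^{j+1}$ (as $\rho\geqslant 2$).

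However, Step 3, which carries the real content of the proposition, is not a proof, and its main line cannot work. Take $\rho=\omega^*$, $\psi=\tau$, $\sigma=\omega$, and let $f$ send the $n$th summand $\rho^n$ of $\tau$ into the $n$th copy of $\psi$. Then:
\begin{itemize}
\item $\psi,\sigma\leqslant\tau$ and $\tau\not\leqslant\sigma$;
\item $g=\mathrm{id}$ is unbounded;
\item every tail $\rho^k\tau$ meets infinitely many copies of $\psi$;
\item yet every final segment $\sigma'$ of $\omega$ is $\omega$, and $1+\omega^*\omega\not\leqslant\omega$.
\end{itemize}
So ``every tail meets several copies'' is perfectly consistent with $\tau\not\leqslant\sigma$. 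No contradiction can be extracted on the $\sigma$ side, and $\tau\leqslant\psi$ must be built inside $\psi$ directly. Your fallback names the right dichotomy but supplies none of the argument that produces it.

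The missing argument is as follows. The bounded/unbounded split is a red herring; split instead on whether $1+\rho^{i+1}\leqslant\rho^i$ for some $i$. If so, you finish exactly as at the end of Step 2 ($\tau\leqslant\rho^i\leqslant\psi$), whatever $g$ does. If not, let $X=g[\psi]\subseteq\tau$ and build an embedding of $\tau$ into $X$ by recursion.
\begin{enumerate}
\item Suppose $e_k$ embeds $\sum_{n<k}\rho^n$ into $X\cap\sum_{n<m}\rho^n$ for some $m\geq k$.
\item By Step 1, $\sum_{n\le m+1}\rho^n\leqslant\rho^{m+2}\leqslant\psi$. So fix a copy of $\sum_{n\le m}\rho^n+\rho^{m+1}$ in $X$, and apply \Cref{sum-inequality-separation} to the cut $X=(X\cap\sum_{n<m}\rho^n)+(X\cap\sum_{n\ge m}\rho^n)$.
\item The first alternative, $\sum_{n<m+1}\rho^n\leqslant\sum_{n<m}\rho^n$, is excluded by \Cref{Alberto's-lemma} together with the case hypothesis. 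Hence $\rho^{m+1}\leqslant X\cap\sum_{n\ge m}\rho^n$.
\item Since $\rho^k2\leqslant\rho^{k+1}\leqslant\rho^{m+1}$, this gives a copy of $\rho^k$ in $X$ lying to the right of the image of $e_k$. It is bounded above in $\tau$ by a point of the second copy, so it lies inside some $\sum_{n<m'}\rho^n$.
\item Appending this copy gives $e_{k+1}$, and the union of the $e_k$ witnesses $\tau\leqslant\psi$.
\end{enumerate}
This is essentially the paper's argument, which uses a copy of $\rho^{m+1}(m+2)$ in place of $\sum_{n\le m+1}\rho^n$. It is precisely the step your proposal leaves open.
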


\begin{proof}
For reasons of symmetry it is sufficient to prove only the first assertion. If $\rho$ is finite, then either $\tau = 1$ (when $\rho = 0$) or $\tau = \omega$, and in both cases $\tau$ is untranscendable. 

From now on we assume that $\rho$ is infinite, and that $\varphi \leqslant \tau$ and $\psi \leqslant \tau$ are such that $\tau \leqslant \varphi\psi$. 

If $1 + \rho^{1 + n} \leqslant \rho^n$ for some $n$, by \Cref{sum-of-powers-lemma} we have $\tau \leqslant \rho^n$.
Since $\rho^{2n} \leqslant \tau \leqslant \varphi\psi$ by \Cref{product-lemma} we have either $\rho^n \leqslant \varphi$ or $\rho^n \leqslant \psi$, yielding either $\tau \leqslant \varphi$ or $\tau \leqslant \psi$, as needed.

We now assume $1 + \rho^{1 + n} \isntcontainedin \rho^n$ for every $n$.
Since $\rho$ is infinite, then for every $n$, $\rho^n (n+1) \rho^n (n+1) \leqslant \rho^{4n} \leqslant \tau \leqslant \varphi\psi$. 
By \Cref{product-lemma} we must have either $\rho^n (n+1) \leqslant \varphi$ or $\rho^n (n+1) \leqslant \psi$.
As $\rho^n (n+1) \leqslant \varphi$ implies $\rho^k (k+1) \leqslant \varphi$ for every $k \leqslant n$ (and the analogous statement with $\psi$ in place of $\varphi$ holds true as well), we have that either $\rho^n (n+1) \leqslant \varphi$ for all $n$ or $\rho^n (n+1) \leqslant \psi$ for all $n$. 
Let us assume that the latter is the case (the  argument for the other case is analogous).

Then for any order $X$ of type $\psi$ and any $n$, any order of type $\rho^n (n+1)$ can be embedded into $X$ and in fact into an initial segment of it (using the embedding of $\rho^{n+1} (n+2)$ into $X$). 
Let $Y$ be an order of type $\rho$ and fix $X \subset \sum_{n < \omega} Y^n$ of type $\psi$ ($X$ exists because $\psi \leqslant \tau$). 
We inductively define an embedding
\begin{align*}
e : \sum_{n < \omega} Y^n & \longrightarrow  X
\intertext{thus showing $\tau$ to be untranscendable.
Suppose that at stage $k$ of the construction we defined an embedding}
e_k : \sum_{n < k} Y^n & \longrightarrow X \subset \sum_{n < \omega} Y^n
\end{align*}
into an initial segment of $X$. Let $m \geq k$ be such that the image of $e_k$ is included in $\sum_{n < m} Y^n$.
Since $\rho^{m+1} (m+2) \leqslant \psi$ there exists an embedding $f: Y^{m+1} (m+2) \longrightarrow X$ witnessing this. 
By \Cref{sum-inequality-separation}, since $X = (X \cap \sum_{n < m} Y^n) + (X \cap \sum_{n \geq m} Y^n)$, $f$ witnesses either $Y^{m+1} (m+1) \leqslant X \cap \sum_{n < m} Y^n$ or $Y^{m+1} \leqslant X \cap \sum_{n \geq m} Y^n$.
By \Cref{Alberto's-lemma} the first case implies $1 + Y^{m+1} \leqslant Y^n$ for some $n<m$, contradicting $1 + \rho^{m+1} \isntcontainedin \rho^m$.
Therefore $f$ maps a set of order type $\rho^{m+1}$ into $X \cap \sum_{n \geq m} Y^n$ and a fortiori a set of order type $\rho^k$ into an initial segment of
$X \cap \sum_{n \geq m} Y^n$. 
Now we can use $f$ to extend $e_k$ to an embedding $e_{k + 1}$ of $\sum_{n < k + 1} Y^n$ into an initial segment of $X$.
\end{proof}

\begin{examples}\label{ex:untr}
\phantom{}
\begin{enumerate}[leftmargin=*,label=(\roman*)]
    \item\label{ex:untr.not s-untr} By \Cref{untranscendability of sums of finite powers}, the sums
    \[
    \Phi = 1 + \omega^* + (\omega^*)^2 + \cdots = \sum_{n \in \omega} (\omega^*)^n
    \]
    and 
    \[
    \Phi^* = \cdots + \omega^2 + \omega + 1 = \sum_{n \in \omega^*} \omega^n
    \]
    are untranscendable. These are scattered types that are neither ordinals nor reverse ordinals. 
    
    We claim that $\Phi$ and $\Phi^*$ are neither $s$-untranscendable nor product closed. Let us check this for $\Phi^*$. Since $\omega^n$ embeds in $\omega^{\omega}$ for every $n$, we have that $\Phi^* \leqslant \cdots + \omega^{\omega} + \omega^{\omega} = \omega^{\omega} \omega^*$. Clearly, $\Phi^* \isntcontainedin \omega^{\omega}$ and $\Phi^* \isntcontainedin \omega^*$, since $\Phi^*$ is neither well-ordered nor reverse well-ordered. Thus $\Phi^*$ is not $s$-untranscendable. To see that $\Phi^*$ is not product closed, observe that since $\Phi^*$ is an $\omega^*$-sum of ordinals, any infinite descending sequence in $\Phi^*$ (that is, any suborder of $\Phi$ isomorphic to $\omega^*$) must be unbounded to the left in $\Phi^*$. It follows that $\omega^* 2 \isntcontainedin \Phi^*$. Since both $2 < \Phi^*$ and $\omega^* < \Phi^*$, this shows $\Phi^*$ is not product closed.

    Interestingly, these order types are also instrumental in showing that not even all initial ordinals are $s$-untranscendable, cf.\ \Cref{ut_but_not_sut_ord<=>sing_lim_ord} below.
    
    \item \Cref{embedding one's square} gives that if $\varphi$ is a type embedding its square $\varphi^2$, then $\varphi$ is not only untranscendable but in fact $s$-untranscendable and product closed.
    Thus $\eta$ is $s$-untranscendable and product closed. 
    \item Since $\mathbb{R}$ is separable, any collection of pairwise disjoint intervals in $\mathbb{R}$ must be countable. It follows that $2 \lambda \isntcontainedin \lambda$ and, a fortiori, $\lambda^2 \isntcontainedin \lambda$. Thus we cannot use \Cref{embedding one's square} to prove that $\lambda$ is untranscendable. However, $\lambda$ \emph{is} untranscendable. In fact, $\lambda$ is $s$-untranscendable. This follows from a more general fact; see \Cref{homogeneity =>s-untranscendability}. 
\end{enumerate}    
\end{examples}

\begin{defi}
A linear order $X$ is \emph{homogeneous} if whenever $I \subseteq X$ is a non-degenerate interval in $X$, then $X \leqslant I$.

An order type $\varphi$ is \emph{homogeneous} if some (equivalently every) linear order $X$ of type $\varphi$ is homogeneous. 
\end{defi}

The types $0$, $1$, and $2$ are homogeneous. Any other homogeneous type must be dense. For example, both $\eta$ and $\lambda$ are homogeneous types, since $\eta$ and $\lambda$ are isomorphic to all of their open intervals. 

\begin{prop}
\label{homogeneity =>s-untranscendability}
If $\varphi$ is a homogeneous type, then $\varphi$ is $s$-untranscendable. 
\end{prop}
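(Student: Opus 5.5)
Fix orders $X$, $Y$, $Z$ of types $\varphi$, $\psi$, $\tau$ and an embedding $e: X \to YZ$. We must show that $X$ embeds in $Y$ or in $Z$. As in the proof of \Cref{new-product-lemma}, write $Y_z$ for the $z$-th copy of $Y$ in $YZ$, and let $\pi: YZ \to Z$ be the projection $\opair{y}{z} \mapsto z$. The composite $\pi \circ e: X \to Z$ is weakly order preserving, so its fibres $C_z = e^{-1}[Y_z]$ are intervals of $X$, and they form a condensation of $X$. The proof is a case split according to the shape of these fibres.

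In the first case some fibre $C_z$ is non-degenerate. Then $C_z$ is a non-degenerate interval of $X$. By homogeneity $X \leqslant C_z$, and $e$ restricted to $C_z$ maps into $Y_z \cong Y$. Hence $\varphi \leqslant \psi$.

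In the second case every fibre has at most one point. Then $\pi \circ e$ is injective and weakly monotone, hence strictly order preserving, and it embeds $X$ into $Z$. Hence $\varphi \leqslant \tau$.

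These two cases are exhaustive, which completes the argument. The degenerate types need no separate treatment. For $\varphi = 0$ the statement is trivial. For $\varphi = 1$ or $\varphi = 2$ the dichotomy still works: if $\varphi = 2$ and both points land in the same copy $Y_z$, then $2 \leqslant \psi$; otherwise the two points land in different copies and $2 \leqslant \tau$. This agrees with the general argument, since the whole of $X$ is then a non-degenerate interval.

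The only point needing care is to check that the fibres $C_z$ are convex. This holds because $\pi \circ e$ is weakly monotone: if $a < b < c$ with $\pi e(a) = \pi e(c) = z$, then $z \leq \pi e(b) \leq z$, so $b \in C_z$. Beyond this there is no real obstacle. The argument is the multiplicative analogue of the observation that homogeneity yields indecomposability.
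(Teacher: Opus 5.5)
Your proof is correct and is essentially the paper's argument. If two points land in the same copy of $\psi$, homogeneity gives $\varphi \leqslant \psi$; otherwise the projection onto the right factor embeds $\varphi$ into $\tau$. The only difference is that the paper works with the interval $[x,x']$ between two such points rather than with the whole fibre $C_z$.
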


\begin{proof}
Fix an order $X$ of type $\varphi$ and suppose we have orders $Y$ and $Z$ and an embedding $f: X \rightarrow ZY$. Given $y \in Y$, we write $Z_y$ for the $y$th copy of $Z$ in $ZY$, that is $Z_y = \{\opair{z}{y} \in ZY: z \in Z \}$. In particular, $Z_y \cong Z$. 

If for some $y \in Y$ there exist $x < x'$ in $X$ such that $f(x), f(x') \in Z_y$ then $Z_y$ contains the image of the non-degenerate interval $[x, x']$. 
By homogeneity, $X$ embeds in $[x, x']$ and hence in $Z_y$, so that $X \leqslant Z$. 
If instead for every $y \in Y$ there is at most one $x \in X$ such that $f(x) \in Z_y$ it is easy to define an embedding of $X$ in $Y$.
\end{proof}

It follows immediately from \Cref{homogeneity =>s-untranscendability} that $\lambda$ is $s$-untranscendable, despite the fact that $\lambda^2 \isntcontainedin \lambda$. However, $\lambda$ is not product closed. Sierpi\'nski showed that there exist suborders $A \subseteq \mathbb{R}$ of the same cardinality as $\mathbb{R}$ that do not embed $\mathbb{R}$ (cf.\ \cite[Th\'eor\`eme 1]{950S1} and also \cite[Theorem 9.10]{982R0}). Any such $A$ is in particular uncountable, so that $2A$ does not embed in $\mathbb{R}$ (by separability). Letting $\psi = \textrm{otp}\langle A \rangle$, we have $2 < \lambda$, $\psi < \lambda$, but $2 \psi \isntcontainedin \lambda$, giving that $\lambda$ is not product closed. 

While every homogeneous type is $s$-untranscendable by \Cref{homogeneity =>s-untranscendability}, not every $s$-untranscendable type is homogeneous. For example, $\omega$ and $\omega^*$ are not homogeneous, but they are $s$-untranscendable. 
%To see this for $\omega^*$, suppose that $\omega^* \leqslant \psi \tau$ for some order types $\psi$ and $\tau$. If it were the case that $\omega^* \isntcontainedin \psi$ and $\omega^* \isntcontainedin \tau$, then both $\psi$ and $\tau$ are well-ordered types, i.e., ordinals. Since the product of two ordinals is also an ordinal, it follows $\omega^* \isntcontainedin \psi \tau$, a contradiction. 

\subsection{A Hagendorf-Jullien type theorem for $s$-untranscendable types} Our goal in this section is to prove an analog\ue\ for $s$-untranscendable types of the Hagendorf-Jullien theorem (\Cref{Hagendorf's} above) for indecomposable types. 

By \Cref{containing two copies}, if $\varphi$ is a type embedding $\varphi + \varphi$, then $\varphi$ is indecomposable, and $\varphi$ embeds both $\varphi + 1$ and $1 + \varphi$ (assuming $\varphi$ is non-empty). The Hagendorf-Jullien theorem says precisely that the converse is true: if $\varphi$ is indecomposable and embeds both $1 + \varphi$ and $\varphi + 1$, then $\varphi$ embeds $\varphi + \varphi$ (cf.\ \Cref{1+phi<=phi + phi+1<=phi  ==>  phi+phi<=phi}). 

Analogously, by \Cref{embedding one's square}, if $\varphi$ is a type embedding $\varphi\varphi$, then $\varphi$ is untranscendable (in fact, $s$-untranscendable and product closed), and $\varphi$ embeds both $\varphi 2$ and $2 \varphi$ (assuming $\varphi$ is not the singleton type $1$ or the zero type $0$). Our analog\ue\ of Hagendorf's result is the following theorem, which says that the converse is true. 

\begin{theo}
\label{Garrett's Theorem}
If $\varphi$ is $s$-untranscendable and both $\varphi 2 \leqslant \varphi$ and $2 \varphi \leqslant \varphi$, then $\varphi^2 \leqslant \varphi$. 
\end{theo}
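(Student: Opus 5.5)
The plan is to argue by contradiction through a condensation of a fixed order $X$ of type $\varphi$, using $s$-untranscendability to push $\varphi$ into the quotient and then $2\varphi \leqslant \varphi$ to recover $\varphi^2 \leqslant \varphi$. Define $x \sim y$ if and only if the interval $[\{x,y\}]$ does not embed $\varphi$. First I will check that $\sim$ is a condensation, i.e.\ that it is transitive. If $x<y<z$ with $x\sim y$ and $y\sim z$, then $[x,z]=[x,y]+(y,z]$. If $\varphi \leqslant [x,z]$, then by \Cref{sum-inequality-separation} applied to $\varphi\leqslant\varphi 2 \leqslant [x,y]+(y,z]$, one of the two pieces embeds $\varphi$, which is a contradiction. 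So $X\cong\sum_{c\in X/\sim} C_c$, and no class $C_c$ embeds $\varphi$, since $\varphi$ would then embed in an interval with endpoints in $C_c$. Note also that $X/\sim$ is nontrivial: if it were a single class, $X$ itself would not embed $\varphi$.

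The second step turns an embedding of $\varphi$ into the quotient into the conclusion. Suppose $g\colon X\to X/\sim$ is an embedding. Iterating $2\varphi\leqslant\varphi$ gives $3\varphi\leqslant\varphi$, hence an embedding $g'\colon 3X\to X/\sim$. For each $x\in X$ this gives three classes $c^x_0<c^x_1<c^x_2$, and for distinct $x$ the triples are ordered blockwise. Pick $a_x\in c^x_0$ and $b_x\in c^x_2$. Since $a_x\not\sim b_x$, the interval $[a_x,b_x]$ embeds $\varphi$, and for $x<x'$ we have $[a_x,b_x]<[a_{x'},b_{x'}]$. Concatenating the resulting copies of $\varphi$ gives $\varphi\varphi\leqslant\varphi$. (The same argument with $\varphi 2\leqslant\varphi$ handles the symmetric variant if needed.)

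It therefore suffices to show $\varphi\leqslant X/\sim$, and this is where $s$-untranscendability is used. I want a single order $\Psi$ into which every class $C_c$ embeds but which does not embed $\varphi$. Then $X\leqslant \Psi\cdot(X/\sim)$, by mapping each class $C_c$ into the $c$-th copy of $\Psi$. By $s$-untranscendability, either $\varphi\leqslant\Psi$, which is excluded, or $\varphi\leqslant X/\sim$, which is what we want.

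The main obstacle is constructing $\Psi$. The naive choice $\Psi=\sum_c C_c$ in the original order is $X$ itself, so that does not work. My first attempt is to reindex the classes so that they cannot reassemble $\varphi$, for instance $\Psi=\sum_{c\in (X/\sim)^*} C_c$ or a sum over a suitable well-ordering of $X/\sim$. I would then analyze a hypothetical embedding $h\colon X\to\Psi$ via the condensation it induces on $X$: the preimages of the summands are convex pieces, each embedding into a single class, and \Cref{product-lemma}- and \Cref{sum-inequality-separation}-style separation, combined with $\varphi 2\leqslant\varphi$ and $2\varphi\leqslant\varphi$, should force some piece to embed $\varphi$, contradicting the choice of classes. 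If no uniform $\Psi$ works, the fallback is to refine the condensation, for example taking $x\sim y$ when $[\{x,y\}]$ fails to embed $\varphi 2$, so that the classes become more uniform. Alternatively, I would apply $s$-untranscendability to $X\leqslant\Psi\tau$ with $\tau$ chosen as the quotient by the finest condensation whose classes all embed into one fixed non-$\varphi$-embedding type, and then rerun the second step for that condensation.
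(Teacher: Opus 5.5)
Your overall architecture is the paper's: the same condensation $x\sim y$ iff $\varphi\not\leqslant[\{x,y\}]$, and the same reduction via $X\leqslant\Psi\,(X/{\sim})$ and $s$-untranscendability. Your second step, from $\varphi\leqslant X/{\sim}$ and $2\varphi\leqslant\varphi$ to $\varphi^2\leqslant\varphi$, is correct (two classes per point already suffice). It is slightly more direct than the paper's route through homogeneity of the quotient.

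\textbf{The gap.} Everything hinges on producing $\Psi$, and there you only offer candidates and the hope that separation arguments ``should force'' some piece to embed $\varphi$. This is not a formality. For an arbitrary reindexing, the pieces $h^{-1}[C_c]$ may all lie inside single $\sim$-classes, so no separation lemma alone helps. Your first candidate already fails for $\varphi=\eta$: all classes are singletons, so $\sum_{c\in(X/\sim)^*}C_c\cong\eta^*\cong\eta$.

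\textbf{The missing idea.} Use a well-ordering and look at the \emph{first} summand hit, as the paper does.
\begin{enumerate}
\item Enumerate the classes as $\{I_\alpha:\alpha<\kappa\}$ along an ordinal and put $\Psi=\sum_{\alpha<\kappa}I_\alpha$.
\item Given an embedding $h\colon X\to\Psi$, let $\alpha$ be least with $h[X]\cap I_\alpha\neq\emptyset$. If $X$ has a first class, work in $X$ minus that class instead. Minimality makes $h^{-1}[I_\alpha]$ a non-empty initial segment.
\item You also need that $X/{\sim}$ is dense, which you have not shown. It follows from $\varphi 2\leqslant\varphi$: if $a\not\sim b$, then $[a,b]$ embeds $\varphi 5$, and a point of the middle copy lies in a class strictly between $[a]$ and $[b]$.
\item By density no first class remains, so this initial segment meets two classes. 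It therefore contains some $[x,y]$ with $x\not\sim y$, and $\varphi\leqslant[x,y]\leqslant I_\alpha$. This contradicts the fact that no class embeds $\varphi$.
\end{enumerate}
It is the well-ordering that guarantees the preimage of the first summand hit is an initial segment.

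\textbf{A smaller point.} Your reason why no class embeds $\varphi$ is incomplete. A copy of $\varphi$ inside $C_c$ need not be bounded by points of $C_c$. Use a copy of $\varphi 3$ instead: its middle copy lies between points of the outer two.
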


Observe that no two of the hypotheses in \Cref{Garrett's Theorem} are sufficient to derive the conclusion. Indeed, $\varphi = \otR$ satisfies $s$-untranscendability and $\varphi 2 \leqslant \varphi$; $\varphi = \omega$ satisfies $s$-untranscendability and $2 \varphi \leqslant \varphi$; and $\varphi = \omega \lambda$ satisfies $\varphi 2 \leqslant \varphi$ and $2 \varphi \leqslant \varphi$; but none of these orders satisfy $\varphi^2 \leqslant \varphi$. 

We do not know if \Cref{Garrett's Theorem} remains true if we assume untranscendability in place of $s$-untranscendability. See \Cref{2x=x=x2 =?=> x=xx} below. 

We will prove \Cref{Garrett's Theorem} in a sequence of lemmas.

\begin{lemm}
\label{homogeneity + 2x<=x  ==>  xx<=x}
If $\varphi$ is a homogeneous type and $2 \varphi \leqslant \varphi$, then $\varphi^2 \leqslant \varphi$.
\end{lemm}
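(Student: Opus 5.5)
Fix an order $X$ of type $\varphi$ and an embedding $g: 2X \rightarrow X$, where $2X$ is the order obtained by replacing each $x \in X$ with a two-point set $\{\opair{0}{x}, \opair{1}{x}\}$. The goal is to embed $XX$ into $X$. In $XX$, each point $x \in X$ of the outer factor carries a copy $X_x$ of $X$, and these copies are pairwise disjoint and ordered like $X$. The plan is to use $g$ to produce, inside $X$, a family of pairwise disjoint non-degenerate intervals indexed by $X$ and ordered like $X$. Homogeneity will then give an embedding of $X$ into each of them.

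\emph{Step 1.} If $\varphi \le 1$ the statement is trivial ($0^2 = 0$, $1^2 = 1$). If $\varphi = 2$, the hypothesis $2\varphi = 4 \leqslant 2$ fails. So I may assume $\varphi$ is dense, since the paper notes that every homogeneous type other than $0,1,2$ is dense.

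\emph{Step 2.} For each $x \in X$, let $I_x = [g(\opair{0}{x}), g(\opair{1}{x})]$. Since $g(\opair{0}{x}) < g(\opair{1}{x})$, each $I_x$ is a non-degenerate closed interval of $X$. If $x < x'$, then $\opair{1}{x} < \opair{0}{x'}$ in $2X$, so $g(\opair{1}{x}) < g(\opair{0}{x'})$ and hence $I_x < I_{x'}$. Thus the $I_x$ are pairwise disjoint and ordered by $x$.

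\emph{Step 3.} By homogeneity, for each $x$ choose an embedding $h_x: X \rightarrow I_x$. Define $H: XX \rightarrow X$ by $H(\opair{y}{x}) = h_x(y)$. If two points lie in the same copy $X_x$, the order is preserved because $h_x$ is an embedding. If $\opair{y}{x} < \opair{y'}{x'}$ with $x < x'$, then $H(\opair{y}{x}) \in I_x < I_{x'} \ni H(\opair{y'}{x'})$. So $H$ is an embedding and $\varphi^2 \leqslant \varphi$.

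The only delicate point is Step 3, which chooses one embedding $h_x$ for each of $|X|$ many intervals and so uses choice. If one wants to avoid this, a single uniform embedding is not available in general, so I would simply note the use of \AC\ here. Steps 1 and 2 are routine, and Step 1 is not actually needed, since Step 2 already produces non-degenerate intervals whenever $X$ is non-empty.
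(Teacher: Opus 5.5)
Your proof is correct and is essentially the paper's argument: the paper likewise uses $2\varphi \leqslant \varphi$ to obtain $\varphi$-many pairwise disjoint non-degenerate intervals and then applies homogeneity to embed $\varphi$ into each of them. You simply make the details explicit (the intervals $[g(\opair{0}{x}), g(\opair{1}{x})]$ and the glued map $H$). Your remark that choosing the embeddings $h_x$ uses choice applies equally to the paper's proof.
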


\begin{proof}
The hypothesis $2 \varphi \leqslant \varphi$ implies that we can find a collection of $\varphi$-many pairwise disjoint non-degenerate intervals in $\varphi$. By homogeneity, $\varphi$ embeds in each of these intervals, and it follows $\varphi^2 \leqslant \varphi$. 
\end{proof}

To prove \Cref{Garrett's Theorem} we will show that if $\varphi$ is $s$-untranscendable and $\varphi 2 \leqslant \varphi$, then $\varphi$ is equimorphic to a homogeneous type. It then follows from \Cref{homogeneity + 2x<=x  ==>  xx<=x} that $\varphi^2 \leqslant \varphi$.

\Cref{selfsimcondensationlemma} below indicates how the hypothesis $\varphi 2 \leqslant \varphi$ will be used. The idea is that if $\varphi 2 \leqslant \varphi$, then we can get a condensation of $\varphi$ by condensing the maximal intervals in $\varphi$ that do not embed a copy of $\varphi$, and the resulting condensed order is homogeneous. 
It is convenient to state the lemma in terms of orders rather than order types. 

\begin{lemm} \label{selfsimcondensationlemma}
Suppose that $X$ is a linear order such that $X2 \leqslant X$. Define a binary relation $E$ on $X$ by the rule $x E x'$ if and only if $X \isntcontainedin [\{x, x'\}]$.

Then $E$ is a condensation on $X$, no $E$-equivalence class embeds $X$, and the induced order $X / E$ is homogeneous. 
\end{lemm}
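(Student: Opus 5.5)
We check, in order, that $E$ is an equivalence relation with convex classes, that no class embeds $X$, and that $X/E$ is homogeneous. Throughout, the hypothesis $X2 \leqslant X$ is what lets us split a copy of $X$ into two copies lying side by side.

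\emph{$E$ is a condensation.} Reflexivity and symmetry are immediate, since $[\{x,x\}]$ is a singleton (we may assume $X$ has at least two points, as the cases $0$ and $1$ are trivial). Convexity of the classes is also clear: if $x<y<z$ and $xEz$, then $[x,y]\subseteq[x,z]$, and $[x,z]$ does not embed $X$, so $xEy$.

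For transitivity, suppose $xEy$ and $yEz$ but $X \leqslant [\{x,z\}]$. The only nontrivial configuration is when $y$ lies between $x$ and $z$, say $x<y<z$; otherwise one interval contains the other and we are done. Then $[x,z]=[x,y]\cup(y,z]$ is a sum of two intervals, neither of which embeds $X$. Since $X2\leqslant X\leqslant [x,y]+(y,z]$, \Cref{sum-inequality-separation} gives $X\leqslant[x,y]$ or $X\leqslant(y,z]$, a contradiction.

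\emph{No class embeds $X$.} Suppose a class $C$ contains a copy $f[X]$ of $X$. Using $X2\leqslant X$, pick an embedding $g\colon X+X\to X$ and compose with $f$. Choose $a$ in the image of the first copy and $b$ in the image of the second copy. The second copy lies entirely above $a$, and the first lies entirely below $b$. Taking $a$ in the first copy and $b$ in the second, both in $C$ by convexity, we want $X\leqslant[a,b]$, which would contradict $aEb$.

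This needs care, because the copies are not inside $[a,b]$ unless they are bounded. The fix is to use $X4\leqslant X$ (iterate the hypothesis) and take $a$ in the first copy and $b$ in the fourth. Then the second copy lies inside $[a,b]$, so $X\leqslant[a,b]$ with $a,b\in C$, contradicting $aEb$.

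\emph{$X/E$ is homogeneous.} Let $I\subseteq X/E$ be a non-degenerate interval, containing classes $[a]<[b]$. Then $\neg(aEb)$, so $X\leqslant[a,b]$. We want $X/E\leqslant I$. The preimage $\bigcup I$ is convex in $X$ and contains $[a,b]$. Given an embedding $h\colon X\to[a,b]$, we would like to define $X/E\to I$ by sending $[x]$ to $[h(x)]$. This is weakly monotone, but the map may collapse distinct classes, and this is the main obstacle.

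To fix it, use that $\neg(xEx')$ means $X\leqslant[x,x']$, so $h[[x,x']]$ contains a copy of $X$ inside $[h(x),h(x')]$. By the previous step, a copy of $X$ cannot lie in a single class, so $[h(x),h(x')]$ meets at least two classes. The remaining issue is whether $h(x)$ and $h(x')$ themselves lie in different classes. Again use doubling: from $X\leqslant[x,x']$ and $X3\leqslant X$, the interval $[x,x']$ contains three consecutive copies of $X$. If $x$ and $x'$ are not equivalent, consider the $E$-classes of $x$ and $x'$. The class of $x$ cannot reach past a full copy of $X$ placed above it inside the interval, since then that class would contain a copy of $X$. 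The same holds for $x'$. Hence some point strictly between lies in neither class, so $x$ and $x'$ are separated by a point whose classes differ from both.

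I therefore expect to replace the naive map by choosing representatives: for each class $c$ pick $x_c\in c$, and send $c$ to $[h(x_c)]$. Strict monotonicity of this map then reduces to showing that $\neg(x_cEx_d)$ implies $\neg(h(x_c)Eh(x_d))$. That holds because $[h(x_c),h(x_d)]\supseteq h[[x_c,x_d]]$, which embeds $X$. This gives $X/E\leqslant I$, and homogeneity follows.
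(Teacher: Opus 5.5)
Your proof is correct and follows the paper's argument. Transitivity comes from the sum-separation corollary applied to $X2\leqslant[x,y]+(y,z]$; several side-by-side copies of $X$ show that no class embeds $X$ (the paper uses $X3$, which already suffices, where you use $X4$); and homogeneity comes from the map $c\mapsto[h(x_c)]$ on chosen representatives, which is exactly the paper's choice of $q_I\in f[I]$. The middle detour in your homogeneity paragraph (the three copies separating the classes of $x$ and $x'$) is unnecessary, since $\neg(h(x_c)\,E\,h(x_d))$ follows directly from the definition of $E$ once $h[[x_c,x_d]]\subseteq[h(x_c),h(x_d)]$ embeds $X$. Note also that the naive map $[x]\mapsto[h(x)]$ is not even well defined, and choosing representatives fixes this as well.
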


\begin{proof}
We first show that $E$ is a condensation, i.e.\ an equivalence relation on $X$ whose equivalence classes are intervals. 

It is clear that if $x \leq y \leq z$ and $x E z$ then $x E y$.
It remains to show $E$ is an equivalence relation. It follows immediately from the definition that $E$ is reflexive and symmetric. 

To see that $E$ is transitive, suppose that $x, y, z \in X$ and $x E y E z$. We wish to show $x E z$, that is, that the closed interval between $x$ and $z$ does not embed $X$. There are six possible ways the points $x, y, z$ might be ordered with respect to one another. By the convexity of $E$, it suffices to consider the two cases when $y$ is between $x$ and $z$. Then without loss of generality we may assume $x \leq y \leq z$. If $X$ embeds in $[x, z]$, then since $X + X$ embeds in $X$ we have that $X + X$ embeds in $[x, z] = [x,y] + (y,z]$. 
By \Cref{sum-inequality-separation} $X$ embeds in either $[x,y]$ or $(y,z]$, contradicting $x E y$ or $y E z$.

To see that each condensation class $I \in X / E$ does not embed $X$, observe that since $X + X$ embeds in $X$, we have that $X + X + X$ embeds in $X$ as well. If such an $I$ did embed $X$, then $I$ embeds $X + X + X$. Viewing $X + X + X$ as a suborder of $I$, we can find an $x \in I$ from the initial copy of $X$ and a $y \in I$ from the final copy of $X$. But then $[x, y]$ embeds the middle copy of $X$, contradicting $x E y$. 

On the other hand, if $x, y \in X$ and $x\un{E}y$, then $[x, y]$ embeds $X$ by definition of $E$. This shows that the condensation classes $I \in X / E$ are maximal (as intervals in $X$) with respect to not embedding $X$. 

Now we prove the homogeneity of $X / E$. As discussed in \Cref{HausdorffCondSection}, we can view $X$ as an ordered sum over $X / E$ by the condensation classes of $E$. We write $X = \sum_{I \in X / E} I$, where in the subscript we are viewing each $I$ as a point in $X / E$, and in the sum as an interval in $X$. 

Fix points $A < B$ in $X / E$, where $A = [x]$ and $B = [y]$ are represented by points $x < y$ in $X$. We wish to show that $X / E$ embeds in $[A, B]$. 

Since $x \un{E} y$ we know that $X$ embeds in $[x, y]$. Fix an embedding $f: X \rightarrow [x, y]$. Observe that if $q < r$ are points in $X$ such that $q\un{E}r$, then we must have $f(q)\un{E}f(r)$, as otherwise we would get an embedding of $X$ into an $E$-class. Thus if for every condensation class $I \in X / E$ we pick a point $q_I$ in the image $f[I]$, then we have $q_I\un{E}q_J$ whenever $I \neq J$. Then the map $I \mapsto [q_I]$ is an embedding of $X / E$ into $[A, B]$, as desired. 
\end{proof}

The hypothesis $X2 \leqslant X$ implies that $X$ is infinite and hence $X / E$ from \Cref{selfsimcondensationlemma} is also infinite. It  follows that since $X / E$ is homogeneous, it is dense.

The next lemma will essentially finish the proof of \Cref{Garrett's Theorem}. It says that for order types $\varphi$ embedding $\varphi2$, $s$-untranscendability is equivalent to homogeneity up to equimorphism.
More explicitly, if $\varphi$ is a homogeneous type then by \Cref{homogeneity =>s-untranscendability}, $\varphi$ is $s$-untranscendable. Moreover, assuming $\varphi$ is not one of the three finite homogeneous types, then $\varphi 2 \leqslant \varphi$, since by homogeneity any pair of non-overlapping intervals in $\varphi$ yields an embedding of $\varphi 2$ into $\varphi$. \Cref{s-untranscendable + phi2 <= phi => homogeneous} below says the converse is true, up to equimorphism: if $\varphi$ is $s$-untranscendable and $\varphi 2 \leqslant \varphi$, then $\varphi$ is equimorphic to a homogeneous type.

\begin{lemm}\label{s-untranscendable + phi2 <= phi => homogeneous}
Suppose $\varphi$ is an $s$-untranscendable type such that $\varphi 2 \leqslant \varphi$. Then there is a homogeneous type $\varphi'$ such that $\varphi' \equim \varphi$. 
\end{lemm}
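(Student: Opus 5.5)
Fix an order $X$ of type $\varphi$ and let $E$ be the condensation from \Cref{selfsimcondensationlemma}: $x E x'$ iff $X \isntcontainedin [\{x, x'\}]$. That lemma tells us three things. No $E$-class embeds $X$. The quotient $X/E$ is homogeneous. And $X/E$ is infinite, hence dense. The natural candidate for $\varphi'$ is therefore the type of $X/E$ itself, and the plan is to show $X \equim X/E$.

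The direction $X/E \leqslant X$ is immediate: choosing one representative point from each $E$-class gives an embedding of $X/E$ into $X$. This uses choice over the classes, as the paper's arguments already do.

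For the reverse direction $X \leqslant X/E$, use $s$-untranscendability. Let $\kappa$ be an order type into which every $E$-class embeds. For instance, take $\kappa$ to be the ordered sum of all the classes $I \in X/E$, indexed by $X/E$; this is just $X$ again. Then $X = \sum_{I \in X/E} I \leqslant \kappa\,(X/E)$. By $s$-untranscendability, either $X \leqslant \kappa$ or $X \leqslant X/E$, so we need a choice of $\kappa$ with $X \isntcontainedin \kappa$.

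Taking $\kappa$ to be a single universal order for the classes fails, since it would just be $X$. So instead of a product, I would argue directly with the embedding structure, as in the proof of \Cref{homogeneity =>s-untranscendability}. For each class $I$, let $Z_I = I$. Then $X$ embeds into $\sum_{I} Z_I$ with every summand failing to embed $X$. To put this into product form, let $Z$ be the type of $\sum_{n\in\omega^*+\omega}$ of classes — but that too might embed $X$.

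A cleaner route replaces $\kappa$ by a single class using homogeneity. Fix an arbitrary class $I_0$. For each class $I$, pick points $a < b$ with $[a,b]$ meeting only classes whose convex span lies inside one non-degenerate interval. Rather than pursue that, I would aim to show directly that $X \isntcontainedin \kappa$ for $\kappa = \otp\langle \sum_I I \rangle$ restricted suitably. This is the main obstacle: find an order $\kappa$ that embeds every $E$-class but does not embed $X$.

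My first attempt at clearing this obstacle uses $\varphi 2 \leqslant \varphi$. Fix a non-degenerate interval $[A,B]$ in $X/E$. The union of classes strictly between $A$ and $B$, together with parts of $A$ and $B$, embeds $X$. By homogeneity, however, every class should be realisable inside a bounded region. The key claim to prove is that some single $E$-class $C$ embeds all $E$-classes, up to equimorphism, or that one can choose $\kappa = C\cdot 2$ or similar. We already know $C \not\geqslant X$, and $C2 \not\geqslant X$ by the same $X+X+X$ argument as in the lemma.

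If this uniformity fails, I would fall back on a transfinite argument that replaces $X$ by the suborder obtained by selecting, inside copies of $X$ in intervals, classes of minimal embeddability. Applying $s$-untranscendability to $X \leqslant C(X/E)$ would then give $X \leqslant X/E$, and so $\varphi' = \otp\langle X/E\rangle$ works.
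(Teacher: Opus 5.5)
There is a genuine gap, located exactly where you place "the main obstacle." Your framework is sound. You take $\varphi'$ to be the type of $X/E$, note $X/E \leqslant X$, and aim to get $X \leqslant X/E$ from $s$-untranscendability applied to $X \leqslant Z\,(X/E)$. Here $Z$ must be an order into which every $E$-class embeds, while $X \isntcontainedin Z$. But you never produce such a $Z$.

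Your proposed fix is unproved and false in general. You suggest that a single class $C$, or $C2$, or some finite multiple, embeds every $E$-class. Counterexample: let $X = \sum_{q \in \mathbb{Q}} \omega^{n(q)}$, where every fibre $n^{-1}(k)$, $k \geq 1$, is dense in $\mathbb{Q}$.
\begin{itemize}
\item $X$ is countable and embeds $\eta$, so $X \equiv \eta$. Hence $X$ is $s$-untranscendable and satisfies $X2 \leqslant X$.
\item An interval $[x,y]$ embeds $X$ exactly when $x$ and $y$ lie in different summands. So the $E$-classes are precisely the summands $\omega^{n(q)}$.
\item No class, and no finite multiple $\omega^{n}m$ of one, embeds all the others.
\end{itemize}
The ``transfinite fallback'' in your last paragraph is too vague to assess.

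The missing idea is to rearrange the classes in a well-ordered fashion. Enumerate $X/E$ as $\{I_\alpha : \alpha < \kappa\}$ and put $Y = \sum_{\alpha<\kappa} I_\alpha$. Every class is an interval of $Y$, so $X \leqslant Y\,(X/E)$.

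To see $X \isntcontainedin Y$, suppose $f : X \to Y$ is an embedding. Let $\alpha$ be least such that $f[X]$ meets $I_\alpha$. If $X/E$ has a left endpoint $I$, use $f[X \setminus I]$ instead.
\begin{itemize}
\item The preimage of $I_\alpha$ is a non-empty initial segment of $X$ (respectively of $X\setminus I$).
\item $X/E$ is dense, so the relevant quotient has no left endpoint. Hence this initial segment meets two distinct classes.
\item So it contains $x<y$ with $x \un{E} y$ and $f[[x,y]] \subseteq I_\alpha$.
\item By definition of $E$, $X \leqslant [x,y] \leqslant I_\alpha$. This contradicts the fact from \Cref{selfsimcondensationlemma} that no $E$-class embeds $X$.
\end{itemize}
The well-ordering is what does the work. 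It forces the image of an initial segment of $X$ spanning two classes into a single summand, which neither the original arrangement nor any single dominating class can guarantee. In the example above, $Y$ is just an ordinal ($\omega^\omega$), which plainly does not embed $\eta$.
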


\begin{proof}
Fix a linear order $X$ of type $\varphi$. Then $X$ embeds $X2 = X + X$. Let $E$ be the condensation of $X$ from \Cref{selfsimcondensationlemma}. 
We will prove $X/E$ is equimorphic with $X$. Clearly $X/E$ embeds in $X$, so it suffices to show that $X$ embeds in $X/E$. 

Suppose $X / E$ has cardinality $\kappa$. Enumerate $X / E$ as $\{I_{\alpha}: \alpha \in \kappa\}$. Viewing $\kappa$ as an ordinal, consider the sum $Y = \sum_{\alpha \in \kappa} I_{\alpha}$. Since $X = \sum_{I \in X / E} I$ and every $I \in X / E$ appears as an interval in $Y$, we clearly have that $X$ embeds in $Y (X / E)$. 
By the $s$-untranscendability of $X$ (i.e., of $\varphi = \textrm{otp} \langle X \rangle$), we have that $X$ embeds in either $Y$ or $X/E$. 
We will show that $X$ does not embed in $Y$, which will finish the proof of the lemma.

Suppose there were an embedding $f: X \rightarrow Y$, i.e.\ an embedding 
\[
f: \sum_{I \in X / E} I \rightarrow \sum_{\alpha \in \kappa} I_{\alpha}.
\]
There are two cases. If $X / E$ does not have a left endpoint (i.e.\ $X$ does not have a leftmost $E$-class), then let $\alpha$ be least in $\kappa$ such that the image $f[X]$ intersects $I_{\alpha}$. If $X / E$ has a left endpoint $I$ (i.e.\ $I$ is the leftmost $E$-class in $X$), let $\alpha$ instead be least in $\kappa$ such that $f[X \setminus I]$ intersects $I_{\alpha}$. In either case, since $X/E$ or $(X \setminus I) /E$ has no left endpoint (recall that $X/E$ is dense) it must be that $I_{\alpha}$ intersects the image of two distinct $E$-classes, i.e.\ there are $I < J$ in $X / E$ such that $f[I]$ and $f[J]$ both intersect $I_{\alpha}$. We can choose points $x \in I$ and $y \in J$ such that the image of $[x, y]$ lies in $I_{\alpha}$. Since $x\un{E}y$, $X$ embeds into $[x, y]$ and hence into $I_{\alpha}$,  contradicting \Cref{selfsimcondensationlemma}. 
\end{proof}

\emph{Proof of \Cref{Garrett's Theorem}}:
Fix an order $X$ of type $\varphi$. By \Cref{s-untranscendable + phi2 <= phi => homogeneous}, $X$ is equimorphic to a homogeneous type $X/E$. Let $\varphi'$ be the type of $X/E$. Then since $2 \varphi \leqslant \varphi$ we also have $2 \varphi' \leqslant \varphi'$. By \Cref{homogeneity + 2x<=x  ==>  xx<=x}, since $\varphi'$ is homogeneous, we have $(\varphi')^2 \leqslant \varphi'$. But then $\varphi^2 \leqslant \varphi$ as well. \qed

\section{Untranscendability and indecomposability} \label{Untr+Indec}
In this section we investigate the relationship between untranscendability and indecomposability. We will show that, with the unique exception of the two-point type, every untranscendable type is indecomposable. We also show that untranscendable types that satisfy a certain extra condition, that of being equimorphic to a type containing only finitely many finite $F$-classes, are in fact strongly indecomposable (again with the exception of $2$). We then use this result to show that for several canonical hierarchies of order types, including the $\sigma$-scattered order types as well as the Aronszajn types under \PFA, untranscendability implies strong indecomposability. 

\subsection{Untranscendability implies indecomposability, except for 2}

\begin{lemm}
\label{lemma : psi(1+tau)}
Let $\psi$ and $\tau$ be order types. If $\psi\ne 0$, we have $\psi + \tau \leqslant \psi(1 + \tau)$. Similarly, if $\tau \neq 0$, then $\psi + \tau \leqslant \tau(\psi + 1)$.
\end{lemm}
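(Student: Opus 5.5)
We prove the first inequality by exhibiting an explicit embedding; the second follows by a symmetric construction.

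Fix orders $X$ and $Y$ of types $\psi$ and $\tau$, with $X \neq \emptyset$. Write $1 + Y$ as $\{a\} + Y$, where $a$ is a new point placed below all of $Y$. Recall that the product $X(\{a\}+Y)$ is the anti-lexicographically ordered set of pairs $\opair{x}{w}$ with $x \in X$ and $w \in \{a\}+Y$; it is a sum of copies of $X$ indexed by $\{a\} + Y$.

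Since $X$ is nonempty, fix a point $x_0 \in X$. Define $g \colon X + Y \to X(\{a\}+Y)$ as follows:
\begin{itemize}
\item for $x \in X$, set $g(x) = \opair{x}{a}$, so the summand $X$ is mapped onto the copy of $X$ indexed by $a$;
\item for $y \in Y$, set $g(y) = \opair{x_0}{y}$, so each point of $Y$ is sent to the fixed point $x_0$ of the copy indexed by $y$.
\end{itemize}

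Order preservation needs three checks.
\begin{itemize}
\item Within $X$, all images lie in the single copy indexed by $a$, so the order of the first coordinates decides, and $g$ preserves the order of $X$.
\item Within $Y$, the second coordinates are distinct and ordered as in $Y$, so anti-lexicographic comparison preserves the order of $Y$.
\item Every point of $X$ lies below every point of $Y$ in $X+Y$. Correspondingly, $a < y$ for every $y \in Y$, so every $\opair{x}{a}$ lies below every $\opair{x_0}{y}$.
\end{itemize}
Hence $g$ is an embedding, and $\psi + \tau \leqslant \psi(1+\tau)$.

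For the second inequality, assume $Y \neq \emptyset$ and write $\psi + 1$ as $X + \{b\}$, with $b$ a new point placed above all of $X$. Fix $y_0 \in Y$. Send $x \in X$ to $\opair{y_0}{x}$, and send $y \in Y$ to $\opair{y}{b}$, the copy of $Y$ indexed by the top point $b$. The same three checks apply with the roles of the two summands exchanged, so this is an embedding $X + Y \to Y(X + \{b\})$, giving $\psi + \tau \leqslant \tau(\psi+1)$.

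There is no substantial obstacle. The only delicate point is that the product here is anti-lexicographic, so the right-hand factor indexes the copies. This is why the extra point is adjoined to the right factor, and why the nonemptiness hypothesis falls on the left factor: that factor must supply the fixed point $x_0$, respectively $y_0$.
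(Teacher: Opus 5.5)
Your proof is correct and is essentially the paper's argument made explicit. The paper notes that $\psi(1+\tau) = \psi + \psi\tau$ by distributivity and that $\tau \leqslant \psi\tau$ because $1 \leqslant \psi$; your map $g$ realizes exactly this by sending $X$ onto the copy indexed by $a$ and embedding $Y$ into the remaining copies via the fixed point $x_0$. The second inequality is handled the same way in both, via $\tau(\psi+1) = \tau\psi + \tau$.
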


\begin{proof}
Since the multiplication of linear order types is right-distributive, we have $\psi(1 + \tau) = \psi + \psi\tau$. Therefore, to prove the first statement, it suffices to show that $\tau \leqslant \psi\tau$. This is true, because $1 \leqslant \psi$. The second statement can be proved analogously.
\end{proof}

Using this lemma, we can prove the following:

\begin{prop}
\label{AD+UT=2}
The finite linear order type of cardinality $2$ is the only linear order type which is decomposable yet untranscendable.
\end{prop}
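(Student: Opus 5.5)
The plan is to show directly that a decomposable untranscendable type must be $2$. The converse, that $2$ is decomposable and untranscendable, was already observed in the paper: $2=1+1$, and $2$ is product closed. The tools are \Cref{lemma : psi(1+tau)} and \Cref{strong-sum-inequality-separation}.

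Fix an untranscendable $\varphi$ that is decomposable, witnessed by $\varphi=\psi+\tau$ with $\varphi\isntcontainedin\psi$ and $\varphi\isntcontainedin\tau$. Both $\psi$ and $\tau$ are nonempty, since $\varphi=\psi+0$ would give $\varphi\leqslant\psi$, and symmetrically for $\tau$.

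\textbf{Step 1: $\varphi\leqslant 1+\tau$.} Because $\psi\neq 0$, \Cref{lemma : psi(1+tau)} gives $\varphi=\psi+\tau\leqslant\psi(1+\tau)$. To apply untranscendability I must check that both factors embed in $\varphi$.
\begin{itemize}
\item $\psi\leqslant\varphi$ holds trivially.
\item $1+\tau\leqslant\psi+\tau=\varphi$ holds because $\psi$ is nonempty.
\end{itemize}
Untranscendability then yields $\varphi\leqslant\psi$ or $\varphi\leqslant 1+\tau$. The first is excluded, so $\varphi\leqslant 1+\tau$.

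\textbf{Step 2: $\varphi\leqslant\psi+1$.} This is symmetric. Since $\tau\neq0$, \Cref{lemma : psi(1+tau)} gives $\varphi\leqslant\tau(\psi+1)$. Here $\tau\leqslant\varphi$ and $\psi+1\leqslant\varphi$, so untranscendability yields $\varphi\leqslant\psi+1$.

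\textbf{Step 3: $\psi=1$.} From Step 1 we have $\psi+\tau\leqslant 1+\tau$. By \Cref{strong-sum-inequality-separation}, either $\psi\leqslant 1$ or $1+\tau\leqslant\tau$. In the second case $\varphi\leqslant 1+\tau\leqslant\tau$, a contradiction. Hence $\psi\leqslant 1$, and since $\psi\neq0$, $\psi=1$.

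\textbf{Step 4: $\tau=1$.} From Step 2 we have $\psi+\tau\leqslant\psi+1$. The symmetric half of \Cref{strong-sum-inequality-separation} gives $\psi+1\leqslant\psi$ or $\tau\leqslant 1$. The first would force $\varphi\leqslant\psi$, a contradiction, so $\tau=1$. Therefore $\varphi=1+1=2$.

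The only delicate point is Steps 3 and 4. The weak separation \Cref{sum-inequality-separation} is not enough there, because it only yields $\psi\leqslant 1$ or $\tau\leqslant\tau$, which is useless. The extra point supplied by the strong version, $1+\tau\leqslant\tau$, is exactly what turns the unwanted alternative into the contradiction $\varphi\leqslant\tau$.
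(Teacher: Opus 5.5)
Your proof is correct and follows the paper's argument. Lemma~\ref{lemma : psi(1+tau)} together with untranscendability gives $\psi+\tau\leqslant 1+\tau$, then Proposition~\ref{strong-sum-inequality-separation} forces $\psi=1$, and the symmetric argument gives $\tau=1$. The differences are minor: you apply the strong separation directly to $\psi+\tau\leqslant 1+\tau$ rather than first assuming $2\leqslant\psi$, and you explicitly check the side conditions $1+\tau\leqslant\varphi$ and $\psi+1\leqslant\varphi$ that the paper leaves implicit.
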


\begin{proof}
Assume that $\psi + \tau$ is untranscendable but decomposable, witnessed by $\psi$ and $\tau$, so in particular $\psi + \tau \isntcontainedin \psi $ and $\psi + \tau \isntcontainedin \tau$. The latter implies that $1 \leqslant \psi$. As $\psi + \tau$ is untranscendable, Lemma \ref{lemma : psi(1+tau)} implies that $\psi + \tau \leqslant 1 + \tau$. Were $2 \leqslant \psi$ true, then we would have $2 + \tau \leqslant 1 + \tau$. As $2 \isntcontainedin 1$ we have $1 + \tau \leqslant \tau$ by \Cref{strong-sum-inequality-separation}.  We obtain $\psi + \tau \leqslant \tau$ by transitivity, a contradiction. Therefore, $\psi = 1$. Similarly one can show that $\tau = 1$. So $\psi + \tau = 2$.
\end{proof}

\begin{coro}
\label{corollary : No Sum}
Suppose that $\varphi$ is an order type that can be written as a sum
\[
\varphi = \sum_{i = 0}^n \psi_i 
\]
where $n \in \mathbb{N}$, $n \geq 2$, and for all $i \leq n$, $\psi_i \neq 0$ and $\varphi \isntcontainedin \psi_i$. Then $\varphi$ is transcendable.
\end{coro}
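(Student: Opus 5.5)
We argue by contradiction and reduce to \Cref{AD+UT=2}. Suppose $\varphi = \sum_{i=0}^n \psi_i$ with $n \geq 2$, each $\psi_i \neq 0$ and $\varphi \isntcontainedin \psi_i$, and assume $\varphi$ is untranscendable. Group the summands as $\varphi = \psi_0 + \tau$, where $\tau = \sum_{i=1}^n \psi_i$.

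The first goal is to show that this splitting witnesses decomposability, i.e.\ that $\varphi \isntcontainedin \psi_0$ and $\varphi \isntcontainedin \tau$. The first holds by hypothesis. For the second, note that $\varphi \leqslant \tau$ would give
\[
\sum_{i=0}^n \psi_i \leqslant \sum_{i=1}^n \psi_i .
\]
This compares a sum of $n+1$ nonzero summands with a sum of $n$ summands. \Cref{Alberto's-lemma}, with $m = n$ and the second sum indexed by $\psi_1,\dots,\psi_n$, then yields $1 + \psi_{i+1} \leqslant \psi_i$ for some $1 \le i \le n$. That is a relation between summands, not yet a contradiction, so I would instead iterate \Cref{sum-inequality-separation} directly, in the form of the key step below.

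\emph{Key step:} if $\varphi \leqslant \sum_{i\in S}\psi_i$ for a set $S$ of consecutive indices, peel off one summand at a time. Write $\varphi = \psi_0 + (\text{rest})$ and the target as $\psi_j + (\text{rest}')$. By \Cref{strong-sum-inequality-separation}, either $\psi_0 \leqslant \psi_j$, or $1+\text{rest} \leqslant \text{rest}'$, and in the second case $\varphi$ is not affected. This bookkeeping is the main obstacle, and I would avoid it entirely as follows.

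Since the summands can be grouped into two blocks in several ways, it suffices to find some grouping $\varphi = A + B$ with $A, B \neq 0$, $\varphi \isntcontainedin A$, $\varphi \isntcontainedin B$. \Cref{AD+UT=2} then forces $\varphi = 2$, contradicting $n \geq 2$, since $\varphi$ has at least three points. So assume every grouping fails, and take the finest failure. Let $k$ be least such that $\varphi \leqslant \sum_{i \le k}\psi_i$; such $k$ exists, and $k \geq 1$ because $\varphi \isntcontainedin \psi_0$. Using the embedding $\varphi \leqslant \sum_{i\le k}\psi_i$ together with \Cref{sum-inequality-separation}, split $\varphi = \bigl(\sum_{i<k}\psi_i\bigr) + \bigl(\sum_{i\ge k}\psi_i\bigr)$ against $\bigl(\sum_{i<k}\psi_i\bigr) + \psi_k$. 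By minimality of $k$, the first alternative, $\varphi \leqslant \sum_{i<k}\psi_i$, fails.

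Hence every grouping at a cut point $0<j\le n$ has $\varphi$ embedding a side, and by minimality it must be the right side whenever $j \le k$. Applying this at $j = k$ gives $\varphi \leqslant \sum_{i\ge k}\psi_i$. Iterating the argument on the tail (taking the least end point from the left, and then the greatest start point from the right) shows that $\varphi$ embeds in a single summand $\psi_k$, contradicting the hypotheses. The iteration terminates because the tails strictly shrink and are finite in number.
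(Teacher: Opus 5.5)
Your reduction matches the paper's: show $\varphi$ is decomposable and invoke \Cref{AD+UT=2}, using that $\varphi$ has at least three points. The argument that some grouping witnesses decomposability, however, has a genuine gap.

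First, your use of \Cref{sum-inequality-separation} is vacuous. Applied to $\bigl(\sum_{i<k}\psi_i\bigr)+\bigl(\sum_{i\ge k}\psi_i\bigr)\leqslant\bigl(\sum_{i<k}\psi_i\bigr)+\psi_k$, its first alternative is $\sum_{i<k}\psi_i\leqslant\sum_{i<k}\psi_i$, which always holds. It is not $\varphi\leqslant\sum_{i<k}\psi_i$, so nothing is learned.

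More seriously, the final ``iteration on the tail'' is unsupported. Your standing assumption only says that at each cut \emph{between summands} $\varphi$ embeds in one side. Once you have $\varphi\leqslant\sum_{i\le k}\psi_i$ and $\varphi\leqslant\sum_{i\ge k}\psi_i$, that assumption says nothing about cuts of the block $\sum_{i\ge k}\psi_i$. From $\varphi\leqslant\sum_{i<j}\psi_i$ you cannot conclude $\varphi\leqslant\sum_{k\le i<j}\psi_i$, so nothing forces the tails to shrink. Already for $n=2$ and $k=1$ your procedure halts with $\varphi\leqslant\psi_0+\psi_1$ and $\varphi\leqslant\psi_1+\psi_2$, and nothing you have written yields $\varphi\leqslant\psi_1$.

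The missing idea is to use indecomposability of $\varphi$ at \emph{arbitrary} cuts, not only at summand boundaries. You have this for free: $\varphi$ is assumed untranscendable and $\varphi\neq 2$, so \Cref{AD+UT=2} makes $\varphi$ indecomposable outright. Your minimal-$k$ idea then closes in one step:
\begin{itemize}
\item Fix an embedding $e$ of $\varphi$ into $\bigl(\sum_{i<k}\psi_i\bigr)+\psi_k$ and pull the cut back, writing $\varphi=A+B$ with $A=e^{-1}[\sum_{i<k}\psi_i]$ and $B=e^{-1}[\psi_k]$.
\item Indecomposability gives either $\varphi\leqslant A\leqslant\sum_{i<k}\psi_i$, contradicting the minimality of $k$, or $\varphi\leqslant B\leqslant\psi_k$, contradicting the hypothesis.
\end{itemize}
The paper does the same thing as an induction on $n$. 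An indecomposable $\sum_{i\le n}\psi_i$ embeds either in $\psi_n$ or in $\sum_{i<n}\psi_i$. In the latter case that partial sum is equimorphic to the whole, hence indecomposable, so the induction hypothesis applies to it.
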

\begin{proof}
Since $n \geq 2$ we have $3 \leqslant \varphi$ and thus $\varphi$ is not $2$. 
Therefore, by \Cref{AD+UT=2}, it suffices to show that $\varphi$ is decomposable.
This follows because a straightforward induction on $n$ shows that for every $n \geq 1$ if $\sum_{i = 0}^n \psi_i$ is indecomposable then $\varphi \leqslant \psi_i$ for some $i \leq n$.
\end{proof}

One may, reading \Cref{untrans_ord}, wonder whether untranscendability is equivalent to $s$-untranscendability for all ordinals or at least up to a set of exceptions. In fact we can characteri\z e the class of untranscendable ordinals which fail to be $s$-untranscendable but it turns out that it is a proper one:

\begin{prop}
\label{ut_but_not_sut_ord<=>sing_lim_ord}
    An ordinal is $s$-untranscendable if and only if it is untranscendable but no singular limit.
\end{prop}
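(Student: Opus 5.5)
The plan is to prove the two directions separately. I read ``singular limit'' as a limit ordinal $\alpha$ with $\cf(\alpha)<\alpha$. Throughout I use \Cref{untrans_ord}: an untranscendable ordinal is one of $0,1,2$ or of the form $\omega^{\omega^\beta}$, and in the latter case it is an indecomposable limit ordinal.

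\emph{Forward direction.} An $s$-untranscendable type is trivially untranscendable, so it remains to show that a singular limit ordinal $\alpha$ is not $s$-untranscendable. We may assume $\alpha$ is untranscendable, since otherwise there is nothing to prove; hence $\alpha=\omega^{\omega^\beta}$ is indecomposable. Let $\mu=\cf(\alpha)<\alpha$ and fix a cofinal increasing sequence $\langle\alpha_i \mid i<\mu\rangle$ with every $\alpha_i<\alpha$. Imitating the example $\Phi^*\leqslant\omega^\omega\omega^*$ from \Cref{ex:untr}, put
\[
\psi=\sum_{i\in\mu^*}\alpha_i \quad\text{and}\quad \tau=\mu .
\]
\begin{enumerate}[label=(\alph*)]
\item $\alpha\leqslant\psi\tau$. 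Write $\alpha$ as the sum over $i<\mu$ of the intervals $[\sup_{j<i}\alpha_j,\alpha_i)$. Each interval has type at most $\alpha_i$, so it embeds into the summand $\alpha_i$ of $\psi$. Send the $i$-th interval into the $i$-th copy of $\psi$.
\item $\alpha\not\leqslant\tau$. This holds because $\mu<\alpha$.
\item $\alpha\not\leqslant\psi$. Let $W$ be a well-ordered suborder of $\psi$. The set of summand indices meeting $W$ is a well-ordered suborder of $\mu^*$, so it is finite. Hence $\otp(W)$ is a finite sum of ordinals each at most some $\alpha_i<\alpha$. Since $\alpha$ is indecomposable, this finite sum is $<\alpha$, by the finite-sum version of indecomposability used in \Cref{corollary : No Sum}.
\end{enumerate}
Together these show that $\alpha$ is not $s$-untranscendable.

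\emph{Backward direction.} Suppose $\alpha$ is untranscendable and not a singular limit. If $\alpha\le 2$ it is $s$-untranscendable, as already noted in the text. Otherwise $\alpha=\omega^{\omega^\beta}$ is a limit ordinal, so it must be regular, i.e.\ $\alpha=\kappa$ is a regular cardinal. Take orders $Z$, $W$ and an embedding $f\colon\kappa\to ZW$. Composing with the projection to $W$ gives a nondecreasing map $g\colon\kappa\to W$.
\begin{enumerate}[label=(\alph*)]
\item Each fiber $g^{-1}(w)$ is an interval of $\kappa$, and $f$ embeds it into the copy $Z_w$. So if some fiber has order type $\kappa$, then $\kappa\leqslant Z$.
\item Otherwise every fiber has type $<\kappa$. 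The image $g[\kappa]$ is a well-ordered suborder of $W$ of some type $\gamma$, and $\kappa$ is the $\gamma$-indexed sum of the fiber types.
\item If $\gamma<\kappa$, this is a sum of fewer than $\kappa$ ordinals each $<\kappa$. By regularity it is $<\kappa$, a contradiction. Hence $\gamma\geq\kappa$ and $\kappa\leqslant W$.
\end{enumerate}
So $\kappa$ embeds in $Z$ or in $W$, i.e.\ $\kappa$ is $s$-untranscendable.

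The main obstacle is step (c) of the forward direction: checking that $\psi$ embeds every $\alpha_i$ but not $\alpha$. This needs both the ``finitely many blocks'' observation and indecomposability. For the backward direction, the only delicate point is using \Cref{untrans_ord} to see that ``not a singular limit'' forces $\alpha\le2$ or a regular cardinal.
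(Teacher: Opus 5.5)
Your proof is correct. The forward direction is the paper's argument, with the same $\psi=\sum_{i\in\mu^*}\alpha_i$ and $\tau=\cf(\alpha)$. Your preliminary reduction to the case where $\alpha$ is untranscendable, hence indecomposable, is genuinely needed in step (c). Take the decomposable singular limit $\alpha=\omega^\omega 2$ with $\alpha_n=\omega^\omega+\omega^n$: the two-block final segment $\alpha_1+\alpha_0=\omega^\omega 2+1$ already embeds $\alpha$. So the paper's one-line justification of $\alpha\isntcontainedin\rho$ only works for indecomposable $\alpha$. The decomposable case has to be discharged via \Cref{AD+UT=2}, which is exactly what your reduction does.

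The backward direction takes a different route.
\begin{itemize}
\item The paper argues contrapositively. From a witness $\alpha\leqslant\rho\varphi$ to the failure of $s$-untranscendability, it gets that $\alpha$ is a limit via $\gamma+1\leqslant\gamma^2$. It uses \Cref{AD+UT=2} to rule out a final segment of the image lying in a single copy of $\rho$. It then recursively builds an embedding of $\cf(\alpha)$ into $\varphi$, so $\alpha\isntcontainedin\varphi$ forces $\cf(\alpha)<\alpha$.
\item You instead use the classification in \Cref{untrans_ord} to reduce to $\alpha=\kappa$ an infinite regular cardinal. You then count fibers of the projection to the second coordinate.
\end{itemize}
Your version is shorter. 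It also isolates the standalone fact that every infinite regular cardinal is $s$-untranscendable, with no untranscendability hypothesis needed at that stage. The paper's version bypasses the explicit form $\omega^{\omega^\beta}$. It also yields extra information: any such witness satisfies $\cf(\alpha)\leqslant\varphi$.
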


\begin{proof}
We are first going to show that no singular limit ordinal is $s$-untranscendable. To that end, suppose that $\alpha$ is a singular limit ordinal of cofinality $\beta$ and let $\seq{\alpha_\nu}{\nu < \beta}$ be a cofinal sequence in $\alpha$. We define
\begin{align*}
   \rho = \sum_{\nu \in \beta^*} \alpha_\nu,  
\end{align*}
that is, we sum the $\alpha_\nu$'s in reverse order.
Clearly, $\alpha\leqslant\rho\beta$ and  $\alpha \isntcontainedin \beta$. Since the image of any embedding of an ordinal into $\rho$ intersects only finitely many $\alpha_\nu$'s we also have $\alpha \isntcontainedin \rho$. This shows that $\alpha$ is not $s$-untranscendable.

It remains to show that every untranscendable ordinal which is no singular limit is $s$-untranscendable. To that end, suppose that $\alpha$ is untranscendable but not $s$-untranscendable. Then there are types $\rho$ and $\varphi$ such that $\alpha \leqslant \rho\varphi$ but neither $\alpha \leqslant \rho$ nor $\alpha \leqslant \varphi$. This immediately implies $2 \leqslant \rho, \varphi$ and $3 \leqslant \alpha$.
Therefore, as $\gamma + 1\leqslant\gamma^2$ for all ordinals $\gamma > 1$ and $\alpha$ is untranscendable, $\alpha$ has to be a limit ordinal. Let $\seq{\alpha_\nu}{\nu < \cf(\alpha)}$ be a cofinal sequence in $\alpha$. Let $A$, $R$ and $P$ be linear orders of types $\alpha$, $\rho$, and $\varphi$, respectively and let $e : A \longrightarrow R\times P$ witness $\alpha \leqslant\rho\varphi$. If a final segment of $\pwi{e}{A}$ lies in a single copy of $R$, then $\alpha$ is decomposable, as any indecomposable ordinal is indecomposable to the right. But this would contradict \Cref{AD+UT=2}. Therefore no final segment of $\pwi{e}{A}$ lies in a single copy of $R$. Let $p$ be the projection of an an ordered pair to its right component and define $X_\xi = \pwi{(p \circ e)}{\alpha_{\xi+1}\setminus\alpha_\xi}$ for all $\xi < \cf(\alpha)$. Now we define inductively the following function $f$:
\begin{align*}
f : \cf(\alpha) & \longrightarrow P,\\
0 & \longmapsto (p\circ e)(0),\\
\nu & \longmapsto \min\left(\bigcup\soast{X_\xi}{\xi < \cf(\alpha) \wedge \forall\iota\left(\iota<\nu\rightarrow f(\iota) < \min(X_\xi)\right)}\right)
\end{align*}
It is easy to check that $f$ yields an embedding of $\cf(\alpha)$ into $P$ thus proving that $\cf(\alpha) \leqslant \varphi$. As $\alpha\isntcontainedin\varphi$ we know that $\alpha$ is singular. This finishes the proof.
\end{proof}

\subsection{Untranscendability and strong indecomposability}

In \Cref{2 only FF} below, we prove a sufficient condition for untranscendability to imply strong indecomposability. First, we extract from the proof of \Cref{AD+UT=2} a general method for finding suborders of an untranscendable order $X$ that embed $X$. 

\begin{defi}
Suppose that $X$ is a linear order and $Y \subseteq X$ is a non-empty suborder of $X$. Define a relation $E_Y$ on $X$ by the rule $x E_Y x'$ if and only if either $[\{x, x'\}] \subseteq Y$ or $x = x'$. 
\end{defi}

It is easy to see that the relation $E_Y$ is a condensation of $X$. Intuitively, $E_Y$ condenses each contiguous $Y$-segment in $X$ to a point. 

For a given $x \in X$, we denote the $E_Y$-class of $x$ by $[x]_Y$. We denote the condensed order $X / E_Y$ by $X /\!/ Y$. 

\begin{remark} \label{between E_Y eq classes}
By definition of $E_Y$, for any $C, C' \in X/\!/Y$ with $C < C'$ there is $x \in X \setminus Y$ such that $C \leq [x]_Y \leq C'$. Indeed, let $y \in C$ and $y' \in C'$: since $y$ and $y'$ are not $E_Y$-equivalent there exists $x \in [y,y'] \cap X \setminus Y$ and hence $C \leq [x]_Y \leq C'$.
\end{remark}

\begin{lemm} \label{Embed in Y or X//Y}
Suppose that $X$ is an untranscendable linear order and $Y \subseteq X$. Then either $X \leqslant Y$ or $X \leqslant X /\!/ Y$.  
\end{lemm}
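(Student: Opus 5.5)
The plan is to exhibit $X$ as embedded in a product of two types, each of which embeds in $X$, and then apply \Cref{untrdef} directly. This mirrors the proof of \Cref{AD+UT=2}, where a sum was dominated by a product via \Cref{lemma : psi(1+tau)}.

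First I would dispose of the trivial case. If $Y=\emptyset$, the relation $E_Y$ is equality, so $X/\!/Y\cong X$ and the second alternative holds. (The definition of $E_Y$ already assumes $Y$ is non-empty, so this case is only a remark.) So assume $Y\neq\emptyset$.

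Next, as in \Cref{HausdorffCondSection}, write
\[
X\cong \sum_{C\in X/\!/Y} C .
\]
Every $E_Y$-class $C$ is of one of two kinds: either $C\subseteq Y$, or $C$ is a singleton $\{x\}$ with $x\notin Y$. Indeed, a class with two distinct points $x<x'$ satisfies $[x,x']\subseteq Y$, so every point of the class lies in $Y$. In the first case $C$ embeds in $Y$ by inclusion. In the second case $C$ embeds in $Y$ because $Y\neq\emptyset$. Fixing such an embedding $g_C\colon C\to Y$ for each class, the map $x\mapsto \opair{g_{[x]_Y}(x)}{[x]_Y}$ embeds $X$ into the product $Y\,(X/\!/Y)$, i.e.\ $X/\!/Y$ copies of $Y$. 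This is an embedding since it is order-preserving inside each class and respects the order of the classes.

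It remains to check that both factors embed in $X$. For $Y$ this is immediate, as $Y\subseteq X$. For $X/\!/Y$, choose one representative point from each class. Since the classes are pairwise disjoint intervals ordered by the induced order, the map $C\mapsto$ (its representative) is an embedding $X/\!/Y\to X$. With $\psi=\textrm{otp}\langle Y\rangle\leqslant \varphi$, $\tau=\textrm{otp}\langle X/\!/Y\rangle\leqslant\varphi$ and $\varphi=\textrm{otp}\langle X\rangle\leqslant\psi\tau$, untranscendability of $X$ yields $X\leqslant Y$ or $X\leqslant X/\!/Y$, as claimed.

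I do not expect a genuine obstacle. The only points needing care are the classification of the $E_Y$-classes, which is what makes each summand embed in $Y$, and the choice of the embeddings $g_C$ and of class representatives. These choices use some choice in general, though for singleton classes and classes contained in $Y$ the embeddings $g_C$ can be made uniform by fixing a single point $y_0\in Y$.
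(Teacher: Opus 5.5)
Your proposal is correct and matches the paper's proof essentially step for step. The paper uses the same embedding of $X$ into $Y(X/\!/Y)$, sending $x\notin Y$ to $\langle y_0,[x]_Y\rangle$ and $x\in Y$ to $\langle x,[x]_Y\rangle$, and the same choice of class representatives to embed $X/\!/Y$ into $X$, before applying untranscendability directly.
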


\begin{proof}
If $Y = \emptyset$ then $X /\!/ Y \cong X$.
Otherwise, we claim $X \leqslant Y(X/\!/Y)$, i.e.\ $X \leqslant \sum_{[x]_Y \in X/\!/Y} Y$. 

To see this, fix $y_0 \in Y$. Define a map $f: X \rightarrow Y(X/\!/Y)$, as follows:
\begin{align*}
f(x) & = \begin{cases}
\opair{y_0}{[x]_Y} \text{ if } x \in X \setminus Y \\
\opair{x}{[x]_Y} \text{ if } x \in Y.
\end{cases}
\end{align*}
(Recall that we formally view $Y(X/\!/Y)$ as the set of ordered pairs $Y \times X /\!/ Y$, ordered anti-lexicographically.)

It is straightforward to check that $f$ is an embedding and hence
%To see this, suppose $x < x'$ are points in $X$. There are four cases to consider.
%
%If $x, x' \in X \setminus Y$, then $\{x\} = [x]_Y < [x']_Y = \{x'\}$ in $X /\!/ Y$. Hence $\opair{y_0}{[x]_Y} < \opair{y_0}{[x']_Y}$, i.e.\ $f(x) < f(x')$. 
%
%If $x \in X \setminus Y$ and $x' \in Y$, then $[x]_Y \neq [x']_Y$, and so $[x]_Y < [x']_Y$. Thus $\opair{y_0}{[x]_Y} < \opair{x'}{[x']_Y}$, i.e.\ $f(x) < f(x')$. The situation is symmetric if $x \in Y$ and $x' \in X \setminus Y$. 
%
%Finally, suppose $x, x' \in Y$. Then either $[x]_Y < [x']_Y$ or $[x]_Y = [x']_Y$. In the first case we have that $f(x) < f(x')$ is again witnessed by the second coordinate of these pairs. If $[x]_Y = [x']_Y$ then still we have $\opair{x}{[x]_Y} < \opair{x'}{[x']_Y} = \opair{x'}{[x]_Y}$, i.e.\ $f(x) < f(x')$, but now this is witnessed on the first coordinate. 
%
%Thus in all cases we have $f(x) < f(x')$, so that $f$ is an embedding, as claimed. 
$X \leqslant Y (X/\!/Y)$. 

On the other hand, we have $X/\!/Y \leqslant X$. Indeed, if for every $E_Y$-class $C$ we fix a representative $x_C \in C$ (so that $C = [x_C]_Y)$, then $[x_C]_Y \mapsto x_C$ defines an embedding of $X/\!/Y$ into $X$. 

Since clearly also $Y \leqslant X$, we have by the untranscendability of $X$ that either $X \leqslant Y$ or $X \leqslant X /\!/ Y$. 
\end{proof}

We will use \Cref{Embed in Y or X//Y} to prove the main result of this section, \Cref{2 only FF}. It says that any untranscendable type all of whose $F$-classes are infinite (cf.\ \Cref{HausdorffCondSection}) must be strongly indecomposable. More generally, any type equimorphic to such a type must be strongly indecomposable. 

The hypothesis of ``all $F$-classes infinite" is more natural than it perhaps appears.
Indecomposable types in certain classes (like the $\sigma$-scattered orders) are built up inductively using so-called regular unbounded sums and shuffles (we present an abstract version of such a construction in \Cref{abstract theorem}). These types are always equimorphic to a type none of whose $F$-classes are finite.

Some hypothesis beyond untranscendability is needed to guarantee strong indecomposability in general. We saw previously that $\lambda$ is untranscendable but not strongly indecomposable. See also the Trichotomy Conjecture \ref{realtypeconj}.

We will need the following lemma. 

\begin{lemm}
\label{AIs and finite CC}
Let $\varphi$ be an indecomposable order type, different from $1$. If $\varphi$ is equimorphic to a type with only finitely many finite $F$-classes, then $\varphi$ is equimorphic to a type with all $F$-classes infinite. 
\end{lemm}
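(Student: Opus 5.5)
Fix an order $X$ equimorphic to $\varphi$ with only finitely many finite $F$-classes. Let these be $C_1 < \dots < C_k$. Since these are finitely many finite sets, $X$ splits as a finite sum in which the infinite $F$-classes are grouped into blocks. Concretely, $X = A_0 + C_1 + A_1 + C_2 + \cdots + C_k + A_k$, where each $A_j$ is a (possibly empty) interval that is a union of infinite $F$-classes. The plan is to remove the finite classes and show that what remains still has all $F$-classes infinite and is still equimorphic to $\varphi$.

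\textbf{First step: the remainder $X' = X \setminus \bigcup_i C_i$ has all $F$-classes infinite.} Each $A_j$ is a union of whole $F$-classes of $X$. Within $A_j$ with the induced order, an $F$-class of $X$ stays an $F$-class. The reason is that $[\{x,x'\}]$ computed in $A_j$ equals the one computed in $X$, because $A_j$ is convex. These classes are infinite. In the sum $X' = A_0 + A_1 + \cdots + A_k$, two points in different $A_j$'s could become $F$-equivalent only if they are adjacent across a junction. That would merge some classes, but merged classes are still infinite. So all $F$-classes of $X'$ are infinite, and the content is to show $X' \equim X$. Since $X' \leqslant X$ trivially, it remains to show $X \leqslant X'$.

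\textbf{Second step: show $X \leqslant X'$ using indecomposability.} Here $X$ is a finite sum of the pieces $A_j$ and $C_i$. By the induction fact from the proof of \Cref{corollary : No Sum}, an indecomposable finite sum embeds in one of its summands. The $C_i$ are finite, while $X$ is infinite: indeed $\varphi$ is indecomposable and different from $1$, so it is not a finite type $n \geq 2$, and the case $\varphi = 0$ is trivial. Hence $X$ does not embed in any $C_i$, so $X \leqslant A_j$ for some $j$. Since $A_j \subseteq X'$, we get $X \leqslant X'$, and therefore $\varphi \equim \otp\langle X'\rangle$, a type with all $F$-classes infinite.

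\textbf{Expected obstacle.} The delicate point is checking rigorously that deleting the finite classes does not create new finite $F$-classes in $X'$. This amounts to computing $F$ on $X'$ versus on $X$ at the junctions where a $C_i$ was removed. The approach is to argue that each $F$-class of $X'$ contains an $F$-class of $X$ lying in some $A_j$, because each $A_j$ is convex in $X$. There is one remaining case: every $A_j$ might be empty, so that $X'$ is empty. But then $X$ is finite, which we have excluded.
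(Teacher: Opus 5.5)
Your proof is correct, and it takes a genuinely different and simpler route than the paper.

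The paper splits into cases using the Hagendorf--Jullien theorem. If $\varphi \equiv \varphi + \varphi$, it embeds $2m+1$ copies of $X$ into $X$ and uses a pigeonhole count (each $F$-class meets at most two of the convex closures $X_i'$) to find a copy avoiding the finite classes. Otherwise $\varphi$ is strictly indecomposable to one side, and the paper takes the segment beyond the last finite class.

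You need only two facts: the definition of indecomposability, and its equimorphism invariance (which you should state explicitly, since it is what makes $X$ itself indecomposable). An indecomposable type that embeds into the finite sum $A_0 + C_1 + \cdots + C_k + A_k$ embeds into one summand, even when some $A_j$ are empty. The finite $C_i$ are then excluded because $X$ is infinite. You could even stop at that summand $A_j$: it is a convex union of whole $F$-classes of $X$, so it has only infinite $F$-classes and is equimorphic to $X$.

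Working with unions of whole classes also makes your route more robust. In the paper's first case, a convex closure $X_i'$ can cut an infinite class down to a finite piece at one of its ends. For example, take $X = \omega + \omega^*\eta$, which satisfies $X \equiv X + X$ and has no finite $F$-classes. An embedding $X \to X$ may send $\min X$ into an $\omega^*$-class, and then $X_0'$ has a finite $F$-class. The paper's count over the finite classes of $X$ does not address this. The natural repair is to strip the at most two finite end-classes of $X_{i_0}'$ and invoke indecomposability, which is exactly your argument.
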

\begin{proof}
Let $\varphi$ be indecomposable and suppose for some $m \in \mathbb{N}$ there is a type $\psi \equim \varphi$ with $m$ finite $F$-classes. As $\varphi$ is indecomposable, so is $\psi$. 

Suppose first that $\varphi \equim \varphi + \varphi$; then also $\psi \equim \psi + \psi$. Let $X$ be a linear order of type $\psi$. As $X2 \leqslant X$, we have by induction that $Xn \leqslant X$ for any $n \in \mathbb{N}$. In particular $X(2m+1) \leqslant X$. 

Let $f: X(2m+1) \rightarrow X$ be a fixed embedding. For each $i \leq 2m$, let $X_i$ denote the image under $f$ of the $i$th copy of $X$ in the sum $X(2m+1) = X + X + \cdots + X$. Let $X_i'$ denote the convex closure of $X_i$ in $X$. Clearly $X \equim X_i'$. 

Since each $X_i'$ is an interval in $X$, its $F$-classes are exactly the intersections of the $F$-classes of $X$ with $X_i'$. We claim that each $F$-class of $X$ intersects at most two of the intervals $X_i'$. Indeed, if $C$ were an $F$-class intersecting three of the intervals $X_i' < X_j' < X_k'$, then since $C$ is an interval we have $X_j' \subseteq C$. But then since $X_i' \cap C$ and $X_k' \cap C$ are non-empty, the interval $X_j'$ is bounded above and below in $C$, and hence must be finite, since $C$ is an $F$-class. But $X_j' \equiv X$ is infinite, a contradiction. Thus $C$ intersects at most two of the intervals $X_i'$, as claimed. 

Since there are only $m$-many finite $F$-classes in $X$, there must be an index $i_0 \leq 2m$ such that $X_{i_0}'$ has no finite $F$-classes. Let $\psi' = \textrm{otp} \langle X'_{i_0} \rangle$. Then $\psi' \equim \varphi$ and $\psi'$ has no finite $F$-classes, as desired. 

Now suppose $\varphi \not \equim \varphi + \varphi$. By the Hagendorf-Jullien Theorem, $\varphi$ is either strictly indecomposable to the right or to the left. Assume without loss of generality that $\varphi$, and hence also $\psi$, is strictly indecomposable to the right. 

Again fix an order $X$ of type $\psi$. Let $C$ denote the rightmost finite $F$-class in $X$, and let $x_{\textrm{max}}$ denote the right endpoint of $C$ (which exists, since $C$ is finite). It cannot be that $x_{\textrm{max}}$ is maximal in $X$, since $X$ is indecomposable to the right and not isomorphic to $1$. Let $Y = \{y \in X: y > x_{\textrm{max}}\}$. Then $Y$ is a non-empty final segment of $X$ without finite $F$-classes. By the right indecomposability of $X$, $Y \equim X$. Hence $\psi' = \textrm{otp}\langle Y \rangle$ has no finite $F$-classes and $\psi' \equim \varphi$. 
\end{proof}

\begin{theo}
\label{2 only FF}
Let $\varphi$ be an untranscendable order type that is equimorphic to one with only finitely many finite $F$-classes. Then $\varphi$ is strongly indecomposable or $\varphi = 2$.
\end{theo}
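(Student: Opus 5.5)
Assume $\varphi \neq 2$. By \Cref{AD+UT=2}, $\varphi$ is then indecomposable. The cases $\varphi \in \{0,1\}$ are trivially strongly indecomposable, so assume $\varphi \neq 0,1$. By \Cref{AIs and finite CC}, $\varphi$ is equimorphic to a type with all $F$-classes infinite. Untranscendability and strong indecomposability are both equimorphism invariants (the former by the proposition on invariance, the latter by \Cref{strongindecequiinvar}). So we may fix an order $X$ of type equimorphic to $\varphi$, untranscendable, in which every $F$-class is infinite (of type $\omega$, $\omega^*$ or $\zeta$).

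Now let $X = A \cup B$ be a partition, and suppose $X \not\leqslant A$. Apply \Cref{Embed in Y or X//Y} with $Y = A$: since $X \not\leqslant A$, we get $X \leqslant X /\!/ A$. The aim is to show $X /\!/ A \leqslant B$, which yields $X \leqslant B$. To this end, pick for each $E_A$-class $C$ a point $b_C \in B$ so that $C \mapsto b_C$ is order preserving.

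If $C = \{x\}$ with $x \in B$, take $b_C = x$. If $C$ is a maximal contiguous $A$-segment, we want a point of $B$ lying "just next to" $C$. Here we need care, because two consecutive $A$-segments may be separated by a single $B$-point, and both cannot claim that point. So instead we route the construction through the structure of $X /\!/ A$ itself. By \Cref{between E_Y eq classes}, between any two distinct classes there is a $B$-singleton class, so the singleton classes are dense in $X /\!/ A$ in the weak sense. The plan is to embed $X$ not into all of $X/\!/A$ but into its suborder $S$ of singleton classes, which is literally a subset of $B$.

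To do so, use the infinite $F$-classes. Write $X \cong X/F$-sum of infinite blocks $[x]_F$. Given an embedding $g : X \to X/\!/A$, define $h(x)$ for $x$ in an $F$-class by shifting $g(x)$ to a nearby singleton class. The idea is to interleave: since each $F$-class is infinite, successive points $x < x^+$ in the same $F$-class can be sent to classes with a $B$-point between them. Concretely, I would apply \Cref{Embed in Y or X//Y} again, or compose $g$ with itself on the block $[x]_F$. Distinct values $g(x) < g(x')$ have a singleton $B$-class in $[g(x), g(x')]$ by \Cref{between E_Y eq classes}, possibly equal to an endpoint. Choose $h(x)$ as such a point between $g(x)$ and $g(x^+)$, where $x^+$ is the successor of $x$ in its $F$-class (or the predecessor, for classes of type $\omega^*$).

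\textbf{Main obstacle.} The difficulty is making these choices coherent so that $h$ is strictly increasing, i.e.\ the intervals $[g(x),g(x^+)]$ used for different $x$ overlap only at endpoints. I would handle this with a parity trick: within each $F$-class of type $\omega$, $\omega^*$ or $\zeta$, use only every other point. The map $x \mapsto x^{++}$ embeds the class into its even part, so $X$ embeds into the suborder $X'$ picking alternate points of each $F$-class, and each chosen point $x$ owns the disjoint interval $[g(x^-),g(x^+)]$ around it. Inside that interval, \Cref{between E_Y eq classes} gives a singleton $B$-class strictly between $g(x^-)$ and $g(x^+)$. These choices are increasing, so $X \leqslant X' \leqslant B$, and $X$ is strongly indecomposable.
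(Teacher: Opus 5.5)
\textbf{A step in the final construction fails as written; the fix is the paper's own argument.} Your reduction is the paper's. \Cref{AD+UT=2} gives indecomposability. \Cref{AIs and finite CC} together with equimorphism invariance lets you assume all $F$-classes are infinite. \Cref{Embed in Y or X//Y} then gives an embedding $g\colon X\to X/\!/A$ when $X\not\leqslant A$. Using the infinite $F$-classes to push $g$ into the $B$-singleton classes is also the paper's idea.

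\textbf{Where the "every other point" step breaks.}
\begin{itemize}
\item \Cref{between E_Y eq classes} only gives $b\in B$ with $C\le [b]_A\le C'$, possibly equal to an endpoint. It does not give a $B$-singleton strictly between $g(x^-)$ and $g(x^+)$. For example, take $X=\omega$, $A=\{2\}$, $B=\omega\setminus\{2\}$ and $g(n)=\{n\}$. Around the chosen point $x=2$, the only class strictly between $g(1)$ and $g(3)$ is $\{2\}\subseteq A$.
\item Once endpoints must be allowed, your closed intervals are not disjoint. For consecutive chosen points $x_{2k}<x_{2k+2}$, the intervals $[g(x_{2k-1}),g(x_{2k+1})]$ and $[g(x_{2k+1}),g(x_{2k+3})]$ share $g(x_{2k+1})$. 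Nothing in your rule then prevents $h(x_{2k})=h(x_{2k+2})$; in the example both could be $3$.
\item Two smaller slips: the least point of an $\omega$-class has no $x^-$, and $x\mapsto x^{++}$ does not map a class into its even part (you want $x_n\mapsto x_{2n}$).
\end{itemize}

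\textbf{The repair.} Use disjoint consecutive pairs instead of overlapping triples, as your earlier sentence ``between $g(x)$ and $g(x^+)$'' already suggests.
\begin{itemize}
\item Enumerate each $F$-class $D$ increasingly as $(x_i)_{i\in I}$, with $I$ equal to $\NN$, $\{i\in\ZZ : i<0\}$ or $\ZZ$.
\item Let $h(x_i)=b$ for some $b\in B$ with $g(x_{2i})\le [b]_A\le g(x_{2i+1})$, which \Cref{between E_Y eq classes} provides.
\item If $i<j$ then $2i+1<2j$, so $[h(x_i)]_A\le g(x_{2i+1})<g(x_{2j})\le [h(x_j)]_A$.
\item If $x_i\in D<D'\ni x'_j$, the same chain works using $x_{2i+1}<x'_{2j}$.
\end{itemize}
Hence $h\colon X\to B$ is an embedding. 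With this replacement your argument is complete, and it coincides with the paper's proof.
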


\begin{proof}
Suppose towards a contradiction that $\varphi \ne 2$ is untranscendable and equimorphic to a type with only finitely many finite $F$-classes, but $\varphi$ is not strongly indecomposable. By \Cref{AD+UT=2}, $\varphi$ must be indecomposable. As $1$ is strongly indecomposable, \Cref{AIs and finite CC} implies that $\varphi$ is equimorphic to a type $\psi$ without finite $F$-classes. By \Cref{strongindecequiinvar}, $\psi$ is not strongly indecomposable either. 

Let $\opair{X}{<}$ be a linear order of type $\psi$. Since $\psi$ is not strongly indecomposable, we can find $Y \subseteq X$ such that $X$ embeds in neither $Y$ nor $X \setminus Y$.

We consider the condensation $E_Y$: by \Cref{Embed in Y or X//Y} we have $X \leqslant Y$ or $X \leqslant X /\!/ Y$. Since $X \isntcontainedin Y$ by choice of $Y$, we must have $X \leqslant X /\!/ Y$. Let $f : X \longrightarrow X/\!/Y$ be an embedding. We will use $f$ to get an embedding $g$ of $X$ into $X \setminus Y$, which is a contradiction. 

We define $g$ separately on each $F$-class $D$ of $X$. Since $D$ is infinite, the order type of $D$ is one of $\omega$, $\omega^*$, or $\zeta$. Let $I$ denote either $\NN$, $ \{i \in \ZZ: i<0\}$ or $\ZZ$ so that we can enumerate $D$ in an order preserving way as $(x_i)_{i \in I}$.
For each $i \in I$, since $f(x_{2i}) < f(x_{2i+1})$, by \Cref{between E_Y eq classes} we can pick $x_i^* \in X \setminus Y$ such that $f(x_{2i}) \leq [x_i^*]_Y \leq f(x_{2i+1})$.
Let $g(x_i) = x_i^*$. Then $g$ is order-preserving: if $i,j \in I$ are such that $i<j$ we have $x_i^* \leq y_{2i+1} < y_{2j} \leq x_j^*$ for any  $y_{2i+1} \in f(x_{2i+1})$ and $y_{2j} \in f(x_{2j})$.
Moreover the convex closure of the image $g[D]$ is a subinterval of (the union over) the convex closure of $f[D]$. 
Therefore the definitions of $g$ on distinct $F$-classes do not clash. Thus $g$ is an embedding of $X$ into $X \setminus Y$, as promised.
\end{proof}

\subsection{Untranscendable types in hierarchies of regular unbounded sums and shuffles}

In this section we present two applications of \Cref{2 only FF}. These applications concern classes of types which are obtained by iterating certain kinds of sums and shuffles.

First, in his analysis of the $\sigma$-scattered types, cf.\ \cite[Theorem 3.1]{973L1} and \cite{971L0}, Laver showed that every indecomposable $\sigma$-scattered type can be built inductively by closing the family of types $\{0, 1\}$ under so-called \textit{regular unbounded sums} over regular cardinals $\kappa$ and their reverses, as well as under \textit{shuffles} over certain $\sigma$-scattered orders $\eta_{\alpha \beta}$ that generali\z e the rationals (see \Cref{regunboundedsumdef} and \Cref{shuffledef} below, as well as \Cref{Laver sigma-scattered indec thm}).

A second example uses axioms beyond \ZFC: under the the Proper Forcing Axiom ($\mathsf{PFA}$) one can define the class of \textit{fragmented Aronszajn types} (see \Cref{Aronszajnlinedef} and \Cref{fragmentedAlinedef}). Because this class is not closed under taking suborders, it is convenient to work with the class consisting all fragmented Aronszajn types along with all countable types. Following Barbosa, we denote this class by $\mathcal{C}$. Building on work of Moore \cite{006M1} and Martinez-Ranero \cite{011M2}, Barbosa showed \cite{023B0} that under the \PFA\, the indecomposable members of $\mathcal{C}$ are built inductively by closing $\{0, 1\}$ under countable regular unbounded sums and shuffles over $\QQ$ and the \textit{minimal Countryman lines} $C$ and $C^*$ (see \Cref{Countrymandef}).

It turns out that in both cases the pertinent combinatorial feature of the types $\varphi$ over which the shuffles are taken is that they are each equimorphic with their own square. In \Cref{abstractthm} below, we show that any class of types obtained by closing $\{0, 1\}$ under regular unbounded sums and shuffles over order types $\varphi$ such that $\varphi \equim \varphi^2$ only contains types that are equimorphic to types without finite $F$-classes, and $1$. We then use this result and \Cref{2 only FF}, along with the results of Laver and Barbosa, to show that the untranscendable $\sigma$-scattered linear orders, as well as the untranscendable Aronszajn lines under \PFA, are strongly indecomposable. 

\begin{defi} \label{regunboundedsumdef}
Suppose $\kappa$ is an infinite regular cardinal. 

A \emph{regular unbounded $\kappa$-sum} is an ordered sum of the form $\sum_{\alpha \in \kappa} \varphi_{\alpha}$, where for each $\alpha \in \kappa$ the set $\{\beta \in \kappa: \varphi_{\alpha} \leqslant \varphi_{\beta}\}$ is unbounded (to the right) in $\kappa$. 

A \emph{regular unbounded $\kappa^*$-sum} is an ordered sum $\sum_{\alpha \in \kappa^*} \varphi_{\alpha}$, where for each $\alpha \in \kappa^*$,  $\{\beta \in \kappa^*: \varphi_{\alpha} \leqslant \varphi_{\beta}\}$ is unbounded (to the left) in $\kappa^*$. 

A \emph{regular unbounded sum} is a regular unbounded $\kappa$-sum or regular unbounded $\kappa^*$-sum for some infinite regular cardinal $\kappa$. 
\end{defi}

\begin{defi} \label{shuffledef}
Suppose $\varphi$ is an order type. A \textit{$\varphi$-shuffle} (or \textit{shuffle over $\varphi$}) is an ordered sum of the form $\sum_{x \in \varphi} \psi_x$, where for each $x \in \varphi$ the set $\{y \in \varphi: \psi_x \leqslant \psi_y\}$ is dense in $\varphi$. 
\end{defi}

In \Cref{abstractthm} below, we will consider shuffles over types that are equimorphic with their own squares. We will need the following lemma and its corollary. 

\begin{lemm}\label{phi equi phi2 dense implies equi}
Suppose that $X$ is a linear order such that $X \equiv 2X$. Then any dense suborder $Y \subseteq X$ is equimorphic to $X$. 
\end{lemm}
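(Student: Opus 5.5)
Since $Y \subseteq X$, we already have $Y \leqslant X$. So the whole task is to embed $X$ into $Y$, and the plan is to route this through $2X$. Because $X \equim 2X$, it suffices to show $2X \leqslant Y$; composing with an embedding $X \to 2X$ then gives $X \leqslant Y$. An order of type $2X$ is just a family $\{a_x < b_x : x \in X\}$ of disjoint pairs that are arranged in the order of $X$. Concretely, we need points $a_x<b_x$ for $x\in X$ such that $x<x'$ implies $b_x<a_{x'}$.

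First I would fix an embedding $h : 2X \to X$ and write $h(\opair{0}{x}) = p_x$ and $h(\opair{1}{x}) = q_x$. These satisfy $p_x < q_x < p_{x'} < q_{x'}$ whenever $x<x'$. Next I would use that $X\equim 2X$ to obtain $4X \leqslant X$ (iterate: $2(2X) \leqslant 2X \leqslant X$). This yields, for each $x \in X$, four points
\[
p^0_x<p^1_x<p^2_x<p^3_x ,
\]
and the quadruples for different $x$ are arranged in the order of $X$.

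Density of $Y$ in $X$ only guarantees a point $z\in Y$ with $u \le z \le v$ for given $u<v$, and the endpoints are allowed. To keep the chosen points separated, I would use the gaps. Pick $a_x\in Y$ with $p^0_x \le a_x \le p^1_x$, and pick $b_x \in Y$ with $p^2_x \le b_x \le p^3_x$. Then $a_x \le p^1_x < p^2_x \le b_x$, so $a_x<b_x$. Moreover, for $x<x'$ we get $b_x \le p^3_x < p^0_{x'} \le a_{x'}$. Hence the map $\opair{i}{x} \mapsto$ ($a_x$ if $i=0$, $b_x$ if $i=1$) is an embedding $2X \to Y$. Composing with $X \leqslant 2X$ gives $X \leqslant Y$, and hence $X \equim Y$.

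The only subtle step is the non-strict inequality in the paper's definition of a dense suborder. If we worked directly from $2X$, the chosen points could collapse onto shared endpoints and fail to be distinct. Passing to $4X$ (or even to $3X$, with suitably spaced choices) is exactly what removes this problem. Everything else is routine bookkeeping. Note that the choice of the points $a_x, b_x$ uses the Axiom of Choice, which is harmless in this context.
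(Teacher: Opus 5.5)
Your argument is correct and uses the same idea as the paper's proof: an embedding of a multiple of $X$ into $X$ gives $X$-many pairwise disjoint closed intervals, and density of $Y$ lets you pick points of $Y$ inside them. The detour through $4X$ is unnecessary, though, because the ``subtle step'' you identify only arises from aiming at the stronger conclusion $2X \leqslant Y$. The paper targets $X \leqslant Y$ directly: it picks a single $y_x \in Y$ with $p_x \le y_x \le q_x$ in each interval $[p_x,q_x]$ coming from $2X \leqslant X$, and since these intervals are already pairwise disjoint, no collapse can occur.
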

\begin{proof}
It suffices to find a suborder $Y^*$ of $Y$ that is isomorphic to $X$. 

The hypothesis $X \equiv 2X$ guarantees that we can find a collection of $X$-many pairwise disjoint non-degenerate intervals in $X$. Explicitly, suppose $f: 2X \rightarrow X$ is an embedding. Writing $2X = \sum_{x \in X} 2$, let $I_x$ denote the image under $f$ of the $x$-th summand, and let $J_x$ denote the convex closure of $I_x$. Then $\{J_x: x \in X\}$ is a collection of pairwise disjoint non-degenerate intervals in $X$ whose order type in the induced order is $X$. By the density of $Y$ in $X$, for each $x \in X$ we can find $y_x \in J_x \cap Y$. Let $Y^* = \{y_x: x \in X\}$. Then $Y^* \subseteq Y$, and by construction $Y^* \cong X$.
\end{proof}

\begin{coro} \label{phi equi square dense implies equi}
Suppose that $X$ is a linear order such that $X \equiv X^2$. Then any dense suborder $Y \subseteq X$ is equimorphic to $X$. 
\end{coro}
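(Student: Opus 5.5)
The plan is to reduce the corollary to \Cref{phi equi phi2 dense implies equi}. It suffices to show that $X \equiv X^2$ implies $X \equiv 2X$. The lemma then applies directly to any dense suborder $Y \subseteq X$, and we conclude that $Y$ is equimorphic to $X$.

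First I dispose of degenerate cases. If $X$ is empty, the only suborder is $Y = \emptyset = X$, so there is nothing to prove. If $X$ is a singleton, then $X^2 \cong X$ holds. Here a dense suborder $Y$ must be non-empty (otherwise taking $x = y$ the density condition fails), so again $Y = X$. These cases can also be handled uniformly if one is careful, but it is simplest to separate them. Note that $X \cong 1$ does not satisfy $X \equiv 2X$, so the lemma cannot be invoked there. For finite $X$ with at least two points, $X^2 \equiv X$ fails by cardinality. So from now on $X$ is infinite, and in particular $2 \leqslant X$.

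Now I verify $X \equiv 2X$. One direction is trivial, since $X \leqslant 2X$ for any nonempty $X$: pick one point of each copy of $2$. For the other direction, $2 \leqslant X$ gives $2X \leqslant XX = X^2$, using that products are monotone in the left factor (replace each copy of $2$ by a copy of $X$ via the fixed embedding). Combining this with $X^2 \leqslant X$ gives $2X \leqslant X$. Hence $X \equiv 2X$, and \Cref{phi equi phi2 dense implies equi} yields the claim.

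The only step requiring any care is the monotonicity $2 \leqslant X \Rightarrow 2X \leqslant XX$. This follows by applying a fixed embedding $2 \to X$ coordinatewise in the anti-lexicographic product. The rest is bookkeeping about the trivial finite cases.
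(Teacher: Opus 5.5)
Your proposal is correct and is the paper's argument: treat $X\cong 1$ separately, then use $2\leqslant X$ to get $2X\leqslant X^2\leqslant X$ (with $X\leqslant 2X$ trivial) and apply \Cref{phi equi phi2 dense implies equi}. One small correction: the paper's density condition only quantifies over pairs $x<y$, so you cannot ``take $x=y$''; the empty set is vacuously dense in a singleton, and the $X\cong 1$ case genuinely needs the implicit assumption $Y\neq\emptyset$, which the paper's ``clearly holds'' also takes for granted.
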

\begin{proof}
The statement clearly holds when $X \cong 1$, so suppose that $X$ contains at least two points. Then $X \equiv X^2$ gives $X \equiv 2X$, and the result follows from \Cref{phi equi phi2 dense implies equi}.
\end{proof}

\begin{theo}
\label{abstract theorem} \label{abstractthm}
Suppose that $\mathcal{F}$ is a class of order types such that for every $\rho \in \mathcal{F}$, either $\rho \equim \rho^2$ or one of $\rho, \rho^*$ is an infinite regular cardinal.

Let $\mathcal{T}$ denote the class of order types obtained by closing $\{0, 1\}$ under $\rho$-shuffles (when $\rho \equiv \rho^2$) and regular unbounded $\rho$-sums (when one of $\rho, \rho^*$ is an infinite regular cardinal), for all $\rho \in \mathcal{F}$.

Then for every $\varphi \in \mathcal{T}$, either $\varphi = 1$ or $\varphi$ is equimorphic to a type $\tilde{\varphi}$ without finite $F$-classes.
\end{theo}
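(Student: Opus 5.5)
We argue by induction along the construction of $\mathcal{T}$, i.e.\ by induction on the rank at which a type enters the closure. The base cases are immediate: $0$ has no $F$-classes at all, and $1$ is excluded by the statement. For the inductive step we assume that every summand $\psi_x$ is either $0$, or $1$, or equimorphic to some $\tilde\psi_x$ without finite $F$-classes. We then treat $\varphi = \sum_{x\in\rho}\psi_x$ in two cases: $\rho$ is a regular unbounded sum index, or $\rho$ is a shuffle index with $\rho\equim\rho^2$.

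\textbf{Reduction.} First we discard the empty summands, since this does not change $\varphi$. If all summands are empty, then $\varphi=0$ and we are done. Otherwise we use the fact that the sum operation is monotone under summandwise embeddings. Replacing each $\psi_x$ by $\tilde\psi_x$ wherever $\tilde\psi_x$ exists therefore gives an equimorphic sum, because equimorphic summands give equimorphic sums. The remaining problem is summands equal to $1$: finite $F$-classes can come only from these singletons, or from singletons together with endpoint pieces.

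\textbf{Regular unbounded $\kappa$-sum.} Say $\rho=\kappa$; the case $\kappa^*$ is symmetric. We regroup the sum into blocks. Choose a strictly increasing sequence $0=\beta_0<\beta_1<\dots$ of length $\kappa$ and set $\varphi = \sum_{\xi<\kappa}\chi_\xi$, where $\chi_\xi=\sum_{\beta_\xi\le\alpha<\beta_{\xi+1}}\psi_\alpha$. Each block should contain, for every earlier index, a later summand embedding it. Using unboundedness and regularity, we will instead extract a subsum of type $\varphi$ of the form $\sum_{\xi<\kappa}\psi_{\alpha_\xi}\omega$. This is possible because unboundedness lets us find, in every tail, $\omega$ many summands each embedding a given $\psi_\alpha$. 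Concretely, we map $\psi_\alpha$ into one of these later copies, inductively along $\kappa$. This shows $\varphi\leqslant\sum_{\alpha<\kappa}\psi_\alpha\omega$, and conversely $\sum\psi_\alpha\omega\leqslant\varphi$ by the same unboundedness. In the sum $\sum_{\alpha<\kappa}\psi_\alpha\omega$, every nonempty summand becomes $\psi_\alpha\omega$, an $\omega$-sum of nonempty types, and so its $F$-classes are infinite. Any finite $F$-class would have to lie inside some copy of $\psi_\alpha$ and consist of finite $F$-classes of $\tilde\psi_\alpha$; there are none, except when $\psi_\alpha=1$, in which case that block is $\omega$, which is infinite. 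One point needs care: an $F$-class can also straddle the boundary between blocks. Such a merged class contains an infinite piece, so it is still infinite.

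\textbf{Shuffle over $\rho\equim\rho^2$.} Here $\rho$ is infinite, since a type with two points cannot be equimorphic to its square unless $\rho\le 1$; we handle $\rho\le 1$ trivially. Using $\rho\equim\rho^2$, we will show $\varphi\equim\sum_{(x,y)\in\rho\rho}\psi_x$, i.e.\ we replace each summand by a $\rho$-block of copies of itself. We obtain $\leqslant$ by choosing, for each $x$ and inside a pairwise disjoint family of intervals indexed by $\rho$, dense-many later indices $y$ with $\psi_x\leqslant\psi_y$. Corollary \ref{phi equi square dense implies equi} guarantees that the sets $\{y:\psi_x\leqslant\psi_y\}$, being dense, are equimorphic to $\rho$, and hence embed the needed copies. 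Then every summand $\psi_x$ sits in a block $\psi_x\rho$ with $\rho$ infinite and without endpoints issues. We need to justify this: a finite $F$-class would have to be a singleton summand with finite neighbors on both sides, forming finitely many points. This is ruled out if $\rho$ has no two consecutive elements, which follows from $\rho\equim\rho^2$ together with the passage to $\rho\omega$-style blocks if necessary. Alternatively, we use the block $\psi_x\omega$ again, exactly as in the regular case.

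The main obstacle will be writing down the equimorphism between $\varphi$ and the \emph{thickened} sum, where each summand is replaced by $\psi_x\omega$ or a $\rho$-block. In particular, the embedding of the thickened sum back into $\varphi$ requires the inductive choice of target summands, using unboundedness or density. I would try the $\psi_x\omega$ thickening uniformly in both cases first.
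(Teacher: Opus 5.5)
Your reduction step (discard the empty summands, replace each $\psi_x$ by $\tilde\psi_x$) is fine. The regular unbounded case, however, has a genuine gap at $\kappa=\omega$ (dually $\omega^*$), and this case is needed for the Aronszajn application.

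To get $\varphi\equim\sum_{\alpha<\kappa}\psi_\alpha\omega$ you must embed the thickened sum back into $\varphi$. That means assigning to each $\alpha$ an increasing $\omega$-sequence of target indices lying above all earlier choices. This works when $\mathrm{cf}(\kappa)>\omega$. For $\kappa=\omega$, though, the first such sequence is already cofinal and leaves no room for $\alpha=1$. Two concrete failures:
\begin{itemize}
\item $\varphi=\omega$ thickens to $\omega^2\not\leqslant\omega$.
\item $\varphi=1+\omega^*+1+\omega^*+\cdots=1+\omega^*\omega$ is a regular unbounded $\omega$-sum in $\mathcal{T}$ (for $\omega,\omega^*\in\mathcal{F}$), and its left endpoint is a finite $F$-class. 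It thickens to a type containing $\omega^*\omega+\omega^*\omega$. That type does not embed in $1+\omega^*\omega\equim\omega^*\omega$, because $\omega^*\omega$ is strictly indecomposable to the right.
\end{itemize}

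The missing idea is to \emph{absorb} the singleton summands rather than thicken them. If some $\psi_\alpha>1$, then unboundedly many are. You can then recursively send every $\alpha$, singletons included, to one index $\alpha^*$ with $\psi_{\alpha^*}>1$, $\psi_\alpha\leqslant\tilde\psi_{\alpha^*}$, and $\alpha\mapsto\alpha^*$ strictly increasing. This works for singletons because every $\tilde\psi_\beta$ is non-empty. One target per $\alpha$ suffices even for $\kappa=\omega$, and it gives $\varphi\equim\sum_{\psi_\alpha>1}\tilde\psi_\alpha$, which has no finite $F$-classes. If instead every nonzero summand is $1$, then $\varphi=\kappa$ by regularity and you are done directly.

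The shuffle case also needs repair. Thickening each summand to $\psi_x\rho$ does not remove finite $F$-classes, because $\rho$ itself may have them. For $\rho=\eta$, every point is its own $F$-class, and the $\eta$-shuffle of $1$'s is $\eta$.

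Your justification runs backwards here. A singleton summand with no immediate neighbours \emph{is} a finite (one-point) $F$-class. So the absence of consecutive elements in $\rho$ creates finite classes rather than excluding them.

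Your fallback of $\psi_x\omega$-blocks can be made to work for shuffles, but not ``exactly as in the regular case.'' You need:
\begin{itemize}
\item $\rho^2\leqslant\rho$, to get $\rho$-many pairwise disjoint intervals, each containing a copy of $\rho$;
\item $\omega\leqslant\rho$ (or $\omega^*\leqslant\rho$, thickening with $\omega^*$), together with density of $\{y:\psi_x\leqslant\psi_y\}$, to find an increasing $\omega$-sequence of good indices inside the $x$-th interval.
\end{itemize}

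The paper avoids thickening altogether. In the all-singletons case it uses $\varphi\equim\rho\equim\omega\rho$, via \Cref{phi equi square dense implies equi} and $\omega\rho\leqslant\rho^2$. Otherwise it again discards the singletons, choosing for each $i$ a single good index $y_i$ in the $i$-th disjoint interval.
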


\begin{proof}

Fix $\varphi \in \mathcal{T} \setminus \{1\}$. We induct on the construction of $\varphi$ to show that $\varphi$ is equimorphic to a type without finite $F$-classes. 

If $\varphi = 0$ there is nothing to show. So assume $\varphi \neq 0$. Then either $\varphi$ is a regular unbounded $\rho$-sum or a $\rho$-shuffle, for some $\rho \in \mathcal{F}$. Suppose first that $\rho$ is an infinite regular cardinal and $\varphi$ is a regular unbounded $\rho$-sum of types $\psi_i \in \mathcal{T}$:
\begin{align}
\label{rus}
\varphi = \sum_{i \in \rho} \psi_i.
\end{align}

There are several possibilities to consider. First, it may be that for all large enough $i \in \rho$ we have $\psi_i = 0$. But then by the definition of regular unbounded sum, we actually have $\psi_i = 0$ for all $i \in \rho$. This gives $\varphi = 0$, contradicting our assumption. 

Second, it may be that for every $i \in \rho$ we have $\psi_i \leqslant 1$. Then by what we have just observed, it must be that for unboundedly-many $i \in \rho$ we have $\psi_i = 1$. By the regularity of $\rho$, it follows $\varphi = \rho$. Since an infinite cardinal has no finite $F$-classes, we are done in this case. 

Finally, it may be that for some $i \in \rho$ we have $\psi_i > 1$. Since $\varphi$ is a regular unbounded sum, it must be then that $\psi_i > 1$ for unboundedly-many $i \in \rho$. By induction, each such $\psi_i$ is equimorphic to some $\tau_i$ without finite $F$-classes. Define a new type:
\begin{align*}
\tilde{\varphi} = & \sum_{i \in \rho, \psi_i > 1} \tau_i.
\end{align*}

Clearly, $\tilde{\varphi}$ has no finite $F$-classes, and $\tilde{\varphi} \leqslant \varphi$. It remains to show that $\varphi \leqslant \tilde{\varphi}$. Using the regular unboundedness of the original sum $\varphi$, and the fact that $1$ embeds in \textit{every} summand $\tau_i$ in $\tilde{\varphi}$, we can recursively choose for each $i \in \rho$ some $i^* \in \rho$ such that $\psi_i \leqslant \tau_{i^*}$, and such that $i < j$ implies $i^* < j^*$. Once this is done, we may glue together the individual embeddings witnessing $\psi_i \leqslant \tau_{i^*}$ to get an embedding witnessing $\varphi \leqslant \tilde{\varphi}$, as desired. 

The argument when $\rho$ is instead the reverse of an infinite regular cardinal is symmetric. 

Now suppose that $\varphi$ is a $\rho$-shuffle of types $\psi_i \in \mathcal{T}$ for some $\rho \in \mathcal{F}$ with $\rho \equim \rho^2$:
\begin{align}
\label{shuffle}
\varphi = \sum_{i \in \rho} \psi_i.
\end{align}

Again, there are several possibilities to consider. As before, we cannot have $\psi_i = 0$ for all $i \in \rho$, since we are assuming $\varphi \neq 0$. If $\psi_i \leqslant 1$ for all $i$, and $\psi_i = 1$ for at least one $i$, then by the definition of $\rho$-shuffle we must have $\psi_i = 1$ for densely-many $i$. But then $\varphi = \rho'$ for a type $\rho'$ that is dense in $\rho$. By \Cref{phi equi square dense implies equi} we have $\rho' \equim \rho$, and so $\varphi \equim \rho$. If $\rho = 1$ (which, strictly speaking, we have not ruled out), then $\varphi = 1$, a contradiction since we are assuming $\varphi \neq 1$. Thus $\rho > 1$. Then since $\rho^2 \equim \rho$ it must be that $\rho$ is infinite. In particular we must have that at least one\footnote{Using that $\rho^2 \equiv \rho$, it is not hard to see that in fact both $\omega$ and $\omega^*$ embed in $\rho$.} of $\omega$ and $\omega^*$ embeds in $\rho$. Without loss of generality, assume $\omega \leqslant \rho$. Then $\omega \rho \leqslant \rho^2$. Hence $\omega \rho \leqslant \rho$, which yields $\omega \rho \equiv \rho \equiv \varphi$. Since $\omega \rho$ contains no finite $F$-classes, $\varphi$ is equimorphic to a type without finite $F$-classes in this case. 

The other possibility is that $\psi_i > 1$ for some $i \in \rho$. Then $\psi_i > 1$ for densely-many $i$, by definition of shuffle. By induction, each such $\psi_i$ is equimorphic to a type $\tau_i$ without finite $F$-classes. Letting $\rho' = \{i \in \rho: \psi_i > 1\}$, we have $\rho'$ is dense in $\rho$. Define a type $\tilde{\varphi}$:
\begin{align*}
\tilde{\varphi} = & \sum_{i \in \rho'} \tau_i.
\end{align*}
By construction, $\tilde{\varphi}$ has no finite $F$-classes. We clearly have $\tilde{\varphi} \leqslant \varphi$. We show $\varphi \leqslant \tilde{\varphi}$. 

Fix an order $X$ of type $\rho$ and a suborder $Y \subseteq X$ of type $\rho'$ that is dense in $X$. For each $i \in X$, fix an order $I_i$ of type $\psi_i$ so that $\sum_{i \in X} I_i$ has type $\varphi = \sum_{i \in \rho} \psi_i$. For each $i \in Y$, fix $J_i$ of type $\tau_i$ so that $\sum_{i \in Y} J_i$ has type $\tilde{\varphi} = \sum_{i \in \rho'} \tau_i$.

Since $X \equiv X^2$ we may (as in the proof of \Cref{phi equi phi2 dense implies equi}) find $X$-many pairwise disjoint intervals in $X$, i.e.\ a collection $\{Z_i: i \in X\}$ such that each $Z_i$ is a non-degenerate interval in $X$ and $i < i'$ implies $Z_i < Z_{i'}$. 

Fix $i \in X$. If $I_i > 1$, then since $Y$ is dense in $X$ and $\varphi$ is a $\rho$-shuffle, we can find $y_i \in Z_i \cap Y$ such that $I_i \leqslant J_{y_i}$. And if $I_i \cong 1$, for any fixed $y_i \in Z_i \cap Y$ we have $I_i \leqslant J_{y_i}$, since the types $\tau_i$ are non-empty. Thus in either case we can choose $y_i \in Z_i \cap Y$ such that $I_i \leqslant J_{y_i}$. If we fix embeddings $f_i: I_i \rightarrow J_{y_i}$, then the map $f: \sum_{i \in X} I_i \rightarrow \sum_{i \in Y} J_i$ defined by $f(i, x) = (y_i, f_i(x))$ is an embedding that witnesses $\varphi \leqslant \tilde{\varphi}$. Thus $\varphi \equiv \tilde{\varphi}$, as claimed. 
\end{proof}

\begin{coro}
\label{abstract corollary}
Let $\mathcal{T}$ be a family of order types as in \Cref{abstractthm}. Consider the family $\mathcal{S}$ consisting of all finite sums of members of $\mathcal{T}$. Then every member of $\mathcal{S}$ is equimorphic to an order type with only finitely many finite condensation classes.
\end{coro}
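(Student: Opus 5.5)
The proof goes through \Cref{abstractthm}: first replace each summand by an equimorphic type with no finite $F$-classes, then count the finite $F$-classes that can appear at the junctions of the sum.

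Let $\sigma = \psi_0 + \psi_1 + \cdots + \psi_{n-1}$ with each $\psi_k \in \mathcal{T}$. By \Cref{abstractthm}, each $\psi_k$ is either $1$ or equimorphic to a type $\tilde\psi_k$ without finite $F$-classes. When $\psi_k = 1$ we set $\tilde\psi_k = 1$. Equimorphism respects finite sums: gluing embeddings $\psi_k \to \tilde\psi_k$ and $\tilde\psi_k \to \psi_k$ summand by summand gives embeddings in both directions. Hence $\sigma \equim \tilde\sigma = \tilde\psi_0 + \cdots + \tilde\psi_{n-1}$, and it suffices to show that $\tilde\sigma$ has only finitely many finite $F$-classes.

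Fix an order $X = X_0 + \cdots + X_{n-1}$ with $X_k$ of type $\tilde\psi_k$. Each $X_k$ is an interval of $X$, so the $F$-classes of $X$ restricted to $X_k$ are exactly the $F$-classes of $X_k$. It follows that every $F$-class $C$ of $X$ is a union of consecutive pieces $C \cap X_k$, each of which is an $F$-class of $X_k$ (or a singleton summand).

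Now suppose $C$ is finite. If $C$ meets only one summand $X_k$, and that summand is not a singleton, then $C$ is a finite $F$-class of $X_k$, which is impossible. So $C$ either meets some singleton summand or meets at least two consecutive summands. In the second case $C$ contains a pair consisting of the right endpoint of some $X_k$ and the left endpoint of $X_{k+1}$. There are at most $n$ singleton summands and at most $n-1$ boundaries between consecutive summands, and distinct $F$-classes are disjoint. Therefore $X$ has at most $2n-1$ finite $F$-classes, which proves the corollary.

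I expect no serious obstacle. The only care needed is the observation that a finite $F$-class of $X$ lying inside one summand is a finite $F$-class of that summand; this holds because $F$ restricted to an interval is the $F$ of that interval. (The case $\psi_k = 0$ is harmless: it adds nothing to the sum.)
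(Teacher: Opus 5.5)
Your proof is correct and follows the route the paper intends. The paper states this corollary without proof, as an immediate consequence of \Cref{abstractthm}, and its proof of \Cref{sigma has finite finite classes} uses the same reasoning: replace each summand by an equimorphic type that is $1$ or has no finite $F$-classes, then note that a finite $F$-class of the sum must contain a singleton summand or straddle one of the finitely many junctions. One detail to tighten: when empty summands are present, read ``$X_{k+1}$'' as ``the next non-empty summand''.
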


\subsubsection{Untranscendable $\sigma$-scattered types}\label{subsub sigma}

Laver showed that the class of indecomposable scattered linear orders has the form of a family $\mathcal{T}$ as in \Cref{abstract theorem}.

\begin{theo}[{Laver, \cite[Theorem 3.1(f) with $Q = \{1\}$]{973L1}}] \label{Laver sigma-scattered indec thm} 
Let $\mathcal{F}$ denote the family consisting of all infinite regular cardinals $\kappa$, their reverses $\kappa^*$, and all types of the form $\eta_{\alpha\beta}$. Then the class of indecomposable $\sigma$-scattered types is obtained by closing $\{0, 1\}$ under regular unbounded $\rho$-sums and $\rho$-shuffles, for $\rho \in \mathcal{F}$.  
\end{theo}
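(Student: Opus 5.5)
Since this is Laver's theorem, quoted from \cite[Theorem 3.1(f)]{973L1}, the plan is to follow Laver's argument in outline rather than derive it from the results of this paper. There are two inclusions.

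\emph{The closure consists of indecomposable $\sigma$-scattered types.} Each $\eta_{\alpha\beta}$, each regular cardinal $\kappa$ and each reverse $\kappa^*$ is $\sigma$-scattered. The class of $\sigma$-scattered types is closed under sums over $\sigma$-scattered index orders, so by induction every member of the closure is $\sigma$-scattered. For indecomposability, take a regular unbounded $\kappa$-sum $\varphi=\sum_{\alpha\in\kappa}\varphi_\alpha$ and a cut $\varphi=\psi+\tau$.
\begin{itemize}
\item If the cut falls inside, or before, some summand indexed by $\alpha_0$, then $\tau$ contains the whole tail $\sum_{\alpha>\alpha_0}\varphi_\alpha$.
\item Using unboundedness and regularity of $\kappa$, we recursively choose an increasing map $\alpha\mapsto\alpha^*>\alpha_0$ with $\varphi_\alpha\leqslant\varphi_{\alpha^*}$. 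Gluing these embeddings gives $\varphi\leqslant\tau$.
\end{itemize}
The $\kappa^*$ case is symmetric. For a $\rho$-shuffle, the cut splits $\rho$ itself, and each side of $\rho$ is a nonempty interval or is empty; one uses that the relevant $\eta_{\alpha\beta}$ are homogeneous enough to embed into any non-degenerate interval of themselves. Then density of $\{y:\psi_x\leqslant\psi_y\}$ lets us embed the whole shuffle into one side, exactly as in the last part of the proof of \Cref{abstract theorem}.

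\emph{Every indecomposable $\sigma$-scattered type is equimorphic to a member of the closure.} This is the hard direction, and the key tool is Laver's theorem that the $\sigma$-scattered types are better-quasi-ordered (in particular well-founded) under $\leqslant$. This allows induction along $\leqslant$ on equimorphism classes. Take an indecomposable $\sigma$-scattered $\varphi$ and fix an order $X$ of that type.
\begin{itemize}
\item By Hausdorff-type structure theory for $\sigma$-scattered orders, $X$ is a sum $\sum_{i\in I}X_i$ with $I$ a regular cardinal, a reverse regular cardinal, or some $\eta_{\alpha\beta}$, and with each $X_i$ of strictly smaller rank.
\item If $\varphi$ is strictly indecomposable to one side, use the Hagendorf--Jullien theorem (\Cref{Hagendorf's}) to take $I=\kappa=\mathrm{cf}(X)$, or $\kappa^*$ for the other side.
\item Otherwise use a shuffle over the appropriate $\eta_{\alpha\beta}$.
\end{itemize}
Next, each $X_i$ is a finite sum of indecomposables; this is Laver's finite-sum theorem, proved by the same induction using well-quasi-ordering. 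Refining the index order, we may assume all summands are indecomposable and strictly below $\varphi$, so they lie in the closure by induction. Finally we thin out the index set to make the sum regular unbounded, respectively a genuine shuffle. For each summand type $\psi$, the set of indices carrying a summand that embeds $\psi$ must be unbounded, respectively dense. If it were not, cutting at a bound would decompose $\varphi$ into two parts neither of which embeds $\varphi$; well-quasi-ordering guarantees that only finitely many "minimal obstructions" need to be handled.

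The main obstacle is this last step. Ensuring unboundedness or density for every summand simultaneously requires the bqo theory; wqo alone does not suffice for the infinitary sums over $\eta_{\alpha\beta}$. I would therefore cite Laver's bqo theorem as a black box rather than reprove it.
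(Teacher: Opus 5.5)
The paper gives no proof of this statement; it only quotes Laver's Theorem~3.1(f). So the question is whether your sketch stands on its own.

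Your first inclusion is fine, and you are right to state the second one up to equimorphism. The homogeneity of $\eta_{\alpha\beta}$ that you invoke for shuffles is unproved, but it is also unnecessary:
\begin{itemize}
\item since $\rho\equim\rho^2$ and $\rho$ is infinite, $\rho 3\leqslant\rho$;
\item so for a cut $\rho=A+\{x_0\}+B$, one of $A$, $B$ embeds $\rho$ (by \Cref{sum-inequality-separation});
\item then $2\rho\leqslant\rho$ gives $\rho$-many disjoint non-degenerate intervals inside that side, exactly as in the proof of \Cref{abstract theorem}.
\end{itemize}
The hard inclusion, however, has a genuine gap in two places.

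\textbf{First gap: the recurrence claim in the regular-sum case is false.} You claim that for each summand type, the set of indices carrying a summand that embeds it \emph{must} be unbounded, and the cutting argument you offer for this does not work. Take $\omega_1$ written as the $\omega_1$-sum $\omega+1+1+\cdots$. Its summands are indecomposable and strictly below $\omega_1$, and only index $0$ carries a summand embedding $\omega$. Yet cutting after index $0$ gives $\omega+\omega_1$, whose right part embeds $\omega_1$. There is no contradiction, because a summand can embed into a tail spread over many summands without embedding into any single one.

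This case can be repaired. Let $B$ be the set of indices $\alpha$ for which $\{\beta: X_\alpha\leqslant X_\beta\}$ is bounded. If $B$ were unbounded, you could choose $\alpha_0<\alpha_1<\cdots$ in $B$, each beyond the bounds attached to its predecessors. This is a bad sequence, so by Laver's wqo theorem $B$ is bounded. The tail past $B$ is then a regular unbounded sum, and it is equimorphic to $\varphi$ by right indecomposability. So it is wqo itself that handles recurrence; bqo is what Laver needs to prove that the $\sigma$-scattered types are wqo in the first place.

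\textbf{Second gap: the non-strict case is not argued.} This case, $\varphi\equim\varphi+\varphi$, is the real content of Laver's theorem, and ``use a shuffle over the appropriate $\eta_{\alpha\beta}$'' just restates the conclusion. You need three things: an index order equimorphic to some $\eta_{\alpha\beta}$, pieces strictly below $\varphi$, and dense recurrence of every piece type. The ``Hausdorff-type'' decomposition you posit is not a result you can quote in that form, and in any case it says nothing about recurrence.

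\Cref{selfsimcondensationlemma} gives a start. Since $\varphi\equim\varphi+\varphi$, condensing by $x E x'$ iff $X\isntcontainedin[\{x,x'\}]$ yields a homogeneous dense quotient whose classes lie strictly below $X$. Dense recurrence can then be forced on a suitable non-degenerate interval of the quotient: choose one minimising the downward closure of the types of the indecomposable pieces of its classes, using that downward closed sets in a wqo satisfy the descending chain condition.

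What remains, and is absent from your proposal, is to show that this quotient is equimorphic to some $\eta_{\alpha\beta}$, with $\alpha,\beta$ determined by the quotient rather than by $X$, and then to transfer the shuffle onto it. For example, $X=\omega_1\eta$ embeds $\omega_1$, but its quotient is just $\eta$. This step uses the internal structure of Laver's $\eta_{\alpha\beta}$. So your hard direction, like the paper, ultimately rests on citing Laver.
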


For the definition of the orders $\eta_{\alpha\beta}$, see \cite[pg. 99]{971L0}. We note that the indices $\alpha$ and $\beta$ here are regular uncountable cardinals. For our purposes, the relevant fact concerning these orders is that each is equimorphic to its square.

\begin{prop}[{Laver, \cite[Corollary 3.4]{971L0}}] \label{embedding in etas} Suppose $\varphi$ is an order type. Then $\varphi \leqslant \eta_{\alpha\beta}$ if and only if $\varphi$ is $\sigma$-scattered, $\alpha^* \isntcontainedin \varphi$, and $\beta \isntcontainedin \varphi$.
\end{prop}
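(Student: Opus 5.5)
The plan is to work from Laver's explicit description of $\eta_{\alpha\beta}$ (\cite[pg.~99]{971L0}): a lexicographically ordered set of finite sequences, each entry drawn from an order built from $\alpha^*$ and $\beta$. I will prove the two directions separately. Throughout, I will use that $\alpha$ and $\beta$ are regular uncountable cardinals.

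For the forward direction, suppose $\varphi \leqslant \eta_{\alpha\beta}$. Since suborders of $\sigma$-scattered orders are $\sigma$-scattered, it suffices to prove three facts about $\eta_{\alpha\beta}$ itself.
\begin{enumerate}
\item $\eta_{\alpha\beta}$ is $\sigma$-scattered. I will split it into countably many pieces, for example by sequence length together with any countable auxiliary data. Each piece is a finite lexicographic product of scattered orders, and such a product is scattered by the closure facts in \Cref{Prelims}.
\item $\beta \isntcontainedin \eta_{\alpha\beta}$. Given an embedding of $\beta$, regularity and uncountability of $\beta$ put $\beta$ many of its points into a single piece, since there are only countably many pieces. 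I then induct on the coordinates of that piece. An increasing $\beta$-sequence in a lexicographic product $A_1 \cdots A_n$ either has unboundedly many distinct values in some coordinate, which yields $\beta \leqslant A_i$, or has a final segment that is constant on the leading coordinates and can be pushed to the remaining ones. Regularity of $\beta$ is exactly what makes this dichotomy work, and no coordinate order embeds $\beta$ by construction.
\item $\alpha^* \isntcontainedin \eta_{\alpha\beta}$. This is the symmetric argument.
\end{enumerate}

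For the backward direction I will induct on a structural rank of $\sigma$-scattered orders. I will use Hausdorff's analysis in Laver's form: every $\sigma$-scattered order is built from $0$ and $1$ by sums indexed by regular cardinals, by their reverses, and by shuffles over orders $\eta_{\kappa\lambda}$. The key extra observation is that if $\beta \isntcontainedin \varphi$ and $\alpha^* \isntcontainedin \varphi$, then every regular-cardinal index $\kappa$ occurring in such a decomposition satisfies $\kappa < \beta$, and every reversed index $\kappa^*$ satisfies $\kappa < \alpha$. Otherwise a cofinal selection from nonempty summands would embed $\beta$ or $\alpha^*$. So the inductive step needs closure properties of the class of types embeddable in $\eta_{\alpha\beta}$:
\begin{enumerate}
\item closure under $\kappa$-sums for regular $\kappa<\beta$;
\item closure under $\kappa^*$-sums for regular $\kappa<\alpha$;
\item closure under shuffles over admissible $\eta_{\kappa\lambda}$.
\end{enumerate}
I will get all three from a self-similarity property of $\eta_{\alpha\beta}$. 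Every nondegenerate interval of $\eta_{\alpha\beta}$ contains a copy of $\eta_{\alpha\beta}$, and $\kappa$, $\kappa^*$ (in these ranges) embed as families of disjoint intervals. Hence $\eta_{\alpha\beta}\kappa$, $\eta_{\alpha\beta}\kappa^*$ and, more generally, $\eta_{\alpha\beta}\sigma$ for any $\sigma \leqslant \eta_{\alpha\beta}$ all embed in $\eta_{\alpha\beta}$, which gives $\eta_{\alpha\beta}\equim\eta_{\alpha\beta}^2$. Gluing the embeddings of the summands given by the induction hypothesis then handles each case.

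The main obstacle is the structural step: showing that a $\sigma$-scattered order omitting $\beta$ and $\alpha^*$ really decomposes using only regular cardinals below $\beta$, reverses below $\alpha$, and admissible shuffles, so that the induction closes. I would first try a direct transfinite induction on the least $n$-indexed countable scattered cover. Within each scattered piece I would apply Hausdorff's theorem with rank bookkeeping, then amalgamate the countably many pieces via an $\eta_{\omega_1\omega_1}$-like shuffle, which sits inside $\eta_{\alpha\beta}$ because $\alpha,\beta$ are uncountable.
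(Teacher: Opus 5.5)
The paper does not prove this statement; it is quoted from Laver, so your argument has to stand on its own.

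\textbf{Forward direction.} This is fine in outline: countably many pieces, then regularity to push a $\beta$-sequence into one piece and handle it coordinate by coordinate. It works only if each coordinate alphabet itself omits $\beta$ and $\alpha^*$. That sits uneasily with your description of the alphabets as ``built from $\alpha^*$ and $\beta$''.

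\textbf{Backward direction: the gap.} You cannot start from the structure theorem (every $\sigma$-scattered type is generated from $0,1$ by regular sums, their reverses and $\eta_{\kappa\lambda}$-shuffles). That is Laver's later \cite[Theorem 3.1]{973L1}. Its shuffle clause presupposes that the dense skeleton of a non-scattered indecomposable type can be replaced, up to equimorphism, by some $\eta_{\kappa\lambda}$. That is essentially the universality you want to prove.

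You flag this as the main obstacle, but your workaround fails. The pieces of a cover $X=\bigcup_n X_n$ by scattered sets are not convex. So $X$ is not a sum of them, and embeddings of the $X_n$ chosen separately do not glue together.

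Nor can you first regroup $X$ as a sum of scattered blocks over a countable order, which is all an ``$\eta_{\omega_1\omega_1}$-like shuffle'' provides ($\eta_{\omega_1\omega_1}$ is countable). Concretely, $\eta\,\omega_1$ is $\sigma$-scattered. Suppose $\eta\,\omega_1=\sum_{i\in I}S_i$ with every $S_i$ scattered. Each of the $\aleph_1$ disjoint copies of $\eta$ must meet two summands. Two disjoint intervals cannot meet the same two summands. Hence $I$ is uncountable.

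\textbf{What is missing} is a device that coordinates the pieces.
\begin{enumerate}
\item Take the $X_n$ disjoint.
\item Code $x\in X_n$ by the sequence of its positions (a point or a Dedekind cut): first relative to $X_0$, then relative to $X_1$ inside the convex set of points with the same first position, and so on up to stage $n$.
\item This embeds $X$ lexicographically into a tree of finite sequences. The coordinate orders are point-and-cut orders of scattered suborders of $X$.
\item These coordinate orders are again scattered, since the completion of a scattered order is scattered. They also omit $\beta$ and $\alpha^*$: between consecutive terms $c_\xi<c_{\xi+1}$ of a monotone sequence of cuts lies a point of the underlying order.
\item The tree then maps into $\eta_{\alpha\beta}$ level by level. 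Use your scattered case together with $\eta_{\alpha\beta}\sigma\leqslant\eta_{\alpha\beta}$ to give each cut a nondegenerate interval.
\end{enumerate}

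\textbf{Range of $\alpha,\beta$.} Your uniform treatment of all regular uncountable $\alpha,\beta$ cannot be right. By induction along Hausdorff's analysis, a scattered order omitting $\alpha^*$ and $\beta$ has size $<\max(\alpha,\beta)$, and hence so does a $\sigma$-scattered one. So if $\max(\alpha,\beta)$ is weakly inaccessible, no such order embeds all ordinals below $\beta$ and all reverse ordinals below $\alpha$, and the equivalence fails. The indices must be restricted there. This is exactly where your claim breaks down that every regular $\kappa<\beta$, and every $\kappa^*$ with $\kappa<\alpha$, embeds in $\eta_{\alpha\beta}$.
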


\begin{coro}
\label{etas embed their squares}
$\eta_{\alpha\beta} \equim {\eta_{\alpha\beta}}^2$. 
\end{coro}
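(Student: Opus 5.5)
The plan is to derive the corollary directly from \Cref{embedding in etas}, which characterises exactly which types embed in $\eta_{\alpha\beta}$. Trivially $\eta_{\alpha\beta} \leqslant \eta_{\alpha\beta}^2$, since $\eta_{\alpha\beta}$ is non-empty, so we may pick a point in the left factor and map $x \mapsto \opair{c}{x}$. The real content is the reverse embedding $\eta_{\alpha\beta}^2 \leqslant \eta_{\alpha\beta}$. By \Cref{embedding in etas} applied with $\varphi = \eta_{\alpha\beta}^2$, it suffices to check three things: $\eta_{\alpha\beta}^2$ is $\sigma$-scattered, $\alpha^* \isntcontainedin \eta_{\alpha\beta}^2$, and $\beta \isntcontainedin \eta_{\alpha\beta}^2$.

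First, by the backward direction of \Cref{embedding in etas} applied to $\varphi = \eta_{\alpha\beta}$ itself, $\eta_{\alpha\beta}$ is $\sigma$-scattered and embeds neither $\alpha^*$ nor $\beta$. Since the class of $\sigma$-scattered orders is closed under sums over $\sigma$-scattered index orders, hence under products (as noted in the preliminaries), $\eta_{\alpha\beta}^2$ is $\sigma$-scattered.

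The main step is showing that $\beta$ does not embed in $\eta_{\alpha\beta}\eta_{\alpha\beta}$; the $\alpha^*$ case is symmetric. I would use \Cref{product-lemma} in the form of a projection argument. Suppose $e:\beta \to ZW$ is an embedding, with $Z,W$ of type $\eta_{\alpha\beta}$. Compose with the projection to $W$ to get a weakly increasing map $\beta \to W$. Its image is a well-ordered subset of $W$, so its order type is an ordinal $\gamma \leqslant \eta_{\alpha\beta}$, and hence $\gamma < \beta$ because $\beta \isntcontainedin \eta_{\alpha\beta}$. Thus $\beta$ is partitioned into $|\gamma|<\beta$ convex pieces, each mapped into a single copy of $Z$. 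Since $\beta$ is regular and infinite, some piece must have order type $\beta$. That piece gives $\beta \leqslant Z = \eta_{\alpha\beta}$, a contradiction.

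The point needing care is exactly where regularity of $\beta$ is used. Fewer than $\beta$ convex pieces of a regular cardinal $\beta$ cannot all have size less than $\beta$, since their union would then be bounded; so some piece is a final segment of type $\beta$. The paper notes that $\alpha$ and $\beta$ are regular uncountable cardinals, which supplies this. With the three conditions verified, \Cref{embedding in etas} gives $\eta_{\alpha\beta}^2 \leqslant \eta_{\alpha\beta}$, and hence $\eta_{\alpha\beta} \equim \eta_{\alpha\beta}^2$.
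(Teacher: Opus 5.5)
Your proof is correct and is essentially the paper's argument: the square is $\sigma$-scattered, regularity of $\alpha$ and $\beta$ keeps $\alpha^*$ and $\beta$ out of it, and \Cref{embedding in etas} then gives $\eta_{\alpha\beta}^2 \leqslant \eta_{\alpha\beta}$; your fiber-counting argument just spells out the regularity step that the paper leaves implicit. (One small slip: deducing the three properties of $\eta_{\alpha\beta}$ from $\eta_{\alpha\beta} \leqslant \eta_{\alpha\beta}$ uses the ``only if'' direction of \Cref{embedding in etas}, not the backward one.)
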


\begin{proof}
Since $\eta_{\alpha\beta}$ is $\sigma$-scattered, ${\eta_{\alpha\beta}}^2$ is $\sigma$-scattered as well. By \Cref{embedding in etas}, $\alpha^* \isntcontainedin \eta_{\alpha\beta}$. Since $\alpha$ is a regular cardinal we also have $\alpha^* \isntcontainedin {\eta_{\alpha\beta}}^2$. Likewise, the regularity of $\beta$ implies $\beta \isntcontainedin {\eta_{\alpha\beta}}^2$. Thus ${\eta_{\alpha\beta}}^2 \leqslant \eta_{\alpha\beta}$ by \Cref{embedding in etas}.
\end{proof}

We are now in a position to apply \Cref{abstract theorem} to $\sigma$-scattered linear orders.

\begin{theo}
\label{sigma has finite finite classes}
Every $\sigma$-scattered linear order is equimorphic to a linear order with only finitely many finite $F$-classes.
\end{theo}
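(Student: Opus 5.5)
The plan is to combine Laver's structure theory with \Cref{abstract corollary}. The key input beyond \Cref{Laver sigma-scattered indec thm} is Laver's finite decomposition theorem for $\sigma$-scattered orders (cf.\ \cite{971L0}, \cite{973L1}): every $\sigma$-scattered order type $\varphi$ can be written as a finite sum
\[
\varphi = \psi_0 + \psi_1 + \cdots + \psi_n
\]
of indecomposable $\sigma$-scattered types $\psi_i$. I would quote this directly from Laver, in the same way the introduction recalls the analogous result for scattered orders.

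Next, let $\mathcal{F}$ be the family of all infinite regular cardinals, their reverses, and all types $\eta_{\alpha\beta}$. By \Cref{etas embed their squares}, every $\eta_{\alpha\beta}$ is equimorphic to its square. So $\mathcal{F}$ satisfies the hypothesis of \Cref{abstract theorem}: each $\rho \in \mathcal{F}$ either satisfies $\rho \equim \rho^2$ or is, up to reversal, an infinite regular cardinal. Let $\mathcal{T}$ be the closure of $\{0,1\}$ under regular unbounded $\rho$-sums and $\rho$-shuffles for $\rho\in\mathcal{F}$. By \Cref{Laver sigma-scattered indec thm}, $\mathcal{T}$ is exactly the class of indecomposable $\sigma$-scattered types. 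Hence each summand $\psi_i$ above lies in $\mathcal{T}$, and therefore $\varphi$ belongs to the family $\mathcal{S}$ of finite sums of members of $\mathcal{T}$.

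Finally, \Cref{abstract corollary} gives that $\varphi$ is equimorphic to a type with only finitely many finite $F$-classes, which is the statement of the theorem. If one prefers to verify \Cref{abstract corollary} directly here, the argument runs as follows:
\begin{enumerate}
\item By \Cref{abstract theorem}, replace each $\psi_i$ by either $1$ or an equimorphic $\tilde\psi_i$ with no finite $F$-classes.
\item Equimorphisms of the summands assemble into an equimorphism of the sums.
\item In the resulting sum $\tilde\psi_0+\cdots+\tilde\psi_n$, each $F$-class is a union of $F$-class pieces of consecutive summands. A finite $F$-class must therefore contain at least one singleton summand, or be formed at one of the $n$ junctions between adjacent summands. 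This bounds the number of finite $F$-classes by roughly $2n+2$.
\end{enumerate}

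The main obstacle is not internal to this paper: it is making sure the finite-sum decomposition into indecomposables is available for all $\sigma$-scattered types, and not just scattered ones. I would cite Laver's $\sigma$-scattered version, which rests on the well-quasi-ordering of $\sigma$-scattered types under embeddability, so that there are no infinite descending chains of decompositions. Everything else is a direct assembly of \Cref{Laver sigma-scattered indec thm}, \Cref{etas embed their squares}, and \Cref{abstract corollary}.
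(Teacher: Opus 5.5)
Your proposal is correct and follows the paper's proof. Both use Laver's decomposition of a $\sigma$-scattered type into finitely many indecomposable summands, then \Cref{Laver sigma-scattered indec thm} together with \Cref{etas embed their squares} to apply \Cref{abstract theorem} to each summand, and finally assemble the summands as in \Cref{abstract corollary}. (In your step (3) the junction case is actually vacuous: summands are convex, so any $F$-class meeting a summand without finite $F$-classes is infinite, which gives the sharper bound $n+1$; this does not affect the argument.)
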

\begin{proof}
By \cite[Theorem 3.1(d)]{973L1} every $\sigma$-scattered linear order is a finite sum of indecomposable $\sigma$-scattered linear orders.
By \Cref{Laver sigma-scattered indec thm}, \Cref{etas embed their squares}, and \Cref{abstract theorem}, every indecomposable $\sigma$-scattered linear order is either $1$ or equimorphic to a type without finite $F$-classes.
\end{proof}

\begin{theo} \label{SS+untr implies SI}
Every untranscendable $\sigma$-scattered linear order different from $2$ is strongly indecomposable.
\end{theo}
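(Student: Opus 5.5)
The plan is to combine \Cref{sigma has finite finite classes} with \Cref{2 only FF}; almost all of the work has already been done.

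Let $X$ be an untranscendable $\sigma$-scattered linear order with $\textrm{otp}\langle X \rangle \neq 2$, and write $\varphi = \textrm{otp}\langle X \rangle$. By \Cref{sigma has finite finite classes}, $X$ is equimorphic to a linear order with only finitely many finite $F$-classes. Hence $\varphi$ is an untranscendable type that is equimorphic to a type with only finitely many finite $F$-classes. \Cref{2 only FF} then gives that $\varphi$ is strongly indecomposable or $\varphi = 2$. We have excluded $\varphi = 2$, so $\varphi$ is strongly indecomposable, and therefore so is $X$, by the definition of strong indecomposability for orders.

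The only point that needs care is the proof of \Cref{sigma has finite finite classes}, which we take as given. That proof writes $X$ as a finite sum of indecomposable $\sigma$-scattered orders (Laver) and replaces each summand by an equimorphic one. Here one must note that the ordered sum of the replacements is equimorphic to the original sum, since embeddings can be glued summand by summand. One must also note that finitely many summands, each either $1$ or free of finite $F$-classes, yield only finitely many finite $F$-classes in total: each finite $F$-class of the sum is contained in a union of boundary pieces and summands equal to $1$. This is essentially \Cref{abstract corollary}.

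No further obstacle is expected. In particular, we never need to know that $X$ itself is indecomposable, since \Cref{2 only FF} already handles that internally via \Cref{AD+UT=2}.
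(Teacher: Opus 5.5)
Your argument is correct and is essentially the paper's: both routes pass through Laver's results and \Cref{abstract theorem} and then conclude with \Cref{2 only FF}. The only difference is in how you reach the hypothesis of \Cref{2 only FF}. The paper first applies \Cref{AD+UT=2} to get that $\varphi$ is indecomposable, so Laver's characterization (\Cref{Laver sigma-scattered indec thm}) applies to $\varphi$ directly and gives a type with no finite $F$-classes (or $\varphi = 1$). You instead cite \Cref{sigma has finite finite classes}, which additionally uses Laver's finite-sum decomposition, and you leave the reduction from finitely many finite $F$-classes to none to \Cref{AIs and finite CC} inside \Cref{2 only FF}.
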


\begin{proof}
Suppose $\varphi \neq 2$ is $\sigma$-scattered and untranscendable. By \Cref{AD+UT=2}, $\varphi$ is indecomposable and hence belongs to the class $\mathcal{T}$ of indecomposable $\sigma$-scattered types. By \Cref{Laver sigma-scattered indec thm} and \Cref{etas embed their squares}, \Cref{abstract theorem} applies to $\mathcal{T}$; hence every type in $\mathcal{T}$ is equimorphic to a type without finite $F$-classes. In particular, $\varphi$ is equimorphic to such a type. By \Cref{2 only FF}, $\varphi$ is strongly indecomposable.  
\end{proof}

Since every countable order is $\sigma$-scattered, we obtain the following corollary. 

\begin{coro}
\label{2 only scattered}
Every countable untranscendable linear order different from $2$ is strongly indecomposable. \qed
\end{coro}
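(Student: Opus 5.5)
This is an immediate specialization of \Cref{SS+untr implies SI}, so the plan is just to check that its hypotheses are met. Let $\varphi$ be a countable untranscendable order type with $\varphi \neq 2$, and fix an order $X$ of type $\varphi$.

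First, $X$ is $\sigma$-scattered. Enumerate $X = \{x_n : n \in \NN\}$; when $X$ is finite, repeat points or pad with empty sets. Each singleton $\{x_n\}$ is scattered, so $X = \bigcup_{n \in \NN} \{x_n\}$ is a countable union of scattered suborders. The preliminaries already record that every countable order is $\sigma$-scattered in this way.

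Second, apply \Cref{SS+untr implies SI} to $\varphi$. It is $\sigma$-scattered, untranscendable, and different from $2$, so it is strongly indecomposable.

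There is no real obstacle here, since all the substantive work lies in \Cref{SS+untr implies SI}. That work is the chain \Cref{AD+UT=2}, then Laver's \Cref{Laver sigma-scattered indec thm}, then \Cref{abstract theorem}, then \Cref{2 only FF}. The only point to confirm is that the finite cases cause no trouble. Among finite types, only $0$, $1$ and $2$ are untranscendable, as noted in the examples subsection. The types $0$ and $1$ are trivially strongly indecomposable, and $2$ is excluded by hypothesis. So the statement holds for them directly, in agreement with the general theorem.
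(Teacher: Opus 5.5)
Your proposal is correct and matches the paper's argument: the paper also notes only that every countable order is $\sigma$-scattered and then applies \Cref{SS+untr implies SI}. Your separate check of the finite cases $0$, $1$, $2$ is harmless but unnecessary, since the general theorem already covers them.
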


\Cref{SS+untr implies SI} says that for $\sigma$-scattered types $\varphi \neq 2$, untranscendability is a strengthening of strong indecomposability. We note that it is a strict strengthening, as, for example, $\omega^2$ is strongly indecomposable but transcendable. However, we lack a characterization of the scattered and $\sigma$-scattered untranscendable linear orders, cf.\ Problems \ref{characterise_ut&sc} and \ref{characterise_ut&s-sc}.

\subsubsection{Untranscendable Aronszajn types}\label{subsub_Aronszajn}

\begin{defi} \label{Aronszajnlinedef}
An \emph{Aronszajn line} is an uncountable linear order that does not embed either $\omega_1$ or $\omega_1^*$, or any uncountable suborder of $\mathbb{R}$. 
\end{defi}
Aronszajn lines were first constructed by Aronszajn \cite{935K0}, \cite[§5]{984T0}. For a history of this notion cf.\ \cite{995T0}. It can be shown that every Aronszajn line has cardinality $\aleph_1$.
The order type of an Aronszajn line is called an \textit{Aronszajn type}.  

The \textit{Proper Forcing Axiom} (\PFA) is a forcing axiom introduced by Baumgartner (cf.\ \cite{984B1}) that strongly influences the combinatorics of objects of size $\aleph_1$ (cf.\ \cite{010M2} for a survey). And indeed, it turns out that in the presence of \PFA\ the class of Aronszajn lines possesses a structure theory that resembles the structure theory of the class of $\sigma$-scattered linear orders; see the discussion below. We will use this structure theory to show that the untranscendable Aronszajn lines are strongly indecomposable. 

A \textit{universal Aronszajn type} is an Aronszajn type $\nu$ such that $\varphi \leqslant \nu$ for every Aronszajn type $\varphi$. Observe that if there is a universal Aronszajn type, then it is unique up to equimorphism. Moore has shown that the Proper Forcing Axiom implies the existence of such a type, cf. \cite{009M0}:

\begin{theo}[$\ZFC + \PFA$]
\label{universal Aronszajn}
There is a universal Aronszajn type $\nu$. 
\end{theo}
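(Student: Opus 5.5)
The plan is to follow Moore's strategy in \cite{009M0}. First represent an arbitrary Aronszajn line as the lexicographic order on an Aronszajn tree. Then use \PFA\ to push every such tree into a single fixed, highly homogeneous coherent Aronszajn tree. Finally take $\nu$ to be a lexicographically ordered line built from that fixed tree.

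\emph{Step 1 (tree representation).} Let $L$ be an Aronszajn line; recall that $|L|=\aleph_1$. I will build a partition tree for $L$. Fix an enumeration $\seq{x_\xi}{\xi<\omega_1}$ of $L$. At stage $\alpha$, partition $L$ into the maximal intervals that avoid $\{x_\xi:\xi<\alpha\}$. Ordering these intervals by inclusion gives a tree $T_L$ of height $\omega_1$. Because $L$ embeds neither $\omega_1$, nor $\omega_1^*$, nor an uncountable set of reals, $T_L$ has countable levels and no uncountable branch, so $T_L$ is an Aronszajn tree.

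The immediate successors of a node are naturally linearly ordered, and each is a countable order. Hence $L$ embeds into the lexicographic order of $T_L$, once we allow each successor set to be ordered by some countable order and add the points attached to nodes. To make the target uniform, I would replace these successor orders by $\mathbb{Q}$, which embeds every countable order. So $L$ embeds lexicographically into a $\mathbb{Q}$-branching Aronszajn tree.

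\emph{Step 2 (uniformization under \PFA).} Fix a coherent, special Aronszajn tree $T$ whose nodes are functions $t:\alpha\to\mathbb{Q}$ with $\alpha<\omega_1$, closed under finite modifications; for instance, Todorcevic's $T(\rho_1)$ with its values recoded into $\mathbb{Q}$. By the Abraham--Shelah/Todorcevic consequences of \PFA, any two Aronszajn trees are club-isomorphic. More precisely, for every Aronszajn tree $S$ there is a club $C\subseteq\omega_1$ such that $S\restriction C$ embeds into $T\restriction C$. I would upgrade this to an embedding that also respects the lexicographic order of successors. The intended mechanism is to exploit closure of $T$ under finite changes together with the fact that every countable order embeds into $\mathbb{Q}$: one adjusts values at the single level where two nodes split, so that their order is preserved.

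\emph{Step 3 (the universal line).} Let $\nu$ be the type of the set of all nodes of $T$ (and finite modifications thereof), ordered lexicographically, with the usual convention for comparing a node with its extensions. Check that $\nu$ is Aronszajn. Specialness and coherence of $T$ rule out copies of $\omega_1$ and $\omega_1^*$. Uncountable real-type suborders are ruled out as well, since an uncountable separable suborder would yield an uncountable antichain structure incompatible with $T$ being special and coherent, which is a standard argument. Finally, compose Steps 1 and 2 to embed any Aronszajn $L$ into $\nu$. The order between two points of $L$ is decided at the level where their branches split, and passing to club levels of $C$ only merges intermediate levels without changing the comparison.

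\emph{Main obstacle.} I expect the main obstacle to be Step 2, namely turning \PFA's club-isomorphism of trees into an embedding that preserves the lexicographic order. This requires a genuine \PFA\ argument: a proper poset of countable approximations to the embedding, with side conditions, meeting $\aleph_1$ dense sets. It is exactly the technical core of Moore's paper, and I would follow that forcing argument rather than reconstruct it from scratch.
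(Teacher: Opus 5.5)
The paper does not prove this statement; it simply quotes Moore's theorem. Deferring to Moore is therefore in line with the paper, but your outline does not reduce the theorem to anything smaller.

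Steps 1 and 3 are routine. For Step 3 you only need that the underlying tree has countable levels and no uncountable branch. An $\omega_1$- or $\omega_1^*$-sequence yields a branch. If a countable $D$ is dense in an uncountable $X$, take $\delta$ above the heights of $D$: each level-$\delta$ cone is convex and misses $D$, so it meets $X$ in at most one point. Specialness and coherence play no role.

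All the content is in the upgraded Step 2, and that is not a lemma but the theorem in another guise. Conversely, take any order embedding $f$ of $(S,<_{\mathrm{lex}})$ into $\nu$. A closing-off and pressing-down argument gives a club $E$ such that for $\delta\in E$ and $s\in S_\delta$ the set $f[\mathrm{cone}(s)]$ lies inside a single level-$\delta$ cone of $T$. Sending $s$ to that node is then a lexicographic tree embedding of $S\restriction E$ into $T\restriction E$.

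The one concrete idea you offer for Step 2 does not go through. Correcting an Abraham--Shelah club isomorphism $\phi$ ``at the level where two nodes split'' has to be done at every splitting level along every branch. In a normal Aronszajn tree, the branch below a node of limit height $\alpha$ has splitting points cofinal in $\alpha$. So the corrected $\psi(s)$ will in general differ from $\phi(s)$ at infinitely many places and fall outside $T$, which is closed only under \emph{finite} changes. More precisely, the corrected functions at any two nodes of the same level must agree off a finite set. This is a global coherence condition on the whole labelling, and local choices cannot secure it.

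There is also no reason for $\phi$ to be nearly order-preserving to begin with. $C$ and $C^*$ are lexicographic orders on isomorphic trees (negate the values), yet they have no uncountable suborder in common. So the tree structure does not determine the order, even up to uncountable suborders. Producing coherent order-preserving labels is exactly what requires a genuine \PFA\ argument, and you do not supply it. As written, the proposal amounts to a citation of Moore's theorem wrapped in a sketch whose only specific mechanism is not viable. It would be cleaner simply to cite Moore, as the paper does.
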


\begin{defi}[$\ZFC + \PFA$] \label{fragmentedAlinedef}
An Aronszajn type $\varphi$ is called \emph{fragmented} if $\nu \isntcontainedin \varphi$.
\end{defi}

The universal Aronszajn type $\nu$ can be viewed as an analog\ue\ of the rational type $\eta$, which is universal for countable types. From this point of view, a fragmented Aronszajn type is the analog\ue\ of a scattered type. 

\textit{Countryman lines} are a special type of Aronszajn line that play an important role in the structure theory of Aronszajn lines. 

\begin{defi} \label{Countrymandef}
A linear order $C$ is called a \textit{Countryman line} if 
\begin{itemize}
    \item[(i)] $C$ is uncountable, 
    \item[(ii)] Its square $C \times C$, when partially ordered coordinatewise, can be expressed as a union of countably many linearly ordered subsets. 
\end{itemize}
\end{defi}

It can be shown that a Countryman line is necessarily Aronszajn. Notice that if $C$ is Countryman, then so is $C^*$. 

Countryman lines were first shown to exist by Shelah in response to a question of Countryman, \cite{976S0}. Moore showed that, under PFA, every Countryman line $C$ is minimal among the Aronszajn lines in the sense that every Aronszajn line $A$ embeds either $C$ or its reverse $C^*$, see \cite{006M1}.

For the remainder of this subsection we fix a Countryman line $C$. Let $\gamma$ denote the order type of $C$ and $\gamma^*$ the order type of its reverse $C^*$. 

%We will use the following fact in our proof that, under $\mathsf{PFA}$, every untranscendable Aronszajn line is strongly indecomposable. 

We will use the following result of Martinez-Ranero, cf. \cite[Fact 3.11]{011M2}, in our proof of \Cref{Aron+untr implies SI}:

\begin{prop}[$\ZFC + \PFA$]
\label{Countryman lines equi to squares}
If $\rho$ is a Countryman type then $\rho \equiv \rho^2$.
\end{prop}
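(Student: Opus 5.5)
Since $\rho \neq 0$, we have $\rho = \rho\cdot 1 \leqslant \rho\rho$ trivially, so the content of the statement is $\rho^2 \leqslant \rho$. I would not try to build an embedding of $\rho\rho$ into $\rho$ by hand. Instead I would combine three ingredients.
\begin{enumerate}
\item $\rho^2$ is itself a Countryman type (a $\ZFC$ fact).
\item Under \PFA, any two Countryman types $\sigma, \sigma'$ satisfy $\sigma \equim \sigma'$ or $\sigma \equim \sigma'^*$. This is the Abraham--Shelah / Moore ``one or two Countryman types up to equimorphism'' theorem, which underlies Moore's minimality result quoted above.
\item A Countryman line $D$ never shares an uncountable suborder with its reverse. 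Indeed, if $Z$ were uncountable with $Z \leqslant D$ and $Z \leqslant D^*$, then the set $\{(f(z), g(z)) : z \in Z\}$, for embeddings $f : Z \to D$ and $g : Z \to D^*$, would be an uncountable antichain in $D \times D$ with the coordinatewise order. Such an antichain cannot be covered by countably many chains.
\end{enumerate}
Granting these, $\rho^2$ is Countryman, so by (2) either $\rho^2 \equim \rho$ or $\rho^2 \equim \rho^*$. In the second case $\rho \leqslant \rho^2 \leqslant \rho^*$, contradicting (3) applied to $\rho$. Hence $\rho^2 \equim \rho$.

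The main work is therefore step (1). Fix a line $D$ of type $\rho$ and a decomposition $D \times D = \bigcup_n K_n$ into coordinatewise chains. Elements of $DD$ are pairs $(a,b)$ ordered anti-lexicographically, with $b$ the block index. Given two points $p = (a,b)$ and $p' = (a',b')$ of $DD$, I would index the pair $(p,p')$ by
\begin{itemize}
\item the $n$ with $(b,b') \in K_n$,
\item the $m$ with $(a,a') \in K_m$,
\item a finite ``equality pattern'' recording whether $b=b'$ and related coincidences.
\end{itemize}
The aim is to show that, after a further countable refinement, each piece is a chain in $DD \times DD$. When the block coordinates $b,c$ (respectively $b',c'$) of two pairs are strictly ordered, the comparison is decided by $K_n$. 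When they coincide, it falls back to the comparison of the $a$-coordinates, which is controlled by $K_m$.

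The delicate case is a mixed one: say $b < c$ while $b' = c'$ and the $a'$-coordinates go the wrong way. This is where I expect the main obstacle. To handle it, I would first refine each $K_n$ by the sets of points with $b = c$ (respectively $b' = c'$) fixed. Since $D$ is Aronszajn and each $K_n$ is a chain in $D \times D$, every fibre $\{b' : (b,b') \in K_n\}$ is countable; otherwise we would get an uncountable chain in one coordinate with the other fixed, which is harmless but needs to be used. I would then use a countable enumeration of these fibres to separate the ``equal'' from the ``strict'' comparisons into different pieces.

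If this bookkeeping becomes unwieldy, my fallback is to avoid (1) altogether. Instead I would show that $\rho^2$ is Aronszajn and contains no copy of $\rho^*$, again by (3) via projection to the block coordinate. By Moore's minimality theorem $\rho^2$ then contains $\rho$, and I would invoke the \PFA\ fact that every Aronszajn line not containing the reverse of a Countryman line embeds into that Countryman line. Either route gives $\rho^2 \leqslant \rho$.
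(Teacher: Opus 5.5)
Your overall architecture is sound, and it differs from the paper, which gives no argument and simply cites Martinez-Ranero's Fact 3.11. Steps (2) and (3) are fine. In fact (2) follows directly from Moore's minimality theorem as quoted in the paper: apply it in both directions and use (3) to rule out the mixed cases. Your final deduction from (1)--(3) is also correct.

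The gap is in step (1), exactly at the mixed case you flagged. Your remedy rests on the claim that every fibre $\{b' : (b,b')\in K_n\}$ is countable, and this is false for every cover. For fixed $b$, the uncountable set $\{b\}\times D$ is itself a chain and is covered by countably many $K_n$, so some fibre is uncountable. Refining ``by fibres'' with a countable enumeration therefore does not go through. Your fallback does not rescue this either. The \PFA{} fact you invoke (every Aronszajn line not embedding $\rho^*$ embeds into $\rho$) is unreferenced, and applied to $\rho^2$ it is essentially the statement being proved.

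The missing idea is to also colour the two ``cross'' pairs. For $p=(a,b)$ and $p'=(a',b')$ in $DD$, colour $(p,p')$ by the indices of the chains containing $(b,b')$, $(a,a')$, $(b,a')$ and $(a,b')$. Take a second pair $q=(e,d)$, $q'=(e',d')$ of the same colour; since $(b,b')$ and $(d,d')$ share a chain you may assume $b\leqslant d$ and $b'\leqslant d'$.
\begin{itemize}
\item If both inequalities are strict, $p<q$ and $p'<q'$ directly.
\item If both are equalities, the common chain of $(a,a')$ and $(e,e')$ gives comparability.
\item If $b<d$ and $b'=d'$, then $(b,a')$ and $(d,e')$ lie in one chain with $b<d$. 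This forces $a'\leqslant e'$, so $p<q$ and $p'\leqslant q'$.
\item The case $b=d$, $b'<d'$ is symmetric, using $(a,b')$ and $(e,d')$.
\end{itemize}
This proves in $\ZFC$ that $\rho^2$ is Countryman. With that patch, your argument becomes a self-contained derivation of the statement from Moore's theorem alone, rather than an appeal to Martinez-Ranero.
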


The following definitions and results, due to Barbosa (cf.\ \cite{023B0}), spell out the analogy between the class of $\sigma$-scattered linear orders and the class of fragmented Aronszajn types under $\mathsf{PFA}$. 

\begin{defi}[{\cite[Definition 3.13 with $Q = \{1\}$]{023B0}}]
\label{definition of H}
Let $\mathcal{H}$ denote the class of linear order types obtained by closing $\{0, 1\}$ under $\varphi$-regular unbounded sums, for $\varphi \in \{\omega, \omega^*\}$, and under $\varphi$-shuffles, for $\varphi \in \{\eta, \gamma, \gamma^*\}$. 
\end{defi}

Recall that $\mathcal{C}$ denotes the class consisting all fragmented Aronszajn types along with all countable types. 
By Definition 3.11 and Lemma 3.12 from \cite{023B0}, we have $\mathcal{H} \subseteq \mathcal{C}$.

Since under $\mathsf{PFA}$ we have $\gamma \equiv \gamma^2$ by \Cref{Countryman lines equi to squares}, under $\mathsf{PFA}$ the class $\mathcal{H}$ satisfies the conclusion of \Cref{abstract theorem}. In light of \Cref{Laver sigma-scattered indec thm}, $\mathcal{H}$ can be viewed as an analog\ue\ of the class of indecomposable $\sigma$-scattered orders. As Barbosa's following results show, the analogy is quite strong. For the first result, cf. \cite[Proposition 3.14]{023B0}:

\begin{prop}[$\ZFC + \PFA$]
\label{HcAI}
Every member of $\mathcal{H}$ is indecomposable.
\end{prop}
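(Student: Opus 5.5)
I would prove \Cref{HcAI} by induction on the construction of a type $\varphi \in \mathcal{H}$, using \Cref{indecequivdef}. The base cases $0$ and $1$ are trivially indecomposable. The inductive step has two cases: $\varphi$ is a regular unbounded $\omega$- or $\omega^*$-sum of members of $\mathcal{H}$, or $\varphi$ is a $\rho$-shuffle of members of $\mathcal{H}$ for $\rho \in \{\eta, \gamma, \gamma^*\}$. Notably, I expect the inductive hypothesis (indecomposability of the summands) not to be needed; only the regularity and unboundedness conditions on the index set should matter.

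For a regular unbounded $\omega$-sum $\varphi = \sum_{n \in \omega} \psi_n$, I would fix a decomposition $\varphi = A + B$ of an order $X = \sum_n I_n$ of type $\varphi$ into an initial segment $A$ and a final segment $B$. There are two cases.
\begin{enumerate}
\item If $B \neq \emptyset$, then $B$ contains a final segment of some $I_m$ together with all $I_n$ for $n > m$. By unboundedness I can choose indices $m < n_0 < n_1 < \cdots$ with $\psi_k \leqslant \psi_{n_k}$, and glue the individual embeddings into an embedding $X \to B$.
\item If $B = \emptyset$, then $A = X$ and there is nothing to prove.
\end{enumerate}
So $\varphi$ is indecomposable to the right, and the $\omega^*$ case is symmetric.

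For a $\rho$-shuffle $\varphi = \sum_{x \in R} I_x$ with $R$ of type $\rho$, the cut $A + B$ of $X$ induces a cut of $R$ into $R_A$, $R_B$ and at most one point $x_0$ whose summand is split. Since $R_A$ and $R_B$ are disjoint convex sets, at least one of them contains a non-degenerate open interval $J$ of $R$, unless $R$ is degenerate, which is impossible since $\rho$ is infinite and dense-like. The remaining steps are as follows.
\begin{enumerate}
\item Show that $\rho$ embeds in $J$ as a family of $\rho$-many pairwise disjoint non-degenerate subintervals $Z_x$, $x \in R$. For $\eta$ this is immediate. For $\gamma$ and $\gamma^*$ I would use \Cref{Countryman lines equi to squares} ($\gamma \equiv \gamma^2$) together with the fact that Countryman lines under \PFA\ embed into any of their uncountable intervals. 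Here I would hope to use that every nondegenerate interval of a Countryman line is uncountable, or else pass to an equimorphic representative of $\rho$ with this homogeneity property.
\item Copy the argument in the last part of the proof of \Cref{abstractthm}: the density condition in \Cref{shuffledef} gives, for each $x$, some $y_x \in Z_x$ with $\psi_x \leqslant \psi_{y_x}$.
\item Gluing these embeddings yields $\varphi \leqslant A$ or $\varphi \leqslant B$.
\end{enumerate}

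The main obstacle is step 1 for $\rho = \gamma, \gamma^*$, namely that every non-degenerate interval of a Countryman line embeds the whole line $\rho$. In general some intervals may be countable. I would address this by first replacing $C$ by an equimorphic Countryman line in which every interval is uncountable; this is possible by removing the countably many points lying in countable $F$-like condensation classes. Then I would invoke Moore's minimality theorem: under \PFA\ each uncountable suborder of $C$ embeds $C$ or $C^*$, and since a Countryman line cannot embed its reverse, it embeds $C$. For the countable-interval case, one would instead check that $\psi_x$ and the density condition still force the piece containing the uncountable part of $R$ to absorb $\varphi$.
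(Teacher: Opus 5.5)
\textbf{There is a genuine gap in the shuffle case for $\gamma$ and $\gamma^*$.} The other cases are fine. Your argument for regular unbounded $\omega$- and $\omega^*$-sums is correct: these sums are even indecomposable to one side, and the induction hypothesis is indeed not needed. The $\eta$-shuffle case is also fine. (The paper gives no proof here; it cites Barbosa.)

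For $\gamma$ and $\gamma^*$, you take $J$ to be a non-degenerate interval in \emph{whichever} of $R_A$, $R_B$ contains one, and then need $\rho\leqslant J$. That step fails. A Countryman line can have countable non-degenerate open intervals, and the side containing one may be countable altogether. For instance, if $C$ is Countryman then so is $\mathbb{Q}+C$, since its square is still a countable union of chains. Cutting it between $\mathbb{Q}$ and $C$ puts only $\mathbb{Q}$ on the left, and $\gamma\not\leqslant\mathbb{Q}$.

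Your proposed repair, replacing $C$ by an equimorphic Countryman line all of whose non-degenerate intervals are uncountable, is not available. A $\gamma$-shuffle is a sum indexed by the fixed line $C$, and the decomposition $X=A+B$ you must handle is of that given sum. A shuffle over an equimorphic but non-isomorphic index line is a different type. Your closing sentence about the ``piece containing the uncountable part of $R$'' points the right way, but it is not yet an argument.

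\textbf{The missing idea} is to choose the side according to where $R$ itself embeds, not where some interval happens to lie.
\begin{enumerate}
\item For each $\rho\in\{\eta,\gamma,\gamma^*\}$ we have $\rho\equiv\rho^2$, using \Cref{Countryman lines equi to squares} for $\gamma,\gamma^*$. Hence $\rho+\rho\leqslant\rho^2\leqslant\rho$, so $\rho$ is indecomposable by \Cref{containing two copies}.
\item From $R=R_A+\{x_0\}+R_B$ you therefore get $R\leqslant R_A$ or $R\leqslant R_B$, since an infinite type cannot embed into the single point $x_0$.
\item Say $R\leqslant R_B$. Composing with $R^2\leqslant R$ gives an embedding $h:R\times R\to R_B$. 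The convex closures $Z_x$ of the sets $h[R\times\{x\}]$ are pairwise disjoint non-degenerate intervals of $R$, ordered like $R$, and they lie inside $R_B$ because $R_B$ is convex.
\item Each $Z_x$ contains some $[a,b]$ with $a<b$, so it meets the dense set $\{y:\psi_x\leqslant\psi_y\}$. Your steps 2 and 3 then go through verbatim and give $\varphi\leqslant B$.
\end{enumerate}
This handles $\eta$, $\gamma$ and $\gamma^*$ uniformly. It needs neither homogeneity of intervals nor Moore's minimality theorem.

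Alternatively, you can stay with your own tools. Since $R$ is uncountable, take the side that is uncountable. Under \PFA\ that side embeds $C$ or $C^*$. It cannot embed the reverse of $\rho$, because no uncountable suborder of a Countryman line embeds into its reverse. Then proceed as above.
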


Moreover, Barbosa has shown that, just as every $\sigma$-scattered linear order can be written as a finite sum of indecomposable orders, we have the following, \cite[Theorem 3.19 with $Q = \{1\}$]{023B0}:

\begin{theo}[$\ZFC + \PFA$]
\label{finite sums of AI Aronszajns}
Every type $\varphi \in \mathcal{C}$ can be written as a finite sum of members of $\mathcal{H}$.
\end{theo}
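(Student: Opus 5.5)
The plan is to follow Laver's proof of the analogous statement for $\sigma$-scattered types (\cite[Theorem 3.1(d)]{973L1}). In place of Hausdorff's analysis of scattered orders I would use the \PFA\ structure theory of Moore and Martinez-Ranero. The argument has two halves: first generate every $\varphi \in \mathcal{C}$ from countable types by a well-founded hierarchy of sums, then show by induction along that hierarchy that finite sums of members of $\mathcal{H}$ are closed under the relevant operations.

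\emph{First step (a rank analysis).} Let $\mathcal{G}$ be the closure of the countable types under ordered sums indexed by $\omega$, $\omega^*$, $\eta$, $\gamma$ and $\gamma^*$. I would show $\mathcal{C} \subseteq \mathcal{G}$. For this I would use Moore's minimality of $C$ and $C^*$ (\cite{006M1}) together with Martinez-Ranero's analysis of Aronszajn lines under \PFA\ (\cite{011M2}).

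On a fixed $X$ of type $\varphi$, define $x \sim y$ if and only if the interval $[\{x,y\}]$ belongs to $\mathcal{G}$, and check that $\sim$ is a condensation. Then argue that $X/\!\sim$ is a single point. If it is not, its quotient is a nontrivial order each of whose nondegenerate pieces fails to be of the form $\sum$ over $\omega$, $\omega^*$, $\eta$, $\gamma$, $\gamma^*$. Iterating this inside $X$ (a Cantor--Bendixson style construction, using that every Aronszajn piece contains $C$ or $C^*$) should produce a copy of the universal type $\nu$ (\Cref{universal Aronszajn}), contradicting that $\varphi$ is fragmented. Countable pieces are handled by Laver's theorem (\Cref{Laver sigma-scattered indec thm}), since countable orders are $\sigma$-scattered.

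\emph{Second step (closure of finite sums of $\mathcal{H}$).} Let $\mathcal{S}$ be the class of finite sums of members of $\mathcal{H}$. By induction on the hierarchy from the first step, show that $\mathcal{S}$ is closed under $\omega$-, $\omega^*$-, $\eta$-, $\gamma$- and $\gamma^*$-indexed sums of its members. The countable base case is Laver's finite-sum theorem restricted to countable types.

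For a sum $\sum_{n\in\omega}\varphi_n$ with $\varphi_n \in \mathcal{S}$, use that under \PFA\ the class $\mathcal{C}$ is well-quasi-ordered (Martinez-Ranero), so that in fact the members of $\mathcal{H}$ are better-quasi-ordered under $\leqslant$. Split each $\varphi_n$ into its $\mathcal{H}$-summands and regroup, as in Laver's argument, so that a tail of the sum becomes a regular unbounded $\omega$-sum, which lies in $\mathcal{H}$. What remains is a finite initial piece, which is in $\mathcal{S}$ by induction. The $\omega^*$ case is symmetric.

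\emph{Third step (shuffles) and the expected obstacle.} The hard case is a sum $\sum_{c\in\rho}\varphi_c$ with $\rho \in \{\eta,\gamma,\gamma^*\}$. The aim is to partition $\rho$ into finitely many intervals on each of which the summands, regrouped, form a genuine $\rho$-shuffle, meaning that for each $c$ the set $\{d : \varphi_c \leqslant \varphi_d\}$ is dense in the interval.

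I would first attempt a descending-density argument. Use the well-quasi-order to pick, on each interval, the $\leqslant$-maximal equimorphism classes of summands that occur densely. Then use that every nondegenerate interval of $\rho$ is equimorphic to $\rho$ (by $\rho \equim \rho^2$, \Cref{Countryman lines equi to squares} and \Cref{phi equi square dense implies equi}) to discard the non-dense summands, absorbing them into the embedding as in the proof of \Cref{abstract theorem}.

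The main obstacle is making the decomposition finite for $\rho = \gamma$. Countryman lines lack the homogeneity of $\eta$, so the choice of intervals must be coherent across uncountably many points. I expect this to need the full \PFA\ combinatorics of Countryman lines, in particular Moore's basis theorem, rather than just $\gamma \equim \gamma^2$.
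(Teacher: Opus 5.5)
\textbf{The statement as written.} The paper gives no proof here; it quotes the result from Barbosa, whose argument builds on Martinez-Ranero. So your proposal has to stand on its own, and it runs into trouble before the hard step.

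Read literally (equality of types, with $\mathcal{H}$ as in \Cref{definition of H}), the statement is false, and your base case is exactly where this shows. Laver's theorem writes a countable type as a finite sum of \emph{indecomposable} types, and these are only \emph{equimorphic} to members of $\mathcal{H}$. For example, $2\eta\in\mathcal{C}$ is not a finite sum of members of $\mathcal{H}$:
\begin{itemize}
\item The finite members of $\mathcal{H}$ are only $0$ and $1$, since nonzero regular unbounded sums and shuffles have infinitely many nonzero summands.
\item By induction on the construction, no member of $\mathcal{H}$ is the type of an interval $I\subseteq 2\eta$ meeting two of its pairs. Each summand of $I$ would lie inside one pair, hence be a singleton.
\item In the $\omega$ and $\omega^*$ cases, $I$ (which contains a copy of $\eta$) would then be well ordered or reverse well ordered.
\item In an $\eta$-shuffle, the two points of a pair inside $I$ would be consecutive nonzero summands, contradicting density.
\item Nonzero $\gamma$- and $\gamma^*$-shuffles are uncountable, while $I$ is countable.
\end{itemize}
Hence in any decomposition $2\eta=h_1+\dots+h_k$ with $h_i\in\mathcal{H}$, each $h_i$ sits inside a single pair, which is absurd. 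Yet $2\eta\equiv\eta\in\mathcal{H}$.

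Your regrouping has the same defect: splitting $\varphi_x=1+1$ turns the index $\eta$ into $2\eta$. Discarding summands can only ever give equimorphism. The correct statement, and all the paper uses later, is that every $\varphi\in\mathcal{C}$ is a finite sum of types each equimorphic to a member of $\mathcal{H}$.

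\textbf{The third step.} Even for that version, your third step, which is the heart of the theorem, fails, and not only for $\gamma$.
\begin{itemize}
\item A wqo need not have maximal elements, so ``the maximal classes occurring densely'' may not exist.
\item The target shape is wrong. Let $\varphi_x=\gamma$ for $x=-1/n$ and $\varphi_x=1$ for the other $x\in\mathbb{Q}$. The sum lies in $\mathcal{C}$, because $\gamma^*$ does not embed in it. No finite partition of $\mathbb{Q}$ into intervals makes these summands a shuffle on each piece. The discarded $\gamma$'s cannot be absorbed into the remaining $\eta$.
\item The true decomposition of that example is $\eta+(\gamma+\eta)\omega+1+\eta$, with a regular unbounded $\omega$-sum converging to the cut at $0$.
\item With summands $1$ at the points $-1/n$ and $0$ elsewhere you even get $\omega$. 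This is equimorphic to no shuffle, since shuffles satisfy $\psi+\psi\leqslant\psi$.
\end{itemize}
So you must allow regular unbounded $\omega$- and $\omega^*$-pieces at cuts (which have countable cofinality here). You also need an actual finiteness mechanism. For instance:
\begin{itemize}
\item Condense the index order by $x\approx y$ iff the sum over $[\{x,y\}]$ has the required form; the classes are handled by your $\omega/\omega^*$ step.
\item A nontrivial quotient is dense.
\item Downward closed sets in a wqo are well founded under inclusion. So some interval of the quotient has the same downward closure of summand types on all its subintervals, which is precisely the shuffle condition.
\item This gives a contradiction once that interval is identified, up to equimorphism, with $\eta$, $\gamma$ or $\gamma^*$. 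That identification is where Moore's basis theorem and $\gamma\equiv\gamma^2$ enter.
\end{itemize}

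\textbf{Remaining points.} Your first step is only a citation. That $X/{\sim}$ is a dense Aronszajn line containing $C$ or $C^*$ gives no contradiction by itself, and ``iterating should produce $\nu$'' just restates Martinez-Ranero's structure theorem. In the $\omega$-case you need \Cref{HcAI} to regroup summands. Also, bqo does not follow from wqo; only wqo is used there.
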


The following result is the analog\ue\ of \Cref{Laver sigma-scattered indec thm} for Aronszajn types under \PFA. 

\begin{theo}[$\ZFC + \PFA$] \label{Barbosa indec theorem}
The indecomposable members of $\mathcal{C}$ are exactly the types equimorphic to an element of $\mathcal{H}$.
\end{theo}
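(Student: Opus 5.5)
The plan is to prove the two inclusions separately, relying entirely on the earlier results of Barbosa quoted above (\Cref{HcAI} and \Cref{finite sums of AI Aronszajns}) together with elementary facts about indecomposability from \Cref{IndecTypes}. Throughout we work in $\ZFC + \PFA$.

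First, the easy direction. Suppose $\varphi \in \mathcal{C}$ is equimorphic to some $\psi \in \mathcal{H}$. By \Cref{HcAI}, $\psi$ is indecomposable. Indecomposability is an equimorphism invariant (\Cref{equimorphism invariant}), so $\varphi$ is indecomposable as well. Thus every member of $\mathcal{C}$ that is equimorphic to an element of $\mathcal{H}$ is an indecomposable member of $\mathcal{C}$.

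For the converse, let $\varphi \in \mathcal{C}$ be indecomposable. By \Cref{finite sums of AI Aronszajns} we may write $\varphi = \psi_0 + \psi_1 + \cdots + \psi_n$ with each $\psi_i \in \mathcal{H}$. If $\varphi = 0$ we are done, since $0 \in \mathcal{H}$, so we may assume $\varphi \neq 0$ and, by discarding empty summands, that $n \geq 0$ and each $\psi_i \neq 0$.

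The key step is the elementary fact, already used in the proof of \Cref{corollary : No Sum}, that if an indecomposable type is written as a finite sum $\sum_{i \le n} \psi_i$, then $\varphi \leqslant \psi_i$ for some $i \le n$. I would prove this by induction on $n$:
\begin{itemize}
\item For $n = 0$ there is nothing to prove.
\item For the inductive step, write $\varphi = (\psi_0 + \cdots + \psi_{n-1}) + \psi_n$. By the definition of indecomposability, either $\varphi \leqslant \psi_n$, or $\varphi \leqslant \psi_0 + \cdots + \psi_{n-1}$.
\item In the second case, let $f$ be a witnessing embedding. Pulling back along $f$ expresses $\varphi$ as a sum $\varphi = \psi_0' + \cdots + \psi_{n-1}'$ with $\psi_i' \leqslant \psi_i$ (as in the proof of \Cref{decompequivdef}).
\item Applying the induction hypothesis to this decomposition gives $\varphi \leqslant \psi_i' \leqslant \psi_i$ for some $i < n$.
\end{itemize}

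Having found $i$ with $\varphi \leqslant \psi_i$, note that $\psi_i$ is a convex suborder of $\varphi$, so trivially $\psi_i \leqslant \varphi$. Hence $\varphi \equim \psi_i \in \mathcal{H}$, which finishes the proof.

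I expect no real obstacle here. The substantive content lies in Barbosa's decomposition theorem (\Cref{finite sums of AI Aronszajns}) and in \Cref{HcAI}, both of which are assumed. The only care needed is the pull-back step in the induction, which handles the case where $\varphi$ embeds into a subsum rather than being literally equal to it.
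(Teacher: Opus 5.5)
Your proof is correct and follows the paper's argument. You use \Cref{HcAI} plus equimorphism invariance for one inclusion, and \Cref{finite sums of AI Aronszajns} plus the fact that an indecomposable finite sum embeds into one of its (convex) summands for the other; your pull-back induction spells out what the paper leaves implicit. One point is left tacit by both you and the paper: a type equimorphic to a member of $\mathcal{H}$ actually lies in $\mathcal{C}$, which holds because $\mathcal{H} \subseteq \mathcal{C}$ and $\mathcal{C}$ is closed under taking suborders.
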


\begin{proof}
By \Cref{HcAI}, every type in $\mathcal{H}$ is indecomposable and thus also every type equimorphic to an element of $\mathcal{H}$ is indecomposable. 

Conversely, by \Cref{finite sums of AI Aronszajns}, every type $\varphi \in \mathcal{C}$ can be written as a finite sum of elements of $\mathcal{H}$. Thus if $\varphi$ is indecomposable, it must be equimorphic with one of these finitely many summands.
\end{proof}

This result is interesting because it ties in nicely with another fact about $\mathcal{H}$:

\begin{theo}[$\ZFC + \PFA$]
Every member of $\mathcal{H}$ except $1$ is equimorphic to a type without finite $F$-classes.
\end{theo}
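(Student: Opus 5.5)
The plan is to apply \Cref{abstract theorem} directly to the class $\mathcal{H}$ of \Cref{definition of H}. To do this, we take $\mathcal{F} = \{\omega, \omega^*, \eta, \gamma, \gamma^*\}$ and check the hypothesis of \Cref{abstract theorem} for each member: every $\rho \in \mathcal{F}$ must satisfy either $\rho \equim \rho^2$ or have $\rho$ or $\rho^*$ an infinite regular cardinal.

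The check splits by type.
\begin{itemize}
\item For $\omega$ and $\omega^*$ there is nothing to do, since $\omega = \aleph_0$ is an infinite regular cardinal and $\omega^*$ is its reverse. Regular unbounded $\omega$-sums and $\omega^*$-sums in \Cref{definition of H} are then exactly the regular unbounded $\rho$-sums of \Cref{regunboundedsumdef} with $\rho = \omega$ or $\rho = \omega^*$.
\item For $\eta$, Cantor's characterization gives $\eta^2 \cong \eta$: the product is countable and dense without endpoints. In particular $\eta \equim \eta^2$.
\item For $\gamma$, \Cref{Countryman lines equi to squares} (which is where \PFA\ is used) gives $\gamma \equim \gamma^2$.
\item For $\gamma^*$, we note that $\gamma^*$ is again a Countryman type, as observed after \Cref{Countrymandef}. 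So the same proposition applies to it, or alternatively we reverse: $(\gamma^*)^2 = (\gamma^2)^* \equim \gamma^*$.
\end{itemize}

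With these checks in place, the class obtained in \Cref{abstract theorem} from this $\mathcal{F}$ is exactly $\mathcal{H}$. The shuffles over $\eta, \gamma, \gamma^*$ are allowed because each of these types is equimorphic to its square, and the regular unbounded sums over $\omega, \omega^*$ are allowed because each is a regular cardinal or its reverse. The conclusion of \Cref{abstract theorem} then says that every $\varphi \in \mathcal{H}$ is either $1$ or equimorphic to a type $\tilde\varphi$ without finite $F$-classes, which is the statement.

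The only point needing care is that the closure operations in \Cref{definition of H} coincide exactly with those in \Cref{abstract theorem}: the shuffle notion of \Cref{shuffledef} should be the one Barbosa uses, and the type $0$ should be included in both classes. I expect no genuine obstacle; the substantive inputs are Martinez-Ranero's result and the induction already carried out in \Cref{abstract theorem}.
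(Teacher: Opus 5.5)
Your proposal is correct and follows the paper's proof: take $\mathcal{F} = \{\omega, \omega^*, \eta, \gamma, \gamma^*\}$, note that $\eta, \gamma, \gamma^*$ are equimorphic to their squares (the Countryman cases via Martinez-Ranero under $\mathsf{PFA}$) and that $\omega$ is an infinite regular cardinal, then read off the conclusion of the abstract theorem for $\mathcal{H}$. Your explicit checks of $\eta^2 \cong \eta$ and of the $\gamma^*$ case fill in details the paper leaves implicit.
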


\begin{proof}
As $\varphi^2 \equim \varphi$ for all three $\varphi \in \{\eta, \gamma, \gamma^*\}$ and $\omega$ is an infinite regular cardinal, the class $\mathcal{F} = \{\omega, \omega^*, \eta, \gamma, \gamma^*\}$ satisfies the assumptions of \Cref{abstract theorem}. By \Cref{definition of H}, its conclusion provides what was demanded.
\end{proof}

Via \Cref{finite sums of AI Aronszajns} and \Cref{Barbosa indec theorem} this yields an analog\ue\ of \Cref{sigma has finite finite classes}:

\begin{coro}[$\ZFC + \PFA$]
\label{Aronszajn has finite finite classes}
Every member of $\mathcal{C}$ is equimorphic to a linear order with only finitely many finite $F$-classes.
\end{coro}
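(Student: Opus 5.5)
The plan is to combine \Cref{finite sums of AI Aronszajns} with the theorem just proved, that every member of $\mathcal{H}$ other than $1$ is equimorphic to a type without finite $F$-classes. The key observation is that an ordered sum of finitely many orders, each with no finite $F$-classes (or each equal to $1$), has only finitely many finite $F$-classes.

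Fix $\varphi \in \mathcal{C}$. By \Cref{finite sums of AI Aronszajns}, write
\[
\varphi = \psi_0 + \psi_1 + \cdots + \psi_n
\]
with each $\psi_i \in \mathcal{H}$. For each $i$ with $\psi_i \neq 1$, the preceding theorem yields a type $\tilde{\psi}_i \equim \psi_i$ without finite $F$-classes. Put $\tilde{\psi}_i = \psi_i$ when $\psi_i = 1$, and set $\tilde{\varphi} = \tilde{\psi}_0 + \cdots + \tilde{\psi}_n$.

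Equimorphism is preserved by finite sums: glue together embeddings $\psi_i \to \tilde{\psi}_i$ and $\tilde{\psi}_i \to \psi_i$ summand by summand. Hence $\varphi \equim \tilde{\varphi}$.

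It remains to count the finite $F$-classes of an order $Z = Z_0 + \cdots + Z_n$ realizing $\tilde{\varphi}$. Each $F$-class $D$ of $Z$ is an interval, so each intersection $D \cap Z_i$ is a union of $F$-classes of $Z_i$.

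Suppose $D$ is finite and meets some $Z_i$ that has no finite $F$-classes. Then $D \cap Z_i$ is a nonempty finite interval of $Z_i$ whose points are $F$-related in $Z_i$, so it lies inside a single $F$-class of $Z_i$. That class is infinite, and it is contained in $D$ because $F$ computed in an interval agrees with $F$ in $Z$. This contradicts the finiteness of $D$.

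So every finite $F$-class of $Z$ is contained in the union of the singleton summands. There are at most $n+1$ such summands, hence at most $n+1$ finite $F$-classes.

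The only point needing care is the claim that $F$ restricted to an interval $Z_i$ coincides with the finite condensation of $Z_i$. This holds because $[\{x,x'\}]$ is the same set whether computed in $Z_i$ or in $Z$, and it is the same observation already used in the proof of \Cref{AIs and finite CC}.
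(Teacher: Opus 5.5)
Your proof is correct and follows the paper's route: write $\varphi$ as a finite sum of members of $\mathcal{H}$ using Barbosa's finite-sum theorem, replace each summand other than $1$ by an equimorphic type without finite $F$-classes using the preceding theorem, and glue the embeddings summand by summand. The paper leaves implicit the final count showing that such a sum has at most $n+1$ finite $F$-classes, as it also does in the $\sigma$-scattered analogue. Your argument, based on the finite condensation of an interval agreeing with the restriction of $F$, correctly fills in that step.
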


Via \Cref{2 only FF} we arrive at the following:

\begin{coro}[$\ZFC + \PFA$]
Every untranscendable member of $\mathcal{C}$ except $2$ is strongly indecomposable.
\end{coro}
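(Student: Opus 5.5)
The plan is to combine \Cref{Aronszajn has finite finite classes} with \Cref{2 only FF}, in the same way \Cref{SS+untr implies SI} was obtained for $\sigma$-scattered types.

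Fix an untranscendable type $\varphi \in \mathcal{C}$ with $\varphi \neq 2$. By \Cref{Aronszajn has finite finite classes}, which is where \PFA\ enters through Barbosa's decomposition theorem \Cref{finite sums of AI Aronszajns} and the theorem that members of $\mathcal{H}$ other than $1$ are equimorphic to types without finite $F$-classes, $\varphi$ is equimorphic to a linear order with only finitely many finite $F$-classes. This is exactly the hypothesis of \Cref{2 only FF}. That theorem then says $\varphi$ is strongly indecomposable or $\varphi = 2$, and the second alternative is excluded by assumption.

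No real obstacle should arise, since all the work is already done in the cited results. The one point to check is that \Cref{Aronszajn has finite finite classes} holds for every member of $\mathcal{C}$, not only the indecomposable ones. By \Cref{finite sums of AI Aronszajns}, $\varphi$ is a finite sum of members of $\mathcal{H}$. Each summand is either $1$ or equimorphic to a type without finite $F$-classes. Replacing each summand by such an equimorphic copy gives an order equimorphic to $\varphi$; this works because equimorphism of summands passes to finite sums, as in \Cref{abstract corollary}.

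In this copy, finite $F$-classes can only come from the finitely many singleton summands, together with possibly finite classes formed where adjacent summands meet. There are finitely many such boundaries, so only finitely many finite $F$-classes occur.

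One could instead go through \Cref{AD+UT=2} and \Cref{Barbosa indec theorem}: $\varphi$ is indecomposable, hence equimorphic to a member of $\mathcal{H}$, hence to a type without finite $F$-classes. But \Cref{2 only FF} already absorbs this step, so the direct route above suffices.
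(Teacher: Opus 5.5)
Your proof is correct and is the paper's: the corollary follows by combining \Cref{Aronszajn has finite finite classes} with \Cref{2 only FF}, excluding the case $\varphi = 2$ by hypothesis. Your check that \Cref{Aronszajn has finite finite classes} covers every member of $\mathcal{C}$ is also sound, since $F$-classes of adjacent summands can only merge when both are finite. So summands without finite $F$-classes contribute none, and only the finitely many singleton summands can produce finite $F$-classes.
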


%Taken together, \Cref{Countryman lines equi to squares} and Theorems \ref{Barbosa indec theorem} and \ref{abstract theorem} give that under \PFA\ any indecomposable type $\varphi$ that is either countable or fragmented Aronszajn is equimorphic to a type without finite $F$-classes. Hence if $\varphi \neq 2$ is untranscendable and either countable or fragmented Aronszajn then it is strongly indecomposable, by \Cref{2 only FF}. 

By its universality (and the fact that the product of two Aronszajn types is Aronszajn), any universal Aronszajn type is equimorphic to its square, and hence untranscendable. Thus to conclude that \emph{every} untranscendable Aronszajn type is strongly indecomposable under \PFA, it remains to show that the universal Aronszajn type $\nu$ is strongly indecomposable. By \Cref{2 only FF}, it suffices to check that $\nu$ is equimorphic to a type without finite $F$-classes. But, again by its universality, $\nu$ is equimorphic to $\omega \nu$, which has no finite $F$-classes. We have proved the following. 

\begin{theo}[$\ZFC + \PFA$]
\label{Aron+untr implies SI}
Every untranscendable Aronszajn line is strongly indecomposable.
\end{theo}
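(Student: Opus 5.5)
The plan is to split according to whether the Aronszajn line embeds the universal Aronszajn type $\nu$ of \Cref{universal Aronszajn}, i.e.\ whether it is fragmented in the sense of \Cref{fragmentedAlinedef}. Let $\varphi$ be an untranscendable Aronszajn type. Since Aronszajn lines are uncountable, $\varphi \neq 2$, so the exceptional case of \Cref{2 only FF} never arises. In each case the goal is to check the remaining hypothesis of \Cref{2 only FF}: that $\varphi$ is equimorphic to a type with only finitely many finite $F$-classes.

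\emph{Fragmented case.} Suppose $\nu \isntcontainedin \varphi$. Then $\varphi$ is a fragmented Aronszajn type, so $\varphi \in \mathcal{C}$. By \Cref{Aronszajn has finite finite classes}, $\varphi$ is equimorphic to a type with only finitely many finite $F$-classes. \Cref{2 only FF} then gives that $\varphi$ is strongly indecomposable. This is just the preceding corollary about untranscendable members of $\mathcal{C}$.

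\emph{Non-fragmented case.} Suppose $\nu \leqslant \varphi$. Since $\varphi$ is Aronszajn, universality gives $\varphi \leqslant \nu$, so $\varphi \equim \nu$. By \Cref{strongindecequiinvar} it suffices to show $\nu$ is strongly indecomposable. The route is again through \Cref{2 only FF}, in two steps.

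\begin{enumerate}
\item $\nu$ is untranscendable. I would show $\nu^2 \equim \nu$ and then apply \Cref{embedding one's square}. By universality it is enough that $\nu^2$ is Aronszajn. It has cardinality $\aleph_1$, so it is uncountable. It cannot embed $\omega_1$: an embedded $\omega_1$ would either meet uncountably many copies, giving $\omega_1$ in the outer factor, or by regularity of $\omega_1$ have a final segment of type $\omega_1$ inside a single copy. The same argument rules out $\omega_1^*$. Finally, if $\nu^2$ contained an uncountable real type $R$, then either uncountably many copies meet $R$, and picking one point per copy gives an uncountable suborder of $R$ embedded in $\nu$, or some copy contains an uncountable part of $R$. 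Either way $\nu$ would contain an uncountable real type, which is impossible.
\item $\nu$ is equimorphic to a type without finite $F$-classes. The candidate is $\omega\nu$. It has no finite $F$-classes, and it is Aronszajn by the same argument as in step 1, so $\omega\nu \leqslant \nu$. Conversely $\nu \leqslant \omega\nu$ trivially. Hence $\nu \equim \omega\nu$.
\end{enumerate}

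With both steps in hand, \Cref{2 only FF} applies to $\nu$, and hence, through the equimorphism, to $\varphi$. The only step needing real care is the check in step 1 that products of Aronszajn types are Aronszajn, and specifically that no uncountable suborder of $\mathbb{R}$ embeds into the product; the copy-counting argument above should handle it.
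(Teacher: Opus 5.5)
Your proposal is correct and is essentially the paper's argument: the fragmented case is exactly the corollary on untranscendable members of $\mathcal{C}$, and in the non-fragmented case $\varphi \equim \nu \equim \omega\nu$, so \Cref{2 only FF} applies (your step~1 is harmless but redundant, since untranscendability of $\nu$ already follows from $\varphi \equim \nu$ by equimorphism invariance). Your copy-counting check that $\nu^2$ and $\omega\nu$ are Aronszajn fills in a detail the paper only asserts.
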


\subsection{The reals}

Some results about the reals do not depend on the axiom of choice but others do. (As usual, in what follows $\AC$ denotes the axiom of choice, cf.\ \cite{904Z0}, $\ZF$ denotes Zermelo-Fraenkel set theory without the axiom of choice and $\ZFC$ is an abbreviation for $\ZF + \AC$.) For example the following corollary of \Cref{homogeneity =>s-untranscendability} was proved without \AC:

\begin{coro}[$\ZF$]
\label{R is untranscendable}
$\otR$ is untranscendable.
\end{coro}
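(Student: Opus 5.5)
The plan is to reduce the statement to \Cref{homogeneity =>s-untranscendability} and then pass from $s$-untranscendability to untranscendability directly from the definitions. Both steps must be checked to go through in $\ZF$ alone, without \AC.

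\textbf{Homogeneity of $\mathbb{R}$ in $\ZF$.} Let $I \subseteq \mathbb{R}$ be a non-degenerate interval and pick $a < b$ in $I$. The open interval $(a,b)$ is isomorphic to $\mathbb{R}$ via an explicit map, for instance
\[
t \mapsto a + \frac{b-a}{1+e^{-t}}.
\]
So $\lambda \leqslant I$, and $\lambda$ is homogeneous. No choice is used in this step.

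\textbf{$s$-untranscendability in $\ZF$.} I will re-read the proof of \Cref{homogeneity =>s-untranscendability} to confirm it uses no choice. Given an embedding $f\colon X \to ZY$, there are two cases.
\begin{enumerate}
\item Some copy $Z_y$ contains the images of two points $x < x'$. Then $Z_y$ contains $f[[x,x']]$. Composing $f$ with the explicit embedding of $X$ into $[x,x']$ from the previous step, and projecting to the first coordinate, gives $X \leqslant Z$. This is a composition of definable maps, so it is fine in $\ZF$.
\item Every copy $Z_y$ meets $f[X]$ in at most one point. Then $x \mapsto$ the second coordinate of $f(x)$ is already an injective, order-preserving map $X \to Y$. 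No choice is needed here either.
\end{enumerate}
Hence $\lambda$ is $s$-untranscendable in $\ZF$.

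\textbf{From $s$-untranscendability to untranscendability.} The definition of untranscendability only weakens the hypotheses of $s$-untranscendability, since it adds the restrictions $\psi \leqslant \varphi$ and $\tau \leqslant \varphi$. So untranscendability follows immediately.

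The only potential obstacle is confirming that case 1 above does not silently choose embeddings. It does not: the embedding of $\mathbb{R}$ into $[x,x']$ is given by a uniform formula in the endpoints $x$ and $x'$.
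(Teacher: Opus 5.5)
Your proposal is correct and follows the paper's route exactly. The paper obtains this as an immediate corollary of \Cref{homogeneity =>s-untranscendability}: $\lambda$ is homogeneous, and $s$-untranscendability trivially implies untranscendability. It merely notes that this argument uses no choice. Your explicit checks supply the verification the paper leaves implicit: the explicit isomorphism $\mathbb{R}\cong(a,b)$, and the fact that both cases of that proof (projection to the first coordinate, respectively the second coordinate) are choice-free.
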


In contrast, this result due to Sierpi\'nski, cf.\ \cite[Theorem 9]{956ER0} requires \AC:

\begin{theo}[$\ZFC$]
\label{fullsizerealtype=>-SI}
No order type $\varphi \leqslant \otR$ of cardinality continuum is strongly indecomposable.
\end{theo}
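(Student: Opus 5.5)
The plan is to follow the Dushnik--Miller/Sierpi\'nski diagonalization from \cite[Chapter 9]{982R0}, relativized to an arbitrary suborder of $\mathbb{R}$ of size continuum. Fix $X \subseteq \mathbb{R}$ with $\card{X} = 2^{\aleph_0}$ and $\otp\langle X\rangle = \varphi$. The goal is to split $X = A \cup B$ so that $X$ embeds in neither $A$ nor $B$.

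\textbf{Counting the embeddings.} The key observation is that there are only $2^{\aleph_0}$ embeddings $f \colon X \to \mathbb{R}$. To see this, fix a countable dense (in the sense of the order on $X$) subset $D \subseteq X$; one exists because $X$ is a suborder of a separable order, up to adding the countably many points having an immediate neighbour. An order embedding of $X$ into $\mathbb{R}$ is determined by its restriction to $D$ together with its values at the countably many jump points of $f$. More cleanly, a monotone map $f \colon X \to \mathbb{R}$ is determined by its values on $D$ together with its values on the countable set of points of $X$ at which $f$ fails to be continuous with respect to $D$. A monotone function has only countably many discontinuities, since each gives a disjoint open interval containing a rational. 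Hence the number of such $f$ is at most $(2^{\aleph_0})^{\aleph_0} = 2^{\aleph_0}$.

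\textbf{Diagonalization.} Keep only the non-identity embeddings $f \colon X \to X$, and enumerate them as $\seq{f_\alpha}{\alpha < 2^{\aleph_0}}$ using $\AC$. Each non-identity embedding of a suborder of $\mathbb{R}$ into itself moves continuum many points. This is the Dushnik--Miller lemma: if $f(x) \neq x$ on only a small set, then, since the set $\{x : f(x) > x\}$ is a union of convex pieces on each of which the iterates of $f$ give an infinite strictly monotone chain, one obtains $2^{\aleph_0}$ moved points. I would verify this carefully for suborders $X$, perhaps by using $f(x) > x$ and the convex closure of the orbit of $x$. By recursion on $\alpha$, choose distinct points $a_\alpha, b_\alpha \in X$, not previously chosen, with $b_\alpha = f_\alpha(a_\alpha) \neq a_\alpha$. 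This is possible because fewer than $2^{\aleph_0}$ points have been used so far while $f_\alpha$ moves $2^{\aleph_0}$ points. Put $a_\alpha$ into $A$ and $b_\alpha$ into $B$, and assign all remaining points to $A$.

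\textbf{Conclusion.} Suppose $g \colon X \to A$ is an embedding. It cannot be the identity, because $B \neq \emptyset$, so $g = f_\alpha$ for some $\alpha$. Then $g(a_\alpha) = b_\alpha \in B$, which is a contradiction. The same argument with the roles reversed rules out an embedding $g \colon X \to B$: since $A \neq \emptyset$, $g$ is not the identity, so $g = f_\alpha$ and $g(a_\alpha) = b_\alpha$. Here I would instead arrange the recursion so that a point outside $B$ lands in the image. To do this, at each stage also choose a pair $c_\alpha, d_\alpha = f_\alpha(c_\alpha)$ with $c_\alpha \in B$ and $d_\alpha \in A$.

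\textbf{Main obstacle.} The hard step is the Dushnik--Miller lemma for arbitrary size-continuum suborders. A non-identity self-embedding of $X$ must move $2^{\aleph_0}$ points, and this can fail naively, for instance if $X$ has a countable "moved" region. If it fails, I would instead diagonalize against the pair condition differently: for each $f_\alpha$, find a point $x$ such that $x$ and $f_\alpha(x)$ are both fresh, using that the set $\{x : f_\alpha(x) \neq x\}$ is non-empty and taking images under iterates.
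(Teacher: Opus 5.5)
\textbf{The gap: the claimed Dushnik--Miller lemma is false.} A non-identity self-embedding of a suborder $X \subseteq \mathbb{R}$ of size continuum need not move $2^{\aleph_0}$ points. Take $X = [0,1] \cup \{2,3,4,\dots\}$, and let $f$ be the identity on $[0,1]$ and $n \mapsto n+1$ on the integers; $f$ moves only countably many points.

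For this $X$ your recursion does not merely lack justification; it provably gets stuck. Every non-identity increasing map of $\{2,3,\dots\}$ into itself, extended by the identity on $[0,1]$, is a self-embedding of $X$ that moves only integers. There are $2^{\aleph_0}$ such maps. Each stage devoted to one of them must consume fresh integers, because $a_\alpha, b_\alpha$ (and likewise $c_\alpha, d_\alpha$) are moved points. But there are only countably many integers.

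Your fallback does not help, since the orbit of a moved point under $f_\alpha$ stays inside the set of moved points. There is also a smaller hole. If $X$ is rigid (e.g.\ a Dushnik--Miller set), your enumeration of non-identity embeddings is empty, so every point goes to $A$ and $X$ embeds in $A = X$. Your step ``$g$ is not the identity because $B \neq \emptyset$'' presupposes that some stage actually occurred.

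\textbf{The missing observation: you never need the preimage of the point you place.} Your contradiction for $g \colon X \to A$ only uses that $\mathrm{ran}(g)$ meets $B$, and symmetrically for $B$. Putting $a_\alpha$ itself into $A$ plays no role, and it is exactly this that forces $a_\alpha \neq f_\alpha(a_\alpha)$.

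So enumerate \emph{all} self-embeddings of $X$, identity included, as $\langle f_\alpha : \alpha < 2^{\aleph_0}\rangle$. Your counting argument is correct and is the real content here: it guarantees there are at most $2^{\aleph_0}$ of them. At stage $\alpha$, choose two distinct points $a_\alpha, b_\alpha \in \mathrm{ran}(f_\alpha)$ not chosen before. This is always possible: $f_\alpha$ is injective, so $|\mathrm{ran}(f_\alpha)| = |X| = 2^{\aleph_0}$, while at most $2|\alpha| < 2^{\aleph_0}$ points have been used.

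Let $B = \{b_\alpha : \alpha < 2^{\aleph_0}\}$ and $A = X \setminus B$, so every $a_\alpha$ lies in $A$. An embedding $g \colon X \to A$ is some $f_\alpha$, whose range contains $b_\alpha \in B$. An embedding $g \colon X \to B$ is some $f_\alpha$, whose range contains $a_\alpha \in A$.

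This is the Bernstein-style argument behind Sierpi\'nski's theorem, which the paper cites rather than proves. It uses $|X| = 2^{\aleph_0}$ in two places: to bound the number of embeddings, and to make each range large enough to avoid the points already used.
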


In order to appreciate how different the situation may look in the absence of the axiom of choice let us first mention 
$\AD$---the axiom of determinacy, cf.\ \cite{962MS0,020C0, 021L0} as it paints a markedly different image of the reals. It implies the axiom of choice for countable families of reals, cf.\ \cite{964MS0}, but is incompatible with the full axiom of choice, cf.\ \cite{953GS0}. It has considerable consistency strength, cf.\ \cite[\S 32]{003K0}, and implies, that all sets of reals enjoy regularity properties such as being Lebesgue-measurable, cf.\ \cite{964MS0}, having the property of Baire, cf.\ \cite{957O0}, but also, cf.\ \cite{964D0}, that
\begin{align}
\label{PSP}
\text{Every uncountable set of reals contains a non-empty perfect subset.}
\end{align}
Specker had realised early on that \eqref{PSP} implies $\aleph_1$ to be a limit cardinal in $L$, cf.\ \cite{957S1}. Solovay later proved the consistency of \eqref{PSP} assuming the existence of an inaccessible cardinal, cf.\ \cite{970S0} (and $\aleph_1$'s regularity is necessary for that, cf.\ \cite{974T3}).

Recall that a \emph{Bernstein set} is a set of reals that intersects every perfect set but does not contain any perfect set. The existence of such a set can be proved with the axiom of choice, cf.\ \cite{908B0}, and clearly can be refuted using \eqref{PSP}. But as no Bernstein set can have the property of Baire, cf.\ \cite[Theorem 5.4]{980O0}, the nonexistence of a Bernstein set has no consistency strength beyond $\ZFC$, cf.\ \cite{984S0}, even in conjunction with $\aleph_1$ being regular.

Bernstein sets are relevant to our discussion due to the following result:

\begin{prop}[$\ZF$]
\label{Bernstein}
$\otR$ is strongly indecomposable if and only if there is no Bernstein set.
\end{prop}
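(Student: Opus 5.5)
We prove both directions in $\ZF$, in contrapositive form: a Bernstein set yields a partition of $\mathbb{R}$ into two parts neither of which embeds $\lambda$, and conversely such a partition yields a Bernstein set. The bridge is the following claim: \emph{if $A \subseteq \mathbb{R}$ and $\lambda \leqslant \otp\langle A\rangle$, then $A$ contains a non-empty perfect subset.} Once the claim is in hand, suppose $B$ is a Bernstein set. Neither $B$ nor $\mathbb{R}\setminus B$ contains a perfect set: $B$ by definition, and $\mathbb{R}\setminus B$ because $B$ meets every perfect set. By the claim neither part embeds $\lambda$, so $\lambda$ is not strongly indecomposable.

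To prove the claim, let $f:\mathbb{R}\to A$ be an order embedding. A monotone map on $\mathbb{R}$ has only countably many jump discontinuities. We want to find these and enumerate them without choice. Since $f$ is strictly increasing, its jumps are exactly the rationals-indexed gaps: $x$ is a jump point iff $\sup_{q<x,q\in\mathbb{Q}}f(q) < \inf_{q>x,q\in\mathbb{Q}}f(q)$, and the rational between these values can be chosen canonically via an enumeration of $\mathbb{Q}$. This gives a definable injection from the set $D$ of jump points into $\mathbb{Q}$, so $D$ is countable with a definable enumeration. Let $P_0$ be a Cantor set in $\mathbb{R}\setminus D$; for example, take $\mathbb{R}\setminus D \supseteq$ the complement of a countable set, and build a Cantor scheme avoiding the enumerated points one by one. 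Then $f\restriction P_0$ is continuous and injective on the compact set $P_0$. Hence $f[P_0]$ is compact and homeomorphic to $P_0$, so it is a perfect subset of $A$. (Alternatively, take $P_0=[a,b]\setminus$ small open intervals around the $D$-points, so that what remains is perfect; this is all explicit.)

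For the converse, assume $\lambda$ is not strongly indecomposable, and fix a partition $\mathbb{R}=A\cup B$ with $\lambda\not\leqslant A$ and $\lambda\not\leqslant B$. We show that $A$ is Bernstein. It suffices to show that \emph{every perfect set $P$ contains a copy of $\lambda$ inside any of its perfect subsets}, and more precisely that a set containing a perfect set embeds $\lambda$. Then $A$ contains no perfect set, and $B$ contains no perfect set; since $B \supseteq$ the perfect set $P$ whenever $P\cap A=\emptyset$, $A$ meets every perfect set. The key point is that a perfect set $P$ contains a homeomorphic copy of the Cantor set $2^\omega$, embedded order-preservingly in the lexicographic order. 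The lexicographic Cantor set embeds $\lambda$: map $x\in\mathbb{R}$ to its binary-like expansion, choosing the representation without trailing $1$'s, after first identifying $\mathbb{R}$ with $(0,1)$. This is an order embedding into $2^\omega$. Composing gives $\lambda \leqslant \otp\langle P\rangle$.

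The main obstacle is doing everything choice-free: in the first claim, enumerating the discontinuities and constructing the perfect set $P_0$ canonically; in the converse, obtaining an order-preserving Cantor subset of $P$ from the perfect-set structure. For the latter, we use the canonical splitting of $P$ by rational intervals, choosing at each node the least rational-indexed witnesses to build the Cantor scheme with left-to-right ordered branches.
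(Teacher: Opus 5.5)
Your proof is correct. For the direction ``Bernstein set $\Rightarrow$ $\lambda$ not strongly indecomposable'' you take a different route from the paper's.

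\textbf{Direction: Bernstein set $\Rightarrow$ $\lambda$ not strongly indecomposable.} Both arguments start from the same fact: an increasing $f\colon\mathbb{R}\to A$ has only countably many discontinuities, and these can be enumerated canonically because the gaps $(f(x^-),f(x^+))$ are disjoint and each contains a rational.
\begin{itemize}
\item The paper passes to the closure $S$ of $R=f[\mathbb{R}]$. It notes that $S\setminus C\subseteq R$ for a countable $C$, and then invokes the Alexandroff--Hausdorff theorem that an uncountable $G_\delta$ set contains a perfect set.
\item You instead place a Cantor set $P_0$ among the continuity points of $f$ and take $f[P_0]$ itself. This set is compact and homeomorphic to $P_0$, hence perfect.
\end{itemize}
Your version is more elementary and self-contained:
\begin{itemize}
\item It avoids the perfect set theorem for $G_\delta$ sets, whose \ZF\ proof requires running the Cantor--Bendixson analysis with some care.
\item It exhibits the perfect subset explicitly.
\item It sidesteps the paper's assertion that $S$ is perfect. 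That assertion fails in general: $R=(-\infty,0)\cup\{1\}\cup(2,\infty)$ has type $\lambda+1+\lambda=\lambda$, yet $\overline{R}$ has $1$ as an isolated point. The paper's argument does not actually need it, though.
\end{itemize}

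\textbf{Direction: partition $\Rightarrow$ Bernstein set.} Here you follow the paper. You also supply the justification, left implicit there, that every perfect set contains a copy of $\mathbb{R}$: an order-preserving Cantor scheme combined with the binary-expansion embedding of $(0,1)$ into lexicographic $2^\omega$.

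\textbf{One small slip.} Your parenthetical alternative, $[a,b]$ minus small open intervals around the points of $D$, need not be perfect, since it can have isolated points. You would have to pass to its Cantor--Bendixson kernel. The Cantor-scheme construction you give first is the one to use.
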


\begin{proof}
First suppose $\RR = A \cup B$ is a partition of $\RR$ into two suborders neither of which embed an order-isomorphic copy of $\RR$. Then since every perfect set contains a copy of $\RR$, neither $A$ nor $B$ contains a perfect set. Hence $A$ and $B$ are both Bernstein sets. 

Conversely, suppose $A$ is a Bernstein set. Then $B = \RR\setminus A$ is also a Bernstein set. We claim neither $A$ nor $B$ contains an isomorphic copy of $\RR$. This follows if we can show that every isomorphic copy of $\RR$ contains a perfect set. 

Suppose $R$ is a suborder of $\RR$ isomorphic to $\RR$. While $R$ of course contains a subset which is perfect with respect to itself (namely, itself), because $R$ need not be closed in $\RR$ we need to look a little harder to find an $\RR$-perfect set in $R$. 

Observe that $R$ contains all but countably many of its limit points, i.e.\ there is a countable $C \subseteq \RR$ s.t.\ if $x \in \RR\setminus C$ is a limit of points $x_n \in R$, then $x \in R$. (One way to see this: let $f: \RR \longrightarrow R$ be an isomorphism. Then $f$ is increasing and hence discontinuous at only countably many points.) Let $S$ be the closure of $R$. Then $S$ is perfect (i.e.\ perfect in $\RR$). To find a perfect subset of $S$ that avoids $C$ observe that $S \setminus C$ is an uncountable $G_\delta$-set. Therefore $S\setminus C$ contains a non-empty perfect subset, cf.\ \cite{916A0,916H0}.
%(This is always possible since $C$ is countable: e.g. enumerate $C = c_1, c_2, ...$ and iteratively choose a pairwise disjoint collection of open intervals\todo[color=green!40]{I do not see neither why this works (what if $c_1$ is a limit point of $C$?) nor why it produces the required perfect set ($\bigcup_{n < \omega} I_n$ could contain $S$)} $\seq{I_n}{n < \omega}$ with $c_n \in I_n$. Then $S \setminus \bigcup_{n < \omega} I_n$ is perfect). 
\end{proof}

L\"ucke, Schlicht, and Weinert could show that if all sets of reals have the property of Baire, even stronger partition properties follow:

\begin{theo}[$\ZF$]\label{theorem_ADlambda}
If all sets of reals have the property of Baire, then $\otR \longrightarrow (\otR)^2_n$ holds for all natural numbers $n$.
\end{theo}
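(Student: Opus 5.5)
The plan is a Galvin–Mycielski style fusion argument, carried out so that the homogeneous set has order type $\otR$ and not just some perfect type. Fix a colouring $c\colon [\RR]^2 \to n$. Identify $[\RR]^2$ with the open half-plane $H=\{\opair{x}{y} : x<y\}$, so that $c$ partitions $H$ into $n$ sets $H_0,\dots,H_{n-1}$. By hypothesis each $H_i$ has the property of Baire, since $H \subseteq \RR^2$ is homeomorphic to a set of reals. So there are open sets $O_i \subseteq H$ with $H_i \,\triangle\, O_i$ meager. The $O_i$ can be chosen canonically, as unions of rational boxes, and need no choice.

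\textbf{Step 1: a comeager set on which $c$ agrees with the $O_i$.} I will fix a countable family $\seq{N_k}{k<\omega}$ of closed nowhere dense sets whose union covers $\bigcup_i (H_i \triangle O_i)$ together with the boundaries of the $O_i$. Let $G = H \setminus \bigcup_k N_k$. This set is comeager in $H$, and on $G$ each pair $\opair{x}{y}$ lies in $O_{c(x,y)}$, an open set contained in $H_{c(x,y)}$ up to the removed meager part.

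I expect this step to be the main obstacle in $\ZF$. Choosing the witnesses $N_k$ uses a countable choice over codes. I would handle it in one of two ways. One option is to use the fact that the hypothesis "all sets of reals have the Baire property" already yields enough countable choice for reals. The other is to observe that only countably many Borel codes are needed, and these can be extracted uniformly from the Baire-property witnesses by coding them as reals.

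\textbf{Step 2: fusion building an ordered Cantor scheme.} I will build, by recursion on $s \in 2^{<\omega}$, nondegenerate closed intervals $J_s$ with rational endpoints, canonically chosen, so that the recursion uses no choice. They will satisfy:
\begin{itemize}
\item $J_{s\conc 0} < J_{s \conc 1}$, both contained in the interior of $J_s$, and the lengths tend to $0$;
\item at level $k$, every product $J_{t}\times J_{t'}$ with $t<t'$ on that level avoids $N_0,\dots,N_k$;
\item every such product lies inside a single $O_i$, with $i$ depending only on the pair $\opair{t}{t'}$.
\end{itemize}
This works because the $O_i$ are open and their union is dense in $H$, so a box meeting $\bigcup_i O_i$ can be shrunk into one $O_i$ and off $N_k$, and we only handle finitely many boxes at each stage. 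After the fusion, the colour of a pair of branches $x<y$ depends only on the finite data of the nodes at which they separate. Refining along the way, I will arrange that it depends only on the meet $s = x\wedge y$, giving a colour $d(s)\in n$.

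\textbf{Step 3: homogenising and extracting $\otR$.} Colour the nodes of $2^{<\omega}$ by $d$. By a standard pigeonhole on trees (Halpern–Läuchli style, but here only the one-dimensional version), there is a perfect subtree $T$ all of whose splitting nodes have the same $d$-colour. Explicitly, at each node some colour recurs densely above it, and we build a skew subtree by recursion. Let $P$ be the set of points determined by branches of $T$. Then $P$ is homogeneous of colour $i$, and $P$ is order-isomorphic to $2^\omega$ ordered lexicographically.

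Finally, remove from $P$ the points given by eventually-$1$ branches, that is, the left endpoints of the gaps. What remains is order-isomorphic to $[0,1]$ via binary expansion, and its interior is isomorphic to $\RR$. This embedding is explicit, so it needs no choice. It gives a homogeneous suborder of type $\otR$, proving $\otR \longrightarrow (\otR)^2_n$.
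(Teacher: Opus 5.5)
Your proposal is correct, and it takes a genuinely different route from the paper. The paper gives no argument of its own. It cites Theorem 2.1.2 of L\"ucke--Schlicht--Weinert, which is proved for the Cantor space ${}^\omega 2$ under the lexicographic order, and transfers it to $\otR$ via $\otR \equim {}^\omega 2$ and \Cref{monotonicity}. You instead run Galvin's fusion argument directly in the plane:
\begin{enumerate}
\item Baire-property witnesses for the $n$ colour classes;
\item a Cantor scheme of rational intervals whose level-$k$ boxes each lie in a single $O_i$ and off $N_0,\dots,N_k$, so that the colour of a pair of branch points is determined by the box $J_{s\conc 0}\times J_{s\conc 1}$ at their meet $s$;
\item a one-dimensional pigeonhole on $2^{<\omega}$;
\item an explicit copy of $\RR$ inside the resulting homogeneous Cantor set.
\end{enumerate}
The paper's route buys brevity. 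Yours is self-contained and makes the choicelessness transparent, since every step is a canonical choice from a fixed enumeration of rational boxes or of $2^{<\omega}$. It also shows that only the Baire property of the $n$ colour classes of the given colouring is used, and it yields a closed homogeneous set. Finally, it needs no transfer from ${}^\omega 2$: the Cantor set is found inside $\RR$ and $\RR$ is then embedded into it by hand.

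Some corrections, none of which affects the conclusion.
\begin{itemize}
\item The step you flag as the main obstacle is not one. With only $n$ colour classes you need to choose just $n$ witness sequences of nowhere dense sets. That is finite choice, which is provable in $\ZF$. Drop both proposed workarounds, in particular the unsupported claim that the Baire-property hypothesis yields countable choice for reals.
\item Your reason why the $H_i$ have the Baire property is wrong as stated: $H$ is homeomorphic to $\RR^2$, not to a set of reals. Transfer instead through $(\RR\setminus\QQ)^2$, which is a comeager $G_\delta$ subset of $\RR^2$ homeomorphic to $\RR\setminus\QQ$.
\item In Step 3 it is false that above every node some colour is dense. Colour $0$ above $t\conc 0$ and colour $1$ above $t\conc 1$ defeats this at $t$. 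The finite descending argument gives a node $s_0$ and a colour $i$ dense above $s_0$, and the skew subtree should be built above $s_0$.
\item The extra refinement in Step 2 is unnecessary. The third bullet at level $|s|+1$, applied to the pair $\langle s\conc 0, s\conc 1\rangle$, already makes the colour depend only on the meet.
\item Removing the eventually-$1$ branches leaves a copy of $[0,1)$, not $[0,1]$. Either way, removing the endpoints gives $\otR$.
\end{itemize}
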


This follows from \cite[Theorem 2.1.2]{
017LSW0}. The result is proved there for the Cantor space instead of $\otR$. Since these two linear order types are equimorphic, the result in the paper and the one just mentioned are actually equivalent, cf.\ \Cref{monotonicity}.

\subsubsection{Consequences of the Proper Forcing Axiom}

%Now we are going to mention some consequences of

Recall that for a given cardinal $\kappa$, an ordered set $X$ is called \emph{$\kappa$-dense}, if every nonempty interval of it as well as $X$ itself has cardinality $\kappa$.

One of the earliest applications of the \emph{Proper Forcing Axiom}, $\PFA$ was Baumgartner's proof, cf. \cite{973B0}, that consistently for any given infinite cardinal $\kappa < 2^{\aleph_0}$, all $\kappa$-dense sets of reals are isomorphic.

\begin{theo}[$\ZFC + \PFA$]
\label{Baumgartner}
All $\aleph_1$-dense sets of reals are isomorphic.
\end{theo}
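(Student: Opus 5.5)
The plan is to follow Baumgartner's strategy. We realize an order-isomorphism between two $\aleph_1$-dense sets of reals $A$ and $B$ as the union of a filter in a suitable forcing notion $\mathbb{P}$ of finite approximations. We then invoke $\PFA$ for $\aleph_1$ many dense sets. Since $|A| = |B| = \aleph_1$, it suffices to produce a proper $\mathbb{P}$ together with $\aleph_1$ many dense sets $D_a = \{p : a \in \operatorname{dom} p\}$ for $a \in A$ and $R_b = \{p : b \in \operatorname{ran} p\}$ for $b \in B$. A filter meeting all of them then yields an order-preserving bijection $A \to B$. Because $\PFA$ only applies to proper posets, the whole difficulty is to arrange properness, and in fact the c.c.c.\ after a preliminary proper extension.

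The naive poset consists of finite order-preserving partial maps $A \rightharpoonup B$. It is not c.c.c.: one can cook up uncountably many pairwise incompatible conditions, for instance with an uncountable ``decreasing'' pattern. The key steps are therefore as follows.

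\begin{enumerate}
\item Pass first to an extension by the $\sigma$-closed collapse $\mathrm{Col}(\omega_1, 2^{\aleph_0})$, which is proper, adds no reals and preserves $\omega_1$. In it $\mathsf{CH}$ holds and $A$, $B$ remain $\aleph_1$-dense sets of reals.
\item Working under $\mathsf{CH}$, partition $A = \bigcup_n A_n$ and $B = \bigcup_n B_n$ into countably many pieces, each $\aleph_1$-dense in $\mathbb{R}$. The pieces are built by a diagonalization of length $\omega_1$ along a continuous chain of countable elementary submodels $M_\xi$. At stage $\xi$, the points of $A \cap (M_{\xi+1} \setminus M_\xi)$ are placed into pieces so as to avoid every countable ``bad'' configuration coded in $M_\xi$; such configurations are given by the continuous functions in $M_\xi$ that could witness incompatibility.
\item Let $\mathbb{P}$ consist of finite order-preserving $p$ with $p[A_n \cap \operatorname{dom} p] \subseteq B_n$.
\item Prove that $\mathbb{P}$ is c.c.c.\ in this extension. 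Given an uncountable family of conditions, apply a $\Delta$-system argument and thin out so that all the conditions have the same size $k$, the same rational ``interval pattern'' and the same piece indices. View each condition as a point of $\mathbb{R}^{2k}$. The diagonalization in step 2, applied to a countable elementary submodel containing the family, then produces two members whose new points interleave compatibly, so that their union is order-preserving.
\item Then $\mathrm{Col} * \dot{\mathbb{P}}$ is proper. Applying $\PFA$ to it with the $\aleph_1$ dense sets above (evaluated via names) gives a filter in $V$ whose union is an isomorphism $A \cong B$. The genericity over the $D_a$ and $R_b$ yields totality and surjectivity, and since $p$ maps $A_n$ into $B_n$, the map respects the pieces.
\end{enumerate}

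The main obstacle is step 4, together with its interaction with step 2. One must show that the countable partition can be chosen so that no uncountable antichain survives. I would handle this through the elementary-submodel argument: take a condition $q$ outside $M$ with its root in $M$, and use the $\aleph_1$-density of each $B_n$ inside arbitrarily small rational intervals to find, within $M$, a condition $p$ from the family whose non-root points lie in the same rational intervals and on the correct side of those of $q$. The construction of the pieces must ensure that such a $p$ exists for every $q$. This is exactly the point that fails without the refinement into pieces, and it is where the technical core of Baumgartner's argument lies.
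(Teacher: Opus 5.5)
The paper gives no proof of this theorem; it cites Baumgartner. Your architecture is the standard derivation of the statement from $\PFA$:
\begin{itemize}
\item a $\sigma$-closed collapse to get $\mathsf{CH}$ without adding reals;
\item Baumgartner's poset of finite order-preserving maps respecting a countable partition;
\item $\PFA$ applied to the proper two-step iteration, using the $\aleph_1$ sets $D_a$, $R_b$ together with the dense set of conditions whose collapse coordinate decides the finite function (possible because the collapse adds no finite subsets of $A\times B$).
\end{itemize}
Steps 1, 3 and 5 are fine. However, the whole weight of the theorem is the $\mathsf{CH}$ lemma that the partition can be chosen so that $\mathbb{P}$ is c.c.c. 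You do not prove this lemma, and the mechanism you sketch for it would not work as described. First, in the c.c.c.\ argument the $\aleph_1$-density of the $B_n$ plays no role. The condition $p$ must come from the given family, and after thinning to a common rational-interval pattern, $p\cup q$ is a condition iff for every non-root index $i$ the points $x^p_i$ and $y^p_i$ lie on the same side of $x^q_i$ and $y^q_i$ respectively. Density of the pieces is only what makes $D_a$ and $R_b$ dense.

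Second, the obstructions are not captured by functions linking the two points of a single pair. Let $A=B$ be $\aleph_1$-dense and closed under $x\mapsto -x$, and put $p_\xi=\{(a_\xi,b_\xi),(b_\xi,-a_\xi)\}$ with $a_\xi\in(-4,-3)$ and $b_\xi\in(1,2)$. Each $p_\xi$ is order-preserving. For $\xi\neq\eta$, the first pairs require $\mathrm{sign}(a_\xi-a_\eta)=\mathrm{sign}(b_\xi-b_\eta)$, while the second pairs require the opposite, so these conditions are pairwise incompatible. They lie in $\mathbb{P}$ as soon as $a_\xi\in A_{n_0}$, $-a_\xi\in B_{n_1}$ and $b_\xi\in B_{n_0}\cap A_{n_1}$. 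Moreover, $a_\xi$ and $b_\xi$ can be taken from different levels of your chain, so within each pair the two points are not linked by any one-to-one function coded at the stage where they are placed. A level-by-level assignment avoiding only the configurations you describe therefore does not prevent both $\{a\in(-4,-3): a\in A_{n_0},\ -a\in B_{n_1}\}$ and $B_{n_0}\cap A_{n_1}\cap(1,2)$ from being uncountable. In that case you get an uncountable antichain.

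What is missing is a precise property of the partition that controls how the piece indices of interdependent points combine across different levels. You would then need to prove two things: that this property can be achieved under $\mathsf{CH}$ with all pieces $\aleph_1$-dense, and that it yields the c.c.c.\ for $k$-tuples whose coordinates are spread over several levels. That is the real content of Baumgartner's theorem. As written, your proposal reduces the statement to his lemma rather than proving it, which is acceptable only if you explicitly cite that lemma, as the paper in effect does.
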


The following fact is folklore, but we could not locate a proof in the literature.

\begin{coro}[$\ZFC + \PFA$]
All real types of cardinality $\aleph_1$ are mutually equimorphic.
\end{coro}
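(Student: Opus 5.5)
Under \PFA, fix a real type $\varphi$ of cardinality $\aleph_1$ represented by $A \subseteq \RR$. Since $\RR$ has a countable dense subset, I will aim to show that $A$ is equimorphic to some fixed $\aleph_1$-dense set of reals $B$. By \Cref{Baumgartner} all such $B$ are isomorphic, so all real types of size $\aleph_1$ are then equimorphic to one another. Note that $\aleph_1 \le 2^{\aleph_0}$ holds in \ZFC, and $\aleph_1$-dense sets of reals exist: for instance, take any set of reals of size $\aleph_1$ and add to it $\aleph_1$ points in each rational interval.

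First, I will find an $\aleph_1$-dense suborder of $A$, which gives the embedding $B \leqslant A$. Let $A_0$ be the set of points $a \in A$ having a neighbourhood (an interval with rational endpoints) meeting $A$ in a countable set. Since there are only countably many rational intervals, $A_0$ is countable. Put $A' = A \setminus A_0$, which is uncountable and hence has cardinality $\aleph_1$. Every nonempty interval of $A'$ with distinct endpoints $a<a'$ contains $A \cap (a,a')$, which is uncountable because $a \notin A_0$. Removing the countable set $A_0$ still leaves $\aleph_1$ points. Singleton intervals are the only problem, so I will read ``interval'' as a nondegenerate one, or equivalently remove the countably many isolated points first.

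To avoid fussing with endpoint and gap issues, I will instead modify $A'$ to make it dense in itself without endpoints. Remove from $A'$ its at most countably many points that are left or right isolated in $A'$ (each such point determines a distinct rational gap interval, so there are countably many). Then iterate this removal countably often, or simply observe that after one removal the set remains $\aleph_1$-dense in the sense required: every open interval still contains $\aleph_1$ points. So $A''$ is $\aleph_1$-dense, $A'' \subseteq A$, and hence $B \cong A'' \leqslant A$.

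Second, for $A \leqslant B$, I will embed $A$ into an $\aleph_1$-dense set of reals. Take $D = A \cup S$, where $S$ contains $\aleph_1$ points from each open rational interval. Then $D$ is $\aleph_1$-dense and has cardinality $\aleph_1$, so $D \cong B$ by \Cref{Baumgartner}, and $A \subseteq D$. Combining the two steps gives $A \equim B$.

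The main obstacle is the minor bookkeeping in the first step, namely checking that the set obtained is genuinely $\aleph_1$-dense in the paper's sense (every nonempty interval has cardinality $\aleph_1$). One way to avoid the problem entirely is to find a smaller $\aleph_1$-dense suborder directly. Choose a countable dense subset $Q$ of $A'$ together with, for each pair $q<q'$ in $Q$, a set of $\aleph_1$ points of $A \cap (q,q')$ that avoid $Q$. Then the points of $Q$ become non-isolated and every nondegenerate interval has size $\aleph_1$. If isolated points still cause trouble, I will delete $Q$ itself, leaving only the added points, which are all two-sided limits of one another. Only these routine checks remain.
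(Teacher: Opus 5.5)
Your overall plan is the paper's: squeeze $A$ between two $\aleph_1$-dense sets of reals and apply Baumgartner's theorem. Your upper bound $D=A\cup S$ is exactly the paper's $X\cup Y$. The problem is in your verification of the lower bound.

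The assertion that for $a<a'$ in $A'$ the set $A\cap(a,a')$ is uncountable ``because $a\notin A_0$'' is false. The condition $a\notin A_0$ only says that every neighbourhood of $a$ meets $A$ uncountably, and all those points may lie to the left of $a$. For instance, if $S$ is $\aleph_1$-dense and $A=(S\cap(0,1))\cup\{1,2\}\cup(S\cap(2,3))$, then $1,2\in A'$ while $A\cap(1,2)=\emptyset$. So $A'$ need not be $\aleph_1$-dense, and your claim that $A''$ ``remains'' $\aleph_1$-dense rests on this false step.

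The conclusion about $A''$ is nevertheless true, for a different reason that you should supply. Let $a<a'$ be in $A''$. Then $a$ is not right-isolated in $A'$, so there is $c\in A'$ with $a<c<a'$. Choose rationals $a<p<c<q<a'$. Since $c\notin A_0$, we get $|A\cap(p,q)|=\aleph_1$. Only countably many points were discarded in passing from $A$ to $A''$, so also $|A''\cap(a,a')|=\aleph_1$. The same reasoning applied to a single $a\in A''$, with $c\in A'$ slightly to the right (resp.\ left) of $a$, shows that $A''$ has no endpoints. So one removal suffices and no iteration is needed.

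Your closing alternative with the countable set $Q$ should be discarded. It has the same defect: if $q<q'$ in $Q$ are the two ends of a jump of $A'$, such as $1,2$ above, then $A\cap(q,q')$ can be empty. Moreover, the density of the set of added points is never actually checked.

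With this repair the proof is complete, and it differs from the paper only in how the $\aleph_1$-dense suborder is extracted. The paper defines $x\sim y$ iff $[\{x,y\}]$ is countable, notes via separability that each $\sim$-class is countable, and takes one point from each class. $\aleph_1$-density is then immediate from the definition of $\sim$, and jumps like $\{1,2\}$ are simply collapsed into one class. Your route discards the points of $A$ that are not condensation points of $A$, then the one-sided isolated points. It needs the extra check above, but it has the small advantage of visibly removing endpoints. This must be done before invoking Baumgartner's theorem, and the paper's set of representatives can still have endpoints (e.g.\ if $A$ has an isolated least point).
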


\begin{proof}
In light of \Cref{Baumgartner}, it is sufficient to show that every set $X \subseteq \mathbb{R}$ of cardinality $\aleph_1$ contains and is contained in an $\aleph_1$-dense set of reals.

If $Y$ is any $\aleph_1$-dense set of reals that is also $\aleph_1$-dense in $\mathbb{R}$, then $X \cup Y$ is $\aleph_1$-dense and contains $X$. 

In the other direction, define a relation $\sim$ on $X$ by the rule $x \sim y$ if the interval $[\{x, y\}]$ is countable. It is routine to check that $\sim$ is a condensation of $X$. Since $\mathbb{R}$ is separable, every $\sim$-class must be countable, and it follows that $X/{\sim}$ is $\aleph_1$-dense. Let $X'$ be obtained by choosing one point from every $\sim$-class. Then $X'$ is an $\aleph_1$-dense suborder of $X$, as desired. 
\end{proof}

\begin{coro}[$\ZFC + \PFA$]
\label{PFA=>indivisible real types}
All real types of cardinality $\aleph_1$ are strongly indecomposable, in fact, we even have $\rho \longrightarrow (\rho)^1_{\aleph_0}$ for every real type $\rho$ of cardinality $\aleph_1$.
\end{coro}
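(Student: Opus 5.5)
Fix a real type $\rho$ of cardinality $\aleph_1$, realised by $X \subseteq \mathbb{R}$, and a partition $X = \bigcup_{n \in \omega} A_n$ into countably many pieces. We want some $A_n$ to embed $X$. By the preceding corollary, all real types of cardinality $\aleph_1$ are mutually equimorphic. So it suffices to find a single $n$ with $|A_n| = \aleph_1$: then $\operatorname{otp}\langle A_n\rangle$ is itself a real type of cardinality $\aleph_1$, hence equimorphic to $\rho$, so $\rho \leqslant \operatorname{otp}\langle A_n\rangle$. This gives $\rho \longrightarrow (\rho)^1_{\aleph_0}$. Strong indecomposability $\rho \longrightarrow (\rho)^1_2$ then follows by \Cref{monotonicity}, or directly by padding a $2$-partition with empty pieces.

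The key step is therefore the cardinality claim: some $A_n$ is uncountable. This uses only that $\aleph_1$ is regular (in $\ZFC$), i.e.\ a countable union of countable sets is countable. If every $A_n$ were countable, $X$ would be countable, contradicting $|X| = \aleph_1$. Hence some $A_n$ has cardinality exactly $\aleph_1$, being an uncountable subset of a set of size $\aleph_1$.

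The only point needing care is making sure the equimorphism corollary applies to $A_n$ as stated. It requires merely that $A_n$ be a set of reals of cardinality $\aleph_1$, with no density hypothesis; the corollary's proof already handles arbitrary such sets by thinning to an $\aleph_1$-dense subset and extending to one. So I expect no genuine obstacle: the substantive work (Baumgartner's isomorphism theorem, \Cref{Baumgartner}, and the thinning/extension argument) has already been done. The remaining concern is that the empty type and countable pieces play no role, which the pigeonhole argument above settles.
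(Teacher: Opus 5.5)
Your proposal is correct and follows the paper's own proof: a countable colouring of a set of $\aleph_1$ reals has an uncountable colour class, and the preceding corollary makes that class equimorphic to the whole set, so it contains a copy of it. Deriving strong indecomposability from $\rho \longrightarrow (\rho)^1_{\aleph_0}$ via \Cref{monotonicity} is also fine.
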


\begin{proof}
Let $R \subset \RR$ be a set of order type $\rho$ of $\aleph_1$ real numbers and let $\chi : R \longrightarrow \omega$ be a col\ou ring of $R$. Then one col\ou r class has to be uncountable and, by the previous corollary, this class is equimorphic to $R$ so in particular contains an order-isomorphic copy of $R$.
\end{proof}

\begin{theo}[$\ZFC + \PFA$]
All untranscendable real types of cardinality at most $\aleph_1$ are either strongly indecomposable or equal to $2$.
\end{theo}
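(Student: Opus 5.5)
We argue by cases on the cardinality of the real type $\varphi$ (a type with $\varphi \leqslant \otR$), which is either finite, countably infinite, or exactly $\aleph_1$.

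\emph{Finite case.} The untranscendable finite types are $0$, $1$ and $2$, since no finite $n \geq 3$ is untranscendable, as noted before \Cref{untrans_ord}. Both $0$ and $1$ are trivially strongly indecomposable, which leaves only $2$, as allowed by the statement.

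\emph{Countable infinite case.} Here $\varphi$ is countable, hence $\sigma$-scattered. \Cref{2 only scattered} (equivalently \Cref{SS+untr implies SI}) then gives directly that $\varphi$ is strongly indecomposable or equal to $2$. This case needs no use of \PFA.

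\emph{Case of cardinality $\aleph_1$.} By \Cref{PFA=>indivisible real types}, under \PFA\ every real type of cardinality $\aleph_1$ is strongly indecomposable, indeed satisfies $\rho \longrightarrow (\rho)^1_{\aleph_0}$. So every real type of size $\aleph_1$ is strongly indecomposable, whether or not it is untranscendable. In particular untranscendability is not even used in this case.

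The only point needing care is that the three cases are exhaustive, which is immediate since cardinality at most $\aleph_1$ means finite, countably infinite, or exactly $\aleph_1$. I do not expect any genuine obstacle: all the work was done in \Cref{SS+untr implies SI} and \Cref{PFA=>indivisible real types}, and the theorem is a bookkeeping combination of the two. The restriction to cardinality at most $\aleph_1$ is essential, because \Cref{fullsizerealtype=>-SI} shows that types of size continuum fail, and under \PFA\ the continuum is $\aleph_2$.
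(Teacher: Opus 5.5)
Your proposal is correct and follows the paper's proof: the countable case comes from \Cref{2 only scattered}, and the size-$\aleph_1$ case comes from \Cref{PFA=>indivisible real types}, which does not use untranscendability. Your separate finite case is redundant, since \Cref{2 only scattered} already covers finite types, but it does no harm.
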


\begin{proof}
$\PFA$ implies that there are $\aleph_2$ real numbers and by \Cref{2 only scattered} all countable untranscendable types are strongly indecomposable or equal to $2$. So it remains to show that untranscendable linear orders of cardinality $\aleph_1$ are strongly indecomposable. As all real types of cardinality $\aleph_1$ are strongly indecomposable by \Cref{PFA=>indivisible real types} we are done.
\end{proof}

\section{Open Problems\dots} \label{OpenProbs}

\subsection{\dots with choice}
We remarked previously that we do not know if the hypothesis of $s$-untranscendability in the statement of \Cref{Garrett's Theorem} can be weakened to untranscendability. 

\begin{prob}
\label{2x=x=x2 =?=> x=xx}
Is there an untranscendable order type $\varphi$ such that $\varphi \equim \varphi 2 \equim 2 \varphi$ but $\varphi \not \equim \varphi^2$?
\end{prob}

In light of \Cref{fullsizerealtype=>-SI} one may wonder whether uncountable real types are (at least consistently)  the only untranscendable types different from $2$ which are not strongly indecomposable. %all untranscendable Baumgartner types are consistently strongly indecomposable or maybe even provably strongly indecomposable.
Being ever so slightly partial to a positive answer, we pose the following conjecture:

\begin{conj}[The Trichotomy Conjecture (possibly assuming \PFA)] \label{realtypeconj}
For every untranscendable linear order $\varphi$ at least one of the following three statements applies:
\begin{itemize}
\item $\varphi$ is strongly indecomposable,
\item $\varphi$ contains an uncountable real type,
\item $\varphi = 2$.
\end{itemize}
\end{conj}

What would be very helpful in proving this conjecture would be a better understanding of \emph{Baumgartner types}. These order types are neither $\sigma$-scattered, nor do they contain an uncountable real type, nor an Aronszajn type. They are named after James Baumgartner who described them in \cite{976B1}. There have been further investigations since, cf.\ \cite{009IM0,018LM0,019L0,022L0, 024CEM0, 024S0}. At present, however, there is no structure theorem for the indecomposable Baumgartner types à la \Cref{Laver sigma-scattered indec thm}  (for $\sigma$-scattered types) and \Cref{{Barbosa indec theorem}} (for Aronszajn types) that would allow for the application of the methods of this paper.

The following question is strongly related to \Cref{realtypeconj}. 

\begin{ques}
    Does $\ZFC$ prove that all untranscendable Aronszajn lines are strongly indecomposable?
\end{ques}

We would like to pose more open-ended problems:

\begin{prob}
\label{characterise_ut&sc}
    Characteri\z e the class of all %infinite
    untranscendable scattered linear orders.
\end{prob}

\begin{prob}
\label{characterise_ut&s-sc}
    Characteri\z e the class of all %infinite
    untranscendable $\sigma$-scattered linear orders.
\end{prob}

Also note that the notion of multiplicative decomposability, cf.\ \Cref{multiplicative_decomposability}, makes perfect sense for linear order types---call $\rho$ decomposable if there are types $\varphi, \psi < \rho$ such that $\rho = \varphi\psi$. So we may also pose the following:

\begin{prob}
    Characteri\z e the class of all infinite multiplicatively indecomposable scattered linear orders.
\end{prob}

\subsection{\dots and without}\label{sub_noAC}

Many of the arguments in this paper do not depend on the axiom of choice, but some do, including the proof of \Cref{product-lemma}. We wonder whether this can be avoided:

\begin{ques}\label{ques:AC}
Is it provable with $\ZF$ that if $\rho\tau \leqslant \varphi\psi$, then necessarily $\rho \leqslant \varphi$ or $\tau \leqslant \psi$?
\end{ques}

Notice that $\ZF$ suffices to prove that $\rho\tau \leqslant \varphi\psi$ implies $\rho \leqslant \varphi 2$ or $\tau \leqslant \psi$.

%\todo[inline, color = thilo, caption = {}]{We know that the proof above works if $X$ or $W$ can be well-ordered or if $\AC_{\card{Y}}$ holds true. Moreover we can prove with $\ZF$ that
%\begin{enumerate}
%\item If $\rho$ has a left endpoint and $\rho\tau \leqslant \varphi\psi$, then $\rho \leqslant \varphi$ or $\tau \leqslant \psi$,
%\item If $\rho$ has a right endpoint and $\rho\tau \leqslant \varphi\psi$, then $\rho \leqslant \varphi$ or $\tau \leqslant \psi$,
%\item $\rho\tau \leqslant \varphi\psi$ implies $\rho \leqslant \varphi 2$ or $\tau \leqslant \psi$,
%\end{enumerate}
%Can we show anything else with $\ZF$, maybe assuming that $\varphi \leqslant \rho$ and $\psi \leqslant \tau$?
%
%Can we find a counterexample? Maybe under the assumption that the reals are a countable union of countable sets\dots
%
%\dots a counterexample under a determinacy regime would also be interesting.}

We currently only have a few examples of untranscendable types failing to be strongly indecomposable. 
Moreover, with the exception of $2$, our examples require the Axiom of Choice for the proof.
It is tempting to conjecture that this is no coincidence. Let us abbreviate the statement that $2$ is the only untranscendable linear order $\varphi$ failing to be strongly indecomposable as $\BE$ (\emph{binary exceptionalism}).

\begin{ques}
\label{ques : Only Two}
Is $\ZF + \BE$ consistent?
\end{ques}

Let us recall that by the conjunction of \Cref{R is untranscendable} and \Cref{fullsizerealtype=>-SI} , $\BE$ refutes the Axiom of Choice. An important statement in this context is the \emph{ordering principle}, abbreviated as $\OP$. It claims that every set can ordered linearly. It is known that $\ZF$ fails to imply it, cf.\ \cite[Chapter 7 \S 3]{973J0}, and that $\ZF + \OP$ fails to imply $\AC$, cf.\ \cite[Chapter 7 \S 2]{973J0}. An affirmative answer to \Cref{ques : Only Two} may be given with the help of a model in which comparatively few sets can be ordered linearly. Therefore the following variation of the question is also of interest.

\begin{ques}
Is $\ZF + \BE + \OP$ consistent?
\end{ques}

Often when a statement contradicts the axiom of choice, one can at least obtain its consistency with the \emph{axiom of dependent choice}, $\DC$, cf.\ \cite{942B0}. It is known that the axiom of choice for countable families does not suffice to yield $\DC$, cf.\ \cite[Chapter 8, \S 2]{973J0}, but that %it is a theorem in
$\ZF + \AD + V = L(\RR)$ proves $ \DC$, cf.\ \cite{984K0}. This leads us to yet another question:

\begin{ques}
Is $\ZF + \DC + \BE + \OP$ consistent?
\end{ques}

It should be pointed out that determinacy models will not help answer either of the two last questions. This is because, the following holds:

\begin{theo}[$\ZF+\AD$]\label{R/Q_is_unorderable}
$\soafft{\omega}{2}/E_0$ cannot be ordered linearly.
\end{theo}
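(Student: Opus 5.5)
The plan is to derive a contradiction from an assumed linear order $\prec$ on $\soafft{\omega}{2}/E_0$. I will use the fact, quoted earlier in the paper, that under $\AD$ every set of reals has the property of Baire, and combine it with the topological $0$-$1$ law for $E_0$-invariant sets. Here $x\,E_0\,y$ means that $x$ and $y$ differ in only finitely many coordinates, and $[x]$ denotes the $E_0$-class of $x$.

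The first step is to introduce the complementation map $\sigma:\soafft{\omega}{2}\to\soafft{\omega}{2}$, $\sigma(x)(n)=1-x(n)$. This map is a homeomorphism and an involution. It respects $E_0$: if $x\,E_0\,y$ then $\sigma(x)\,E_0\,\sigma(y)$. Also, $\sigma(x)$ differs from $x$ in every coordinate, so $[x]\neq[\sigma(x)]$ for every $x$. Using the assumed order, define
\[
A=\{x\in\soafft{\omega}{2} : [x]\prec[\sigma(x)]\}.
\]
No choice is needed to define $A$, so $A$ is a set of reals and hence has the Baire property. Since $\prec$ is an order on classes and $\sigma$ respects $E_0$, membership in $A$ depends only on $[x]$. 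Thus $A$ is $E_0$-invariant, i.e.\ closed under finite modifications.

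The second step is the $0$-$1$ law: an $E_0$-invariant set with the Baire property is either meager or comeager. I would prove this directly. If $A$ is nonmeager, it is comeager in some basic open set $N_s$, where $s$ is a finite binary string. For any other string $t$ of the same length, the homeomorphism replacing the initial segment $s$ by $t$ preserves $A$ by invariance. So $A$ is comeager in every $N_t$ with $|t|=|s|$, and these finitely many sets cover the space, so $A$ is comeager.

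The third step gives the contradiction. Since $[x]\neq[\sigma(x)]$ and $\prec$ is total, each $x$ satisfies exactly one of $[x]\prec[\sigma(x)]$ and $[\sigma(x)]\prec[x]=[\sigma(\sigma(x))]$. Hence $\sigma[A]=\soafft{\omega}{2}\setminus A$. A homeomorphism preserves meagerness, so $A$ and its complement are both meager or both comeager. Their union is the whole space, which is not meager by the Baire category theorem, and they are disjoint, so they cannot both be comeager. This is a contradiction.

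The main obstacle is only to check that every step goes through in $\ZF+\AD$ without choice. The Baire property comes from $\AD$. The $0$-$1$ law as proved above uses only finitely many homeomorphisms. The Baire category theorem for Cantor space is provable in $\ZF$. No choice of representatives from the classes is ever made.
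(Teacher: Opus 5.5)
Your proof is correct, but it takes a different route from the paper, which does not pass through the Baire property. The paper instead proves the more general \Cref{E_0-Theorem} (any $E\supseteq E_0$ under which $s\conc x$ and $s\conc f(x)$ are never equivalent) directly from $\AD$, via the Banach--Mazur-type games $G(s,E,<,i)$. \Cref{game-lemma} is a strategy-stealing argument (flip the opponent's moves) showing that Player $2$ never has a winning strategy. $\AD$ then gives Player $1$ a winning strategy $\rho$ in $G(\langle\rangle,E,<,1)$. Shifting $\rho$ past its first move $s'$ gives a strategy for Player $2$ in $G(s',E,<,2)$ that must be beaten. Finally $E\supseteq E_0$, through $[s'\conc f(y)]_E=[f(s'\conc y)]_E$, turns the beating play into a contradiction.

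Read through the Banach--Mazur characterization of comeagerness, this is essentially your argument in game form. \Cref{game-lemma} says the tail flip swaps the target set with its complement inside $N_s$, so neither is comeager there. Determinacy supplies a basic set $N_{s'}$ on which one of them is comeager. The real difference is how $E_0$-invariance is used. The paper uses it to trade the global flip for the tail flip after $s'$, which preserves $N_{s'}$. You use it to spread comeagerness from one $N_s$ to every $N_t$ by prefix swaps.

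Your version is shorter and needs only that every set of reals has the Baire property, which is weaker than $\AD$ (it holds, e.g., in Solovay's model, which also satisfies $\DC$). So it shows that $\OP$ fails in all such models as well, which bears on the paper's questions about $\ZF+\DC+\BE+\OP$. It also gives \Cref{E_0-Theorem} verbatim, since for $E\supseteq E_0$ the set $\{x:[x]_E\prec[f(x)]_E\}$ is still $E_0$-invariant. The paper's argument, for its part, is self-contained and does not rely on the theorem that $\AD$ implies the Baire property.
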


This has been known for some time now via various routes\footnote{See for example the discussion at https://mathoverflow.net/questions/26861/explicit-
ordering-on-set-with-larger-cardinality-than-r and especially Caicedo’s contribution, cf. \cite{010C2}.}.

In fact, in the appendix we are going to provide a relatively direct proof of the slightly stronger \Cref{E_0-Theorem}.

Moreover, in \cite[page 163]{006W0} it is pointed out that $\ZF + \DC + \AD_\RR$ proves the existence of $X_0$ such that $\card{X_0} + \card{X_0} > \card{X_0}$ ($\AD_\RR$ is the strengthening of \AD\ where the players play reals).
So in a determinacy context there are infinite sets which are, so to speak, decomposable with respect to injections simpliciter, even before considering any order relation on them.

Note that by \Cref{R is untranscendable}, $\BE$ implies that $\otR$ is strongly indecomposable. 
The latter is the case if and only if there is a Bernstein set, cf.\ \Cref{Bernstein}. 
Therefore the following question could be of interest too:

\begin{ques}
Does $\ZF + \DC + \OP$ imply the existence of a Bernstein set?
\end{ques}

Although fragments of the axiom of choice and their mutual implications have been widely investigated, cf.\ e.g.\ \cite{973J0,998HR0,006H0}, apparently this question has not been answered yet.

Finally, our proof of \Cref{2 only FF} relies on the Axiom of Choice. One wonders whether there is a proof which does not do so. Therefore we are pondering the following question:

\begin{ques}
Does $\ZF$ imply that $2$ is the sole untranscendable yet not strongly indecomposable order type $\varphi$ with only finitely many finite $F$-classes? Does $\ZF + \DC$ imply it? $\ZF + \OP$? $\ZF + \DC + \OP$? What happens in a determinacy regime?
\end{ques}

\appendix
\section{On linear unorderability}

In this section we prove \Cref{E_0-Theorem}, which is a strengthening of \Cref{R/Q_is_unorderable}. We consider the following game $G(s,E,<,i)$ where $s \in \soafft{<\omega}{2}$, $E$ is an equivalence relation on $\soafft{\omega}{2}$, $<$ is a linear order on $\soafft{\omega}{2} / E$, and $i \in \{1,2\}$ slect a player. This is a general Banach-Mazur-game, cf.\ \cite{957O0}, \cite[Chapter 6]{003K0}, \cite[page 116]{015M0}.
Both players are taking turns playing finite non-empty sequences of zeros and ones thereby creating an infinite sequence $x$ through concatenation.  Let
\begin{align*}
f : \soafft{\leqslant\omega}{2} & \longrightarrow \soafft{\leqslant\omega}{2}\\
s & \longmapsto (j\mapsto 1-s(j)).
\end{align*}
be the flip, i.e.\ the function replacing an finite or infinite sequence $s$ by the sequence having zeros at the positions where $s$ has ones and vice versa. %We consider\todo[color=green!40]{
$G(s,E,<,i)$ is %}
the game in which Player $i$ wins iff $[s\conc f(x)]_E < [s\conc x]_E$.
%Note that who wins the game only depends on the $E$-equivalence class of $s\conc x$.

\begin{lemm}[$\ZF$]
\label{game-lemma}
If $E$ is an equivalence relation on $\soafft{\omega}{2}$ and $s \in \soafft{<\omega}{2}$ is such that for no $x \in \soafft{\omega}{2}$ we have $s\conc x \,E\, s\conc f(x)$, then Player $2$ fails to have a winning strategy in $G(s,E,<,i)$.
\end{lemm}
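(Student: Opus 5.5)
The plan is a strategy-stealing argument built on the flip $f$. Suppose toward a contradiction that Player $2$ has a winning strategy $\sigma$ in $G(s,E,<,i)$. Using $\sigma$, I will build two runs $A$ and $B$ of the game, both played according to $\sigma$, such that the infinite sequence produced by $B$ is exactly $f$ applied to the sequence produced by $A$. The hypothesis that $s\conc x$ and $s\conc f(x)$ are never $E$-equivalent will then make the winning conditions of the two runs contradict each other.

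\textbf{Interleaving the runs.} I construct the runs step by step, using $\sigma$'s answer in one run to choose Player $1$'s next move in the other.
\begin{enumerate}
\item In run $A$, Player $1$ opens with $a_0=\langle 0\rangle$, and $\sigma$ answers $b_0$.
\item In run $B$, Player $1$ opens with $f(a_0\conc b_0)$, and $\sigma$ answers $c_0$.
\item In run $A$, Player $1$ next plays $f(c_0)$, and $\sigma$ answers $b_1$.
\item In run $B$, Player $1$ plays $f(b_1)$, and $\sigma$ answers $c_1$.
\end{enumerate}
This continues forever: in run $A$, Player $1$ always plays $f(c_n)$, and in run $B$, Player $1$ plays $f(b_{n+1})$. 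All moves are finite and non-empty, because $f$ preserves length and $\sigma$'s answers are non-empty, so both runs are legal.

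\textbf{Checking the two plays.} Run $A$ produces
\[
x=a_0\conc b_0\conc f(c_0)\conc b_1\conc f(c_1)\conc\cdots ,
\]
and run $B$ produces
\[
f(a_0)\conc f(b_0)\conc c_0\conc f(b_1)\conc c_1\conc\cdots .
\]
Since $f$ acts coordinatewise and is an involution, the second play is exactly $f(x)$. The construction is an explicit recursion from the given $\sigma$, so no choice is used and the argument goes through in $\ZF$.

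\textbf{Deriving the contradiction.} Since $\sigma$ is winning, Player $2$ wins both runs. By hypothesis $[s\conc x]_E\neq[s\conc f(x)]_E$, so by linearity of $<$ exactly one of the following holds:
\[
[s\conc f(x)]_E<[s\conc x]_E \qquad\text{or}\qquad [s\conc x]_E<[s\conc f(x)]_E .
\]
Suppose $i=2$. Then Player $2$ wins run $A$ iff the first inequality holds, and wins run $B$ (whose play is $f(x)$, with $f(f(x))=x$) iff the second holds. These cannot both be true.

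Suppose $i=1$. Then Player $2$ wins exactly when the relevant strict inequality fails, so by non-equivalence winning both runs requires both reversed inequalities, which is again impossible.

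The only delicate step is the bookkeeping of the interleaving: one has to make sure that the concatenation of run $B$ is literally $f$ of run $A$, with each block landing in the correct position.
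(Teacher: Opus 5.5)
Your proof is correct and is essentially the paper's own strategy-stealing argument. The paper packages your run $B$ as a strategy for Player $1$, opening with $\langle 1\rangle\conc f(\rho(\langle 0\rangle))$ and thereafter answering with flipped $\rho$-responses, and plays it against the winning strategy $\rho$. It then observes that the flipped play is exactly your run $A$, so both $x$ and $f(x)$ are plays consistent with $\rho$ and the winning conditions clash exactly as in your final step.
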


\begin{proof}
    Assume towards a contradiction that Player $2$ has a winning strategy $\rho$ in some game $G(s,E,<,i)$ where $s$, $E$, $<$, and $i$ are as above. Since $[s\conc f(x)]_E \ne [s\conc x]_E$ for all $x \in \soafft{\omega}{2}$ and $<$ is linear, Player $3-i$ wins iff $[s\conc x]_E < [s\conc f(x)]_E$.
    We define a strategy $\sigma$ for Player $1$ by letting $\sigma (\langle\rangle) = \langle 1\rangle \conc f(\rho(\langle 0\rangle))$ and, for sequences of moves of positive even length $\langle s_0, \dots, s_{2k-1} \rangle$ with length of $s_0 = \langle j \rangle \conc s'_0$ larger than one (by definition of $\sigma (\langle\rangle)$ it suffices to take care of these sequences) $\sigma (\langle s_0, \dots, s_{2k-1} \rangle) = f(\rho(\langle f(\langle j \rangle), f(s'_0), f(s_1), \dots, f(s_{2k-1})\rangle))$.
    
We now let both Players fight it out using their respective strategies $\sigma$ and $\rho$. That is,
\begin{align*}
    s_{2k} & = \sigma(\seq{k<\omega}{k<2k}),\\
    s_{2k+1} & = \rho(\seq{k<\omega}{k\leqslant 2k}),\\
    x & = s_0 \conc s_1\conc\dots
\end{align*}
As $\rho$ is winning, we have $[s\conc x]_E < [s\conc f(x)]_E$ if $i = 1$ and $[s\conc x]_E > [s\conc f(x)]_E$ if $i=2$.

Now note that $s_0 = \langle 1\rangle \conc f(\rho(\langle 0\rangle))$ and when $k>0$ we obtain $s_{2k}$ by applying $f \circ \rho$ to the sequence consisting of 
\begin{multline*}
\langle 0 \rangle, \rho(\langle 0\rangle), f(s_1), \rho (\langle \langle 0 \rangle, \rho(\langle 0\rangle), f(s_1) \rangle), f(s_3), \dots,\\
f(s_{2k-1}), \rho (\langle \langle 0 \rangle, \rho(\langle 0\rangle), f(s_1), \dots, f(s_{2k-1}) \rangle).
\end{multline*}
This means that $f(x)$ is the result of the play where Player $1$ starts by playing $\langle 0\rangle$ and then plays $f(s_{2k-1})$ in round $2k$ with $k>0$, while Player $2$ plays according to $\rho$.

But then, as $f$ is an involution and $\rho$ is a winning strategy for Player $2$ in $G(s,E,<,i)$, if $i = 1$ we have $[s\conc f(x)]_E < [s\conc x]_E$, while if $i=2$ then $[s\conc x]_E < [s\conc f(x)]_E$, a contradiction!
\end{proof}

\begin{theo}[$\ZF+\AD$]
\label{E_0-Theorem}
If $E \supseteq E_0$ is an equivalence relation on $\soafft{\omega}{2}$ such that for no $s \in \soafft{<\omega}{2}$ and $x \in \soafft{\omega}{2}$ we have $s\conc x \,E\, s\conc f(x)$, then $\soafft{\omega}{2} / E$ cannot be ordered linearly.
\end{theo}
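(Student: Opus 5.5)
The idea is to argue by contradiction. We combine determinacy of the Banach--Mazur games $G(s,E,<,i)$ with \Cref{game-lemma}, and use the hypothesis $E \supseteq E_0$ to shift the base point $s$ of the game.

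Suppose towards a contradiction that $<$ is a linear order on $\soafft{\omega}{2}/E$, and consider the game $G(\langle\rangle,E,<,1)$. The moves are finite non-empty binary sequences, and there are only countably many of these. Fixing a bijection with $\omega$ turns this game into an ordinary integer game whose payoff set is a set of reals. Hence by $\AD$ it is determined.

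If Player $2$ had a winning strategy, this would directly contradict \Cref{game-lemma} with $s = \langle\rangle$. The hypothesis of that lemma holds, since by assumption (taking $s=\langle\rangle$) no $x$ satisfies $x \,E\, f(x)$.

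So Player $1$ has a winning strategy $\sigma$. Let $t = \sigma(\langle\rangle)$ be its first move. Every run of the game is then of the form $x = t\conc y$, and Player $1$ wins iff $[f(t\conc y)]_E < [t\conc y]_E$. The key observation is that
\[
f(t\conc y) = f(t)\conc f(y) \mathrel{E_0} t\conc f(y),
\]
because these two sequences differ only on the first $\card{t}$ coordinates. Since $E \supseteq E_0$, we get $[f(t\conc y)]_E = [t\conc f(y)]_E$. Thus the winning condition for Player $1$ becomes $[t\conc f(y)]_E < [t\conc y]_E$, which is precisely the winning condition of Player $1$ in $G(t,E,<,1)$.

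Now look at the play after the first move $t$. The roles shift: the original Player $2$ moves next and builds $y$ together with Player $1$. So $\sigma$ restricted to positions extending $\langle t\rangle$ becomes a strategy for the player who moves \emph{second} in the game whose outcome is $y$. More precisely, the map $\langle u_0,\dots,u_{2k}\rangle \mapsto \sigma(\langle t,u_0,\dots,u_{2k}\rangle)$ is a strategy for Player $2$ in the game with prefix $t$. Under it, Player $2$ wins exactly when $[t\conc f(y)]_E < [t\conc y]_E$. This is the condition under which Player $1$ of $G(t,E,<,1)$ wins, i.e.\ the condition of $G(t,E,<,2)$ for Player $2$. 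Hence it is a winning strategy for Player $2$ in $G(t,E,<,2)$.

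By hypothesis, no $z$ satisfies $t\conc z \,E\, t\conc f(z)$. So \Cref{game-lemma} applies with $s = t$ and says Player $2$ has no winning strategy in $G(t,E,<,i)$ for either $i$. This is a contradiction.

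The main points requiring care are the bookkeeping of which player's condition matches which index $i$ after the shift, and the $E_0$-invariance step. Here I expect the only subtle issue is making sure the parity of turns in the residual game matches the definition of $G(t,E,<,i)$. Since \Cref{game-lemma} covers both values of $i$, any mismatch in the index is harmless.
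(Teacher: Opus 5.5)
Your proposal is correct and follows essentially the same route as the paper. Determinacy together with \Cref{game-lemma} at $s=\langle\rangle$ gives Player $1$ a winning strategy. Shifting by its first move $t$, and using $E\supseteq E_0$ to identify $[f(t\conc y)]_E$ with $[t\conc f(y)]_E$, produces a Player $2$ strategy in $G(t,E,<,2)$ that contradicts the lemma. The only difference is cosmetic: you show the shifted strategy is winning outright, whereas the paper takes a play that defeats it and derives the contradiction from that play.
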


\begin{proof}
Assume towards a contradiction that the axiom of determinacy holds true and $<$ is a linear order of $\soafft{\omega}{2} / E$. By \Cref{game-lemma} Player $2$ does not have a winning strategy in $G(\langle\rangle,E,<,1)$. So by $\AD$, Player $1$ has a strategy $\rho$ which allows them to win any play of $G(\langle\rangle,E,<,1)$. Let $s' = \rho(\langle\rangle)$. Consider the strategy for Player $2$ defined by
\begin{align*}
    \sigma (\seq{s_k}{k<2i+1}) = \rho(\langle s'\rangle\conc\seq{s_k}{k<2i+1}).
\end{align*}
%If $1$ and $2$ play according to $\rho$ and $\sigma$ they produce a sequence $\seq{t_k}{k < \omega}$ of moves such that
By \Cref{game-lemma} $\sigma$ cannot be a winning strategy in $G(s',E,<,2)$ so there is a sequence $\seq{t_{2k}}{k < \omega}$ of moves of Player $1$ such that, if Player $2$ adheres to $\sigma$, Player $1$ wins the play of $G(s',E,<,2)$. This means that 
$[s'\conc y]_E < [s'\conc f(y)]_E$ where
\begin{align*}
y & = t_0\conc t_1\conc\dots\text{ and}\\
t_{2k+1} & = \sigma(\seq{t_m}{m \leqslant 2k}).
\end{align*}
As $s'\conc y$ is also the result of a play of $G(\langle\rangle,E,<,1)$ in which Player $1$ adhered to their winning strategy $\rho$, we have $[f(s'\conc y)]_E < [s'\conc y)]_E$ and hence $[f(s'\conc y)]_E < [s'\conc f(y)]_E$. Now, as $E \supseteq E_0$, we have $[s'\conc f(y)]_E = [f(s')\conc f(y)]_E = [f(s'\conc y)]_E$, a contradiction!
\end{proof}

\Cref{R/Q_is_unorderable} now follows immediately.

\providecommand{\bysame}{\leavevmode\hbox to3em{\hrulefill}\thinspace}
\providecommand{\MR}{\relax\ifhmode\unskip\space\fi MR }
% \MRhref is called by the amsart/book/proc definition of \MR.
\providecommand{\MRhref}[2]{%
  \href{http://www.ams.org/mathscinet-getitem?mr=#1}{#2}
}
\providecommand{\href}[2]{#2}

\end{document}